\newtheorem{theorem}[equation]{Theorem}
\newtheorem{corollary}[equation]{Corollary}
\newtheorem{lemma}[equation]{Lemma}
\newtheorem{proposition}[equation]{Proposition}
\newtheorem{remark}[equation]{Remark}
\newtheorem{definition}[equation]{Definition}
\numberwithin{equation}{section}
\begin{document}

\setstcolor{red}

\title[Harmonic functions and the Hardy space $H^1$ in the Dunkl setting]
{Harmonic functions, \\
conjugate harmonic functions \\
and the Hardy space $H^1$ \\
in the rational Dunkl setting}

\author[J.-Ph. Anker, J. Dziuba\'nski, and A. Hejna]{
Jean-Philippe Anker,
Jacek Dziuba\'nski,
Agnieszka Hejna
}

\address{J.-Ph. Anker,
Institut Denis Poisson (UMR 7013),
Universit\'e d'Orl\'eans, Universit\'e de Tours \& CNRS,
B.P. 6759, 45067 Orl\'eans cedex 2, France}
\email{anker@univ-orleans.fr}

\address{J. Dziuba\'nski and A. Hejna, Uniwersytet Wroc\l awski,
Instytut Matematyczny,
Pl. Grunwaldzki 2/4,
50-384 Wroc\l aw,
Poland}
\email{jdziuban@math.uni.wroc.pl}
\email{hejna@math.uni.wroc.pl}

\subjclass[2010]{Primary\,: 42B30.
Secondary\,: 33C52, 35J05, 35K08, 42B25,  42B35, 42B37, 42C05}

\keywords{Rational Dunkl theory,
Hardy spaces, Cauchy-Riemann equations,
Riesz transforms,
maximal operators}

\thanks{
The first and second named authors thank their institutions for reciprocal invitations, where part of the present work was carried out.
Second and third authors supported by the National Science Centre, Poland (Narodowe Centrum Nauki), Grant 2017/25/B/ST1/00599.
}


\begin{abstract}
In this work we extend the theory of the classical Hardy space $H^1$ to the rational Dunkl setting. Specifically, let $\Delta$ be the Dunkl Laplacian on a Euclidean space $\mathbb{R}^N$. On the half-space $\mathbb{R}_+\times\mathbb{R}^N$, we consider systems of conjugate $(\partial_t^2+\Delta_{\mathbf{x}})$-harmonic functions satisfying an appropriate uniform $L^1$ condition. We  prove that the boundary values of such harmonic functions, which constitute the real Hardy space $H^1$, can be characterized in several different ways, namely by means of atoms, Riesz transforms, maximal functions or Littlewood-Paley square functions.
\end{abstract}

\maketitle

\section{Introduction}

Real Hardy spaces on $\mathbb{R}^N$ have their origin in studying holomorphic functions of one variable in the upper half-plane $\mathbb{R}^2_+=\{ z=x+iy\in\mathbb C: y>0\}$. The theorem of Burkholder, Gundy, and Silverstein \cite{BGS} asserts that a real-valued harmonic function $u$ on $\mathbb{R}^2_+$ is the real part of a holomorphic function $F(z)=u(z)+iv(z)$ satisfying the $L^p$ condition
$$\sup_{y>0} \int_{\mathbb{R}} |F(x+iy)|^p\, dx <\infty, \ \ \ 0<p<\infty,$$
if and only if the nontangential maximal function $u^*(x)=\sup_{|x-x'|<y} |u(x'+iy)|$ belongs to $L^p(\mathbb{R})$.
Here $0<p<\infty$.
The $N$-dimensional theory was then developed in Stein and Weiss \cite{SW} and Fefferman and Stein \cite{FS},  where the notion of holomorphy was replaced by conjugate harmonic functions. To be more precise, a system of $C^2$ functions
$$
\boldsymbol{u}(x_0,x_1,\dots,x_N)
=(u_0(x_0,x_1,\dots,x_N),u_1(x_0,x_1,\dots,x_N),\dots,
u_N(x_0,x_1,\dots,x_N)),
$$
where $x_0>0$,  satisfies the generalized Cauchy-Riemann equations if
\begin{equation}\label{classicalCR}
\frac{\partial u_j}{\partial x_i}=\frac{\partial u_i}{\partial x_j}
\quad{\forall\;0\le i\neq j\le N\quad\text{and}\quad}
\sum_{j=0}^N\frac{\partial u_j}{\partial x_j}=0.
\end{equation}
One says that $\boldsymbol u$ has the $L^p$ property if
\begin{equation}\label{Hp}
\sup_{x_0>0} \int_{\mathbb{R}^N} |\boldsymbol u(x_0,x_1,\dots ,x_N)|^p\, dx_1 \dots dx_N<\infty.
\end{equation}
As in the case $N=1$, if $1\!-\!\frac1N<p<\infty$ and $u_0(x_0,x_1,\dots ,x_N)$ is a harmonic function, there is a system $\boldsymbol u=(u_0,u_1,\dots,u_N)$ of $C^2$ functions satisfying \eqref{classicalCR} and \eqref{Hp} if and only if
$$
u_0^*(\mathbf{x})=\sup_{\| \mathbf{x}-\mathbf{x}'\| <x_0}|u_0(x_0,\mathbf{x}^{\prime})|
$$
belongs to $L^p(\mathbb{R}^N)$.
 Here $\mathbf{x}=(x_1,\dots,x_N)\in\mathbb{R}^N$
and similarly $\mathbf{x}'=(x_1',\dots,x_N')$.
Then $u_0$ has a limit $f_0$ in the sense of distributions, as $x_0\searrow0$, and  $u_0$ is the Poisson integral of $f_0$.
It turns out that the set of all distributions obtained in this way, which form the so-called real Hardy space $H^p(\mathbb{R}^N)$, can be equivalently characterized in terms of real analysis (see \cite{FS}), namely by means of various maximal functions, square functions or Riesz transforms. An other important contribution to this theory lies in the atomic decomposition introduced by Coifman \cite{Coifman} and extended to spaces of homogeneous type by Coifman and Weiss \cite{CW}.

The goal of this paper is to study harmonic functions, conjugate harmonic functions, and related Hardy space $H^1$ for the Dunkl Laplacian $\Delta$ (see Section \ref{preliminaries}). We shall prove that these objects have properties  analogous  to the classical ones. In particular the related real Hardy space $H^1_{\Delta}$, which can be defined as the set of boundary values  of $(\partial_t^2+\Delta_{\mathbf x})$-harmonic functions satisfying a relevant $L^1$ property, can be characterized by appropriate maximal functions,  square functions, Riesz transforms or atomic decompositions. Apart from the square function characterization, this was achieved previously in \cite{ABDH} and \cite{Dz} in the one-dimensional case, as well as in the product case.

\section{Statement of the results}\label{preliminaries}

In this section we first collect basic facts concerning Dunkl operators, the Dunkl Laplacian, and the corresponding heat and Poisson semigroups. For details we refer the reader to \cite{Dunkl}, 
\cite{Roesler3} and \cite{Roesler-Voit}. Next we state our main results.

In the Euclidean space $\mathbb{R}^N$, equipped with a scalar product $\langle\mathbf{x},\mathbf{y}\rangle$,
the reflection $\sigma_\alpha$ with respect to the hyperplane $\alpha^\perp$ orthogonal to a nonzero vector $\alpha$ is given by
$$
\sigma_\alpha (\mathbf{x})
=\mathbf{x}-2\frac{\langle \mathbf{x},\alpha\rangle}{\|\alpha\|^2}\alpha.
$$
A finite set $R\subset\mathbb{R}^N\setminus\{0\}$ is called a {\it root system} if $\sigma_\alpha (R )=R$ for every $\alpha\in R$. We shall consider normalized reduced root systems, that is, $\|\alpha\|^2=2$  for every $\alpha\in R$. The finite group $G$ generated by the reflections $\sigma_\alpha$ is called the {\it Weyl group} ({\it reflection group}) of the root system. We shall denote by $\mathcal{O}(\mathbf{x})$, resp.~$\mathcal{O}(B)$ the $G$-orbit of a point $\mathbf{x}\in\mathbb{R}^N$, resp.~a subset $B\subset\mathbb{R}^N$. A \textit{multiplicity function\/} is a $G$-invariant function $k:R\to\mathbb C$, which will be fixed and $\ge0$ throughout this paper.

Given a root system $R$ and a multiplicity function $k$, the \textit{Dunkl operators} $T_\xi$ are  the following deformations of directional derivatives $\partial_\xi$ by difference operators\,:
\begin{align*}
T_\xi f(\mathbf{x})
&{=\partial_\xi f(\mathbf{x})+\sum_{\alpha\in R}\frac{k(\alpha)}2\langle\alpha,\xi\rangle\frac{f(\mathbf{x})\!-\!f(\sigma_\alpha(\mathbf{x}))}{\langle\alpha,\mathbf{x}\rangle}}\\
&=\partial_\xi f(\mathbf{x})+\hspace{-1mm}\sum_{\alpha\in R^+}\hspace{-1mm}k(\alpha)\langle\alpha,\xi\rangle\frac{f(\mathbf{x})\!-\!f(\sigma_\alpha(\mathbf{x}) )}{\langle\alpha,\mathbf{x}\rangle}.
\end{align*}
Here $R^+$ is any fixed positive subsystem of $R$. The Dunkl operators $T_\xi$, which were introduced in \cite{Dunkl}, commute pairwise and are skew-symmetric with respect to the $G$-invariant measure $dw(\mathbf{x})=w (\mathbf{x})\,d\mathbf{x}$, where
\begin{equation*}
w (\mathbf{x})
=\prod_{\alpha\in R}|\langle \alpha, \mathbf{x}\rangle|^{k(\alpha)}
{=\hspace{-1mm}\prod_{\alpha\in R^+}\hspace{-1mm}|\langle \alpha, \mathbf{x}\rangle|^{2k(\alpha)}}.
\end{equation*}
{Set $T_j=T_{e_j}$, where $\{e_1,\dots,e_N\}$ is the canonical basis of $\mathbb{R}^N$}. The \textit{Dunkl Laplacian} associated with $R$  and $k$ is the differential-difference operator ${\Delta}=\sum_{j=1}^nT_{j}^2$, which  acts on $C^2$ functions by
$$
{\Delta}f(\mathbf{x})
{=\Delta_{\text{eucl}}f(\mathbf{x})
+\sum_{\alpha\in R}k(\alpha)\delta_\alpha f(\mathbf{x})}
=\Delta_{\text{eucl}}f(\mathbf{x})
+2\sum_{\alpha\in R^+}k(\alpha)\delta_\alpha f(\mathbf{x}),
$$
{where}
$$
\delta_\alpha f(\mathbf{x})
=\frac{\partial_\alpha f(\mathbf{x})}{\langle\alpha,\mathbf{x}\rangle}-\frac{f(\mathbf{x})-f(\sigma_\alpha(\mathbf{x}))}{\langle \alpha,\mathbf{x}\rangle^2}.
$$
The operator $\Delta$ is essentially self-adjoint on $L^2(dw)$
 (see for instance \cite[Theorem\;3.1]{AH})
and generates the heat semigroup
\begin{equation}\label{heat_semigroup}
{H_tf(\mathbf{x})=}e^{t{\Delta}}f(\mathbf{x})=\int_{\mathbb{R}^N}
h_t(\mathbf{x},\mathbf{y})f(\mathbf{y})\,dw (\mathbf{y}){.}
\end{equation}
Here the heat kernel $h_t(\mathbf{x},\mathbf{y})$ is a $C^\infty$ function {in} all variables $t>0$, $\mathbf{x}\in\mathbb{R}^N$, $\mathbf{y}\in\mathbb{R}^N$, which satisfies
$$
h_t(\mathbf{x},\mathbf{y})=h_t(\mathbf{y},\mathbf{x})
{>0\quad\text{and}\quad}
\int_{\mathbb{R}^N} h_t(\mathbf{x},\mathbf{y})\,dw(\mathbf{y})=1.
$$
{Notice that} \eqref{heat_semigroup} defines a strongly continuous semigroup of linear contractions on $L^p({dw})$, {for every} $1\leq p<\infty$.

The  Poisson semigroup $P_t=e^{-t\sqrt{-{\Delta}}}$ is given by the subordination formula
\begin{equation}\label{subordination}
P_tf(\mathbf{x})= \pi^{-1/2}\int_0^{\infty} e^{-u}  \exp\Big(\frac{t^2}{4u} {\Delta}\Big)f(\mathbf{x}) \frac{du}{\sqrt{u}}
\end{equation}
and solves the boundary value problem{
$$\begin{cases}
\,(\partial_{t}^2+\Delta_{\mathbf{x}})\,u(t,\mathbf{x})=0\\
\;u(0,\mathbf{x})=f(\mathbf{x})
\end{cases}$$
}in the upper half{-}space $\mathbb{R}_+^{{1+N}}=(0,\infty)\times\mathbb{R}^N\subset\mathbb{R}^{1+N}$. Let  $e_0=(1,0,{\dots},0)$, $e_1=(0,1,{\dots},0)$,{\dots}, $e_N=(0,0,{\dots},1)$ be the canonical basis in $\mathbb{R}^{1+N}$. In order to unify our notation we shall denote the variable $t$ by $x_0$ and set $T_0=\partial_{e_0}$.

Our goal is to study real harmonic functions of the operator
\begin{equation}\label{operatorL}
\mathcal L=T_0^2+{\Delta}=\sum_{j=0}^N T_{j}^2.
\end{equation}
The operator $\mathcal L$ {is} the Dunkl {Laplacian} associated with the root system $R$, considered as a subset of $\mathbb{R}^{1+N}$ under the {embedding} $R\subset\mathbb{R}^N\hookrightarrow\mathbb{R}\times\mathbb{R}^N$.

We say that a system{
$$
\mathbf{u}=(u_0,u_1,\dots,u_N),
\text{ \,where \;}
u_j=u_j(x_0,\underbrace{x_1,\dots,x_N}_{\mathbf{x}})
\;\;\forall\;0\le j\le N,
$$
}of $C^1$ {real} functions on $\mathbb{R}_+^{{1+N}}$
satisfies the generalized Cauchy-Riemann equations  if\begin{equation}\label{C-R}\begin{cases}
\,{T_iu_j=T_ju_i\quad\forall\;0\le i\ne j\le N},\\
\,\sum_{j=0}^N T_j u_j=0.
\end{cases}\end{equation}
{In this case each component $u_j$ is $\mathcal{L}$-harmonic, i.e., $\mathcal{L}u_j=0$.}

We say that a system $\mathbf{u}$ of $C^2$ real $\mathcal{L}$-harmonic functions {on $\mathbb{R}_+^{{1+N}}$} belongs to the Hardy space {$\mathcal{H}^{1}$} if it satisfies both \eqref{C-R}  and the $L^1$  condition
\begin{equation*}
 \|\mathbf{u}\|_{\mathcal{H}^{1}}=\sup_{x_0>0}\big\||\mathbf{u}(x_0,\cdot)|\big\|_{L^{1}({dw})}
 =\sup_{x_0>0}\int_{\mathbb{R}^N}|\mathbf{u}(x_0,\mathbf{x})|\,{dw}(\mathbf{x})<\infty,
\end{equation*}
where $|\mathbf{u}(x_0,\mathbf{x})|=\Big(\sum_{j=0}^n |u_j(x_0,\mathbf{x})|^2\Big)^{1\slash 2}$.

We are now {ready} to state our first main result.

\begin{theorem}\label{main1}
Let $u_0$ be {a} $\mathcal L$-harmonic function in the upper half{-}space $\mathbb{R}_+^{1+N}$. Then there are $\mathcal L$-harmonic functions $u_j$ $(j=1,{\dots},N)$ such that $\mathbf{u}=(u_0,u_1,{\dots},u_N)$ belongs to {$\mathcal{H}^1$} if and only if the nontangential maximal function
\begin{equation}\label{star}
u_{{0}}^*(\mathbf{x})=\sup\nolimits_{\,{\|}\mathbf{x}'-\mathbf{x}{\|}<x_0} |u_0(x_0,\mathbf{x}')|
\end{equation}
belongs to $L^1({dw})$.
In this case, the norms $\|u_0^{*}\|_{L^1({dw})}$
and $\|\mathbf{u}\|_{\mathcal{H}^1}$ are moreover equivalent.
\end{theorem}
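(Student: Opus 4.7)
The plan is to prove the two implications separately, with the ``$\Leftarrow$'' direction constructing the conjugate components via the Dunkl Riesz transforms of a recovered boundary datum, and the ``$\Rightarrow$'' direction using a Stein-Weiss-type subharmonicity bound on $|\mathbf u|^p$, adapted to the nonlocal Dunkl Laplacian.

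For ``$\Leftarrow$'', assume $u_0^*\in L^1(dw)$. Since $|u_0(x_0,\mathbf x)|\le u_0^*(\mathbf x)$ for every $x_0>0$, the family $\{u_0(x_0,\cdot)\,dw\}_{x_0>0}$ is bounded in total variation, so some subsequence converges weak-$*$ as $x_0\searrow 0$; a uniqueness argument based on $\mathcal L$-harmonicity and the semigroup property of $P_{x_0}$ identifies $u_0(x_0,\cdot)$ with the Poisson integral $P_{x_0}f_0$ of a boundary distribution $f_0$. By the Poisson/nontangential-maximal characterization of $H^1_\Delta$, which belongs to the paper's preparatory material, $f_0\in H^1_\Delta$ with $\|f_0\|_{H^1_\Delta}\lesssim\|u_0^*\|_{L^1(dw)}$. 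I then set
\[
u_j(x_0,\mathbf x)\;=\;-\,P_{x_0}\bigl(R_jf_0\bigr)(\mathbf x),\qquad j=1,\dots,N,
\]
where $R_j=T_j(-\Delta)^{-1/2}$ is the $j$-th Dunkl Riesz transform. Using $\partial_{x_0}P_{x_0}=-\sqrt{-\Delta}\,P_{x_0}$, the pairwise commutation of the $T_j$ with each other and with $P_{x_0}$, and the identity $\sum_{j=1}^N T_j R_j=-\sqrt{-\Delta}$, the Cauchy-Riemann system \eqref{C-R} follows by a direct computation. The $H^1_\Delta\to L^1(dw)$ boundedness of the $R_j$ (also part of the paper) together with the $L^1$-contractivity of $P_{x_0}$ gives $\|\mathbf u\|_{\mathcal H^1}\lesssim\|u_0^*\|_{L^1(dw)}$.

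For ``$\Rightarrow$'', assume $\mathbf u\in\mathcal H^1$. The crux is to show that $|\mathbf u|^p$ is $\mathcal L$-subharmonic on $\mathbb R_+^{1+N}$ for some exponent $p<1$ depending on the homogeneous dimension $\mathbf N=N+2\sum_{\alpha\in R^+}k(\alpha)$. Granting this, the usual maximum-principle argument gives a Poisson majorant $|\mathbf u(x_0,\mathbf x)|^p\le P_{x_0-\varepsilon}(|\mathbf u(\varepsilon,\cdot)|^p)(\mathbf x)$ for $0<\varepsilon<x_0$, and the uniform bound $\||\mathbf u(\varepsilon,\cdot)|^p\|_{L^{1/p}(dw)}=\|\mathbf u(\varepsilon,\cdot)\|_{L^1(dw)}^{\,p}\le\|\mathbf u\|_{\mathcal H^1}^{\,p}$ (valid because $1/p>1$) allows one to pass to the limit $\varepsilon\searrow 0$ and obtain $|\mathbf u(x_0,\mathbf x)|^p\le (P_{x_0}g_0)(\mathbf x)$ for some nonnegative $g_0\in L^{1/p}(dw)$. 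A standard pointwise bound of the nontangential Poisson maximal function by the Hardy-Littlewood maximal operator $\mathcal M$ on the space of homogeneous type $(\mathbb R^N,\|\cdot\|,dw)$ then gives $u_0^*\le(\mathcal Mg_0)^{1/p}$, and the $L^{1/p}(dw)$-boundedness of $\mathcal M$ yields $\|u_0^*\|_{L^1(dw)}\lesssim\|g_0\|_{L^{1/p}(dw)}^{1/p}\lesssim\|\mathbf u\|_{\mathcal H^1}$.

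The main obstacle is the subharmonicity of $|\mathbf u|^p$ for $\mathcal L$. The classical Stein-Weiss computation relies on the chain rule applied to $\Delta_{\text{eucl}}|\mathbf u|^p$ together with the generalized Cauchy-Riemann identities, whereas $\mathcal L=T_0^2+\Delta$ contains a nonlocal difference piece $2\sum_{\alpha\in R^+}k(\alpha)\delta_\alpha$ (acting only in the $\mathbf x$-variables) that produces extra terms comparing $\mathbf u(x_0,\mathbf x)$ with $\mathbf u(x_0,\sigma_\alpha\mathbf x)$. Handling these terms via convexity of $t\mapsto t^p$ requires a careful reflection-symmetry analysis, and pinning down an admissible exponent $p<1$ (necessarily close to $1$ and tied to $\mathbf N$ rather than to $N$) is the technical heart of the argument.
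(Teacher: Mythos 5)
Your overall architecture mirrors the classical Stein--Weiss scheme and is broadly the route the paper takes, but both halves contain genuine gaps.

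In the ``$\Leftarrow$'' direction your argument is circular. The claim that $u_0^*\in L^1(dw)$ forces $f_0\in H^1_\Delta$ ``by the Poisson/nontangential-maximal characterization of $H^1_\Delta$'' cannot be treated as preparatory material: the inclusion $H^1_{{\rm max},P}\subset H^1_\Delta$ is exactly the implication to be proved, and the paper states explicitly that it is established only at the very end of Section \ref{Atomic} (Proposition \ref{converse}), via the chain $H^1_{{\rm max},P}\subset H^1_{(1,\infty,M)}\subset H^1_{\rm Riesz}\subset H^1_\Delta$. That chain requires the atomic decomposition of $H^1_{{\rm max},P}$ (Theorems \ref{AtomicL2} and \ref{AtomPoisson}, which in turn need the Calder\'on reproducing formula, the tent construction over the orbit space, and the $L^2$ reduction of Lemma \ref{L2Lemma}) and the uniform bound $\|R_ja\|_{L^1(dw)}\le C$ on atoms from Section \ref{Sect9}. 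Without this machinery you have no way to conclude that $R_jf_0\in L^1(dw)$ with $\|R_jf_0\|_{L^1(dw)}\lesssim\|u_0^*\|_{L^1(dw)}$, so setting $u_j=-P_{x_0}(R_jf_0)$ gives no control on $\sup_{x_0>0}\|u_j(x_0,\cdot)\|_{L^1(dw)}$; note also that for $f_0\in L^1(dw)$ the Riesz transform is a priori only defined distributionally.

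In the ``$\Rightarrow$'' direction you correctly identify the nonlocal difference terms as the obstacle, but you do not resolve it, and the object you propose, $|\mathbf u|^p$, is not the one for which subharmonicity can be proved. Because $\delta_\alpha$ compares values at $\mathbf x$ and $\sigma_\alpha(\mathbf x)$, the quantity whose power is taken must be $G$-invariant, and $|\mathbf u|$ is not. The paper's fix (Section \ref{SectionSub}) is to pass to the augmented system $F=\{\mathbf u_\sigma\}_{\sigma\in G}$, where $\mathbf u_\sigma$ is obtained from $\mathbf u\circ\sigma$ by rotating its spatial components by $\sigma$; each $\mathbf u_\sigma$ still satisfies \eqref{C-R}, one has $|F(x_0,\mathbf x)|=|F(x_0,\sigma(\mathbf x))|$, the reflection terms then collapse to the nonnegative expression $\tfrac12\sum_{\sigma,\ell}\bigl(u_{\sigma,\ell}(x_0,\mathbf x)-u_{\sigma,\ell}(x_0,\sigma_\alpha(\mathbf x))\bigr)^2$, and a compactness refinement of the Stein--Weiss matrix inequality (Lemma \ref{auxiliary}) absorbs the remaining terms for $q$ close enough to $1$, with $q$ determined by $\gamma$. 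Since $\|\,|F(\varepsilon,\cdot)|\,\|_{L^1(dw)}\sim\|\mathbf u(\varepsilon,\cdot)\|_{L^1(dw)}$ by $G$-invariance of $w$, the Poisson majorization and Hardy--Littlewood steps you outline then go through; but as written your proof stops exactly where the real work begins, and it also elides the barrier function $\mathcal V$ and the decay at infinity needed to apply the maximum principle on the unbounded half-space.
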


{If $\mathbf u\in\mathcal H^1$, we shall prove that} the limit  $f(\mathbf{x})=\lim_{x_0\to 0} u_0(x_0, \mathbf{x})$ exists in {$L^1(dw)$} and $u_0(x_0,\mathbf{x})=P_{x_0}f(\mathbf{x})$. {This leads to consider the so-called real Hardy space}
$$
H^1_{{\Delta}}
=\{f(\mathbf{x})=\lim_{x_0\to 0}u_0(x_0,\mathbf{x})\,{|}\,
(u_0,u_1,{\dots},u_N)\in{\mathcal{H}^1}\},
$$
{equipped with the norm}
$$ \| f\|_{H^1_{{\Delta}}}
=\|(u_0,u_1,{\dots},u_N)\|_{\mathcal{H}^1}.
$$
Let us denote by
$$
\mathcal{M}_Pf(\mathbf{x})=\sup\nolimits_{\,{\|}\mathbf{x}-\mathbf{x}'{\|}<t}\,\bigl|{P_t}f(\mathbf{x}')\bigr|
$$
the nontangential maximal function associated with the {Poisson semigroup $P_t=e^{-t\sqrt{-\Delta}}$}. According to Theorem \ref{main1}, {$H^1_{{\Delta}}$
coincides with the following subspace of $L^1(dw)$\,:}
$$
H^1_{{\rm max},P}=\{f\in L^1({dw})\,{|}\,\|\mathcal{M}_Pf\|_{L^1({dw})}<\infty\}.
$$
{Moreover, the norms $\|f\|_{H^1_\Delta}$ and $\|\mathcal{M}_Pf\|_{L^1(dw)}$ are equivalent.}
\smallskip

Our task is to prove {other} characterizations of $H^1_{{\Delta}}$ by means of real analysis.
\smallskip

\noindent
\textbf{A. Characterization by the heat maximal function.}
Let
$$
\mathcal{M}_{H}f(\mathbf{x})=
\sup\nolimits_{\,{\|}\mathbf{x}-\mathbf{x}'{\|}^2<t}|{H_t}f(\mathbf{x}')|
$$
be the nontangential maximal function associated with the  heat semigroup {$H_t=e^{t\Delta}$} and {set}
$$
H^1_{{\rm max},H}=\{f\in L^1({dw})\,{|}\,\|\mathcal{M}_{H}f\|_{L^1({dw})}<\infty\}.
$$

\begin{theorem}\label{main2}
{The spaces $H^1_{\Delta}$ and $H^1_{{\rm max},H}$ coincide and the corresponding norms $\|f\|_{H^1_{\Delta}}$ and $\|\mathcal{M}_{H}f\|_{L^1(dw)}$ are equivalent.}
\end{theorem}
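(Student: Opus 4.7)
By Theorem \ref{main1}, $H^1_{\Delta}$ coincides with $H^1_{{\rm max},P}$ with equivalent norms, so the task reduces to proving the $L^1(dw)$-equivalence
$$\|\mathcal{M}_{P}f\|_{L^1(dw)}\asymp\|\mathcal{M}_{H}f\|_{L^1(dw)}.$$

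\textbf{Direction $\mathcal{M}_{P}\lesssim\mathcal{M}_{H}$.} This relies on the subordination formula \eqref{subordination}. For $(t,\mathbf{x}')$ with $\|\mathbf{x}-\mathbf{x}'\|<t$ and $s=t^{2}/(4u)$, note that $\|\mathbf{x}-\mathbf{x}'\|<2\sqrt{u}\,\sqrt{s}$, which means that $\mathbf{x}'$ lies in the heat cone of aperture $2\sqrt{u}$ over $\mathbf{x}$. Writing $\mathcal{M}_{H}^{(a)}$ for the heat nontangential maximal function of aperture $a$, formula \eqref{subordination} gives the pointwise bound
$$\mathcal{M}_{P}f(\mathbf{x})\le\pi^{-1/2}\int_{0}^{\infty}e^{-u}\,\mathcal{M}_{H}^{(2\sqrt{u})}f(\mathbf{x})\,\frac{du}{\sqrt{u}}.$$
Combined with a standard aperture-change inequality of the form $\|\mathcal{M}_{H}^{(a)}f\|_{L^{1}(dw)}\lesssim(1+a)^{N+2\sum_{\alpha\in R^{+}}k(\alpha)}\|\mathcal{M}_{H}f\|_{L^{1}(dw)}$ (whose polynomial growth in $a$ is absorbed by the Gaussian factor $e^{-u}$), integration in $u$ yields $\|\mathcal{M}_{P}f\|_{L^{1}(dw)}\lesssim\|\mathcal{M}_{H}f\|_{L^{1}(dw)}$.

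\textbf{Direction $\mathcal{M}_{H}\lesssim\mathcal{M}_{P}$.} This is the harder direction, since $H_t$ admits no positive subordination from the Poisson semigroup. The plan is to pass through an atomic decomposition of $H^{1}_{\Delta}$. Starting from $\mathcal{M}_{P}f\in L^{1}(dw)$, a Calder\'on--Zygmund decomposition on the level sets $\{\mathcal{M}_{P}f>2^{k}\}$, adapted to the weight $dw$, produces $f=\sum_{k}\lambda_{k}a_{k}$ with mean-zero atoms $a_{k}$ supported in Euclidean balls $B_{k}$, $L^{\infty}$-normalized by $w(B_{k})^{-1}$, and $\sum_{k}|\lambda_{k}|\lesssim\|\mathcal{M}_{P}f\|_{L^{1}(dw)}$. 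By linearity it suffices to establish $\|\mathcal{M}_{H}a\|_{L^{1}(dw)}\le C$ uniformly over atoms. On a fixed dilate $2B_{k}$ this follows from the $L^{2}(dw)$-boundedness of $\mathcal{M}_{H}$ combined with the $L^{2}$-normalization of $a_{k}$; outside $2B_{k}$ one invokes the sharp Gaussian-type bounds on $h_{t}(\mathbf{x},\mathbf{y})$ from \cite{AH} and a gradient estimate in $\mathbf{y}$, so that the cancellation of $a_{k}$ yields a decay factor $r(B_{k})/\sqrt{t}$ that renders the outer integral finite.

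\textbf{Main obstacle.} The crux lies entirely in the second direction. Both the Calder\'on--Zygmund decomposition and the uniform atomic bound $\|\mathcal{M}_{H}a\|_{L^{1}(dw)}\le C$ must be carried out against the non-uniformly doubling Dunkl weight $w$, which vanishes along the reflection hyperplanes $\alpha^{\perp}$. Furthermore, the Dunkl heat kernel $h_{t}(\mathbf{x},\mathbf{y})$ is not a pure Gaussian: its sharp pointwise estimate involves contributions from the entire $G$-orbit $\mathcal{O}(\mathbf{y})$. Atoms whose supporting ball is close to a reflection hyperplane---where the local dimension of $w$ changes---are especially delicate, and require the refined orbit-distance Gaussian bounds of \cite{AH} in place of naive Euclidean ones.
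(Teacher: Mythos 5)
Your reduction to the equivalence $\|\mathcal{M}_Pf\|_{L^1(dw)}\asymp\|\mathcal{M}_Hf\|_{L^1(dw)}$ via Theorem \ref{main1} is legitimate, and your first direction is essentially the paper's Lemma \ref{PoissonHeat}: subordination plus the aperture-change estimate \eqref{Nab_eq} (the paper phrases it through the majorant $u^{**}_\lambda$ and \eqref{maxlamba}, but the mechanism is identical). That half is fine.

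The gap is in the direction $\mathcal{M}_H\lesssim\mathcal{M}_P$, where you dispose of the passage from $\mathcal{M}_Pf\in L^1(dw)$ to an atomic decomposition in one sentence. This step is the core of the paper (Theorem \ref{AtomicL2}, an adaptation of Song--Yan occupying most of Section \ref{Atomic}), and what it produces is \emph{not} what you claim. First, the atoms obtained are $(1,\infty,M)$-atoms in the sense of Definition \ref{def-atom}, i.e.\ $a=\Delta^Mb$ with $\operatorname{supp}\Delta^\ell b\subset\mathcal{O}(B)$: they are supported on $G$-\emph{orbits} of balls, not on Euclidean balls, and their cancellation is operator-adapted rather than the mean-zero condition $\int a\,dw=0$. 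This is forced by the construction: the reproducing operators $\Psi_t$ have kernels supported in $\{d(\mathbf{x},\mathbf{y})\le t/2\}=$ an orbit of a ball (see \eqref{supprt_Psi}), and one cannot truncate by cutoff functions without destroying the relation $a=\Delta^Mb$, since multiplication by cutoffs does not interact well with the difference parts of the Dunkl operators. A classical Coifman--Weiss decomposition into mean-zero, $L^\infty$-normalized atoms on Euclidean balls with $\sum|\lambda_k|\lesssim\|\mathcal{M}_Pf\|_{L^1(dw)}$ is simply not established in this paper, and the standard Fefferman--Stein/Latter construction does not transfer, precisely because Dunkl translations are neither positive nor local. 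Second, the decomposition is first carried out for $f\in L^2(dw)\cap H^1_{{\rm max},P}$ and the $L^2$ hypothesis is removed afterwards via Lemma \ref{L2Lemma}; your sketch skips this density step. Third, even granting orbit-supported atoms, your estimate of $\mathcal{M}_Ha$ off the support must be run over annuli in the orbit distance $d(\mathbf{x},\mathbf{y}_0)$, not the Euclidean distance, since the Gaussian factor in \eqref{Gauss} only decays in $d$; the paper's Section \ref{SectionMax} does exactly this, using the factorization $a=\Delta b$ and $e^{t^2\Delta}\Delta=\partial_s e^{s\Delta}|_{s=t^2}$ to produce the decay $r^2/d(\mathbf{x},\mathbf{y}_0)^2$. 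In short, the architecture (maximal function $\to$ atoms $\to$ heat maximal function) is the right one and matches the paper, but the atomic decomposition you invoke is the hardest theorem in the paper, and in the form you state it (Euclidean balls, mean zero) it is not available.
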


\noindent
\textbf{B. Characterization by square functions.}
For every $1\leq p\leq \infty$, the operators
\,$Q_t=t\sqrt{-{\Delta}}e^{-t\sqrt{-{\Delta}}}$
are uniformly bounded on $L^p({dw})$
(this is a consequence of the estimates \eqref{Gauss}, \eqref{DtDxDyPoisson} and \eqref{Poisson_low_up}).
{Consider} the square function
\begin{equation}\label{square}
Sf(\mathbf{x})=\left(\iint_{{\|}\mathbf{x}-\mathbf{y}{\|}<t}|Q_tf(\mathbf{y})|^2
\frac{dt\,{dw}(\mathbf{y})}{t\,{w}(B(\mathbf{x},t))}\right)^{1\slash 2}
\end{equation}
{and the space}
$$
H^1_{\rm square}=\{ f\in L^1({dw})\,{|}\,\| Sf\|_{L^1({dw})}<\infty\}.
$$
\begin{theorem}\label{main5}
{The spaces $H^1_{\Delta}$ and $H^1_{\rm square}$ coincide
and the corresponding norms $\| f\|_{H^1_{\Delta}}$ and $\|Sf\|_{L^1(dw)}$ are equivalent.}
\end{theorem}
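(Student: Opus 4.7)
The plan is to establish the two continuous inclusions $H^1_\Delta\hookrightarrow H^1_{\rm square}$ and $H^1_{\rm square}\hookrightarrow H^1_\Delta$ by complementary techniques. A common ingredient is the Plancherel identity for the Dunkl transform, which yields the spectral equality
$$
\int_0^\infty\|Q_tf\|_{L^2(dw)}^2\,\tfrac{dt}{t}
=c\,\|f\|_{L^2(dw)}^2
\qquad\forall\,f\in L^2(dw).
$$
After a Fubini step and the doubling property of $w$ on balls $B(\mathbf{x},t)$, this gives $\|Sf\|_{L^2(dw)}\lesssim\|f\|_{L^2(dw)}$; combined with the identity $\mathrm{Id}=c_0\int_0^\infty Q_t^2\,\frac{dt}{t}$ on $L^2(dw)$, it supplies the Calder\'on reproducing formula that powers the reverse direction.

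For $H^1_\Delta\hookrightarrow H^1_{\rm square}$, I would use the atomic decomposition of $H^1_\Delta$ obtained elsewhere in the paper from Theorems \ref{main1} and \ref{main2}, and reduce to proving $\|Sa\|_{L^1(dw)}\lesssim 1$ uniformly on atoms. Given an atom $a$ supported in a ball $B=B(\mathbf{x}_0,r)$, I would split $\mathbb{R}^N=\mathcal{O}(2B)\sqcup(\mathbb{R}^N\setminus\mathcal{O}(2B))$. On the orbit tube, apply Cauchy--Schwarz together with $w(\mathcal{O}(2B))\lesssim|G|\,w(2B)\lesssim w(B)$ and the $L^2$-bound for $S$. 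On the complement, use the atomic cancellation $\int a\,dw=0$ together with the pointwise Poisson kernel estimates \eqref{DtDxDyPoisson}--\eqref{Poisson_low_up} to obtain a decaying tail bound for the kernel of $Q_t$, integrating then over the cone $\{\|\mathbf{x}-\mathbf{y}\|<t\}$ and over $t>0$.

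For the reverse inclusion, fix $f\in L^1(dw)$ with $Sf\in L^1(dw)$. The function $F(t,\mathbf{y})=Q_tf(\mathbf{y})$ lies in the tent space $T^1$ with $\|F\|_{T^1}=\|Sf\|_{L^1(dw)}$. Since $(\mathbb{R}^N,\|\cdot\|,dw)$ is a space of homogeneous type, the Coifman--Meyer--Stein atomic decomposition of $T^1$ furnishes $F=\sum_k\lambda_k A_k$, with $A_k$ supported in a tent $\widehat{B_k}$ over a ball $B_k$, satisfying $\|A_k\|_{T^\infty}\le w(B_k)^{-1}$, and with $\sum_k|\lambda_k|\lesssim\|Sf\|_{L^1(dw)}$. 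Inserting this into the Calder\'on reproducing formula gives $f=\sum_k\lambda_k\mathfrak{a}_k$, where
$$
\mathfrak{a}_k(\mathbf{x})=c_0\int_0^\infty Q_t A_k(t,\cdot)(\mathbf{x})\,\tfrac{dt}{t}.
$$
A direct check shows that, up to a harmless constant, each $\mathfrak{a}_k$ is an $H^1_\Delta$-molecule associated with the orbit $\mathcal{O}(B_k)$: the cancellation $\int\mathfrak{a}_k\,dw=0$ comes from $Q_t^*\mathbf{1}=0$, the $L^2$-bound on the ball comes from the spectral identity applied to the supported $A_k$, and the off-orbit decay comes from the Poisson kernel estimates. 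Summing and invoking the maximal characterization of $H^1_\Delta$ gives $\|f\|_{H^1_\Delta}\lesssim\|Sf\|_{L^1(dw)}$.

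The principal obstacle lies in the molecular step of the second direction. In the classical Euclidean setting, finite propagation speed for $\cos(t\sqrt{-\Delta_{\mathrm{eucl}}})$ makes $\mathfrak{a}_k$ literally supported in a fixed dilate of $B_k$, so $\mathfrak{a}_k$ is genuinely an atom. In the Dunkl setting no such finite speed holds and the Poisson kernel spreads over the full $G$-orbit, with only polynomial decay outside $\mathcal{O}(B_k)$. The remedy is to replace atoms by molecules with integrable tails and rely on the off-diagonal bounds \eqref{Gauss}--\eqref{Poisson_low_up} to control those tails; one must also be careful that the maximal function characterization of $H^1_\Delta$ accepts molecules, which in turn requires revisiting the kernel estimates for $P_t$ in the orbit distance. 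The remaining ingredients---$L^2$-boundedness of $S$, doubling, and tent-space combinatorics---are routine in the space-of-homogeneous-type framework once the Dunkl kernel machinery is in place.
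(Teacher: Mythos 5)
Your overall architecture (square-function $L^2$ theory, tent-space atomic decomposition, Calder\'on reproducing formula) matches the paper's, but the step you yourself flag as ``the principal obstacle'' is precisely where your argument has a genuine gap, and the paper resolves it in a way your proposal rules out. You propose to reproduce $f$ with $\int_0^\infty Q_t^2\,\frac{dt}{t}$ and to accept that the pieces $\mathfrak{a}_k=c_0\int_0^\infty Q_tA_k(t,\cdot)\frac{dt}{t}$ are only \emph{molecules}, and then to ``be careful that the maximal characterization accepts molecules.'' No molecular theory for $H^1_\Delta$ is developed in the paper or invoked anywhere, and building one is not routine here: the atoms of Definition \ref{def-atom} are not characterized by moment conditions but by the structural form $a=\Delta^Mb$ with all $\Delta^\ell b$ supported in an orbit $\mathcal{O}(B)$, so showing that a decaying object with $\int\mathfrak{a}_k\,dw=0$ lies in $H^1_\Delta$ with uniform norm is itself a theorem you would have to prove. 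The paper avoids the issue entirely: it replaces one factor $Q_t$ in the reproducing formula \eqref{Calderon} by $\Psi_t$ with $\Psi=\Delta^{2\kappa}(\Phi*\Phi)$ and $\Phi$ compactly supported, and uses the fact (via \eqref{translation-radial} and \eqref{A1}) that the Dunkl translate of a compactly supported radial function vanishes for $d(\mathbf{x},\mathbf{y})$ beyond the support radius, i.e.\ \eqref{supprt_Psi}. Your assertion that ``no finite speed holds'' is therefore misleading: there is exact support control in the \emph{orbit} distance, and since the atoms are allowed to be supported on orbits $\mathcal{O}(B)$, each $\pi_\Psi A_k$ is literally a multiple of a $(1,2,M)$-atom. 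You also omit the passage from $f\in L^1\cap\{Sf\in L^1\}$ to the a priori $L^2$ case needed to run the Calder\'on formula and the $T^1_2\cap T^2_2$ decomposition; the paper handles this with Lemma \ref{Approx} ($P_s$ is an approximate identity on $T_2^1$).

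A secondary but real problem is in your first inclusion. You propose to treat an atom $a$ away from $\mathcal{O}(2B)$ using only the cancellation $\int a\,dw=0$ plus the kernel estimates. The resulting bound $|Q_ta(\mathbf{y})|\lesssim \frac{r}{D}\,w(B(\mathbf{x}_0,D))^{-1}$ (with $D=d(\mathbf{x},\mathbf{x}_0)$) is uniform in $t$, so the inner integral $\int_0^{D/4}\frac{dt}{t}$ in $Sa(\mathbf{x})^2$ diverges; one must either interleave it with the size bound $|Q_ta(\mathbf{y})|\lesssim\frac{t}{D}w^{-1}$ to salvage a logarithm, or, as the paper does, exploit $a=\Delta a_1$ with $\|a_1\|_{L^1}\le r^2$ and the identity $Q_t\Delta=t\,\partial_t^3P_t$ together with \eqref{DtDxDyPoisson}, which produces the extra powers of $t$ that make $\int_0^{D/4}t\,dt$ converge. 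As written, your tail estimate fails.
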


\noindent
\begin{remark}
\normalfont
The square function characterization of $H^1_{{\Delta}}$ is {also} valid {for} $Q_t=t^2{\Delta}\,e^{\,t^2{\Delta}}$.
\end{remark}

\noindent
\textbf{C. Characterization by Riesz transforms.}
{T}he Riesz transforms{, which are defined} in the Dunkl setting by
$$
R_jf=T_j(-{\Delta})^{-1\slash 2}f
$$
(see Section \ref{SectionRiesz}), {are} bounded operators on $L^p({dw})${, for every} $1<p<\infty$ (cf. \cite{AS}).  {In the limit case $p=1$, they turn out to be} bounded {operators} from $H^1_{\Delta}$ {into $H^1_{\Delta}\subset L^1({dw})$. This leads to consider the space}
$$
H^1_{\rm Riesz}=\{f\in L^1({dw})\,{|}\,\| R_j f\|_{L^1(w)}<\infty\;{\forall\;1\le j\le N}\}.
$$

\begin{theorem}\label{main4}
{The spaces $H^1_{\Delta}$ and $H^1_{\rm Riesz}$ coincide and the corresponding norms $\|f\|_{H^1_{\Delta}}$ and
$$
\|f\|_{H^1_{\rm Riesz}}:=\|f\|_{L^1({dw})}+\sum\nolimits_{j=1}^N\|R_jf\|_{L^1({dw})}.
$$
are equivalent.}
\end{theorem}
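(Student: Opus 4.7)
The approach is to establish, for any $f \in H^1_\Delta$ with associated $\mathcal{H}^1$-system $\mathbf{u} = (u_0, u_1, \ldots, u_N)$, the identification
$$
u_0(x_0,\mathbf{x}) = P_{x_0} f(\mathbf{x}), \qquad u_j(x_0,\mathbf{x}) = -P_{x_0}(R_j f)(\mathbf{x}) \quad (1 \le j \le N),
$$
so that the $L^1$ norm of the $j$th conjugate component $u_j$ directly encodes the $L^1$ norm of the Riesz transform $R_j f$. The formal ingredients are the operator identities $\sqrt{-\Delta}\,R_j = T_j$, the commutativity $T_j P_{x_0} = P_{x_0} T_j$, and the integral representation $\int_{x_0}^{\infty} P_\tau \, d\tau = (-\Delta)^{-1/2} P_{x_0}$, all of which are immediate on $L^2(dw)$ by spectral calculus.

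For the inclusion $H^1_\Delta \hookrightarrow H^1_{\rm Riesz}$, Theorem \ref{main1} gives $u_0 = P_{x_0} f$. The CR equation $T_0 u_j = T_j u_0$ reads $\partial_{x_0} u_j = T_j P_{x_0} f$; integrating over $[x_0, s]$ and letting $s \to \infty$ (using the interior decay of $u_j$ coming from the uniform $L^1$ bound combined with Poisson estimates derived from \eqref{Gauss} and \eqref{subordination}) yields $u_j(x_0,\cdot) = -\int_{x_0}^\infty T_j P_\tau f \, d\tau = -P_{x_0}(R_j f)$. Fatou's lemma as $x_0 \to 0$ then gives $\|R_j f\|_{L^1(dw)} \le \sup_{x_0>0} \|u_j(x_0, \cdot)\|_{L^1(dw)} \le \|f\|_{H^1_\Delta}$.

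Conversely, given $f \in L^1(dw)$ with $R_j f \in L^1(dw)$ for every $j$, define $u_0 = P_{x_0} f$ and $u_j = -P_{x_0}(R_j f)$ for $1 \le j \le N$; each $u_j$ is $\mathcal{L}$-harmonic as a Poisson extension. The CR system \eqref{C-R} follows from $T_0 u_j = \sqrt{-\Delta}\, P_{x_0} R_j f = P_{x_0} T_j f = T_j u_0$, the mutual commutativity of the $T_j$, and
$$
\sum_{j=0}^N T_j u_j = -\sqrt{-\Delta}\, P_{x_0} f \,-\, \Delta\,(-\Delta)^{-1/2} P_{x_0} f = 0.
$$
The $L^1$-contractivity of $P_{x_0}$ yields $\|\mathbf{u}\|_{\mathcal{H}^1} \le \|f\|_{L^1(dw)} + \sum_{j=1}^N \|R_j f\|_{L^1(dw)} = \|f\|_{H^1_{\rm Riesz}}$, so $f \in H^1_\Delta$ with the desired norm control.

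The principal obstacle is the rigorous justification of these operator identities at the $L^1$ level, since neither $R_j$ nor $(-\Delta)^{-1/2}$ is bounded on $L^1(dw)$. The standard remedy is to regularize $f$ as $f^\epsilon = P_\epsilon f$ (which lies in $L^1(dw) \cap L^2(dw)$), verify all identities for $f^\epsilon$ via $L^2$ spectral calculus, and pass to the limit $\epsilon \to 0$ using the uniform $L^1$ bounds and Fatou's lemma. A secondary delicacy in the forward direction is proving $u_j(s,\cdot) \to 0$ as $s \to \infty$ without assuming a priori that $u_j$ is a Poisson extension; this is handled via interior regularity of $\mathcal{L}$-harmonic functions in $\mathcal{H}^1$ together with pointwise decay extracted from the semigroup estimates.
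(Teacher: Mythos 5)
Your proposal is correct and follows essentially the same route as the paper: the easy inclusion defines the conjugate system as Poisson integrals of the Riesz transforms and checks \eqref{C-R} by spectral calculus, while the hard inclusion identifies $u_j$ with $\mp P_{x_0}(R_jf)$ by exploiting $T_0u_j=T_ju_0$ together with decay as $x_0\to\infty$ after an $\varepsilon$-regularization (the paper phrases this as showing that $u_{j,\varepsilon}-v_j$ has vanishing $T_0$-derivative, which is the same computation as your integration in $x_0$). The one step you state too loosely is the final passage from the uniform $L^1$ bounds on $u_j(x_0,\cdot)$ to $R_jf\in L^1(dw)$: Fatou alone would only produce a finite boundary \emph{measure}, and one needs the nontangential maximal function control of Proposition \ref{PropMain1Part1} (i.e.\ Corollary \ref{Limits}) to conclude that the boundary value is an $L^1$ function, which is exactly how the paper closes this point.
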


\noindent
\textbf{D. Characterization by atomic decompositions.}
 Let us define atoms in the spirit of \cite{HMMLY}.
Given a Euclidean ball $B$ in $\mathbb{R}^N$,
we shall denote its radius by $r_B$ and its $G$-orbit by $\mathcal{O}(B)$.
For any positive integer $M$,
let $\mathcal{D}({\Delta}^M)$ be the domain of ${\Delta}^M$
as an (unbounded) operator on $L^2({dw})$.

\begin{definition}\label{def-atom}\normalfont
Let $1<q\leq\infty$ and {let} $M$ be a positive integer.
A function {$a\in L^2(dw)$} is said to be  a $(1,q,M)$-\textit{atom} if
{there exist} $ b\in\mathcal{D}({\Delta}^M)$ and a ball {$B$} such that
\begin{itemize}
\item
$a={\Delta}^Mb$\,,
\vspace{.5mm}
\item
$\text{\rm supp}\,{(}{\Delta}^\ell b{)}\subset\mathcal{O}(B)$ \,{\,$\forall\;0\le\ell\le M$,}
\item
$\| (r_{{B}}^2{\Delta})^{\ell}b\|_{L^q({dw})} \leq r^{2M}{w}(B)^{\frac{1}{q}-1}$
{\,$\forall\;0\le\ell\le M$.}
\end{itemize}
\end{definition}

\begin{definition}\normalfont
A function $f$ belongs to $H^1_{(1,q,M)}$ if there are  $\lambda_j\in\mathbb C$ and $(1,q,M)$-atoms $a_j$ such that
\begin{equation}\label{AtomicRepresentation}
f=\sum\nolimits_{j}\lambda_j\,a_j\,.
\end{equation}
{In this case,} set
$$
\|f\|_{H^1_{(1,q,M)}}=\inf\,\Bigl\{\,\sum\nolimits_j|\lambda_j|\,\Bigr\}\,,
$$
where the infimum is taken over all representations {\eqref{AtomicRepresentation}}.
\end{definition}

\begin{theorem}\label{main3} The spaces $H^1_{{\Delta}}$ and $H^1_{(1,q,M)}$ coincide and the corresponding norms are equivalent.
\end{theorem}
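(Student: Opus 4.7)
The plan is to prove the two inclusions $H^1_{(1,q,M)}\subseteq H^1_\Delta$ and $H^1_\Delta\subseteq H^1_{(1,q,M)}$ separately, exploiting Theorems~\ref{main2} and~\ref{main5}. For the forward direction I would use the heat-maximal realization $H^1_{\mathrm{max},H}$: fix a $(1,q,M)$-atom $a=\Delta^M b$ associated with a ball $B$ of radius $r_B$ and show $\|\mathcal M_H a\|_{L^1(dw)}\le C$ uniformly. Split the integration into $\mathcal{O}(2B)$ and its complement. On $\mathcal{O}(2B)$, H\"older with exponents $q,q'$ and the $L^q$-boundedness of $\mathcal M_H$ give
$$
\int_{\mathcal{O}(2B)}\mathcal M_H a\,dw\le w(\mathcal{O}(2B))^{1/q'}\|\mathcal M_H a\|_{L^q(dw)}\lesssim w(B)^{1-1/q}\,\|a\|_{L^q(dw)}\le C,
$$
using the atom size bound $\|a\|_{L^q(dw)}\le w(B)^{1/q-1}$. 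On $\mathcal{O}(2B)^c$ I would use the representation $H_t a(\mathbf{x})=\int\Delta_{\mathbf{y}}^M h_t(\mathbf{x},\mathbf{y})\,b(\mathbf{y})\,dw(\mathbf{y})$, the factor $t^{-M}$ gained from $M$ heat derivatives, and sub-Gaussian bounds on $h_t$ and its $\Delta$-iterates, summing over dyadic orbit-shells to close the estimate.

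For the reverse inclusion I would start from $f\in H^1_{\mathrm{square}}$ (Theorem~\ref{main5}) and apply the tent-space scheme of Coifman--Meyer--Stein adapted to the Dunkl setting. Use a Calder\'on reproducing formula
$$
f=c_M\int_0^\infty(t^2\Delta)^{M+1}e^{2t^2\Delta}f\,\frac{dt}{t},
$$
define $\Omega_k=\{\mathbf{x}:Sf(\mathbf{x})>2^k\}$ and the tents $\widehat\Omega_k=\{(\mathbf{y},t):B(\mathbf{y},t)\subset\Omega_k\}$, and Whitney-decompose each $\widehat\Omega_k\setminus\widehat\Omega_{k+1}$ into regions $T_{k,j}$ sitting over orbit-Whitney balls $B_{k,j}$. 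Put $\lambda_{k,j}\asymp 2^k w(B_{k,j})$ and
$$
b_{k,j}(\mathbf{x})=c_M\iint_{T_{k,j}}t^{2M+2}\,\Delta^{M+1}e^{2t^2\Delta}(\mathbf{x},\mathbf{y})\,f(\mathbf{y})\,dw(\mathbf{y})\,\frac{dt}{t},\qquad a_{k,j}=\lambda_{k,j}^{-1}\Delta^M b_{k,j}.
$$
The $L^q$-size axiom for $(r_{B_{k,j}}^2\Delta)^\ell b_{k,j}$ follows by $L^{q'}$-duality against a test function, via the uniform $L^q$-boundedness of $(t^2\Delta)^{M+1}e^{2t^2\Delta}$ and a tent-space Carleson estimate; the summability $\sum_{k,j}|\lambda_{k,j}|\lesssim\sum_k 2^k w(\Omega_k)\lesssim\|Sf\|_{L^1(dw)}\asymp\|f\|_{H^1_\Delta}$ is then standard.

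The principal obstacle is the support axiom $\mathrm{supp}(\Delta^\ell b_{k,j})\subset\mathcal{O}(cB_{k,j})$: since $\Delta^{M+1}e^{2t^2\Delta}(\mathbf{x},\mathbf{y})$ has only Gaussian tails in $\mathbf{y}$, $b_{k,j}$ is not literally supported in $\mathcal{O}(cB_{k,j})$. I would bypass this by first establishing a \emph{molecular} decomposition, in which compact support is replaced by controlled Gaussian decay, and then passing to genuine $(1,q,M)$-atoms by splitting each molecule over orbit-shells and reassembling the pieces as an absolutely summable atomic series; alternatively, one may insert a compactly supported spectral multiplier into the reproducing formula. A secondary subtlety is that $(\mathbb{R}^N,\|\cdot\|,dw)$ is of homogeneous type only with respect to the orbit quasi-metric $d(\mathbf{x},\mathbf{y})=\min_{\sigma\in G}\|\mathbf{x}-\sigma\mathbf{y}\|$, so all Whitney covers and dyadic shells should be taken there rather than in the Euclidean metric; once this is done, the tent-space and molecule-to-atom machinery goes through as in the Euclidean case.
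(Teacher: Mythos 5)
Your two technical arcs are essentially the ones the paper proves in Sections \ref{SectionMax} and \ref{SectionSquare}: the estimate $\|\mathcal{M}_H a\|_{L^1(dw)}\le C$ for a $(1,q,M)$-atom (with the near/far split relative to $\mathcal{O}(2B)$ and the gain of powers of $t$ from $a=\Delta^M b$) is exactly Section \ref{SectionMax}, and the tent-space construction of $(1,2,M)$-atoms from $Q_tf\in T^1_2$ is Section \ref{SectionSquare}. You also correctly isolate the support problem, and the paper's resolution is your second remedy made precise: the reproducing formula is written as $f=c\int_0^\infty \Psi_t Q_tf\,\frac{dt}{t}$ with $\Psi=\Delta^{2\kappa}(\Phi*\Phi)$ for a radial $\Phi$ supported in $B(0,1/4)$, so that the \emph{physical-space} kernel satisfies $\Psi_t(\mathbf{x},\mathbf{y})=0$ whenever $d(\mathbf{x},\mathbf{y})>t/2$ (this follows from $A(\mathbf{x},\mathbf{y},\eta)\ge d(\mathbf{x},\mathbf{y})$; a compactly supported \emph{spectral} symbol would not localize the kernel). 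One must also first treat $f\in L^2(dw)$ and then remove that hypothesis by an approximation argument as in Lemmas \ref{Ps} and \ref{Approx}.

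The genuine gap is logical: Theorems \ref{main2} and \ref{main5} cannot be used as inputs here, because in the paper both are consequences of the atomic characterization you are trying to prove. The inclusion $H^1_{\Delta}\subset H^1_{\rm square}$ that launches your reverse direction is obtained only by first decomposing $f\in H^1_{\Delta}$ into atoms and then estimating $S$ on each atom; likewise the inclusion $H^1_{{\rm max},H}\subset H^1_{\Delta}$ that closes your forward direction is proved by decomposing $H^1_{{\rm max},H}$-functions into $(1,\infty,M')$-atoms with $M'$ large and then showing such atoms lie in $H^1_{\rm Riesz}\subset H^1_{\Delta}$. As written, your argument is circular. The missing, non-circular arcs are: (i) $H^1_{\Delta}\subset H^1_{{\rm max},P}$, which rests on the subharmonicity of $|F|^q$ for conjugate systems (Theorem \ref{subharmonic} and Proposition \ref{PropMain1Part1}); (ii) the atomic decomposition of $H^1_{{\rm max},P}$ by the Song--Yan stopping-time construction driven by the nontangential maximal function rather than the square function (Theorem \ref{AtomPoisson}); and (iii) the bound $\|R_j a\|_{L^1(dw)}\le C$ for atoms, which yields $H^1_{(1,q,M)}\subset H^1_{\rm Riesz}\subset H^1_{\Delta}$ (Sections \ref{Sect9} and \ref{SectionRiesz}) --- this last arc would also repair your forward direction without any appeal to Theorem \ref{main2}.
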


{In the one-dimensional case and in the product case considered in \cite{ABDH} and \cite{Dz}, the Dunkl kernel can be expressed explicitly in terms of classical special functions (Bessel functions or the confluent hypergeometric function).
Thus its behavior is fully understood, and consequently all kernels involved in the definitions above. In the general case considered in the present paper, no such information is available. Therefore an essential part of our work consists in deriving estimates of the Dunkl kernel, of the heat kernel, of the Poisson kernel, and of their derivatives (see the end of Section \ref{SectionDunkl}, Section \ref{SectionHeat} and Section \ref{SectionPoisson}).} These estimates, which are in a spirit of analysis on spaces of homogeneous type, allow us to build {up} the theory of the Hardy space $H^1$ in the Dunkl setting.

{Let us briefly describe the organization of the proofs of the results.
Clearly, $H^1_{(1,q_1,M)}\subset H^1_{(1,q_2,M)}$ for $1<q_2\leq q_1\leq \infty$.
The proof $(u_0,u_1,{\dots},u_N)\in\mathcal{H}^1_k$ implies $u_0^*\in L^1({dw})$, which is actually the inclusion $H^1_{{\Delta}}\subset H^1_{{\rm max},P}$,  is presented in Section \ref{HarmFunt}, see Proposition \ref{PropMain1Part1}. The proof is based on $\mathcal L$-subharmonicity of certain function constructed from $\mathbf{u}$ (see Section \ref{SectionSub}). The converse to Proposition \ref{PropMain1Part1} is proved at the very end of  Section \ref{Atomic}. Inclusions: $H^1_{{\Delta}}\subset H^1_{{\rm Riesz}}\subset H^1_{{\Delta}}$ are shown in Section \ref{SectionRiesz}.
Further, $H^1_{(1,q,M)} \subset H^1_{\rm Riesz}$ for $M$ large is proved in Section \ref{Sect9}.
The proofs of  $H^1_{{\rm max},{H}}\subset H^1_{{\rm max, P}} \subset H^1_{(1,\infty, M)}$ for every $M\geq 1$ are presented in Section \ref{Atomic}.
Inclusion: $ H^1_{(1,q,M)}\subset H^1_{{\rm max},{H}}$ for every $M\geq 1$ is proved in Section \ref{SectionMax}.
Finally $H^1_{(1,2,M)}\subset H^1_{\rm square}\subset H^1_{(1,2,M)}$ are  established in Section \ref{SectionSquare}.
}

\section{{Dunkl kernel, }Dunkl transform and Dunkl translations}\label{SectionDunkl}

The purpose of this section is to collect some facts {about} the Dunkl kernel, the Dunkl transform  and  Dunkl translations.  General references are \cite{Dunkl}, \cite{Roesler3}, \cite{Roesler-Voit}. At the end of this section we shall derive estimates for the Dunkl translations of radial functions. These estimates will be used later to obtain bounds for the heat {kernel} and {for} the Poisson kernel, as well as for their derivatives, and furthermore upper and lower  bounds for the Dunkl kernel.

We {begin with} some notation.
{Given a root} system $R$ in $\mathbb{R}^N$ {and} a multiplicity function $k \ge0${,} let
\begin{equation}\label{gamma}
\gamma=\sum\nolimits_{\alpha\in R^+}\!k(\alpha)
\quad{\text{and}}\quad\mathbf{N}=N\!+2\gamma.
\end{equation}
The number $\mathbf{N}$ is {called} the homogeneous dimension, {because of the scaling property}
$$ {w}(B(t\mathbf{x}, tr))=t^{\mathbf{N}}{w}(B(\mathbf{x},r)).
$$
Observe that {(\footnote{\,The symbol $\sim$ between two positive expressions means that their ratio is bounded between two positive constants.})}
$$
w(B(\mathbf{x},r))\sim r^{N}\prod_{\alpha\in R}(\,|\langle\alpha,\mathbf{x}\rangle|+r\,)^{k(\alpha)}{.}
$$
{Thus the measure} {$w$} is doubling, that is, there is a constant $C>0$ such that
$$
w(B(\mathbf{x},2r))\leq C\,w(B(\mathbf{x},r)).
$$
Moreover, {there exists a constant $C\ge1$ such that,
for every $\mathbf{x}\in\mathbb{R}^N$ and for every $R\ge r>0$,}
\begin{equation}\label{growth}
C^{-1}\Big(\frac{R}{r}\Big)^{N}\leq\frac{{w}(B(\mathbf{x},R))}{{w}(B(\mathbf{x},r))}\leq C \Big(\frac{R}{r}\Big)^{\mathbf N}.
\end{equation}
{Set}
$$
V(\mathbf{x},\mathbf{y},t)=\max{\bigl\{}w(B(\mathbf{x},t)), w(B(\mathbf{y}, t)){\bigr\}}.
$$
{Finally, l}et $d(\mathbf{x},\mathbf{y})=\min_{\,\sigma\in G}{\|}\mathbf{x}-{\sigma(}\mathbf{y}{)}{\|}$ denote the distance {between two $G$-}orbits $\mathcal O(\mathbf{x})$ and $\mathcal O(\mathbf{y})$. Obviously, $\{{\mathbf{y}\!\in\!\mathbb{R}^N\,|\,d(\mathbf{y},\mathbf{x})}<r\}=\mathcal{O}(B({\mathbf{x}},r))$ and
$$
{w}(B({\mathbf{x}},r))\leq{w}(\mathcal{O}(B({\mathbf{x}},r)))\leq |G|\,{w}(B({\mathbf{x}},r)).
$$

{\bf Dunkl kernel.} For {fixed} ${\mathbf{x}}\in\mathbb{R}^N${,} the \textit{Dunkl kernel} ${\mathbf{y}\longmapsto}{E}(\mathbf{x},\mathbf{y})$ is {the} unique solution {to} the system
$$\begin{cases}
\,T_\xi f=\langle\xi,{\mathbf{x}}\rangle\,f\quad\forall\;\xi\in\mathbb{R}^N,\\
\;f(0)=1.
\end{cases}$$
The following integral formula was {obtained} by R\"osler \cite{Roesle99}\,{:}
\begin{equation}\label{Rintegral}
{E}(\mathbf{x},\mathbf{y})=\int_{\mathbb{R}^N} e^{\langle\eta, \mathbf{y}\rangle} d\mu_{\mathbf{x}}(\eta),
\end{equation}
where ${\mu_{\mathbf{x}}}$ is a probability measure supported {in the convex hull} $\operatorname{conv}\mathcal{O}(\mathbf{x})$ {of the $G$-orbit of $\mathbf{x}$}. The function ${E}(\mathbf{x},\mathbf{y})$, which generalizes the exponential function $e^{\langle\mathbf{x},\mathbf{y}\rangle}${, extends holomorphically to} $\mathbb{C}^N\times\mathbb{C}^N$ {and satisfies the following properties\,:}
\begin{itemize}
\item[$\bullet$]
$\,{E}(0,\mathbf{y})=1\quad{\forall}\;\mathbf{y}\in\mathbb{C}^N$,
\item[$\bullet$]
$\,{E}(\mathbf{x},\mathbf{y})={E}(\mathbf{y},\mathbf{x})\quad{\forall}\;\mathbf{x},\mathbf{y}\in \mathbb{C}^N${,}
\item[$\bullet$]
$\,{E}(\lambda\mathbf{x},\mathbf{y})={E}(\mathbf{x},\lambda\mathbf{y})\quad{\forall}\;\lambda\in\mathbb{C},\;{\forall}\;\mathbf{x},\mathbf{y}\in \mathbb{C}^N${,}
\item[$\bullet$]
$\,{E}(\sigma{(}\mathbf{x}{)},\sigma{(}\mathbf{y}{)})={E}(\mathbf{x},\mathbf{y})\quad{\forall}\;\sigma\in G,\;{\forall}\;\mathbf{x},\mathbf{y}\in \mathbb{C}^N${,}
\item[$\bullet$]
$\,\overline{{E}(\mathbf{x},\mathbf{y})}={E}(\bar{\mathbf{x}},\bar{\mathbf{y}})\quad{\forall}\;\mathbf{x},\mathbf{y}\in\mathbb{C}^N${,}
\item[$\bullet$]
$\,{E}(\mathbf{x},\mathbf{y}){>0}\quad{\forall}\;\mathbf{x},\mathbf{y}\in\mathbb{R}^N${,}
\item[$\bullet$]
$\,|{E}( i\mathbf{x},\mathbf{y})|\leq1\quad{\forall}\;\mathbf{x},\mathbf{y}\in\mathbb{R}^N${,}
\item[$\bullet$]
$\,|\partial_{\mathbf{y}}^\alpha{E}(\mathbf{x},\mathbf{y})|\leq{\|}\mathbf{x}{\|}^{|\alpha|}\max_{\,\sigma\in G}e^{\,{\operatorname{Re}}\,\langle \sigma (\mathbf{x}),\mathbf{y}\rangle}\quad{\forall\;\alpha\in\mathbb{N}^N\,(\footnotemark),\;\forall}\;\mathbf{x}\in\mathbb{R}^N,\;{\forall}\;\mathbf{y}\in\mathbb{C}^N$.
\end{itemize}
\smallskip

\footnotetext{
\,Here and subsequently, $\mathbb{N}$ denotes the set of nonnegative integers.}

{\bf Dunkl transform.} The {\it Dunkl transform}  is defined on $L^1({dw})$ {by}
$$
\mathcal{F} f(\xi)=c_k^{-1}\int_{\mathbb{R}^N}f(\mathbf{x})E(\mathbf{x},-i\xi)\, {dw}(\mathbf{x}),
$$
{where}
$$
c_k=\int_{\mathbb{R}^N}e^{-\frac{{\|}\mathbf{x}{\|}^2}2}\,{dw}(\mathbf{x}){>0}\,.
$$
\smallskip

\noindent
{The following properties hold for} the Dunkl transform (see \cite{dJ}, \cite{Roesler-Voit}):
\begin{itemize}
\item[{$\bullet$}]
The Dunkl transform is a topological automorphisms of the Schwartz {space} $\mathcal{S}(\mathbb{R}^N)$.
\item[{$\bullet$}]
$\,${(\textit{Inversion formula\/})} For every {$f\in\mathcal{S}(\mathbb{R}^N)$ and actually for every} $f\in L^1({dw})$ such that $\mathcal{F}f\in L^1({dw})${,} we have
$$
f(\mathbf{x})=(\mathcal{F})^2f(-\mathbf{x})
\qquad{\forall\;\mathbf{x}\in\mathbb{R}^N}.
$$
\item[{$\bullet$}]
$\,${(\textit{Plancherel Theorem\/})}
The Dunkl transform exten{d}s to an isometric automorphism of $L^2({dw})$.
\item[{$\bullet$}]
The Dunkl transform of a radial function is {again} a radial function.
\item[{$\bullet$}]
{(\textit{Scaling\/})}
{For $\lambda\in\mathbb{R}^*$, we have
$$
\mathcal{F}(f_\lambda)(\xi)=\mathcal{F}f(\lambda\xi),
$$
where $f_\lambda(\mathbf{x})=|\lambda|^{-\mathbf{N}}f(\lambda^{-1}\mathbf{x})$.}
\item[{$\bullet$}]
{Via the Dunkl transform, the Dunkl operator $T_\eta$ corresponds to the multiplication by $\pm i\,\langle\eta,\cdot\,\rangle$. Specifically,
$$\begin{cases}
\,\mathcal{F}(T_\eta f)=i\,\langle\eta,\cdot\,\rangle\,\mathcal{F}f,\\
\,T_\eta(\mathcal{F}f)=-i\,\mathcal{F}(\langle\eta,\cdot\,\rangle f).
\end{cases}$$}
In particular, $\mathcal{F}({\Delta}f)(\xi)=-{\|}\xi{\|}^2\mathcal{F}f(\xi)$.
\end{itemize}
\smallskip

{\bf Dunkl translation{s} and Dunkl convolution.}
The {\it Dunkl translation\/} $\tau_{\mathbf{x}}f$ of a function $f\in\mathcal{S}(\mathbb{R}^N)$ by $\mathbf{x}\in\mathbb{R}^N$ is defined by
\begin{equation}\label{translation}
\tau_{\mathbf{x}} f(\mathbf{y})=c_k^{-1} \int_{\mathbb{R}^N}{E}(i\xi,\mathbf{x})\,{E}(i\xi,\mathbf{y})\,\mathcal{F}f(\xi)\,{dw}(\xi).
\end{equation}
{Notice the following properties of Dunkl translations\,:
\begin{itemize}
\item[$\bullet$]
each translation $\tau_{\mathbf{x}}$ is a continuous linear map of $\mathcal{S}(\mathbb{R}^N)$ into itself, which extends to a contraction on $L^2({dw})$,
\item[$\bullet$]
(\textit{Identity\/})
$\tau_0=I$,
\item[$\bullet$]
(\textit{Symmetry\/})
$\tau_{\mathbf{x}}f(\mathbf{y})=\tau_{\mathbf{y}}f(\mathbf{x})\quad\forall\;\mathbf{x},\mathbf{y}\in\mathbb{R}^N,\;\forall\;f\in\mathcal{S}(\mathbb{R}^N)$,
\item[$\bullet$]
(\textit{Scaling\/})
$\tau_{\mathbf{x}}(f_\lambda)=(\tau_{\lambda^{-1}\mathbf{x}}f)_\lambda\quad\forall\;\lambda>0\,,\;\forall\;\mathbf{x}
\in\mathbb{R}^N,\;\forall\;f\in\mathcal{S}(\mathbb{R}^N)$,
\item[$\bullet$]
(\textit{Commutativity\/})
the Dunkl translations $\tau_{\mathbf{x}}$ and the Dunkl operators $T_\xi$ all commute,
\item[$\bullet$]
(\textit{Skew--symmetry\/})
\begin{equation*}\quad
\int_{\mathbb{R}^N}\!\tau_{\mathbf{x}}f(\mathbf{y})\,g(\mathbf{y})\,dw(\mathbf{y})=\int_{\mathbb{R}^N}f(\mathbf{y})\,\tau_{-\mathbf{x}}g(\mathbf{y})\,dw(\mathbf{y})
\quad\forall\;\mathbf{x}\in\mathbb{R}^N,\;\forall\;f,g\in\mathcal{S}(\mathbb{R}^N).
\end{equation*}
\end{itemize}
The latter formula allows us to define the Dunkl translations $\tau_{\mathbf{x}}f$ in the distributional sense for $f\in L^p({dw})$ with $1\leq p\leq \infty$. In particular,
\begin{equation*}\quad
\int_{\mathbb{R}^N}\!\tau_{\mathbf{x}}f(\mathbf{y})\,dw(\mathbf{y})=\int_{\mathbb{R}^N}f(\mathbf{y})\,dw(\mathbf{y})
\quad\forall\;\mathbf{x}\in\mathbb{R}^N,\;\forall\;f\in\mathcal{S}(\mathbb{R}^N).
\end{equation*}
Finally, notice that $\tau_{\mathbf{x}}f$ is given by \eqref{translation},
if $f\in L^1({dw})$ and $\mathcal{F}f\in L^1({dw})$.}
\smallskip

{The \textit{Dunkl convolution\/} of two reasonable functions (for instance Schwartz functions) is defined by
$$
(f*g)(\mathbf{x})=c_k\,\mathcal{F}^{-1}[(\mathcal{F}f)(\mathcal{F}g)](\mathbf{x})=\int_{\mathbb{R}^N}(\mathcal{F}f)(\xi)\,(\mathcal{F}g)(\xi)\,E(\mathbf{x},i\xi)\,dw(\xi)\quad\forall\;\mathbf{x}\in\mathbb{R}^N
$$
or, equivalently, by}
$$
{(}f{*}g{)}(\mathbf{x})=\int_{\mathbb{R}^N}f(\mathbf{y})\,\tau_{\mathbf{x}}g(-\mathbf{y})\,{dw}(\mathbf{y})\qquad{\forall\;\mathbf{x}\in\mathbb{R}^N}.
$$

\textbf{Dunkl translations of radial functions.}
{The following specific formula was obtained by R\"osler \cite{Roesler2003}
for the Dunkl translations of (reasonable) radial functions $f({\mathbf{x}})=\tilde{f}({\|\mathbf{x}\|})$\,:}
\begin{equation}\label{translation-radial}
\tau_{\mathbf{x}}f(-\mathbf{y})=\int_{\mathbb{R}^N}{(\tilde{f}\circ A)}(\mathbf{x},\mathbf{y},\eta)\,d\mu_{\mathbf{x}}(\eta){\qquad\forall\;\mathbf{x},\mathbf{y}\in\mathbb{R}^N.}
\end{equation}
{Here}
\begin{equation*}
A(\mathbf{x},\mathbf{y},\eta)=\sqrt{{\|}\mathbf{x}{\|}^2+{\|}\mathbf{y}{\|}^2-2\langle \mathbf{y},\eta\rangle}=\sqrt{{\|}\mathbf{x}{\|}^2-{\|}\eta{\|}^2+{\|}\mathbf{y}-\eta{\|}^2}
\end{equation*}
and $\mu_{\mathbf{x}}$ is {the} probability measure {occurring} {in} \eqref{Rintegral}{, which is supported in} $\operatorname{conv}\mathcal{O}(\mathbf{x})$.

In the remaining part of this section{,} we shall derive estimates for the Dunkl translations of {certain} radial functions. {Let us begin with} the following  {elementary estimates} (see, e.g., \cite{AS}){, which hold for $\mathbf{x},\mathbf{y}\in\mathbb{R}^N$ and $\eta\in\operatorname{conv}\mathcal{O}(\mathbf{x})$\,:
\begin{equation}\label{A1}
A(\mathbf{x},\mathbf{y},\eta)\ge d(\mathbf{x},\mathbf{y})
\end{equation}
and}
\begin{equation}\label{A2}\begin{cases}
\,{\|}\nabla_{\mathbf{y}}\{A(\mathbf{x},\mathbf{y},\eta)^2\}{\|}\le{2}\,A(\mathbf{x},\mathbf{y},\eta),\\
\,|\hspace{.25mm}\partial^\beta_{\mathbf{y}}\{A(\mathbf{x},\mathbf{y},\eta)^2\}|\le{2}
&{\text{if \,}|\beta|=2\hspace{.25mm},}\\
\,\partial^\beta_{\mathbf{y}}\{A(\mathbf{x},\mathbf{y},\eta)^2\}=0
&{\text{if \,}|\beta|>2\hspace{.25mm}.}
\end{cases}\end{equation}
{Hence
\begin{equation}\label{A3}
{\|}\nabla_{\mathbf{y}}A(\mathbf{x},\mathbf{y},\eta){\|}\le{1}
\end{equation}
and, more generally,
\begin{equation*}
|\partial^\beta_{\mathbf{y}}(\theta\circ A)(\mathbf{x},\mathbf{y},\eta)|\le C_\beta\,A(\mathbf{x},\mathbf{y},\eta)^{m-|\beta|}\qquad\forall\;\beta\in\mathbb{N}^N,
\end{equation*}
if \,$\theta\in C^\infty(\mathbb{R}\!\smallsetminus\!\{0\})$ is a homogeneous symbol of order $m\in\mathbb{R}$, i.e.,
\begin{equation*}
|{\bigl(\tfrac d{dx}\bigr)^{\hspace{-.25mm}\beta}}\theta(x)\bigr|\le C_\beta\,|x|^{m-\beta}\qquad\forall\;x\in\mathbb{R}\!\smallsetminus\!\{0\}\,,\;\forall\;\beta\in\mathbb{N}\,.
\end{equation*}
Similarly,
\begin{equation*}
|\partial^\beta_{\mathbf{y}}(\tilde\theta\circ A)(\mathbf{x},\mathbf{y},\eta)|\le C_\beta\,{\bigl\{1\!+\!A(\mathbf{x},\mathbf{y},\eta)\bigr\}^{m-|\beta|}}\qquad\forall\;\beta\in\mathbb{N}^N,
\end{equation*}
if \,$\tilde\theta\in C^\infty(\mathbb{R})$ is an {even} inhomogeneous symbol of order $m\in\mathbb{R}$, i.e.,
\begin{equation*}
\bigl|{\bigl(\tfrac d{dx}\bigr)^{\hspace{-.25mm}\beta}}\tilde\theta(x)\bigr|\le C_\beta\,{(1\!+\!|x|)^{m-\beta}}\qquad\forall\;x\in\mathbb{R}\,,\;\forall\;\beta\in\mathbb{N}\,.
\end{equation*}}

{Consider the radial function
$$
q(\mathbf{x})={c_M}\,(1\!+\!{\|}\mathbf{x}{\|}^2)^{-M/2}
$$
on $\mathbb{R}^N$, where $M\!>\hspace{-.5mm}\mathbf{N}$ {and $c_M\!>\hspace{-.5mm}0$ is a normalizing constant such that $\int_{\mathbb{R}^N}\hspace{-.5mm}q(\mathbf{x})\hspace{.5mm}dw(\mathbf{x})\hspace{-.5mm}=\!1$}. Notice that $\tilde{q}(x)={c_M}\,(1\!+\!x^2)^{-M/2}$ is an {even} inhomogeneous symbol of order $-M$. The following estimate holds for the translates $q_t(\mathbf{x},\mathbf{y})=\tau_{\mathbf{x}}q_t(-\mathbf{y})$ of $q_t(\mathbf{x})=t^{-\mathbf{N}}q(t^{-1}\mathbf{x})$.}

\begin{proposition}\label{translation1}
There exists {a constant $C>0$ $($depending on $M)$} such that
\begin{equation*}
0\le q_{t}(\mathbf{x},\mathbf{y})\le{C}\,V(\mathbf{x},\mathbf{y},{t})^{-1}
\qquad{\forall\;t>0,\;\forall\;\mathbf{x},\mathbf{y}\in\mathbb{R}^N}.
\end{equation*}
\end{proposition}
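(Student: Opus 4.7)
The plan is to apply R\"osler's integral formula \eqref{translation-radial} to the radial function $q_t$, giving
\[
q_t(\mathbf{x},\mathbf{y})=\int_{\mathbb{R}^N}\tilde q_t(A(\mathbf{x},\mathbf{y},\eta))\,d\mu_\mathbf{x}(\eta),
\qquad\tilde q_t(r)\le C\,t^{-\mathbf{N}}(1+r/t)^{-M},
\]
which yields $q_t(\mathbf{x},\mathbf{y})\ge 0$ at once. For the upper bound, combine the identity $A(\mathbf{x},\mathbf{y},\eta)^2=\|\mathbf{x}\|^2-\|\eta\|^2+\|\mathbf{y}-\eta\|^2$ with the fact that $\eta\in\operatorname{conv}\mathcal{O}(\mathbf{x})$ forces $\|\eta\|\le\|\mathbf{x}\|$, so that $A(\mathbf{x},\mathbf{y},\eta)\ge\|\mathbf{y}-\eta\|$ and hence
\[
q_t(\mathbf{x},\mathbf{y})\le C\,t^{-\mathbf{N}}\!\int_{\mathbb{R}^N}(1+\|\mathbf{y}-\eta\|/t)^{-M}\,d\mu_\mathbf{x}(\eta).
\]

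Since $V(\mathbf{x},\mathbf{y},t)^{-1}=\min\{w(B(\mathbf{x},t))^{-1},w(B(\mathbf{y},t))^{-1}\}$ and the Dunkl translation of an even radial function is symmetric in $(\mathbf{x},\mathbf{y})$ (so that $q_t(\mathbf{x},\mathbf{y})=q_t(\mathbf{y},\mathbf{x})$), it is enough to prove the one-sided bound $q_t(\mathbf{x},\mathbf{y})\cdot w(B(\mathbf{y},t))\le C$. Using the explicit volume formula $w(B(\mathbf{y},t))\sim t^N\prod_{\alpha\in R}(|\langle\alpha,\mathbf{y}\rangle|+t)^{k(\alpha)}$ together with the elementary factorization
\[
1+|\langle\alpha,\mathbf{y}\rangle|/t\le(1+|\langle\alpha,\eta\rangle|/t)(1+\sqrt 2\,\|\mathbf{y}-\eta\|/t),
\]
taking products over $\alpha\in R$ (with $\sum_{\alpha\in R}k(\alpha)=2\gamma$) and absorbing the $\|\mathbf{y}-\eta\|$ factor into the polynomial decay, the claim reduces to
\[
\int_{\mathbb{R}^N}\prod_{\alpha\in R}(1+|\langle\alpha,\eta\rangle|/t)^{k(\alpha)}(1+\|\mathbf{y}-\eta\|/t)^{2\gamma-M}\,d\mu_\mathbf{x}(\eta)\le C.
\]

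The main obstacle is this last uniform estimate. On $\operatorname{supp}\mu_\mathbf{x}=\operatorname{conv}\mathcal{O}(\mathbf{x})$ one has $|\langle\alpha,\eta\rangle|\le\sqrt 2\,\|\mathbf{x}\|$, so the product factor grows at most like $(1+\|\mathbf{x}\|/t)^{2\gamma}$, while $\|\mathbf{y}-\eta\|$ can be as small as $d(\mathbf{x},\mathbf{y})$. A case split based on whether $\|\mathbf{y}-\eta\|\le t$ or $>t$ finishes the argument: the small range is handled via doubling of $w$, and the large range by the polynomial decay $(1+\|\mathbf{y}-\eta\|/t)^{2\gamma-M}$, whose exponent is strictly less than $-N$ thanks to the hypothesis $M>\mathbf{N}=N+2\gamma$. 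The finer estimate $|\langle\alpha,\eta\rangle|\le\max_{\sigma\in G}|\langle\alpha,\sigma(\mathbf{x})\rangle|$ for $\eta\in\operatorname{conv}\mathcal{O}(\mathbf{x})$ plays an auxiliary role when $\mathbf{x}$ is close to a reflection hyperplane.
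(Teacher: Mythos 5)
Your reduction is correct as far as it goes: the symmetry $q_t(\mathbf{x},\mathbf{y})=q_t(\mathbf{y},\mathbf{x})$, the lower bound $A(\mathbf{x},\mathbf{y},\eta)\ge\|\mathbf{y}-\eta\|$, the volume formula and the factorization $1+|\langle\alpha,\mathbf{y}\rangle|/t\le(1+|\langle\alpha,\eta\rangle|/t)(1+\sqrt2\,\|\mathbf{y}-\eta\|/t)$ are all fine, and they do reduce the proposition to the integral inequality you state. But that inequality is where the entire difficulty sits, and your case split does not prove it. The measure $\mu_{\mathbf{x}}$ is only known to be a probability measure supported in $\operatorname{conv}\mathcal{O}(\mathbf{x})$; it has no explicit density and no known regularity. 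On the range $\|\mathbf{y}-\eta\|\le t$ the integrand is comparable (by your own factorization, read backwards) to $\prod_{\alpha}(1+|\langle\alpha,\mathbf{y}\rangle|/t)^{k(\alpha)}\sim w(B(\mathbf{y},t))/t^{\mathbf{N}}$, so bounding that piece requires
\begin{equation*}
\mu_{\mathbf{x}}\bigl(\{\eta:\|\mathbf{y}-\eta\|\le t\}\bigr)\lesssim t^{\mathbf{N}}\,w(B(\mathbf{y},t))^{-1},
\end{equation*}
which is not a consequence of the doubling of $w$ — it is essentially the statement you are trying to prove, in disguise. On the range $\|\mathbf{y}-\eta\|>t$ the decay $(1+\|\mathbf{y}-\eta\|/t)^{2\gamma-M}$ with exponent below $-N$ would finish the argument if you were integrating against $t^{-N}d\eta$, but you are integrating against $d\mu_{\mathbf{x}}(\eta)$, and a dyadic decomposition in $\|\mathbf{y}-\eta\|$ again needs the same unavailable bound on $\mu_{\mathbf{x}}$ of annuli. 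So the argument is circular at its key step.

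The paper's proof avoids any quantitative knowledge of $\mu_{\mathbf{x}}$. After scaling to $t=1$, it uses only two facts: $\|\nabla_{\mathbf{y}}A\|\le1$ together with $|\tilde q'|\le M\tilde q$ to show that $\mathbf{y}'\mapsto q_1(\mathbf{x},\mathbf{y}')$ varies by at most a factor $2$ on a ball of radius $\frac1{2M}$ around the point $\mathbf{y}_0$ where it attains its maximum $K$ on $\overline{B(\mathbf{y},1)}$; and the normalization $\int q_1(\mathbf{x},\mathbf{y}')\,dw(\mathbf{y}')=1$, which then forces $K\lesssim w(B(\mathbf{y},1))^{-1}$. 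If you want to rescue your route, you must first establish a bound of the form $\mu_{\mathbf{x}}(B(\mathbf{y},t))\lesssim t^{\mathbf{N}}\,V(\mathbf{x},\mathbf{y},t)^{-1}$ — but in this paper that information is a consequence of Proposition \ref{translation1} (via Corollary \ref{translation2}), not an input to it.
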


\begin{proof}
{By scaling we can reduce to $t=1$.}
Fix $\mathbf{x},\mathbf{y}\in\mathbb{R}^N$. {By continuity, the function $\mathbf{y}'\longmapsto q_1(\mathbf{x},\mathbf{y}')$ reaches a maximum $K=q_1(\mathbf{x},\mathbf{y}_0)\ge0$ on the ball}
$$
\bar{B}=\overline{B(\mathbf{y},1)}=\{\mathbf{y}'\in\mathbb{R}^N\,{|}\,{\|}{\mathbf{y}'\!-\hspace{-.5mm}\mathbf{y}}{\|}\leq 1\}.
$$
For {every} $\mathbf{y}'\in\bar{B}$, we have
\begin{equation*}\begin{split}
{0\le}q_1(\mathbf{x},\mathbf{y}_0)-q_1(\mathbf{x},\mathbf{y}')
&= \int_{\mathbb{R}^N}{\bigl\{}{(\tilde{q}\circ A)}(\mathbf{x},\mathbf{y}_0,\eta)-{(\tilde{q}\circ A)}(\mathbf{x},\mathbf{y}',\eta){\bigl\}}\,d\mu_{\mathbf{x}}(\eta) \\
& = \int_{\mathbb{R}^N}\int_0^1{\frac{\partial}{\partial s}}\,{(\tilde{q}\circ A)}(\mathbf{x},{\underbrace{\mathbf{y}'+s(\mathbf{y}_0-\mathbf{y}')}_{\mathbf{y}_{\hspace{-.25mm}s}}},\eta)\,
ds\,d\mu_{\mathbf{x}}(\eta) \\
&\le{\|}\mathbf{y}_0-\mathbf{y}'{\|}\int_{\mathbb{R}^N}\int_0^1|{(\tilde{q}^{\hspace{.25mm}\prime}\circ A)}(\mathbf{x},\mathbf{y}_{\hspace{-.25mm}s},\eta)|\,ds\,d\mu_{\mathbf{x}}(\eta)\\
&\le M\,{\|}\mathbf{y}_0-\mathbf{y}'{\|}\int_{\mathbb{R}^N}\int_0^1{(\tilde{q}\circ A)}(\mathbf{x},\mathbf{y}_{\hspace{-.25mm}s},\eta)\,ds\,d\mu_{\mathbf{x}}(\eta)\\
&=M\,{\|}\mathbf{y}_0-\mathbf{y}'{\|}\int_0^1q_1(\mathbf{x},\mathbf{y}_{\hspace{-.25mm}s})\,ds\\
&\le M\,{\|}\mathbf{y}_0-\mathbf{y}'{\|}\,K\,.
\end{split}\end{equation*}
{Here we have used \eqref{A3} and the elementary estimate}
\begin{equation*}
{|\tilde{q}^{\hspace{.25mm}\prime}(x)|\le M\,\tilde{q}(x)\qquad\forall\;x\hspace{-.5mm}\in\hspace{-.5mm}\mathbb{R}\hspace{.5mm}.}
\end{equation*}
{Hence}
$$
q_1(\mathbf{x},\mathbf{y}')\ge q_1(\mathbf{x},\mathbf{y}_0)-|q_1(\mathbf{x},\mathbf{y}_0)-q_1(\mathbf{x},\mathbf{y}')|\ge K-\frac K2=\frac K2\,,
$$
{if $\mathbf{y}'\in\bar{B}\cap B(\mathbf{y}_0, r)$ with $r={\frac1{2M}}$. Moreover, as} ${w}(\bar{B}\cap B(\mathbf{y}_0, r))\sim {w}(\bar{B})$, {we have}
\begin{equation*}\begin{split}
1&=\int_{\mathbb{R}^N}q_1(\mathbf{x},\mathbf{y}'){dw}(\mathbf{y}')\ge\int_{\bar{B}\cap B(\mathbf{y}_0, r)}q_1(\mathbf{x},\mathbf{y}')\,{dw}(\mathbf{y}')\\
&\ge\frac K2{w}(\bar{B}\cap B(\mathbf{y}_0,r))\ge\frac K{C}\,{w}(\bar{B})\,.
\end{split}\end{equation*}
Thus
$$
0\leq q_1(\mathbf{x},\mathbf{y})\leq K\leq C\,{w}(B(\mathbf{y},1))^{-1}.
$$
{We conclude by using the symmetry $q_1(\mathbf{x},\mathbf{y})=q_1(\mathbf{y},\mathbf{x})$\,.}
\end{proof}

{Consider next a radial function $f$ satisfying {(\footnote{\,As usual, the symbol $\lesssim$ means that there exists a constant $C\!>\hspace{-.5mm}0$ such that $|f(\mathbf{x})|\hspace{-.5mm}\le\hspace{-.5mm}C\hspace{.25mm}(1\!+\hspace{-.5mm}\|\mathbf{x}\|)^{-\hspace{-.25mm}M\hspace{-.25mm}-\kappa}$.})}
\begin{equation*}
|f(\mathbf{x})|\lesssim(1\hspace{-.5mm}+\hspace{-.25mm}\|\mathbf{x}\|)^{-M-\kappa}
\qquad\forall\;\mathbf{x}\in\mathbb{R}^N
\end{equation*}
with $M\!>\hspace{-.5mm}\mathbf{N}$ and $\kappa\hspace{-.5mm}\ge\hspace{-.5mm}0$\hspace{.25mm}. Then the following estimate holds for the translates $f_t(\mathbf{x},\mathbf{y})=\tau_{\mathbf{x}}f_t(-\mathbf{y})$ of
$f_t(\mathbf{x})=t^{-\mathbf{N}}f(t^{-1}\mathbf{x})$.}

\begin{corollary}\label{translation2}
{There exists a constant \,$C\!>\!0$ such that}
\begin{equation*}
|{f_t(\mathbf{x},\mathbf{y})}|\le C\,V(\mathbf{x},\mathbf{y},t)^{-1}\Bigl(1\!+\hspace{-.5mm}\frac{d(\mathbf{x},\mathbf{y})}{t}\Bigr)^{\!-{\kappa}}\qquad{\forall\;t>0,\;\forall\;\mathbf{x},\mathbf{y}\in\mathbb{R}^N}.
\end{equation*}
\end{corollary}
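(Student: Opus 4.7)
The plan is to reduce to the case $t=1$ by scaling, to isolate the factor $(1+d(\mathbf{x},\mathbf{y}))^{-\kappa}$ using the lower bound \eqref{A1}, and then to invoke Proposition \ref{translation1} to control the remaining integral.

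\textbf{Reduction to $t=1$.} Using the scaling property $\tau_{\mathbf{x}}(f_\lambda)=(\tau_{\lambda^{-1}\mathbf{x}}f)_\lambda$, one checks that
$$
f_t(\mathbf{x},\mathbf{y})=t^{-\mathbf{N}}f_1(t^{-1}\mathbf{x},t^{-1}\mathbf{y}).
$$
Combined with $V(t^{-1}\mathbf{x},t^{-1}\mathbf{y},1)=t^{-\mathbf{N}}V(\mathbf{x},\mathbf{y},t)$ and $d(t^{-1}\mathbf{x},t^{-1}\mathbf{y})=t^{-1}d(\mathbf{x},\mathbf{y})$, it suffices to prove the estimate when $t=1$.

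\textbf{Main estimate at $t=1$.} By \eqref{translation-radial},
$$
f_1(\mathbf{x},\mathbf{y})=\int_{\mathbb{R}^N}(\tilde f\circ A)(\mathbf{x},\mathbf{y},\eta)\,d\mu_{\mathbf{x}}(\eta).
$$
The hypothesis on $f$ gives $|\tilde f(x)|\lesssim(1+|x|)^{-M-\kappa}$, hence
$$
|(\tilde f\circ A)(\mathbf{x},\mathbf{y},\eta)|\lesssim\bigl(1+A(\mathbf{x},\mathbf{y},\eta)\bigr)^{-M-\kappa}.
$$
By \eqref{A1}, $A(\mathbf{x},\mathbf{y},\eta)\ge d(\mathbf{x},\mathbf{y})$ for every $\eta\in\operatorname{conv}\mathcal{O}(\mathbf{x})$, so we may peel off the $\kappa$-factor:
$$
\bigl(1+A(\mathbf{x},\mathbf{y},\eta)\bigr)^{-M-\kappa}\le\bigl(1+d(\mathbf{x},\mathbf{y})\bigr)^{-\kappa}\bigl(1+A(\mathbf{x},\mathbf{y},\eta)\bigr)^{-M}.
$$

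\textbf{Applying Proposition \ref{translation1}.} Since $(1+A)^{-M}\sim(1+A^2)^{-M/2}$, the remaining integral is a constant multiple of $q_1(\mathbf{x},\mathbf{y})$ for the radial function $q$ of Proposition \ref{translation1} (with the same exponent $M>\mathbf{N}$). That proposition yields
$$
\int_{\mathbb{R}^N}\bigl(1+A(\mathbf{x},\mathbf{y},\eta)\bigr)^{-M}d\mu_{\mathbf{x}}(\eta)\le C\,V(\mathbf{x},\mathbf{y},1)^{-1}.
$$
Combining the two bounds and then undoing the scaling gives the assertion.

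\textbf{Main obstacle.} The argument is essentially mechanical once one has Proposition \ref{translation1} and \eqref{A1}; the only point requiring a small amount of care is the book-keeping in the scaling step, where one must simultaneously track the normalization $t^{-\mathbf{N}}$ of $f_t$, the scaling of the measure $w$, and the scaling of the orbit distance $d$. All three transform homogeneously, so everything fits together and the stated bound follows.
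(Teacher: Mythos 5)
Your proposal is correct and follows essentially the same route as the paper's proof: reduce to $t=1$ by scaling, apply R\"osler's formula \eqref{translation-radial} together with \eqref{A1} to peel off the factor $\bigl(1+d(\mathbf{x},\mathbf{y})\bigr)^{-\kappa}$, and bound the remaining integral by $V(\mathbf{x},\mathbf{y},1)^{-1}$ via Proposition \ref{translation1} after observing $(1+A)^{-M}\le(1+A^2)^{-M/2}$. The extra book-keeping you supply for the scaling step is consistent with what the paper leaves implicit.
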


\begin{proof}
{By scaling we can reduce to $t=1$. By using \eqref{translation-radial}, \eqref{A1} and Proposition \ref{translation1}, we get}
\begin{equation*}\begin{split}
|{f_1(\mathbf{x},\mathbf{y})}|
&{\lesssim}\int_{\mathbb{R}^N}\bigl(1\hspace{-.5mm}+\hspace{-.5mm}A(\mathbf{x},\mathbf{y},\eta)\bigr)^{\!-{M}}\bigl(1\hspace{-.5mm}+\hspace{-.5mm}A(\mathbf{x},\mathbf{y},\eta)\bigr)^{\!-{\kappa}}d\mu_{\mathbf{x}}(\eta)\\
&{\le}\int_{\mathbb{R}^N}\bigl(1\hspace{-.5mm}+\hspace{-.5mm}A(\mathbf{x},\mathbf{y},\eta)^2\bigr)^{\!-{M/2}}\bigl(1\hspace{-.5mm}+\hspace{-.25mm}d(\mathbf{x},\mathbf{y})\bigr)^{\!-{\kappa}}d\mu_{\mathbf{x}}(\eta)\\
&\le C\,V(\mathbf{x},\mathbf{y},1)^{-1}\bigl(1\hspace{-.5mm}+\hspace{-.25mm}d(\mathbf{x},\mathbf{y})\bigr)^{\!-{\kappa}}.
\end{split}\end{equation*}
\end{proof}

{
Notice that the space of radial Schwartz functions $f$ on $\mathbb{R}^N$ identifies with the space of even Schwartz functions $\tilde{f}$ on $\mathbb{R}$, which is equipped with the norms
\begin{equation}\label{seminorm}
\|\tilde{f}\|_{\mathcal{S}_m}=\max_{0\le j\le m}\sup_{x\in\mathbb{R}}\,(1\!+\hspace{-.5mm}|x|)^m\,\Bigl|\Bigl(\frac{d}{dx}\Bigr)^{\hspace{-.5mm}j}\hspace{-.5mm}\tilde{f}(x)\Bigr|\qquad\forall\;m\in\mathbb{N}\,.
\end{equation}}

\begin{proposition}\label{Psi_st}
{For every $\kappa\ge0$, there exist $C\ge0$ and $m\in\mathbb{N}$ such that, for all even Schwartz functions $\tilde{\psi}^{\{1\}},\tilde{\psi}^{\{2\}}$ and for all even nonnegative integers $\ell_1,\ell_2$, the convolution kernel
$$
\Psi_{s,t}(\mathbf{x},\mathbf{y})=c_k^{-1}\int_{\mathbb{R}^N}(s\hspace{.25mm}\|\xi\|)^{\ell_1}\hspace{.25mm}\psi^{\{1\}}(s\hspace{.25mm}\|\xi\|)\hspace{.5mm}(t\hspace{.25mm}\|\xi\|)^{\ell_2}\hspace{.25mm}\psi^{\{2\}}(t\hspace{.25mm}\|\xi\|)\hspace{.5mm}E(\mathbf{x},i\xi)\hspace{.5mm}E(-\mathbf{y},i\xi)\hspace{.5mm}dw(\xi)
$$
}satisfies
\begin{align*}
|\Psi_{s,t}(\mathbf{x},\mathbf{y})|&\le{C\,\|\psi^{\{1\}}\|_{\mathcal{S}_{m+\ell_1+\ell_2}}\hspace{.5mm}\|\psi^{\{2\}}\|_{\mathcal{S}_{m+\ell_1+\ell_2}}}\\
&{\times\min\hspace{.5mm}\Bigl\{\hspace{-.25mm}\bigl(\tfrac st\bigr)^{\ell_1}\hspace{-.5mm},\hspace{-.25mm}\bigl(\tfrac ts\bigr)^{\ell_2}\Bigr\}}\,V(\mathbf{x},\mathbf{y}, s+t)^{-1}\Big(1+\frac{d(\mathbf{x},\mathbf{y})}{s+t}\Big)^{\!-{\kappa}}{,}
\end{align*}
{for every $s,t>0$ and for every $\mathbf{x},\mathbf{y}\in\mathbb{R}^N$.}
\end{proposition}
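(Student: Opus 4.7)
My plan is to express $\Psi_{s,t}(\mathbf{x},\mathbf{y})$ as a scaled Dunkl translation of a single radial Schwartz function, and then invoke Corollary \ref{translation2} to get the required decay, after extracting the $\min$ factor algebraically. First, I observe that $\Psi_{s,t}$ is symmetric in $(\mathbf{x},\mathbf{y})$ (substitute $\xi\mapsto-\xi$ in the integral, using the $\xi\mapsto-\xi$ invariance of $\|\xi\|$ and of $dw$), and is also symmetric under the simultaneous swap $(s,\ell_1,\psi^{\{1\}})\leftrightarrow(t,\ell_2,\psi^{\{2\}})$. It therefore suffices to assume $s\le t$ and to prove the bound with the factor $(s/t)^{\ell_1}$ in place of the min.

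Setting $\rho=s/t\in(0,1]$, I factor
$$
(s\|\xi\|)^{\ell_1}\tilde\psi^{\{1\}}(s\|\xi\|)\,(t\|\xi\|)^{\ell_2}\tilde\psi^{\{2\}}(t\|\xi\|)=\rho^{\ell_1}\,F_\rho(t\|\xi\|),
$$
where $F_\rho(r)=r^{\ell_1+\ell_2}\tilde\psi^{\{1\}}(\rho r)\tilde\psi^{\{2\}}(r)$. Since $\ell_1+\ell_2$ is even and both $\tilde\psi^{\{i\}}$ are even, $F_\rho$ is an even Schwartz function of $r\in\mathbb{R}$. A Leibniz expansion of $F_\rho^{(j)}$, using that derivatives of $\tilde\psi^{\{1\}}(\rho r)$ gain powers of $\rho\le1$ and that multiplication by $r^{\ell_1+\ell_2}$ raises the Schwartz index by at most $\ell_1+\ell_2$, yields
$$
\|F_\rho\|_{\mathcal{S}_m}\le C_m\,\|\psi^{\{1\}}\|_{\mathcal{S}_{m+\ell_1+\ell_2}}\|\psi^{\{2\}}\|_{\mathcal{S}_{m+\ell_1+\ell_2}}
$$
uniformly in $\rho\in(0,1]$.

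Let $g_\rho\in\mathcal{S}(\mathbb{R}^N)$ denote the radial function whose Dunkl transform equals $F_\rho(\|\cdot\|)$. Since the Dunkl transform is a topological automorphism of $\mathcal{S}(\mathbb{R}^N)$, one has, for any fixed $M>\mathbf{N}$,
$$
|g_\rho(\mathbf{x})|\le C\,\|F_\rho\|_{\mathcal{S}_m}\,(1+\|\mathbf{x}\|)^{-M-\kappa}
$$
for some $m=m(M,\kappa)$. By the Dunkl scaling property, $g_{\rho,t}(\mathbf{x})=t^{-\mathbf{N}}g_\rho(t^{-1}\mathbf{x})$ has Dunkl transform $F_\rho(t\|\cdot\|)$, so comparing with the definition \eqref{translation} of the Dunkl translation (together with the inversion formula) gives the identification
$$
\Psi_{s,t}(\mathbf{x},\mathbf{y})=\rho^{\ell_1}\,\tau_{\mathbf{x}}g_{\rho,t}(-\mathbf{y}).
$$
Applying Corollary \ref{translation2} to $g_\rho$ then produces
$$
|\tau_{\mathbf{x}}g_{\rho,t}(-\mathbf{y})|\le C\,V(\mathbf{x},\mathbf{y},t)^{-1}\bigl(1+d(\mathbf{x},\mathbf{y})/t\bigr)^{-\kappa},
$$
and since $s\le t$ we have $V(\mathbf{x},\mathbf{y},t)\sim V(\mathbf{x},\mathbf{y},s+t)$ and $1+d/t\sim 1+d/(s+t)$, yielding the claimed estimate.

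The main obstacle will be the Schwartz seminorm bookkeeping: ensuring that the final index $m+\ell_1+\ell_2$ is symmetric in $\psi^{\{1\}},\psi^{\{2\}}$ and depends on $\kappa$ (and on the choice of $M>\mathbf{N}$) only through $m$, independently of $\ell_1,\ell_2$. Once this is handled, everything else reduces mechanically to Corollary \ref{translation2} and to the scaling behavior of the Dunkl transform.
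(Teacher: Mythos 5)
Your proposal is correct and follows essentially the same route as the paper's proof: estimate the $\mathcal{S}_m$-seminorm of the product profile so as to extract the factor $(s/t)^{\ell_1}$ (resp.\ $(t/s)^{\ell_2}$), use continuity of the inverse Dunkl transform on even Schwartz functions to get pointwise decay of order $M+\kappa$ with $M>\mathbf{N}$, and then apply Corollary \ref{translation2} together with the comparability of $V(\mathbf{x},\mathbf{y},t)$ with $V(\mathbf{x},\mathbf{y},s+t)$ when $s\le t$. The only (cosmetic) difference is that you normalize by the larger parameter $t$ and keep it general via the scaling of Dunkl translations, whereas the paper first reduces to $t=1$ and rescales at the very end.
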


\begin{proof}
{By continuity of the inverse Dunkl transform in the Schwartz setting, there exists a positive even integer $m$ and a constant $C>0$ such that
$$
\sup\nolimits_{\hspace{.5mm}\mathbf{z}\in\mathbb{R}^N}(1\!+\hspace{-.5mm}\|\mathbf{z}\|)^{M+\kappa}\,|\hspace{.25mm}\mathcal{F}^{-1}\hspace{-.5mm}f(\mathbf{z})|\le C\,\|\tilde{f}\|_{\mathcal{S}_m},
$$
for every even function $\tilde{f}\in C^m(\mathbb{R})$ with $\|\tilde{f}\|_{\mathcal{S}_m}<\infty$\hspace{.25mm}. Consider first the case $0<s\le t=1$}\hspace{.25mm}. Then
$$
\|\hspace{.25mm}(s\xi)^{\ell_1}\hspace{.5mm}\tilde{\psi}^{\{1\}}(s\xi)\hspace{.5mm}\xi^{\ell_2}\hspace{.5mm}\tilde{\psi}^{\{2\}}(\xi)\hspace{.25mm}\|_{\mathcal{S}_m}\le C\,{\|\psi^{\{1\}}\|_{\mathcal{S}_m}\hspace{.5mm}\|\psi^{\{2\}}\|_{\mathcal{S}_{m+\ell_1+\ell_2}}}\,s^{\ell_1}.
$$
{According to} Corollary \ref{translation2}, {we deduce that}
\begin{equation*}\begin{split}
|\Psi_{s,1}(\mathbf{x},\mathbf{y})|
&\leq C\,{\mathcal{N}}\,s^{\ell_1}\,V(\mathbf{x},\mathbf{y}, 1)^{-1}\hspace{.5mm}\bigl(1\!+\hspace{-.25mm}{d(\mathbf{x},\mathbf{y})}\bigr)^{\!-\kappa}\\
 &\leq C\,{\mathcal{N}}\,s^{\ell_1}\,V(\mathbf{x},\mathbf{y}, {s\hspace{-.5mm}+\!1})^{-1}\hspace{.5mm}\Bigl(1\!+\hspace{-.5mm}\frac{d(\mathbf{x},\mathbf{y})}{s\hspace{-.5mm}+\!1}\Bigr)^{\!-\kappa}{,}
\end{split}\end{equation*}
{where \,$\mathcal{N}\!=\hspace{-.25mm}\|\psi^{\{1\}}\|_{\mathcal{S}_{m+\ell_1+\ell_2}}\hspace{.25mm}\|\psi^{\{2\}}\|_{\mathcal{S}_{m+\ell_1+\ell_2}}$. In the case $s=1\geq t>0$, we have similarly}
\begin{equation*}
|\Psi_{1,t}(\mathbf{x},\mathbf{y})|
\leq C\,{\mathcal{N}}\,t^{\ell_2}\,V(\mathbf{x},\mathbf{y}, 1\!+\hspace{-.25mm}t)^{-1}\hspace{.5mm}\Bigl(1\!+\hspace{-.5mm}\frac{d(\mathbf{x},\mathbf{y})}{1\!+\hspace{-.25mm}t}\Bigr)^{\!-\kappa}.
\end{equation*}
{The general case is obtained by scaling.}
\end{proof}

\section{Heat kernel and Dunkl kernel}\label{SectionHeat}

{Via the Dunkl transform, the heat semigroup $H_t=e^{\hspace{.25mm}t\hspace{.25mm}\Delta}$ is given by}
$$
{H_t}f(\mathbf{x})=\mathcal{F}^{-1}\bigl(e^{-t{\|}\xi{\|}^2}\mathcal{F}f(\xi)\bigr)(\mathbf{x}).
$$
{Alternately (see, e.g., \cite{Roesler-Voit})
\begin{equation*}
H_tf(\mathbf{x})=f*h_t(\mathbf{x})=\int_{\mathbb{R}^N}
h_t(\mathbf{x},\mathbf{y})\,{f(\mathbf{y})}\,dw(\mathbf{y}),
\end{equation*}
where the heat kernel $h_t(\mathbf{x},\mathbf{y})$ is a smooth positive radial convolution kernel. Specifically, for every $t>0$ and for every $\mathbf{x},\mathbf{y}\in\mathbb{R}^N$,
\begin{equation}\label{Expression1HeatKernel}
h_t(\mathbf{x},\mathbf{y})
=c_k^{-1}\,(2t)^{-\mathbf{N}/2}\,e^{-\frac{\|\mathbf{x}\|^2+\hspace{.25mm}\|\mathbf{y}\|^2}{4t}}E\Bigl(\frac{\mathbf{x}}{\sqrt{2t}},\frac{\mathbf{y}}{\sqrt{2t}}\Bigr)
=\tau_{\mathbf{x}}h_t(-\mathbf{y}),
\end{equation}
where
\begin{equation*}
h_t(\mathbf{x})=\tilde{h}_t(\|\mathbf{x}\|)
=c_k^{-1}\,(2t)^{-\mathbf{N}/2}\,e^{-\frac{{\|}\mathbf{x}{\|}^2}{4t}}.
\end{equation*}
In particular,}
\begin{gather}
{h_t(\mathbf{x},\mathbf{y})=h_t(\mathbf{y},\mathbf{x})>0,}
\nonumber\\
{\int_{\mathbb{R}^N}h_t(\mathbf{x},\mathbf{y})\,dw(\mathbf{y})=1,}
\nonumber\\
h_t(\mathbf{x},\mathbf{y})\leq c_k^{-1}\,(2t)^{-\mathbf{N}/2}\,e^{-\frac{d(\mathbf{x},\mathbf{y})^2}{4t}}.
\label{estimateRosler}
\end{gather}
\vspace{-2mm}

\textbf{Upper heat kernel estimates.} We {prove now Gaussian bounds} for the heat kernel {and its derivatives, in the spirit of} spaces of homogeneous type{, except that} the metric ${\|}\mathbf{x}-\mathbf{y}{\|}$ is replaced by the orbit distance $d(\mathbf{x},\mathbf{y})$. {In comparison with \eqref{estimateRosler}, the main difference lies in the factor $t^{\mathbf{N}\slash 2}$, which is replaced by the volume of appropriate balls.}

\begin{theorem}\label{theoremGauss}
{{\rm(a) Time derivatives\,:}
for any nonnegative integer $m$,} there are constants \,$C,c>0$ such that
\begin{equation}\label{Gauss}
\left|{\partial_t^m}\hspace{.5mm}h_t(\mathbf{x},\mathbf{y})\right|\leq C\,{t^{-m}}\,V(\mathbf{x},\mathbf{y},\!\sqrt{t\,})^{-1}\,e^{-\hspace{.25mm}c\hspace{.5mm}d(\mathbf{x},\mathbf{y})^2\slash t},
\end{equation}
{for every \,$t\hspace{-.5mm}>\hspace{-.5mm}0$ and for every \,$\mathbf{x},\mathbf{y}\!\in\hspace{-.5mm}\mathbb{R}^N$.}
\par\noindent
{{\rm(b) H\"older  bounds\,:}
for any nonnegative integer $m$,} there are constants \,$C,c>0$ such that
\begin{equation}\label{Holder}
\left|{\partial_t^{{m}}}h_t(\mathbf{x},\mathbf{y})-{\partial_t^{{m}}}h_t(\mathbf{x},\mathbf{y}')\right|\leq C\,t^{-{m}}\,\Bigl(\frac{{\|}\mathbf{y}\!-\!\mathbf{y}'{\|}}{\sqrt{t\,}}\Bigr)\,V(\mathbf{x},\mathbf{y},\!\sqrt{t\,})^{-1}\,e^{-\hspace{.25mm}c\hspace{.5mm}d(\mathbf{x},\mathbf{y})^2\slash t},
\end{equation}
{for every \,$t\hspace{-.5mm}>\hspace{-.5mm}0$ and for every \,$\mathbf{x},\mathbf{y},\mathbf{y}'\hspace{-1mm}\in\hspace{-.5mm}\mathbb{R}^N$ such that \,${\|}\mathbf{y}\!-\!\mathbf{y}'{\|}\!<\!\sqrt{t\,}$}.
\par\noindent
{{\rm(c) Dunkl derivative\,:}
for any \,$\xi\hspace{-.5mm}\in\hspace{-.5mm}\mathbb{R}^N$ and for any nonnegative integer $m$,} there are constants \,$C,c\hspace{-.5mm}>\hspace{-.5mm}0$ such that
\begin{equation}\label{TxiDtHeat}
\Bigl|\hspace{.5mm}T_{{\xi},\mathbf{x}}\,{\partial_t^m}\hspace{.5mm}h_t(\mathbf{x},\mathbf{y})\Bigr|\leq C\,t^{-m-1\slash 2}\,V(\mathbf{x},\mathbf{y},\!\sqrt{t\,})^{-1}\,e^{-\hspace{.25mm}c\hspace{.5mm}d(\mathbf{x},\mathbf{y})^2\slash t}\,{,}
\end{equation}
{for all \,$t\hspace{-.5mm}>\hspace{-.5mm}0$ and \,$\mathbf{x},\mathbf{y}\!\in\hspace{-.5mm}\mathbb{R}^N$}.
\par\noindent
{{\rm(d) Mixed derivatives\,:}
for any nonnegative integer \,$m$ and for any multi-indices \,$\alpha,\beta$, there are constants \,$C,c\hspace{-.5mm}>\hspace{-.5mm}0$ such that, for every \,$t>0$ and for every \,$\mathbf{x},\mathbf{y}\in\mathbb{R}^N$,
\begin{equation}\label{DtDxDyHeat}
\bigl|\hspace{.25mm}\partial_t^m\partial_{\mathbf{x}}^{\alpha}\partial_{\mathbf{y}}^{\beta}h_t(\mathbf{x},\mathbf{y})\bigr|\le C\,t^{-m-\frac{|\alpha|}2-\frac{|\beta|}2}\,V(\mathbf{x},\mathbf{y},\!\sqrt{t\,})^{-1}\,e^{-\hspace{.25mm}c\hspace{.5mm}d(\mathbf{x},\mathbf{y})^2\slash t},
\end{equation}
for every \,$t\hspace{-.5mm}>\hspace{-.5mm}0$ and for every \,$\mathbf{x},\mathbf{y}\!\in\hspace{-.5mm}\mathbb{R}^N$.}
\end{theorem}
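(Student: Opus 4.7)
The plan is to prove the four statements in the order (a), (d), (b), (c), using Rösler's integral formula \eqref{translation-radial} as the main tool throughout. The starting point is
\[
h_t(\mathbf{x},\mathbf{y})=\int_{\mathbb{R}^N}\tilde{h}_t\bigl(A(\mathbf{x},\mathbf{y},\eta)\bigr)\,d\mu_{\mathbf{x}}(\eta),\qquad\tilde{h}_t(r)=c_k^{-1}(2t)^{-\mathbf{N}/2}e^{-r^2/(4t)}.
\]
For (a) with $m=0$ I would split $e^{-A^2/(4t)}=e^{-A^2/(8t)}\cdot e^{-A^2/(8t)}$, extract the first factor as $e^{-d(\mathbf{x},\mathbf{y})^2/(8t)}$ by \eqref{A1}, and dominate the second factor by a radial polynomial $(1+A^2/t)^{-M/2}$ with $M>\mathbf{N}$. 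The resulting Dunkl translation of that polynomial is bounded by $C\,V(\mathbf{x},\mathbf{y},\sqrt{t})^{-1}$ via Corollary \ref{translation2}. For $m\ge1$ I would use the identity $\partial_t^m\tilde{h}_t(r)=t^{-m}P_m(r^2/t)\tilde{h}_t(r)$ with $P_m$ a polynomial of degree $m$, and absorb $P_m$ into a Gaussian with a slightly weakened exponent.

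For (d), pure $\partial_{\mathbf{y}}^\beta$-derivatives commute with the integral; applying the chain rule to $\tilde{h}_t\circ A$ together with the derivative bounds \eqref{A2}--\eqref{A3} on $A$ and the growth $|\tilde{h}_t^{(j)}(r)|\lesssim t^{-j/2}\tilde{h}_{2t}(r)$, each derivative contributes a factor $t^{-1/2}$, after which the argument of (a) concludes. The pure $\partial_{\mathbf{x}}^\alpha$-case then follows from the symmetry $h_t(\mathbf{x},\mathbf{y})=h_t(\mathbf{y},\mathbf{x})$. For genuinely mixed derivatives I plan to use the semigroup factorization
\[
\partial_t^m\partial_{\mathbf{x}}^\alpha\partial_{\mathbf{y}}^\beta h_t(\mathbf{x},\mathbf{y})=\int_{\mathbb{R}^N}\bigl(\partial_{\mathbf{x}}^\alpha h_{t/2}(\mathbf{x},\mathbf{z})\bigr)\bigl(\partial_t^m\partial_{\mathbf{y}}^\beta h_{t/2}(\mathbf{z},\mathbf{y})\bigr)\,dw(\mathbf{z}),
\]
bound each factor by the pure-derivative case, and carry out the Dunkl convolution of Gaussian/volume bounds using $d(\mathbf{x},\mathbf{y})^2\le 2d(\mathbf{x},\mathbf{z})^2+2d(\mathbf{z},\mathbf{y})^2$ together with doubling.

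Part (b) will then fall out of (d) with $|\beta|=1$ by the mean value theorem on the segment $[\mathbf{y},\mathbf{y}']$: under the hypothesis $\|\mathbf{y}-\mathbf{y}'\|<\sqrt{t}$, doubling gives $V(\mathbf{x},\mathbf{y}_s,\sqrt{t})\sim V(\mathbf{x},\mathbf{y},\sqrt{t})$, while $d(\mathbf{x},\mathbf{y}_s)\ge d(\mathbf{x},\mathbf{y})-\sqrt{t}$ yields $e^{-c\,d(\mathbf{x},\mathbf{y}_s)^2/t}\lesssim e^{-c'd(\mathbf{x},\mathbf{y})^2/t}$ uniformly in $s\in[0,1]$.

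Finally, for (c) I would split $T_{\xi,\mathbf{x}}=\partial_{\xi,\mathbf{x}}+\sum_{\alpha\in R^+}k(\alpha)\langle\alpha,\xi\rangle\,(I-\sigma_\alpha^{\mathbf{x}})/\langle\alpha,\mathbf{x}\rangle$; the local derivative is bounded by (d) applied to $\partial_t^m h_t$. Each difference quotient is handled by a dichotomy on $|\langle\alpha,\mathbf{x}\rangle|$: if $|\langle\alpha,\mathbf{x}\rangle|\ge\sqrt{t}$ I estimate numerator and denominator separately using (a) together with the $G$-invariances $d(\sigma_\alpha\mathbf{x},\mathbf{y})=d(\mathbf{x},\mathbf{y})$ and $V(\sigma_\alpha\mathbf{x},\mathbf{y},\sqrt{t})=V(\mathbf{x},\mathbf{y},\sqrt{t})$; if $|\langle\alpha,\mathbf{x}\rangle|<\sqrt{t}$ the mean value theorem on $[\sigma_\alpha\mathbf{x},\mathbf{x}]$ (of length $\sqrt{2}|\langle\alpha,\mathbf{x}\rangle|<\sqrt{2t}$) rewrites the quotient as $\int_0^1\partial_\alpha^{\mathbf{x}}\partial_t^m h_t(\mathbf{x}_s,\mathbf{y})\,ds$, and (d) combined with doubling finishes the argument. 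The main obstacle will be the bookkeeping in (c)---isolating the correct threshold $\sqrt{t}$ for $|\langle\alpha,\mathbf{x}\rangle|$ and verifying that transferring the Gaussian factor from $\mathbf{x}_s$ back to $\mathbf{x}$ in the orbit metric costs only a multiplicative constant (which is automatic in the near-diagonal regime $d(\mathbf{x},\mathbf{y})\lesssim\sqrt{t}$ and follows from $d(\mathbf{x}_s,\mathbf{y})\ge d(\mathbf{x},\mathbf{y})/2$ otherwise); everything else reduces to Rösler's formula and the polynomial-to-$V^{-1}$ machinery already set up in Corollary \ref{translation2}.
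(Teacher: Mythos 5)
Your proposal is correct and follows essentially the same route as the paper: R\"osler's radial translation formula together with the properties \eqref{A1}--\eqref{A3} of $A(\mathbf{x},\mathbf{y},\eta)$, the Gaussian splitting for (a), the semigroup identity for the mixed derivatives in (d), and the mean-value-plus-comparison arguments (transfer of $V$ and of the Gaussian factor from $\mathbf{y}_s$, resp.\ $\mathbf{x}_s$, back to $\mathbf{y}$, resp.\ $\mathbf{x}$) for (b) and (c). The only point where you do more work than necessary is the convolution step in (d): rather than convolving the final bounds $V^{-1}e^{-c\,d^2/t}$ (which needs a standard but nontrivial Chapman--Kolmogorov-type lemma on the orbit space), the paper keeps the intermediate pure-derivative bounds in the form $C\,t^{-m-|\beta|/2}\,h_{2t}(\mathbf{x},\mathbf{y})$, so that the integral over $\mathbf{z}$ collapses by the semigroup property of the heat kernel itself.
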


\begin{proof}
{The proof relies on the expression
\begin{equation}\label{Expression2HeatKernel}
h_t(\mathbf{x},\mathbf{y})=\!\int_{\mathbb{R}^N}\!
\tilde{h}_t\bigl(A(\mathbf{x},\mathbf{y},\eta)\hspace{-.25mm}\bigr)\hspace{.5mm}d\mu_{\mathbf{x}}(\eta)
\end{equation}
and on the properties of \hspace{.5mm}$A(\mathbf{x},\mathbf{y},\eta)$\hspace{.25mm}.
\par\noindent
(a) Consider first the case \,$m\hspace{-.5mm}=\hspace{-.5mm}0$\hspace{.25mm}. By scaling we can reduce to \,$t\hspace{-.5mm}=\!1$. On the one hand, we use \eqref{A1} to estimate}
\begin{equation*}\begin{split}
c_k\,2^{\,\mathbf{N}\slash 2}\,h_1(\mathbf{x},\mathbf{y})
&=\int_{\mathbb{R}^N}e^{-A(\mathbf{x},\mathbf{y},\eta)^2\slash 8}\,e^{-A(\mathbf{x},\mathbf{y},\eta)^2\slash 8}\,d\mu_{\mathbf{x}}(\eta)\\
&{\le e^{-d(\mathbf{x},\mathbf{y})^2\slash 8}}\int_{\mathbb{R}^N}e^{-A(\mathbf{x},\mathbf{y},\eta)^2\slash 8}\,d\mu_{\mathbf{x}}(\eta)\,.
\end{split}\end{equation*}
{On the other hand, it follows from Proposition \ref{translation1} and Corollary \ref{translation2} that}
\begin{equation*}
\int_{\mathbb{R}^N}e^{\,-\,c\,A(\mathbf{x},\mathbf{y},\eta)^2}d\mu_{\mathbf{x}}(\eta)\lesssim V(\mathbf{x},\mathbf{y},{1})^{-1}\,{,}
\end{equation*}
{for any fixed \hspace{.5mm}$c\hspace{-.5mm}>\hspace{-.5mm}0$\hspace{.5mm}. Hence
$$
h_1(\mathbf{x},\mathbf{y})\lesssim V(\mathbf{x},\mathbf{y},1)^{-1}\,e^{-\,d(\mathbf{x},\mathbf{y})^2\slash 8}\,.
$$
Consider next the case \,$m\hspace{-.5mm}>\hspace{-.5mm}0$\hspace{.25mm}. Observe that \hspace{.5mm}$\partial_t^{\hspace{.25mm}m}\tilde{h}_t(x)$ is equal to \hspace{.5mm}$t^{-m}\hspace{.5mm}e^{-\hspace{.25mm}x^2\hspace{-.25mm}/4\hspace{.25mm}t}$ times a polynomial in $\frac{x^2}t$, hence}
\begin{equation}\label{eq41}
\bigl|\hspace{.25mm}{\partial_t^m}\hspace{.25mm}\tilde{h}_t(x)\bigr|\leq C_m\,t^{-m}\,\tilde{h}_{2t}(x)\,.
\end{equation}
{By differentiating \eqref{Expression2HeatKernel} and by using \eqref{eq41}, we deduce that
\begin{equation*}
\bigl|\hspace{.25mm}{\partial_t^m}\hspace{.25mm}h_t(\mathbf{x},\mathbf{y})\bigr|\leq C_m\,t^{-m}\,h_{2t}(\mathbf{x},\mathbf{y})\,.
\end{equation*}
We conclude by using the case \,$m\hspace{-.5mm}=\hspace{-.5mm}0$\hspace{.25mm}.
\par\noindent
(b) Observe now that \hspace{.5mm}$\tilde{\mathfrak{h}}_t(x)\hspace{-.5mm}=\hspace{-.25mm}\partial_x\hspace{.25mm}\partial_t^m\tilde{h}_t(x)$ is equal to \hspace{.5mm}$\frac x{t^{m+1}}\hspace{.5mm}e^{-\hspace{.25mm}x^2\hspace{-.25mm}/4\hspace{.25mm}t}$ times a polynomial in $\frac{x^2}t$, hence}
\begin{equation}\label{dervmix}
\bigl|\hspace{.25mm}{\tilde{\mathfrak{h}}_t}(x)\bigr|\leq C_m\,t^{-m-1\slash 2}\,\tilde{h}_{2t}(x)\,.
\end{equation}
{By differentiating \eqref{Expression2HeatKernel} and by using \eqref{A3} and \eqref{Gauss}, we estimate}
\begin{equation*}\begin{split}
|{\partial_t^mh_t}(\mathbf{x},\mathbf{y})-{\partial_t^mh_t}(\mathbf{x},\mathbf{y}')|
&=\Bigl|\int_{\mathbb{R}^N}\bigl\{{\partial_t^m\tilde{h}_t}(A(\mathbf{x},\mathbf{y},\eta))-{\partial_t^m\tilde{h}_t}(A(\mathbf{x},\mathbf{y}',\eta))\bigr\}d\mu_{\mathbf{x}}(\eta)\Bigr|\\
& =\Bigl|\int_{\mathbb{R}^N}\int_0^1{\frac{\partial}{\partial s}\partial_t^m\tilde{h}_t}(A(\mathbf{x},{\underbrace{\mathbf{y}'\hspace{-1mm}+\hspace{-.5mm}s(\mathbf{y}\!-\hspace{-.5mm}\mathbf{y}')}_{\mathbf{y}_{\hspace{-.25mm}s}}},\eta))\,ds\,d\mu_{\mathbf{x}}(\eta)\Bigr|\\
&\le{\|}\mathbf{y}\!-\hspace{-.5mm}\mathbf{y}'{\|}\int_0^1\!
 \int_{\mathbb{R}^N}\,\bigl|\hspace{.25mm}{\tilde{\mathfrak{h}}_t}(A(\mathbf{x},\mathbf{y}_{\hspace{-.25mm}s},\eta))
 \bigr| \,d\mu_{\mathbf{x}}(\eta) \,ds\\
&\le{C_m\,t^{-m}}\,\frac{{\|}\mathbf{y}\!-\hspace{-.5mm}\mathbf{y}'{\|}}{\sqrt{t\,}}
 \int_0^1h_{2t}(\mathbf{x},\mathbf{y}_{\hspace{-.25mm}s})\,ds\\
&\le{C_m^{\hspace{.25mm}\prime}\,t^{-m}}\,\frac{{\|}\mathbf{y}\!-\hspace{-.5mm}\mathbf{y}'{\|}}{\sqrt{t\,}}\int_0^1{V(\mathbf{x},\mathbf{y}_{\hspace{-.25mm}s},\!\sqrt{2t\,})\,e^{-\hspace{.25mm}c\hspace{.25mm}\frac{d(\mathbf{x},\hspace{.25mm}\mathbf{y}_{\hspace{-.25mm}s})^2}{2\hspace{.25mm}t}}}\,ds\,.
\end{split}\end{equation*}
{In order to conclude, notice that
\begin{equation}\label{comparison1}
V(\mathbf{x},\mathbf{y}_{\hspace{-.25mm}s},\!\sqrt{2\hspace{.25mm}t\,})\sim V(\mathbf{x},\mathbf{y},\!\sqrt{t\,})
\end{equation}
under the assumption \hspace{.25mm}${\|}\mathbf{y}\!-\hspace{-.5mm}\mathbf{y}'{\|}\hspace{-.5mm}<\!\sqrt{t\,}$} {and let us show that, for every \hspace{.25mm}$c\hspace{-.5mm}>\hspace{-.5mm}0$\hspace{.25mm}, there exists \hspace{.25mm}$C\hspace{-.5mm}\ge\!1$ \hspace{.25mm}such that
\begin{equation}\label{comparison2}
C^{-1}\hspace{.5mm}e^{-\frac32\hspace{.25mm}c\hspace{.5mm}\frac{d(\mathbf{x},\hspace{.25mm}\mathbf{y})^2}t}\hspace{-.25mm}\le e^{-\hspace{.25mm}c\hspace{.5mm}\frac{d(\mathbf{x},\hspace{.25mm}\mathbf{y}_{\hspace{-.25mm}s})^2}t}\hspace{-.25mm}\le C\,e^{-\frac12\hspace{.25mm}c\hspace{.5mm}\frac{d(\mathbf{x},\hspace{.25mm}\mathbf{y})^2}t}.
\end{equation}
As long as $d(\mathbf{x},\mathbf{y})=\text{O}(\sqrt{t\,})$, all expressions in \eqref{comparison2} are indeed comparable to $1$. On the other hand, if $d(\mathbf{x},\mathbf{y})\ge\sqrt{32\hspace{.5mm}t\,}$, then
\begin{equation*}\begin{split}
|d(\mathbf{x},\mathbf{y})^2\!-d(\mathbf{x},\mathbf{y}_{\hspace{-.25mm}s})^2|
&=|d(\mathbf{x},\mathbf{y})-d(\mathbf{x},\mathbf{y}_{\hspace{-.25mm}s})|\,
\{d(\mathbf{x},\mathbf{y})+d(\mathbf{x},\mathbf{y}_{\hspace{-.25mm}s})\}\\
&\le\|\mathbf{y}\hspace{-.5mm}-\hspace{-.25mm}\mathbf{y}_{\hspace{-.25mm}s}\|\,
\{2\,d(\mathbf{x},\mathbf{y})+\|\mathbf{y}\hspace{-.5mm}-\hspace{-.25mm}\mathbf{y}_{\hspace{-.25mm}s}\|\}
\le\sqrt{2\hspace{.5mm}t\,}\{2\,d(\mathbf{x},\mathbf{y})+\sqrt{2\hspace{.5mm}t\hspace{.5mm}}\}\\
&\le\sqrt{8\hspace{.5mm}t\,}d(\mathbf{x},\mathbf{y})+2\hspace{.5mm}t
\le\frac12\,d(\mathbf{x},\mathbf{y})^2\!+2\hspace{.5mm}t\,.
\end{split}\end{equation*}
Hence
\begin{equation*}
\frac12\,d(\mathbf{x},\mathbf{y})^2/t-2\le d(\mathbf{x},\mathbf{y}_{\hspace{-.25mm}s})^2/t\le\frac32\,d(\mathbf{x},\mathbf{y})^2/t+2\,.
\end{equation*}}
\par\noindent
{(c) By symmetry, we can replace $T_{\xi,\mathbf{x}}$ by $T_{\xi,\mathbf{y}}$. Consider first the contribution of the directional derivative in $T_{\xi,\mathbf{y}}$. By differentiating \eqref{Expression2HeatKernel} and by using \eqref{dervmix} and \eqref{Gauss}, we estimate as above}
\begin{equation*}\begin{split}
|\partial_{{\xi},\mathbf{y}}{\partial_t^m}h_t(\mathbf{x},\mathbf{y})|
&\leq{\|\xi\|}\int_{\mathbb{R}^N}|{\tilde{\mathfrak{h}}}_t(A(\mathbf{x},\mathbf{y},\eta))|\,d\mu_{\mathbf{x}}(\eta)\\
&\leq{C}\,t^{-m-1\slash 2}\,h_{2t}(\mathbf{x},\mathbf{y})\\
&\leq C\,t^{-m-1\slash 2}\,V(\mathbf{x},\mathbf{y},\!\sqrt{t\,})^{-1}\,e^{-\hspace{.25mm}c\hspace{.5mm}d(\mathbf{x},\mathbf{y})^2\slash t}.
\end{split}\end{equation*}
{Consider next the contributions}
\begin{equation}\label{eq44}
 \frac{{\partial_t^m}h_t(\mathbf{x},\mathbf{y})-{\partial_t^m}h_t(\mathbf{x},
 \sigma_\alpha{(}\mathbf{y}{)})}{\langle \alpha, \mathbf{y}\rangle}
\end{equation}
{of the difference operators in $T_{{\xi},\mathbf{y}}$}.
If \,$|\langle \alpha,\mathbf{y}\rangle|\!>\hspace{-.5mm}\sqrt{t{/2}\hspace{.5mm}}$, {we use \eqref{Gauss} and estimate separately each term in \eqref{eq44}}.
If $|\langle \alpha,\mathbf{y}\rangle|\le\sqrt{t{/2}\hspace{.5mm}}$,
{we estimate again}
\begin{equation*}\begin{split}
\Bigl|\frac{{\partial_t^m}h_t(\mathbf{x},\mathbf{y})-{\partial_t^m}h_t(\mathbf{x},\sigma_\alpha{(}\mathbf{y}{)})}{\langle \alpha, \mathbf{y}\rangle}\Bigr|
&\le\sqrt{2\hspace{.5mm}}\int_{\mathbb{R}^N}\int_0^1|{\tilde{\mathfrak{h}}}_t(A(\mathbf{x},\mathbf{y}_{\hspace{-.25mm}s},\eta))|ds\, d\mu_{\mathbf{x}}(\eta)\\
&\le{C}\,t^{-m-1\slash 2}\int_0^1h_{2t}(\mathbf{x},\mathbf{y}_{\hspace{-.25mm}s})\,ds\\
&\le{C}\,t^{-m-1\slash 2}\int_0^1V(\mathbf{x},\mathbf{y}_{\hspace{-.25mm}s},\!\sqrt{{2}\hspace{.25mm}t\,})^{-1}\,e^{-\hspace{.25mm}c\hspace{.5mm}\frac{d(\mathbf{x},\hspace{.25mm}\mathbf{y}_{\hspace{-.25mm}s})^2}{{2}\hspace{.25mm}t}}\,ds\\
&{\le C\,t^{-m-1\slash 2}\,V(\mathbf{x},\mathbf{y},\!\sqrt{t\,})^{-1}\,e^{-\hspace{.25mm}c\hspace{.25mm}\frac{d(\mathbf{x},\hspace{.25mm}\mathbf{y})^2}t}.}
\end{split}\end{equation*}
{In the last step we have used \eqref{comparison1} and \eqref{comparison2}, which hold as \hspace{.5mm}$\|\mathbf{y}_{\hspace{-.25mm}s}\!-\hspace{-.5mm}\mathbf{y}\|\!\le\!\sqrt{t\,}$.}
\par\noindent
{(d) This time, we use \eqref{A2} to estimate
\begin{equation}\label{eq45}
\bigl|\hspace{.25mm}\partial_{\mathbf{y}}^\beta\partial_t^m\tilde{h}_t\bigl(A(\mathbf{x},\mathbf{y},\eta)\hspace{-.25mm}\bigr)\bigr|\le C_{m,\beta}\,t^{-m-\frac{|\beta|}2}\,\tilde{h}_{2t}\bigl(A(\mathbf{x},\mathbf{y},\eta)\hspace{-.25mm}\bigr)\,.
\end{equation}
Firstly, by differentiating \eqref{Expression2HeatKernel} and by using \eqref{eq45},
we obtain
\begin{equation}\label{eq43}
\bigl|\hspace{.25mm}\partial_t^m\partial_{\mathbf{y}}^\beta\hspace{.25mm}h_t(\mathbf{x},\mathbf{y})\bigr|\le C_{m,\beta}\,t^{-m-\frac{|\beta|}2}\,h_{2t}(\mathbf{x},\mathbf{y})\,.
\end{equation}
Secondly, by differentiating
\begin{equation*}
h_t(\mathbf{x},\mathbf{y})\,=\int_{\mathbb{R}^N}\!h_{t/2}(\mathbf{x},\mathbf{z})\,h_{t/2}(\mathbf{z},\mathbf{y})\,dw(\mathbf{z})\,,
\end{equation*}
by using \eqref{eq43} and by symmetry, we get
$$
\bigl|\hspace{.25mm}\partial_t^m\partial_{\mathbf{x}}^\alpha\partial_{\mathbf{y}}^\beta\hspace{.25mm}h_t(\mathbf{x},\mathbf{y})\bigr|\le C_{m,\alpha,\beta}\,t^{-m-\frac{|\alpha|}2-\frac{|\beta|}2}\,h_{2t}(\mathbf{x},\mathbf{y})\,.
$$
We conclude by using \eqref{Gauss}.}
\end{proof}
\smallskip

\textbf{Lower heat kernel estimates.}
We begin with an auxiliary result.

\begin{lemma}\label{auxiliarylemma}
Let $\tilde{f}$ be a smooth bump function on $\mathbb{R}$ such that $0\le\tilde{f}\le 1$, $\tilde f(x)=1$ if $|x|\le\frac12$ and $\tilde f(x)=0$ if $|x|\ge1$. Set as usual
\begin{equation*}
f(\mathbf{x})=\tilde{f}(\|\mathbf{x}\|)
\quad\text{and}\quad
f(\mathbf{x},\mathbf{y})=\tau_{\mathbf{x}}f(-\mathbf{y}).
\end{equation*}
Then $0\leq f(\mathbf{x},\mathbf{y})\leq 1$ and $f(\mathbf{x},\mathbf{y})=0$ if $d(\mathbf{x},\mathbf{y})\ge1$. Moreover there exists a positive constant $c_1$ such that
\begin{equation}\label{lowerestimatef}
{\sup_{\mathbf{y}\in\mathcal{O}(B(\mathbf{x},1)}}f(\mathbf{x},\mathbf{y})
\ge\frac{c_1}{w(B(\mathbf{x},1))}
\end{equation}
for every $\mathbf{x}\in\mathbb{R}^N$.
\end{lemma}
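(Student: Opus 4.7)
The plan is to exploit Rösler's explicit formula \eqref{translation-radial}, which here reads
\begin{equation*}
f(\mathbf{x},\mathbf{y})=\tau_{\mathbf{x}}f(-\mathbf{y})=\int_{\mathbb{R}^N}\tilde{f}\bigl(A(\mathbf{x},\mathbf{y},\eta)\bigr)\,d\mu_{\mathbf{x}}(\eta),
\end{equation*}
together with the elementary lower bound $A(\mathbf{x},\mathbf{y},\eta)\ge d(\mathbf{x},\mathbf{y})$ from \eqref{A1} and the integral preservation property of Dunkl translations. All three claims follow from these ingredients in a direct way.

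First, since $\mu_{\mathbf{x}}$ is a probability measure and $0\le\tilde{f}\le1$, the pointwise bounds $0\le f(\mathbf{x},\mathbf{y})\le1$ are immediate. Next, if $d(\mathbf{x},\mathbf{y})\ge1$, then \eqref{A1} forces $A(\mathbf{x},\mathbf{y},\eta)\ge1$ for every $\eta\in\operatorname{conv}\mathcal{O}(\mathbf{x})$, hence $\tilde{f}(A(\mathbf{x},\mathbf{y},\eta))=0$ by the support assumption on $\tilde{f}$, giving $f(\mathbf{x},\mathbf{y})=0$. Equivalently, $\operatorname{supp}f(\mathbf{x},\cdot)\subset\mathcal{O}(B(\mathbf{x},1))$.

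For the lower bound \eqref{lowerestimatef}, I would invoke the invariance
\begin{equation*}
\int_{\mathbb{R}^N}\tau_{\mathbf{x}}g(\mathbf{y})\,dw(\mathbf{y})=\int_{\mathbb{R}^N}g(\mathbf{y})\,dw(\mathbf{y})
\end{equation*}
listed among the properties of Dunkl translations. Applying this to $g(\mathbf{y})=f(-\mathbf{y})=f(\mathbf{y})$ (using that $f$ is radial) and using that $w$ is invariant under $\mathbf{y}\mapsto-\mathbf{y}$, one obtains the positive constant
\begin{equation*}
c_0:=\int_{\mathbb{R}^N}f(\mathbf{x},\mathbf{y})\,dw(\mathbf{y})=\int_{\mathbb{R}^N}f(\mathbf{y})\,dw(\mathbf{y})>0,
\end{equation*}
which is independent of $\mathbf{x}$. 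Since $f(\mathbf{x},\cdot)$ is supported in $\mathcal{O}(B(\mathbf{x},1))$, the elementary bound $w(\mathcal{O}(B(\mathbf{x},1)))\le|G|\,w(B(\mathbf{x},1))$ then yields
\begin{equation*}
c_0=\int_{\mathcal{O}(B(\mathbf{x},1))}f(\mathbf{x},\mathbf{y})\,dw(\mathbf{y})\le\Bigl(\sup_{\mathbf{y}\in\mathcal{O}(B(\mathbf{x},1))}f(\mathbf{x},\mathbf{y})\Bigr)\,|G|\,w(B(\mathbf{x},1)),
\end{equation*}
from which \eqref{lowerestimatef} follows with $c_1=c_0/|G|$.

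There is no serious obstacle here; the proof is essentially a bookkeeping exercise once Rösler's formula, the inequality \eqref{A1}, the invariance of the $w$-integral under $\tau_{\mathbf{x}}$, and the orbit volume bound are in hand. The only point that requires a small check is the reduction $\int\tau_{\mathbf{x}}f(-\mathbf{y})\,dw(\mathbf{y})=\int f(\mathbf{y})\,dw(\mathbf{y})$, which uses both the radiality of $f$ and the $G$-invariance (in particular the parity) of the measure $dw$.
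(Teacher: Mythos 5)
Your proposal is correct and follows essentially the same route as the paper: the first two claims from R\"osler's formula \eqref{translation-radial} together with \eqref{A1}, and the lower bound \eqref{lowerestimatef} by comparing $\int f(\mathbf{x},\mathbf{y})\,dw(\mathbf{y})=\int f\,dw>0$ with the bound $|G|\,w(B(\mathbf{x},1))\sup_{\mathbf{y}}f(\mathbf{x},\mathbf{y})$ coming from the support in $\mathcal{O}(B(\mathbf{x},1))$. The paper merely makes your constant $c_0$ explicit as $w(B(0,1/2))$, so $c_1=w(B(0,1/2))/|G|$.
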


\begin{proof}
All claims follow from \eqref{translation-radial} and \eqref{A1}.
Let us prove the last one.
On the one hand, by translation invariance,
\begin{equation*}
\int_{\mathbb{R}^N}f(\mathbf{x},\mathbf{y})\,dw(\mathbf{y})
=\int_{\mathbb{R}^N}f(\mathbf{y})\,d w(\mathbf{y})
\ge w(B(0,1\slash 2)).
\end{equation*}
On the other hand,
\begin{equation*}
\int_{\mathbb{R}^N}f(\mathbf{x},\mathbf{y})\,dw(\mathbf{y})
=\int_{\mathcal{O}(B(\mathbf{x},1))}f(\mathbf{x},\mathbf{y})\,d w(\mathbf{y})
\le|G|\,w(B(\mathbf{x},1))\sup_{\mathbf{y}\in \mathcal{O}( B(\mathbf{x},1))}f(\mathbf{x},\mathbf{y}).
\end{equation*}
This proves \eqref{lowerestimatef} with $c_1=\frac{w(B(0,1\slash 2))}{|G|}$.
\end{proof}

\begin{proposition}\label{LowerBoundNearDiagonal}
There exist positive constants $c_2$ and $\varepsilon$ such that
\begin{equation*}
h_t(\mathbf{x},\mathbf{y})\geq\frac{c_2}{w(B(\mathbf{x},\!\sqrt{t\,}))}
\end{equation*}
for every \,$t>0$ and \,$\mathbf{x},\mathbf{y}\in\mathbb{R}^N$ satisfying $\|\mathbf{x}-\mathbf{y}\|\le\varepsilon\sqrt{t\,}$.
\end{proposition}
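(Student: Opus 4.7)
My plan follows the standard two-step route for Gaussian lower bounds in a doubling heat-kernel setting: first establish an on-diagonal estimate $h_1(\mathbf{x},\mathbf{x}) \ge c\, w(B(\mathbf{x},1))^{-1}$, then propagate it to a Euclidean $\varepsilon$-neighborhood using the H\"older continuity \eqref{Holder}. Reading off the scaling
\[
h_t(\mathbf{x},\mathbf{y}) = t^{-\mathbf{N}/2}\, h_1(\mathbf{x}/\sqrt{t},\mathbf{y}/\sqrt{t})
\]
from \eqref{Expression1HeatKernel}, together with the homogeneity $w(B(\mathbf{x},\sqrt{t})) = t^{\mathbf{N}/2}\, w(B(\mathbf{x}/\sqrt{t},1))$, I reduce to the case $t=1$.

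For the on-diagonal bound I would combine conservation of mass with Cauchy--Schwarz. The upper bound \eqref{Gauss}, together with annular decomposition on the orbit distance $d(\mathbf{x},\mathbf{z})$ and the volume ratio estimate \eqref{growth}, yields $\int_{d(\mathbf{x},\mathbf{z})>R} h_1(\mathbf{x},\mathbf{z})\, dw(\mathbf{z}) \le \tfrac{1}{2}$ for some $R$ independent of $\mathbf{x}$. Since $\int h_1(\mathbf{x},\mathbf{z})\, dw(\mathbf{z}) = 1$, the mass in $\mathcal{O}(B(\mathbf{x},R))$ is at least $\tfrac{1}{2}$, and Cauchy--Schwarz gives
\[
\tfrac{1}{4} \,\le\, w\bigl(\mathcal{O}(B(\mathbf{x},R))\bigr) \int_{\mathbb{R}^N} h_1(\mathbf{x},\mathbf{z})^2\, dw(\mathbf{z}) \,=\, w\bigl(\mathcal{O}(B(\mathbf{x},R))\bigr) \cdot h_2(\mathbf{x},\mathbf{x}),
\]
where the last equality uses the symmetry $h_1(\mathbf{x},\mathbf{z})=h_1(\mathbf{z},\mathbf{x})$ and the semigroup identity. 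Since $w(\mathcal{O}(B(\mathbf{x},R))) \le |G|\, w(B(\mathbf{x},R)) \lesssim w(B(\mathbf{x},1))$ by doubling, this delivers $h_2(\mathbf{x},\mathbf{x}) \ge c\, w(B(\mathbf{x},1))^{-1}$. Running the very same argument at time $\tfrac{1}{2}$ produces $h_1(\mathbf{x},\mathbf{x}) \ge c'\, w(B(\mathbf{x},1))^{-1}$.

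For the propagation step, I would invoke \eqref{Holder} with $m=0$, $\mathbf{y}'=\mathbf{x}$, and $\|\mathbf{x}-\mathbf{y}\| \le \varepsilon \le \tfrac{1}{2}$. Under this constraint $V(\mathbf{x},\mathbf{y},1) \sim w(B(\mathbf{x},1))$ by doubling, so $|h_1(\mathbf{x},\mathbf{y}) - h_1(\mathbf{x},\mathbf{x})| \le C\varepsilon\, w(B(\mathbf{x},1))^{-1}$. Choosing $\varepsilon$ so small that $C\varepsilon < c'/2$ yields the claim with $c_2 = c'/2$. The main obstacle is the uniform-in-$\mathbf{x}$ tail estimate in the first step: it hinges on the fact that the exponential factor in \eqref{Gauss} involves the orbit distance $d(\mathbf{x},\mathbf{z})$ rather than merely the Euclidean distance, combined with the mild volume growth \eqref{growth}, and this is the only point where the geometry of the Weyl group orbits enters non-trivially. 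Lemma \ref{auxiliarylemma} is not needed on this route, but could underpin an alternative argument carrying the pointwise bump lower bound $\sup_{\mathbf{y}} f(\mathbf{x},\mathbf{y}) \ge c_1\, w(B(\mathbf{x},1))^{-1}$ through the heat flow.
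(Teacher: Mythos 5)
Your proposal is correct, and it replaces the paper's key first step by a genuinely different (and more classical) argument. The paper obtains the initial lower bound from Lemma \ref{auxiliarylemma}: R\"osler's radial product formula produces a point $\mathbf{y}(\mathbf{x})\in\mathcal{O}(B(\mathbf{x},1))$ — possibly Euclidean-far from $\mathbf{x}$, near some $\sigma(\mathbf{x})$ — where $h_1(\mathbf{x},\cdot)\gtrsim w(B(\mathbf{x},1))^{-1}$, and then uses \eqref{Holder} together with the identity $h_2(\mathbf{x},\mathbf{x})=\int h_1(\mathbf{x},\mathbf{y})^2\,dw(\mathbf{y})$ to transfer this bound back to the diagonal. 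You instead get the on-diagonal bound directly by the conservation-of-mass route: the Gaussian upper bound \eqref{Gauss} (with the orbit distance in the exponent) plus the volume growth \eqref{growth} give a uniform tail estimate, so at least half the mass of $h_1(\mathbf{x},\cdot)$ sits in $\mathcal{O}(B(\mathbf{x},R))$, and Cauchy--Schwarz combined with the same semigroup identity yields $h_2(\mathbf{x},\mathbf{x})\gtrsim w(\mathcal{O}(B(\mathbf{x},R)))^{-1}\gtrsim w(B(\mathbf{x},1))^{-1}$; both proofs then finish identically with \eqref{Holder}. Your route is the standard one for symmetric heat semigroups on doubling spaces and avoids Lemma \ref{auxiliarylemma} entirely, at the price of relying on the full upper bound \eqref{Gauss} — which is available, being Theorem \ref{theoremGauss}(a) — whereas the paper's route only needs positivity and support information for translates of radial bump functions. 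The one point worth being careful about, which you correctly flag, is that the annular decomposition in the tail estimate must be taken with respect to $d(\mathbf{x},\mathbf{z})$, using $w(\mathcal{O}(B(\mathbf{x},r)))\le|G|\,w(B(\mathbf{x},r))$; with that, the argument is complete.
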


\begin{proof}
 By scaling it suffices to prove the proposition for $t=2$.
According to Lemma \ref{auxiliarylemma}, applied to $\tilde{h}_1\gtrsim\tilde{f}$ {(\footnote{\,As usual, the symbol $\gtrsim$ means that there exists a constant $C\!>\hspace{-.5mm}0$ such that $\tilde{h}_1\!\ge\hspace{-.5mm}C\tilde{f}$.})},
there exists $c_3>0$ and, for every $\mathbf{x}\in\mathbb{R}^N$,
there exists $\mathbf{y}(\mathbf{x})\in\mathcal{O}(B(\mathbf{x},1))$
such that
\begin{equation*}
h_1(\mathbf{x},\mathbf{y}(\mathbf{x}))\geq c_3\,w(B(\mathbf{x},1))^{-1}.
\end{equation*}
This estimate holds true around $\mathbf{y}(\mathbf{x})$, according to \eqref{Holder},
Specifically, there exists $0<\varepsilon<1$ (independent of $\mathbf{x}$)
such that
\begin{equation*}
h_1(\mathbf{x},\mathbf{y})\geq\tfrac{c_3}2\,w(B(\mathbf{x},1))^{-1}
\qquad\forall\;\mathbf{y}\in B(\mathbf{y}(\mathbf{x}),\varepsilon).
\end{equation*}
By using the semigroup property and the symmetry of the heat kernel,
we deduce that
\begin{equation*}\begin{split}
h_2(\mathbf{x},\mathbf{x})
&=\int h_1(\mathbf{x},\mathbf{y})\,h_1(\mathbf{y},\mathbf{x})\,
dw(\mathbf{y})\\
&\geq\int_{B(\mathbf{y}(\mathbf{x}),\varepsilon)}
h_1(\mathbf{x},\mathbf{y})^2\,dw(\mathbf{y})\\
&\geq w(B(\mathbf{y}(\mathbf{x}),\varepsilon)\,
(\tfrac{c_3}2)^2\,w(B(\mathbf{x},1))^{-2}.
\end{split}\end{equation*}
By using the fact that the balls $B(\mathbf{y}(\mathbf{x}),\varepsilon)$,
$B(\mathbf{x},1)$, $B(\mathbf{x},\sqrt{2})$ have comparable volumes
and by using again \eqref{Holder}, we conclude that
\begin{equation*}
h_2(\mathbf{x},\mathbf{y})\geq c_4\,w(B(\mathbf{x},\sqrt{2}))^{-1}
\end{equation*}
for all $\mathbf{x},\mathbf{y}\in\mathbb{R}^N$ sufficiently close.
\end{proof}

A standard argument, which we include for the reader's convenience,
allows us to deduce from such a near on diagonal estimate
the following global lower Gaussian bound.

\begin{theorem}
There exist positive constants $C$ and $c$ such that
\begin{equation}\label{gaussian_lower}
h_t(\mathbf{x},\mathbf{y})\geq\frac{C}{\min\hspace{.5mm}\{\hspace{.25mm}w(B(\mathbf{x},\!\sqrt{t\,})),w(B(\mathbf{y},\!\sqrt{t\,}))\}}\,e^{-\hspace{.25mm}c\hspace{.5mm}{\|}\mathbf{x}-\mathbf{y}{\|}^2\!\slash t}
\end{equation}
for every \,$t>0$ and for every \,$\mathbf{x},\mathbf{y}\in\mathbb{R}^N$.
\end{theorem}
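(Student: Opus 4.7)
The plan is a standard chaining argument along the Euclidean segment joining $\mathbf{x}$ to $\mathbf{y}$, combining the semigroup property of $h_t$, the near-diagonal lower bound of Proposition \ref{LowerBoundNearDiagonal}, and the doubling property \eqref{growth}. Fix parameters $\varepsilon_0\in(0,\varepsilon/2)$ and $\delta\in(0,\varepsilon/4)$, let $n$ be the least positive integer with $n\ge\|\mathbf{x}-\mathbf{y}\|^2/(\varepsilon_0^2\hspace{.25mm}t)$, so that $n\sim\max\bigl\{1,\|\mathbf{x}-\mathbf{y}\|^2/t\bigr\}$, and place equally spaced waypoints $\bar{\mathbf{z}}_j=\mathbf{x}+\tfrac jn(\mathbf{y}-\mathbf{x})$ for $0\le j\le n$, which satisfy $\|\bar{\mathbf{z}}_j-\bar{\mathbf{z}}_{j+1}\|\le\varepsilon_0\sqrt{t/n}$.

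Iterating the semigroup identity and restricting each integration to $\mathbf{z}_j\in B(\bar{\mathbf{z}}_j,\delta\sqrt{t/n})$ for $1\le j\le n-1$ (with $\mathbf{z}_0=\mathbf{x}$, $\mathbf{z}_n=\mathbf{y}$), the triangle inequality yields $\|\mathbf{z}_j-\mathbf{z}_{j+1}\|\le(\varepsilon_0+2\delta)\sqrt{t/n}<\varepsilon\sqrt{t/n}$, so that Proposition \ref{LowerBoundNearDiagonal}, combined with the symmetry of $h_t$, gives $h_{t/n}(\mathbf{z}_j,\mathbf{z}_{j+1})\ge c_2/w(B(\mathbf{z}_{j+1},\sqrt{t/n}))$ throughout the restricted domain. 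Each integration contributes a factor $w(B(\bar{\mathbf{z}}_j,\delta\sqrt{t/n}))$, and since $\mathbf{z}_j\in B(\bar{\mathbf{z}}_j,\delta\sqrt{t/n})$, applying \eqref{growth} a bounded number of times (depending only on $\delta$) produces a constant $c_5\in(0,1)$, independent of $j,n,t,\mathbf{x},\mathbf{y}$, with $w(B(\bar{\mathbf{z}}_j,\delta\sqrt{t/n}))\ge c_5\,w(B(\mathbf{z}_j,\sqrt{t/n}))$. Pairing these $n-1$ volume factors against $n-1$ of the $n$ denominators, with the remaining denominator at $\mathbf{z}_n=\mathbf{y}$, I obtain
\begin{equation*}
h_t(\mathbf{x},\mathbf{y})\ge\frac{c_2\,(c_2 c_5)^{n-1}}{w(B(\mathbf{y},\sqrt{t/n}))}.
\end{equation*}

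Writing $(c_2 c_5)^{n-1}=e^{-(n-1)\,|\log(c_2 c_5)|}$ and using $n\sim\max\bigl\{1,\|\mathbf{x}-\mathbf{y}\|^2/t\bigr\}$ produces the exponential factor $e^{-c\|\mathbf{x}-\mathbf{y}\|^2/t}$ for some $c>0$. Since $\sqrt{t/n}\le\sqrt{t\,}$ gives $w(B(\mathbf{y},\sqrt{t/n}))\le w(B(\mathbf{y},\sqrt{t\,}))$, and chaining in the reverse direction yields the same bound with $\mathbf{x}$ in place of $\mathbf{y}$, taking the better of the two produces \eqref{gaussian_lower}. The main obstacle is the uniform bookkeeping of constants: calibrating $\varepsilon_0$ and $\delta$ once and for all so that every consecutive pair of restricted balls stays within the near-diagonal regime of Proposition \ref{LowerBoundNearDiagonal}, and ensuring that the doubling comparison $w(B(\bar{\mathbf{z}}_j,\delta\sqrt{t/n}))\sim w(B(\mathbf{z}_j,\sqrt{t/n}))$ holds with a multiplicative constant $c_5$ independent of $j,n$, both of which are guaranteed by the uniform doubling encoded in \eqref{growth}.
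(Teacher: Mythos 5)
Your proposal is correct and follows essentially the same route as the paper: a chaining argument along the Euclidean segment from $\mathbf{x}$ to $\mathbf{y}$ with $n\sim\max\{1,\|\mathbf{x}-\mathbf{y}\|^2/t\}$ steps, iterating the semigroup identity, applying the near-diagonal lower bound of Proposition \ref{LowerBoundNearDiagonal} on each link, and absorbing the volume ratios via the doubling property \eqref{growth} into a factor $c^{\hspace{.25mm}n}$ that becomes the Gaussian. The only differences are cosmetic bookkeeping (which endpoint carries the leftover volume denominator, and the explicit calibration of $\varepsilon_0,\delta$), so nothing further is needed.
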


\begin{proof}
{We resume the notation of Proposition \ref{LowerBoundNearDiagonal}.} Assume that ${\|}\mathbf{x}-\mathbf{y}{\|}^2/t\geq1$ and set $n=\lceil4{\|}\mathbf{x}-\mathbf{y}{\|}^2/(\varepsilon^2t)\rceil \geq4$.
{L}et $\mathbf{x}_i=\mathbf{x}+i(\mathbf{y}-\mathbf{x})/n$ ($i=0,\dots,n$), so that $\mathbf{x}_0=\mathbf{x}$, $\mathbf{x}_{n}=\mathbf{y}$, and ${\|}\mathbf{x}_{i+1}-\mathbf{x}_i{\|}={\|}\mathbf{x}-\mathbf{y}{\|}/n$. {Consider the balls} $B_i=B(\mathbf{x}_i,\frac{\varepsilon}4\sqrt{t/n})$ and observe that
$$
{\|}\mathbf{y}_{i+1}-\mathbf{y}_i{\|}
\leq{\|}\mathbf{y}_i-\mathbf{x}_i{\|}
+{\|}\mathbf{x}_i-\mathbf{x}_{i+1}{\|}
+{\|}\mathbf{x}_{i+1}-\mathbf{y}_{i+1}{\|}
<\frac{\varepsilon}4\sqrt{\frac tn}
+\frac{\varepsilon}2\sqrt{\frac tn}
+\frac{\varepsilon}4\sqrt{\frac tn}
=\varepsilon\sqrt{\frac tn}
$$
{if} $\mathbf{y}_i\in B_i$ and $\mathbf{y}_{i+1}\in B_{i+1}$.
By using the semigroup property, Proposition \ref{LowerBoundNearDiagonal} and the {behavior of the ball volume, we estimate}
\begin{equation*}\begin{split}
h_t (\mathbf{x},\mathbf{y})
&=\int_{\mathbb{R}^N}\dots\int_{\mathbb{R}^N}h_{t/n}(\mathbf{x},\mathbf{y}_1)h_{t/n}(\mathbf{y}_1,\mathbf{y}_2)\dots h_{t/n}(\mathbf{y}_{n-1},\mathbf{y})\, dw(\mathbf{y}_1)\dots dw(\mathbf{y}_{n-1})\\
&\geq c_2^{n-1}\int_{B_1}\dots\int_{B_{n-1}}w(B(\mathbf{x},\sqrt{t/n}))^{-1}\dots w(B(\mathbf{y}_{n-1},\sqrt{t/n}))^{-1}dw (\mathbf{y}_1)\dots dw(\mathbf{y}_{n-1})\\
&\geq c_3^{n-1}w(B(\mathbf{x},\sqrt{t/n}))^{-1}\frac{w(B_1)\dots w(B_{n-1})}{w(B(\mathbf{x}_1,\sqrt{t/n}))\dots w(B(\mathbf{x}_{n-1},\sqrt{t/n}))}\\
&\geq c_5^{n-1}w(B(\mathbf{x},\!\sqrt{t\,}))^{-1}
=c_5^{-1}w(B(\mathbf{x},\!\sqrt{t\,}))^{-1} e^{-n\ln c_5^{-1}}
\geq C\,w(B(\mathbf{x},\!\sqrt{t\,}))^{-1}e^{-c\frac{\|\mathbf{x}-\mathbf{y}\|^2}{t}}.
\end{split}\end{equation*}
We conclude by symmetry.
\end{proof}

{By combining \eqref{Gauss} and \eqref{gaussian_lower}, we obtain in particular the following near on diagonal estimates. Notice that the ball volumes $w(B(\mathbf{x},\!\sqrt{t\,}))$ and $w(B(\mathbf{y},\!\sqrt{t\,}))$ are comparable under the assumptions below.

\begin{corollary}\label{EstimateNearDiagonal}
For every \,$c>0$, there exists \,$C>0$ such that
\begin{equation*}
\frac{C^{-1}}{w(B(\mathbf{x},\!\sqrt{t\,}))}\leq h_t(\mathbf{x},\mathbf{y})\leq\frac{C}{w(B(\mathbf{x},\!\sqrt{t\,}))}
\end{equation*}
for every \,$t\hspace{-.5mm}>\hspace{-.5mm}0$ and \,$\mathbf{x},\mathbf{y}\!\in\hspace{-.5mm}\mathbb{R}^N$ such that \,$\|\mathbf{x}\hspace{-.5mm}-\hspace{-.5mm}\mathbf{y}\|\hspace{-.5mm}\le\hspace{-.5mm}c\hspace{.5mm}\sqrt{t\,}$.
\end{corollary}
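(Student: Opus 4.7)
The plan is to derive the corollary as a direct consequence of the Gaussian upper bound \eqref{Gauss} (with $m=0$) and the Gaussian lower bound \eqref{gaussian_lower}, together with the doubling property of $w$, which gives comparability of ball volumes under the assumption $\|\mathbf{x}-\mathbf{y}\|\le c\sqrt{t}$.

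For the upper bound, I would apply \eqref{Gauss} directly. Since $d(\mathbf{x},\mathbf{y})\le\|\mathbf{x}-\mathbf{y}\|\le c\sqrt{t\,}$, the Gaussian factor $e^{-c'\hspace{.25mm}d(\mathbf{x},\mathbf{y})^2/t}$ is bounded by $1$, so \eqref{Gauss} yields $h_t(\mathbf{x},\mathbf{y})\le C\,V(\mathbf{x},\mathbf{y},\sqrt{t\,})^{-1}\le C\,w(B(\mathbf{x},\sqrt{t\,}))^{-1}$, using merely $V(\mathbf{x},\mathbf{y},\sqrt{t\,})\ge w(B(\mathbf{x},\sqrt{t\,}))$.

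For the lower bound, I would apply \eqref{gaussian_lower}. Under the hypothesis $\|\mathbf{x}-\mathbf{y}\|\le c\sqrt{t\,}$, the exponential factor $e^{-c'\|\mathbf{x}-\mathbf{y}\|^2/t}\ge e^{-c'c^2}$ is a positive constant depending only on $c$. It remains to replace the denominator $\min\{w(B(\mathbf{x},\sqrt{t\,})),w(B(\mathbf{y},\sqrt{t\,}))\}$ by $w(B(\mathbf{x},\sqrt{t\,}))$. This is the only real content: since $B(\mathbf{y},\sqrt{t\,})\subset B(\mathbf{x},(1+c)\sqrt{t\,})$, iterating the doubling property $\lceil\log_2(1+c)\rceil$ times gives $w(B(\mathbf{y},\sqrt{t\,}))\le w(B(\mathbf{x},(1+c)\sqrt{t\,}))\le C_c\,w(B(\mathbf{x},\sqrt{t\,}))$, and by symmetry the reverse inequality holds as well. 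Hence both ball volumes are comparable with a constant depending only on $c$.

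Combining these two steps gives the claimed two-sided estimate. There is no real obstacle here; the corollary is essentially a bookkeeping consequence of Theorem \ref{theoremGauss}(a) and of the global lower Gaussian bound, together with doubling. The only point worth remarking upon in the writeup is that once the volumes are shown to be comparable in the near-diagonal regime, one could equally write the estimates with $w(B(\mathbf{y},\sqrt{t\,}))$ in the denominator, which explains the ``in particular'' preceding the statement.
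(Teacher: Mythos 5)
Your proposal is correct and follows exactly the route the paper intends: the corollary is stated there as an immediate combination of the Gaussian upper bound \eqref{Gauss} and the lower bound \eqref{gaussian_lower}, with the observation that the two ball volumes are comparable in the near-diagonal regime. (Minor remark: for the lower bound the doubling step is not even needed, since $\min\{w(B(\mathbf{x},\!\sqrt{t\,})),w(B(\mathbf{y},\!\sqrt{t\,}))\}^{-1}\ge w(B(\mathbf{x},\!\sqrt{t\,}))^{-1}$ automatically; the comparability is only required to justify the ``in particular'' remark about interchanging $\mathbf{x}$ and $\mathbf{y}$, as you note.)
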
}

{\bf Estimates {of} the Dunkl kernel.}
{According to \eqref{Expression1HeatKernel},
the heat kernel estimates \eqref{Gauss} and \eqref{gaussian_lower}
imply the following results,
which partially improve upon known estimates for the Dunkl kernel.
Notice that $\mathbf{x}$ can be replaced by $\mathbf{y}$ in the ball volumes below.

\begin{corollary}
There are constants $c\ge1$ and $C\ge1$ such that
\begin{equation*}
\frac{C^{-1}}{w(B(\mathbf{x},1))}\,
e^{\frac{\|\mathbf{x}\|^2+\|\mathbf{y}\|^2}2}
e^{-c\,\|\mathbf{x}-\mathbf{y}\|^2}
\leq E(\mathbf{x},\mathbf{y})
\leq\frac{C}{w(B(\mathbf{x},1))}\,
e^{\frac{\|\mathbf{x}\|^2+\|\mathbf{y}\|^2}2}
e^{-c^{-1}d(\mathbf{x},\mathbf{y})^2}
\end{equation*}
for all $\mathbf{x},\mathbf{y}\in\mathbb{R}^N$.
In particular,
\\
$\bullet$ {for every $\varepsilon>0$,}
there exist $C\ge1$ such that
\begin{equation*}
\frac{C^{-1}}{w(B(\mathbf{x},1))}\,e^{\frac{\| \mathbf{x}\|^2+\| \mathbf{y}\|^2}2}
\leq E(\mathbf{x},\mathbf{y})\leq
\frac{C}{w(B(\mathbf{x},1))}\,e^{\frac{\|\mathbf{x}\|^2+\|\mathbf{y}\|^2}2}
\end{equation*}
for all $\mathbf{x},\mathbf{y}\in\mathbb{R}^N$
satisfying $\|\mathbf{x}-\mathbf{y}\|<\varepsilon\,;$
\\
$\bullet$ there exist $c>0$ and $C>0$ such that
$$
E(\lambda\mathbf{x},\mathbf{y})
\geq\frac{C}{w(B(\sqrt{\lambda\,}\mathbf{x},1))}\,
e^{\hspace{.25mm}\lambda\hspace{.25mm}(1-\hspace{.25mm}c\hspace{.5mm}\|\mathbf{x}-\mathbf{y}\|^2)}
$$
for all $\lambda{\ge1}$
and for all $\mathbf{x},\mathbf{y}\in\mathbb{R}^N$
with $\|\mathbf{x}\|=\|\mathbf{y}\|=1$.
\end{corollary}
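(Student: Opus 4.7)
The plan is to invert the relation \eqref{Expression1HeatKernel} between the heat kernel and the Dunkl kernel, and then to insert the two-sided Gaussian bounds \eqref{Gauss} and \eqref{gaussian_lower} proved earlier in this section. Specifically, taking $t=1/2$ in \eqref{Expression1HeatKernel} gives
$$
E(\mathbf{x},\mathbf{y}) = c_k\, e^{(\|\mathbf{x}\|^2 + \|\mathbf{y}\|^2)/2}\, h_{1/2}(\mathbf{x},\mathbf{y}),
$$
so the main two-sided estimate reduces to two-sided Gaussian bounds on $h_{1/2}(\mathbf{x},\mathbf{y})$ by a constant multiple of $w(B(\mathbf{x},1))^{-1}$.

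For the upper bound, I apply \eqref{Gauss} with $m=0$ and $t=1/2$. Since $V(\mathbf{x},\mathbf{y},1/\sqrt{2}) \geq w(B(\mathbf{x},1/\sqrt{2})) \sim w(B(\mathbf{x},1))$ by doubling, one obtains $h_{1/2}(\mathbf{x},\mathbf{y}) \leq C\, w(B(\mathbf{x},1))^{-1}\, e^{-c\, d(\mathbf{x},\mathbf{y})^2}$. For the lower bound I apply \eqref{gaussian_lower} with $t=1/2$, noting that $\min\{w(B(\mathbf{x},1/\sqrt{2})), w(B(\mathbf{y},1/\sqrt{2}))\} \leq w(B(\mathbf{x},1/\sqrt{2})) \sim w(B(\mathbf{x},1))$, which gives $h_{1/2}(\mathbf{x},\mathbf{y}) \geq C^{-1}\, w(B(\mathbf{x},1))^{-1}\, e^{-c\|\mathbf{x}-\mathbf{y}\|^2}$. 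Multiplying by the prefactor $c_k\, e^{(\|\mathbf{x}\|^2 + \|\mathbf{y}\|^2)/2}$ yields the main two-sided bound. Interchangeability of $\mathbf{x}$ and $\mathbf{y}$ in the ball volumes is then a consequence of the symmetry $E(\mathbf{x},\mathbf{y}) = E(\mathbf{y},\mathbf{x})$.

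The first consequence is essentially free: when $\|\mathbf{x}-\mathbf{y}\| < \varepsilon$, one has $d(\mathbf{x},\mathbf{y}) \leq \|\mathbf{x}-\mathbf{y}\| < \varepsilon$, so the exponential factors $e^{-c\|\mathbf{x}-\mathbf{y}\|^2}$ and $e^{-c^{-1}d(\mathbf{x},\mathbf{y})^2}$ are pinched between $e^{-c\varepsilon^2}$ and $1$ and get absorbed into a new constant. For the second consequence, I first use the scaling property twice to write $E(\lambda\mathbf{x},\mathbf{y}) = E(\mathbf{x},\lambda\mathbf{y}) = E(\sqrt{\lambda}\mathbf{x}, \sqrt{\lambda}\mathbf{y})$, then apply the main lower bound to the pair $(\sqrt{\lambda}\mathbf{x}, \sqrt{\lambda}\mathbf{y})$, whose norms both equal $\sqrt{\lambda}$. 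This produces
$$
E(\lambda\mathbf{x},\mathbf{y}) \geq \frac{C}{w(B(\sqrt{\lambda}\mathbf{x},1))}\, e^{\lambda}\, e^{-c\lambda\|\mathbf{x}-\mathbf{y}\|^2} = \frac{C}{w(B(\sqrt{\lambda}\mathbf{x},1))}\, e^{\lambda(1 - c\|\mathbf{x}-\mathbf{y}\|^2)},
$$
as required.

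I do not foresee any serious obstacle; the argument is essentially bookkeeping, once Theorem~\ref{theoremGauss} and the global lower Gaussian bound \eqref{gaussian_lower} are in hand. The only point to watch is keeping the directions of the inequalities straight: the $\max$ in the definition of $V$ yields the correct upper estimate ($V^{-1} \leq w(B(\mathbf{x},\cdot))^{-1}$), while the $\min$ appearing in \eqref{gaussian_lower} yields the correct lower estimate ($\min^{-1} \geq w(B(\mathbf{x},\cdot))^{-1}$).
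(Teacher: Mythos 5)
Your proposal is correct and follows exactly the route the paper intends: the paper states the corollary as a direct consequence of the identity \eqref{Expression1HeatKernel} (specialized to $t=1/2$, where the prefactor $(2t)^{-\mathbf{N}/2}$ becomes $1$) combined with the Gaussian bounds \eqref{Gauss} and \eqref{gaussian_lower}, which is precisely your argument. The bookkeeping with $V(\mathbf{x},\mathbf{y},\cdot)$, the $\min$ in \eqref{gaussian_lower}, and the scaling step $E(\lambda\mathbf{x},\mathbf{y})=E(\sqrt{\lambda\,}\mathbf{x},\sqrt{\lambda\,}\mathbf{y})$ are all handled correctly.
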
}

\section{Poisson kernel in the Dunkl setting}\label{SectionPoisson}

{The Poisson semigroup $P_t=e^{-\hspace{.25mm}t\sqrt{\hspace{-.5mm}-\Delta}}$ is subordinated to the heat semigroup $H_t=e^{\hspace{.25mm}t\hspace{.25mm}\Delta}$ by \eqref{subordination} and correspondingly for their integral kernels}
\begin{equation}\label{subord}
{p}_t(\mathbf{x},\mathbf{y})=\pi^{-1/2}\int_0^\infty\!e^{-u}\,h_{\frac{t^2}{4u}}(\mathbf{x},\mathbf{y})\,\frac{du}{\sqrt{u}}\,.
\end{equation}
{This subordination formula enables us to transfer properties of the heat kernel $h_t(\mathbf{x},\mathbf{y})$ to the Poisson kernel $p_t(\mathbf{x},\mathbf{y})$. For instance,}
\begin{gather}
{p}_t(\mathbf{x},\mathbf{y}) ={p}_t(\mathbf{y},\mathbf{x}){>0},\nonumber\\
\int_{\mathbb{R}^N} {p}_t(\mathbf{x},\mathbf{y})\, {dw}(\mathbf{y}) =1{,}\nonumber\\
{p}_t(\mathbf{x},\mathbf{y})=\tau_{\mathbf{x}} {p}_t(-\mathbf{y}),\label{Poisson3}
\end{gather}
where
\begin{equation}\label{Poisson4}
{p}_t(\mathbf{x})
={\tilde{p}_t(\|\mathbf{x}\|)
=c_k'\,t\hspace{.5mm}\bigl(t^2\!+\|\mathbf{x}\|^2\bigr)^{\!-\frac{\mathbf{N}+1}2}}
\end{equation}
{and
\begin{equation*}
c_k'=\frac{2^{\hspace{.25mm}\mathbf{N}/2}\,\Gamma(\frac{\mathbf{N}+1}2)}{\sqrt{\pi}\,c_k}>0\,.
\end{equation*}

The following global bounds hold for the Poisson kernel and its derivatives.}

\begin{proposition}\label{Poiss_new}
{\rm (a) Upper and lower bounds\,:}
there is a constant $C{\ge\!1}$ such that
\begin{equation}\label{Poisson_low_up}
\frac{C^{-1}}{V(\mathbf{x},\mathbf{y},t+{\|}\mathbf{x}-\mathbf{y}{\|})}\,\frac{t}{t+{\|}\mathbf{x}-\mathbf{y}{\|}}
\leq {p}_t(\mathbf{x},\mathbf{y})\leq\frac{C}{V(\mathbf{x},\mathbf{y},t+d(\mathbf{x},\mathbf{y}))}\,\frac{t}{t+d(\mathbf{x},\mathbf{y})}
\end{equation}
{for every \,$t>0$ and for every \,$\mathbf{x},\mathbf{y}\in\mathbb{R}^N$.}
\par\noindent
{\rm (b) Dunkl gradient\,:}
for every \,$\xi\in\mathbb{R}^N$, there is a constant $C>0$ such that
\begin{equation}\label{TxiPoisson}
\bigl|T_{\xi,\mathbf{y}} {p}_t(\mathbf{x},\mathbf{y})\bigr|\leq
\frac{C}{V(\mathbf{x}, \mathbf{y},t+d(\mathbf{x},\mathbf{y}))}\,\frac{1}{t+d(\mathbf{x},\mathbf{y})}
\end{equation}
{for all \,$t>0$ and \,$\mathbf{x},\mathbf{y}\in\mathbb{R}^N$.}
\par\noindent
(c) Mixed derivatives\,:
for any nonnegative integer \,$m$ and for any multi-index \,$\beta$, there is a constant \,$C\hspace{-.5mm}\ge\hspace{-.5mm}0$ such that, for every \,$t>0$ and for every \,$\mathbf{x},\mathbf{y}\in\mathbb{R}^N$,
\begin{equation}\label{DtDyPoisson}
\bigl|\hspace{.25mm}\partial_t^m\partial_{\mathbf{y}}^{\beta}\hspace{.25mm}p_t(\mathbf{x},\mathbf{y})\bigr|\le C\,p_t(\mathbf{x},\mathbf{y})\hspace{.25mm}\bigl(\hspace{.25mm}t\hspace{-.25mm}+d(\mathbf{x},\mathbf{y})\bigr)^{\hspace{-.5mm}-m-|\beta|}\times\begin{cases}
\,1&\text{if \,}m\hspace{-.25mm}=\hspace{-.25mm}0\hspace{.25mm},\\
\,1+\frac{d(\mathbf{x},\mathbf{y})}t&\text{if \,}m\hspace{-.5mm}>\hspace{-.5mm}0\hspace{.25mm}.\\
\end{cases}\end{equation}
Moreover, for any nonnegative integer \,$m$ and for any multi-indices \,$\beta,\beta'$, there is a constant \,$C\hspace{-.5mm}\ge\hspace{-.5mm}0$ such that, for every \,$t>0$ and for every \,$\mathbf{x},\mathbf{y}\in\mathbb{R}^N$,
\begin{equation}\label{DtDxDyPoisson}
\bigl|\hspace{.25mm}\partial_t^m\partial_{\mathbf{x}}^{\beta}\partial_{\mathbf{y}}^{\beta'}p_t(\mathbf{x},\mathbf{y})\bigr|\le C\,t^{-m-|\beta|-|\beta'|}\,p_t(\mathbf{x},\mathbf{y})\,.
\end{equation}
\end{proposition}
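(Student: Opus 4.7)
\emph{Plan.} All three claims will be derived from the subordination formula \eqref{subord} combined with the heat kernel estimates \eqref{Gauss}, \eqref{TxiDtHeat}, \eqref{DtDxDyHeat} and the lower Gaussian bound \eqref{gaussian_lower}. After differentiating \eqref{subord} under the integral sign, every bound reduces to an integral of the form
\begin{equation*}
I_a \;=\; \int_0^\infty e^{-u}\,u^{\,a}\,V\bigl(\mathbf{x},\mathbf{y},t/(2\sqrt{u})\bigr)^{-1}\,e^{-\,cuD^{2}/t^{2}}\,\frac{du}{\sqrt{u}}\,,
\end{equation*}
where $D=d(\mathbf{x},\mathbf{y})$ for the upper bounds and $D=\|\mathbf{x}-\mathbf{y}\|$ for the lower bound, and $a\ge0$ encodes the spatial differentiation order. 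Substituting first $s=t/(2\sqrt{u})$ and then $w=s/(t+D)$, and using the doubling inequality \eqref{growth} in the form $V(\mathbf{x},\mathbf{y},s)^{-1}\lesssim V(\mathbf{x},\mathbf{y},t+D)^{-1}\max\{1,((t+D)/s)^{\mathbf{N}}\}$ (or its reverse for the lower bound), the inner integral becomes a universal positive constant and one reads off $I_a\sim V(\mathbf{x},\mathbf{y},t+D)^{-1}(t/(t+D))^{2a+1}$.

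\emph{Carrying this out.} Part (a) corresponds to $a=0$: the upper bound uses \eqref{Gauss} with $m=0$, and the lower bound uses \eqref{gaussian_lower} together with the reverse direction of \eqref{growth}. Part (b) uses \eqref{TxiDtHeat} with $m=0$, which contributes an extra factor $s^{-1/2}=2\sqrt{u}/t$ inside the integral (i.e.~$a=1/2$) together with an overall $t^{-1}$, giving \eqref{TxiPoisson} at once. For \eqref{DtDxDyPoisson} in part (c), I apply Faà di Bruno in $t$ with $\phi(t)=t^{2}/(4u)$; since $\phi^{(k)}\equiv0$ for $k\ge3$, only finitely many terms of the form $(\partial_s^k\partial_{\mathbf{x}}^{\beta}\partial_{\mathbf{y}}^{\beta'}h_s)\cdot(\phi')^{2k-m}(\phi'')^{m-k}$ with $m/2\le k\le m$ survive, and \eqref{DtDxDyHeat} bounds each one by $C\,t^{-m-|\beta|-|\beta'|}u^{(|\beta|+|\beta'|)/2}V(\mathbf{x},\mathbf{y},t/(2\sqrt{u}))^{-1}e^{-4cuD^{2}/t^{2}}$. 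The same integral analysis then yields the claimed $C\,t^{-m-|\beta|-|\beta'|}p_t(\mathbf{x},\mathbf{y})$.

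\emph{The finer estimate \eqref{DtDyPoisson} and the main difficulty.} Since \eqref{DtDyPoisson} must produce the sharper factor $(t+d(\mathbf{x},\mathbf{y}))^{-m-|\beta|}$, subordination is clumsy and I would instead differentiate the integral representation $p_t(\mathbf{x},\mathbf{y})=\int_{\mathbb{R}^N}(\tilde{p}_t\!\circ\!A)(\mathbf{x},\mathbf{y},\eta)\,d\mu_{\mathbf{x}}(\eta)$ provided by \eqref{Poisson3}--\eqref{Poisson4} and \eqref{translation-radial}. Writing $\tilde{p}_t\!\circ\!A=P_t\!\circ\!A^{2}$ with $P_t(y)=c_k'\,t\,(t^{2}+y)^{-(\mathbf{N}+1)/2}$ and applying Faà di Bruno, together with \eqref{A2}--\eqref{A3}, reduces the estimate to the elementary one-variable bounds $|\partial_x^k\tilde{p}_t(x)|\le C_k\,\tilde{p}_t(x)(t+|x|)^{-k}$ and $|\partial_t^m\tilde{p}_t(x)|\le C_m\,\tilde{p}_t(x)(t+|x|)^{-m}(1+|x|/t)$, which follow by direct computation from \eqref{Poisson4}. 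This yields $|\partial_t^m\partial_{\mathbf{y}}^{\beta}(\tilde{p}_t\!\circ\!A)|\le C\,\tilde{p}_t(A)(t+A)^{-m-|\beta|}$ for $m=0$, with an additional factor $1+A/t$ for $m\ge1$. Because this factor is monotone decreasing in $A$ (the identity $(t+A)^{-m-|\beta|}(1+A/t)=(t+A)^{-m-|\beta|+1}/t$ makes this manifest for $m\ge1$) and $A\ge d(\mathbf{x},\mathbf{y})$ by \eqref{A1}, one replaces $A$ by $d(\mathbf{x},\mathbf{y})$ before integrating against $d\mu_{\mathbf{x}}$, completing \eqref{DtDyPoisson}. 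The main obstacle is purely technical bookkeeping: verifying that every rescaled integral reduces to a uniform positive constant and that \eqref{growth} is invoked in the correct direction at each step.
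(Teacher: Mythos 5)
Your treatment of parts (a), (b) and of \eqref{DtDyPoisson} is essentially the paper's: (a) and (b) are obtained by inserting \eqref{Gauss}, \eqref{gaussian_lower} and \eqref{TxiDtHeat} into the subordination formula \eqref{subord} (the paper outsources the resulting integral computation to \cite{DzPr}, which you carry out explicitly and correctly), and \eqref{DtDyPoisson} is proved, exactly as in the paper, by differentiating $p_t(\mathbf{x},\mathbf{y})=\int(\tilde p_t\circ A)\,d\mu_{\mathbf{x}}$, using the one-variable symbol bounds \eqref{Poisson5} together with \eqref{A1}--\eqref{A2} and the monotonicity in $A$ of the resulting factor. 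Your device of writing $\tilde p_t\circ A=P_t\circ A^2$ is the right way to avoid the spurious singular factors $A^{-1}$ that a naive chain rule on $A$ would produce.

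There is, however, a genuine gap in your derivation of \eqref{DtDxDyPoisson}. The Fa\`a di Bruno computation under the subordination integral is fine and yields
\begin{equation*}
\bigl|\hspace{.25mm}\partial_t^m\partial_{\mathbf{x}}^{\beta}\partial_{\mathbf{y}}^{\beta'}p_t(\mathbf{x},\mathbf{y})\bigr|
\;\lesssim\;t^{-m-|\beta|-|\beta'|}\,\frac{1}{V(\mathbf{x},\mathbf{y},t+d(\mathbf{x},\mathbf{y}))}\,\frac{t}{t+d(\mathbf{x},\mathbf{y})}\,,
\end{equation*}
but this is \emph{not} the asserted bound $\lesssim t^{-m-|\beta|-|\beta'|}\,p_t(\mathbf{x},\mathbf{y})$. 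The quantity $V(\mathbf{x},\mathbf{y},t+d)^{-1}\,t/(t+d)$ majorizes $p_t(\mathbf{x},\mathbf{y})$ by the upper bound in \eqref{Poisson_low_up}, but it is not majorized by $C\,p_t(\mathbf{x},\mathbf{y})$: the only available lower bound involves $\|\mathbf{x}-\mathbf{y}\|$, not $d(\mathbf{x},\mathbf{y})$, and the two are genuinely different. (Take $N=1$, $G=\mathbb{Z}_2$, $k>0$, $\mathbf{x}=1$, $\mathbf{y}=-1$: then $d(\mathbf{x},\mathbf{y})=0$, so the right-hand side above blows up like $w(B(1,t))^{-1}\sim t^{-1}$ as $t\to0$, whereas $p_t(1,-1)$ stays bounded.) So your "same integral analysis" step silently upgrades an upper bound that merely has the same shape as the upper bound for $p_t$ into a pointwise comparison with $p_t$ itself. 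The paper avoids this by deducing \eqref{DtDxDyPoisson} from \eqref{DtDyPoisson} via the semigroup identity $p_t(\mathbf{x},\mathbf{y})=\int p_{t/2}(\mathbf{x},\mathbf{z})\,p_{t/2}(\mathbf{z},\mathbf{y})\,dw(\mathbf{z})$: putting the $\partial_{\mathbf{x}}^\beta$-derivatives (and part of the $t$-derivatives) on the first factor and the $\partial_{\mathbf{y}}^{\beta'}$-derivatives on the second, \eqref{DtDyPoisson} bounds each factor by $C\,t^{-m_i-|\beta_i|}\,p_{t/2}$, and the $\mathbf{z}$-integration reconstitutes $p_t(\mathbf{x},\mathbf{y})$ exactly, preserving the pointwise comparison. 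You should replace your subordination argument for \eqref{DtDxDyPoisson} by this semigroup argument (or otherwise keep the kernel $\tilde p_t(A)$ intact inside the $d\mu_{\mathbf{x}}$-integral, as you correctly do for \eqref{DtDyPoisson}).
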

{Notice that,} by symmetry{,} \eqref{TxiPoisson} holds {also with $T_{\xi,\mathbf{x}}$ instead of $T_{\xi,\mathbf{y}}$}.

\begin{proof}
 (a) The Poisson kernel bounds \eqref{Poisson_low_up} are obtained by inserting the heat kernel bounds \eqref{Gauss} and \eqref{gaussian_lower} in the subordination formula \eqref{subord}. For a detailed proof we refer the reader
 to \cite[Proposition 6]{DzPr}.

\noindent
(b) The Dunkl gradient estimate \eqref{TxiPoisson} is deduced  similarly from \eqref{TxiDtHeat}.

\noindent
{(c) The estimate \eqref{DtDyPoisson} is proved directly. As $(t,x)\longmapsto(t^2\!+\hspace{-.5mm}x^2)^{-{(\mathbf{N}+1)}/2}$ is a homogeneous symbol of order $-\hspace{.25mm}\mathbf{N}\!-\!1$ on $\mathbb{R}^2$, we have
\begin{equation}\label{Poisson5}\begin{cases}
\,|\hspace{.25mm}\partial_x^\beta\hspace{.5mm}\tilde{p}_t(x)|\le C_{\beta}\hspace{.5mm}(t\hspace{-.5mm}+\!|x|)^{-\beta}\hspace{.5mm}\tilde{p}_t(x)\\
\,|\hspace{.25mm}\partial_t^m\hspace{.25mm}\partial_x^\beta\hspace{.5mm}\tilde{p}_t(x)|\le C_{m,\beta}\hspace{.5mm}t^{-1}(t\hspace{-.5mm}+\!|x|)^{1-m-\beta}\hspace{.5mm}\tilde{p}_t(x)
\end{cases}
\quad\forall\;t\hspace{-.5mm}>\hspace{-.5mm}0\hspace{.25mm},\,\forall\;x\!\in\hspace{-.5mm}\mathbb{R}\hspace{.25mm}.
\end{equation}
for every positive integer $m$ and for every nonnegative integer $\beta$. By using \eqref{translation-radial}, \eqref{A1}, \eqref{Poisson3}, \eqref{Poisson4} and
\eqref{Poisson5}, we estimate}
\begin{equation*}\begin{split}
\bigl|\hspace{.25mm}{\partial_{\mathbf{y}}^\beta\hspace{.25mm}p}_t(\mathbf{x},\mathbf{y})\bigr|
&{\leq}\int_{\mathbb{R}^N}{\big|}\hspace{.5mm}{\partial_{\mathbf{y}}^\beta}\hspace{.5mm}\tilde{p}_t(A(\mathbf{x},\mathbf{y},\eta)){\big|}\,d\mu_{\mathbf{x}}(\eta)\\
&\leq{C_\beta}\!\int_{\mathbb{R}^N}{\bigl(\hspace{.25mm}t\hspace{-.25mm}+\hspace{-.5mm}A(\mathbf{x},\mathbf{y},\eta)\hspace{-.25mm}\bigr)^{\!-|\beta|}}\,\tilde{p}_t(A(\mathbf{x},\mathbf{y},\eta))\,d\mu_{\mathbf{x}}(\eta)\\
&\leq{C_\beta\,\bigl(\hspace{.25mm}t\hspace{-.25mm}+\hspace{-.25mm}d(\mathbf{x},\mathbf{y}\hspace{-.25mm}\bigr)^{\!-|\beta|}\,p}_t(\mathbf{x},\mathbf{y})
 \end{split}\end{equation*}
{and similarly
\begin{equation*}
\bigl|\hspace{.25mm}\partial_t^m\partial_{\mathbf{y}}^\beta\hspace{.25mm}p_t(\mathbf{x},\mathbf{y})\bigr|
\leq C_{m,\beta}\,t^{-1}\bigl(\hspace{.25mm}t\hspace{-.25mm}+\hspace{-.25mm}d(\mathbf{x},\mathbf{y}\hspace{-.25mm}\bigr)^{\!1-m-|\beta|}\,p_t(\mathbf{x},\mathbf{y})
\end{equation*}
for every positive integer $m$\hspace{.25mm}.}
{Finally \eqref{DtDxDyPoisson} is deduced from \eqref{DtDyPoisson} by using the semigroup property. More precisely, by differentiating
$$
p_t(\mathbf{x},\mathbf{y})\,=\int_{\mathbb{R}^N}p_{t/2}(\mathbf{x},\mathbf{z})\,p_{t/2}(\mathbf{z},\mathbf{y})\,dw(\mathbf{z})\,,
$$
by using \eqref{DtDyPoisson} and by symmetry, we obtain
$$
\bigl|\hspace{.25mm}\partial_t^m\partial_{\mathbf{x}}^{\beta}\partial_{\mathbf{y}}^{\beta'}p_t(\mathbf{x},\mathbf{y})\bigr|\lesssim t^{-m-|\beta|-|\beta'|}\int_{\mathbb{R}^N}p_{t/2}(\mathbf{x},\mathbf{z})\,p_{t/2}(\mathbf{z},\mathbf{y})\,dw(\mathbf{z})=t^{-m-|\beta|-|\beta'|}\,p_t(\mathbf{x},\mathbf{y})\,.
$$}
\end{proof}

{Notice the following straightforward consequence of the upper bound in \eqref{Poisson_low_up}\,:}
\begin{equation}\label{PHL}
\mathcal{M}_Pf(\mathbf{x}){\lesssim}\sum_{\sigma\in G}\mathcal{M}_{HL} f(\sigma(\mathbf{x}))\,,
\end{equation}
where $\mathcal{M}_{HL}$ denotes the Hardy-Littlewood maximal function on the space of homogeneous type $(\mathbb{R}^N,{\|}\mathbf{x}\hspace{-.5mm}-\hspace{-.5mm}\mathbf{y}{\|},{dw})$. {Likewise, \eqref{Gauss} yields}
$$
\mathcal{M}_{H}f(\mathbf{x}){\lesssim}\sum_{\sigma\in G}\mathcal{M}_{HL} f(\sigma(\mathbf{x}))\,.
$$

Observe that the Poisson kernel {is} an approximation of the identity in the {following} sense.
\begin{proposition}\label{PoissonApprox}
Given {any} compact {sub}set $K\subset\mathbb{R}^N$, {any} \,$r>0$ and {any} \,$\varepsilon>0$, there {exists} \,$t_0=t_0(K,r,\varepsilon)>0$ such that{,} for {every} \,$0<t<t_0$ and {for every} \,$\mathbf{x}\in K$,
$$
\int_{{\|}\mathbf{x}-\mathbf{y}{\|}>r}{p}_t(\mathbf{x},\mathbf{y})\, {dw}(\mathbf{y})<\varepsilon\,.
$$
\end{proposition}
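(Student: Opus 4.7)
My plan is to reduce the statement to a sharp variance identity for the heat kernel and then transfer it to the Poisson kernel via the subordination formula \eqref{subord}. A pleasant feature of this approach is that it yields a bound independent of $\mathbf{x}$, so $t_0$ ends up depending only on $r$ and $\varepsilon$.

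\emph{Step 1: a variance identity for the heat kernel.} I first establish
\begin{equation*}
\int_{\mathbb{R}^N}\|\mathbf{y}-\mathbf{x}\|^2\,h_s(\mathbf{x},\mathbf{y})\,dw(\mathbf{y})=2\mathbf{N}\,s\qquad\forall\;s>0,\;\forall\;\mathbf{x}\in\mathbb{R}^N.
\end{equation*}
The key observation is that $\Delta q_{\mathbf{x}}\equiv 2\mathbf{N}$ for the polynomial $q_{\mathbf{x}}(\mathbf{y})=\|\mathbf{y}-\mathbf{x}\|^2$: the Euclidean Laplacian contributes $2N$, and using $\mathbf{x}-\sigma_\alpha\mathbf{x}=\langle\mathbf{x},\alpha\rangle\alpha$ a short calculation shows that $\delta_\alpha q_{\mathbf{x}}\equiv 2$ for every root, so the reflection correction contributes an extra $4\gamma$. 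Consequently, $u(s,\mathbf{y})=q_{\mathbf{x}}(\mathbf{y})+2\mathbf{N}s$ solves $\partial_s u=\Delta u$ with initial datum $q_{\mathbf{x}}$, and evaluation at $\mathbf{y}=\mathbf{x}$ yields the claimed identity. Markov's inequality then gives
\begin{equation*}
\int_{\|\mathbf{x}-\mathbf{y}\|>r}h_s(\mathbf{x},\mathbf{y})\,dw(\mathbf{y})\le\min\Bigl\{1,\tfrac{2\mathbf{N}s}{r^2}\Bigr\}.
\end{equation*}

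\emph{Step 2: transferring to the Poisson kernel.} Plugging this estimate into \eqref{subord} and applying Fubini, I obtain
\begin{equation*}
\int_{\|\mathbf{x}-\mathbf{y}\|>r}p_t(\mathbf{x},\mathbf{y})\,dw(\mathbf{y})\le\pi^{-1/2}\!\int_0^\infty\!e^{-u}\,u^{-1/2}\,\min\Bigl\{1,\tfrac{\mathbf{N}t^2}{2ur^2}\Bigr\}du.
\end{equation*}
Splitting the one-dimensional integral at $u_0=\mathbf{N}t^2/(2r^2)$, both pieces are bounded by a constant multiple of $\sqrt{u_0}$, producing an overall bound of the form $C(\mathbf{N})\,t/r$. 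Choosing $t_0=\varepsilon r/C(\mathbf{N})$ then concludes the proof.

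\emph{Main obstacle.} The only delicate point is the justification of Step 1 for the polynomial $q_{\mathbf{x}}$, which does not belong to any $L^p(dw)$. This is remedied by the Gaussian bound \eqref{Gauss}, which guarantees absolute convergence of every integral in sight and legitimates differentiation under the integral sign together with integration by parts against $\Delta$ (with boundary contributions decaying like Gaussians). Once this verification is in place, everything else is elementary.
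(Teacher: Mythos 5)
Your route is genuinely different from the paper's and, once repaired, it is attractive: the paper fixes a bump function $f$ supported near a point of $K$, uses the inversion formula and $|E(i\xi,\mathbf{x})|\le1$ to get $\|P_tf-f\|_{L^\infty}\to0$, and concludes by compactness, whereas your argument is purely kernel-based and yields the stronger, quantitative bound $C(\mathbf{N})\,t/r$ uniformly in $\mathbf{x}\in\mathbb{R}^N$. The algebra in Step 1 is right ($\delta_\alpha q_{\mathbf{x}}\equiv2$, hence $\Delta q_{\mathbf{x}}\equiv2N+4\gamma=2\mathbf{N}$), and Step 2 (Chebyshev plus subordination, splitting at $u_0=\mathbf{N}t^2/2r^2$) is correct.

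There is, however, a genuine gap exactly at the point you call the "only delicate point", and the proposed remedy does not close it. The Gaussian bound \eqref{Gauss} decays in the \emph{orbit} distance $d(\mathbf{x},\mathbf{y})$, not in $\|\mathbf{x}-\mathbf{y}\|$; it is therefore perfectly consistent with $h_s(\mathbf{x},\cdot)\,dw$ carrying mass of order $1$ in a $\sqrt{s}$-neighbourhood of every reflected point $\sigma(\mathbf{x})$ (where $d(\mathbf{x},\mathbf{y})=0$ but $\|\mathbf{x}-\mathbf{y}\|\approx\|\mathbf{x}-\sigma(\mathbf{x})\|$ may exceed $r$). Such mass would contribute $\approx\|\mathbf{x}-\sigma(\mathbf{x})\|^2$ to $\phi(s)=\int\|\mathbf{y}-\mathbf{x}\|^2h_s(\mathbf{x},\mathbf{y})\,dw(\mathbf{y})$, uniformly as $s\to0$. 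Consequently, while differentiation under the integral and the symmetry of $\Delta$ do give $\phi'(s)=2\mathbf{N}$ (this part is fine, using \eqref{DtDxDyHeat} and $d(\mathbf{x},\mathbf{y})\ge\|\mathbf{y}\|-\|\mathbf{x}\|$ to kill boundary terms), they only yield $\phi(s)=\phi(0^+)+2\mathbf{N}s$, and nothing in the estimates you invoke forces $\phi(0^+)=0$. Ruling out concentration of the kernel at the reflected points is precisely the content of the proposition (transferred to $H_s$), so the argument as written is circular at its crucial step; the same objection applies if you instead appeal to "uniqueness" for the Cauchy problem with polynomial data, which is not available in the paper. The gap is fixable: the identity $\int p(\mathbf{y})\,h_s(\mathbf{x},\mathbf{y})\,dw(\mathbf{y})=\sum_{j\ge0}\frac{s^j}{j!}\Delta^jp(\mathbf{x})$ for polynomials $p$ is a theorem of R\"osler \cite{Roesler2}, and quoting it gives $\phi(s)=2\mathbf{N}s$ at once; alternatively one can derive $\phi(0^+)=0$ from the strong continuity of $H_s$ on $L^2(dw)$ tested against bumps supported near $\mathbf{x}$ and near $\sigma(\mathbf{x})$, which is in spirit what the paper's Dunkl-transform argument accomplishes. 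With that input supplied, your proof is complete.
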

\begin{proof}
Let {$K$ be a compact subset of $\mathbb{R}^N$ and let} $r,\varepsilon>0$. Fix $\mathbf{x}_0\in K$ and consider $f\in C^\infty_c{(\mathbb{R}^N)}$  such that $0\leq f\leq 1$, {$f=1$ on $B(\mathbf{x}_0,r\slash 4)$ and $\operatorname{supp}f\subset B(\mathbf{x}_0, r\slash 2)$}.
 By the inversion formula{,}
\begin{equation*}
f(\mathbf{x})-P_tf(\mathbf{x})
=c_k^{-1}\int_{\mathbb{R}^N}(1\!-\hspace{-.5mm}e^{-\hspace{.25mm}t\hspace{.25mm}{\|}\xi{\|}})\,{E}(i\xi ,\mathbf{x} )\,\mathcal{F}f(\xi)\,{dw}(\xi)\,{,}
\end{equation*}
{hence}
\begin{equation}\label{uniform}
|f(\mathbf{x})-P_tf(\mathbf{x})|
\leq c_k^{-1}\int_{\mathbb{R}^N}{\bigl(}1\!-\hspace{-.5mm}e^{-\hspace{.25mm}t\hspace{.25mm}{\|}\xi{\|}}{\bigr)}\,|\mathcal{F}f(\xi)|\,{dw}(\xi)\,.
\end{equation}
{As} $\mathcal{F}\hspace{-.25mm}f\!\in\hspace{-.5mm}\mathcal{S}(\mathbb{R}^N)$,  \eqref{uniform} implies that there is \hspace{.5mm}$t_0\!=\hspace{-.5mm}t_0(\mathbf{x}_0,\hspace{-.25mm}r,\hspace{-.25mm}\varepsilon)\hspace{-.5mm}>\hspace{-.5mm}0$ \hspace{.5mm}such that
$$
\sup_{\mathbf{x}\in\mathbb{R}^N}|f(\mathbf{x})-P_tf(\mathbf{x})|{<}\varepsilon\quad{\forall}\;0\hspace{-.5mm}<\hspace{-.5mm}t\hspace{-.5mm}<\hspace{-.5mm}{t_0}\hspace{.5mm}.
$$
In particular{, for every \hspace{.25mm}$0\hspace{-.5mm}<\hspace{-.5mm}t\hspace{-.5mm}<\hspace{-.5mm}t_0$\hspace{.25mm} and} for {every} \hspace{.25mm}$\mathbf{x}\hspace{-.5mm}\in\!B(\mathbf{x}_0,r\slash 4)${,} we have
\begin{equation*}\begin{split}
0&\leq\int_{{\|}\mathbf{x}-\mathbf{y}{\|>}r}\!{p}_t(\mathbf{x},\mathbf{y})\,{dw}(\mathbf{y})=1-\int_{{\|}\mathbf{x}-\mathbf{y}{\|\leq}r}\!{p}_t(\mathbf{x},\mathbf{y})\,{dw}(\mathbf{y})\\
&{\leq}f(\mathbf{x})-\int_{{\|}\mathbf{x}-\mathbf{y}{\|\le}r}\!{p}_t(\mathbf{x},\mathbf{y}) f(\mathbf{y})\,{dw}(\mathbf{y})\leq|f(\mathbf{x})-P_tf(\mathbf{x})|{<}\varepsilon\,.
\end{split}\end{equation*}
{A straightforward compactness argument allows us to conclude.}
\end{proof}
The following {results} follow from \eqref{Poisson_low_up}, \eqref{PHL}, and Proposition \ref{PoissonApprox}.
\begin{corollary}\label{PoissonCont}
{Let $f$ be a bounded continuous function on $\mathbb{R}^N$. Then its Poisson integral \,$u(t,\mathbf{x})=P_tf(\mathbf{x})$ is also bounded and continuous on \,$[0,\infty)\times\mathbb{R}^N$.}
\end{corollary}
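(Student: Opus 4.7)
I split the statement into boundedness and continuity, and continuity further into the interior case ($t_0\!>\!0$) and the boundary case $t_0\!=\!0$. Boundedness is immediate from $p_t\!\ge\!0$ and $\int p_t(\mathbf{x},\mathbf{y})\,dw(\mathbf{y})\!=\!1$, which give
$$
|u(t,\mathbf{x})|\,\le\int_{\mathbb{R}^N}p_t(\mathbf{x},\mathbf{y})\,|f(\mathbf{y})|\,dw(\mathbf{y})\le\|f\|_{L^\infty}
\qquad\forall\;(t,\mathbf{x})\in[0,\infty)\times\mathbb{R}^N.
$$

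For the interior case, fix $(t_0,\mathbf{x}_0)$ with $t_0\!>\!0$ and let $K$ be a compact neighborhood of $(t_0,\mathbf{x}_0)$ contained in $[t_0/2,2t_0]\times\overline{B(\mathbf{x}_0,1)}$. The subordination formula \eqref{subord}, combined with the smoothness of $h_t(\mathbf{x},\mathbf{y})$ in all variables for $t\!>\!0$, shows that $p_t(\mathbf{x},\mathbf{y})$ is jointly continuous in $(t,\mathbf{x},\mathbf{y})$ on $(0,\infty)\times\mathbb{R}^N\times\mathbb{R}^N$; in particular $p_t(\mathbf{x},\mathbf{y})\,f(\mathbf{y})\!\longrightarrow\!p_{t_0}(\mathbf{x}_0,\mathbf{y})\,f(\mathbf{y})$ pointwise as $(t,\mathbf{x})\!\to\!(t_0,\mathbf{x}_0)$. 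Next, the upper bound \eqref{Poisson_low_up}, together with the doubling property of $w$ (which on $K$ lets us replace $V(\mathbf{x},\mathbf{y},t\!+\!d(\mathbf{x},\mathbf{y}))$ and $t\!+\!d(\mathbf{x},\mathbf{y})$ by their values at $(t_0/2,\mathbf{x}_0)$ up to absolute constants), produces a majorant
$$
\sup_{(t,\mathbf{x})\in K}p_t(\mathbf{x},\mathbf{y})\,|f(\mathbf{y})|\,\le\,C\,\|f\|_{L^\infty}\,\frac{1}{V(\mathbf{x}_0,\mathbf{y},t_0/2+d(\mathbf{x}_0,\mathbf{y}))}\,\frac{t_0/2}{t_0/2+d(\mathbf{x}_0,\mathbf{y})}\,,
$$
whose integral with respect to $dw(\mathbf{y})$ is finite, since by the two-sided bound in Proposition \ref{Poiss_new}(a) it is comparable to $\int_{\mathbb{R}^N}p_{t_0/2}(\mathbf{x}_0,\mathbf{y})\,dw(\mathbf{y})=1$. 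Dominated convergence then yields $u(t,\mathbf{x})\!\to\!u(t_0,\mathbf{x}_0)$.

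For the boundary case, fix $\mathbf{x}_0\!\in\!\mathbb{R}^N$ and $\varepsilon\!>\!0$. By continuity of $f$, choose $r\!>\!0$ such that $|f(\mathbf{y})\!-\!f(\mathbf{x}_0)|\!<\!\varepsilon$ whenever $\|\mathbf{y}\!-\!\mathbf{x}_0\|\!<\!2r$. Apply Proposition \ref{PoissonApprox} with compact set $K\!=\!\overline{B(\mathbf{x}_0,r)}$ to obtain $t_*\!>\!0$ such that $\int_{\|\mathbf{y}-\mathbf{x}\|>r}p_t(\mathbf{x},\mathbf{y})\,dw(\mathbf{y})\!<\!\varepsilon$ for every $0\!<\!t\!<\!t_*$ and every $\mathbf{x}\!\in\!K$. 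Using $\int p_t(\mathbf{x},\cdot)\,dw\!=\!1$, write
$$
u(t,\mathbf{x})-f(\mathbf{x}_0)=\int_{\mathbb{R}^N}p_t(\mathbf{x},\mathbf{y})\bigl\{f(\mathbf{y})-f(\mathbf{x}_0)\bigr\}dw(\mathbf{y})
$$
and split the integration according to $\|\mathbf{y}\!-\!\mathbf{x}\|\!<\!r$ or $\|\mathbf{y}\!-\!\mathbf{x}\|\!\ge\!r$\hspace{.25mm}; since $\|\mathbf{x}\!-\!\mathbf{x}_0\|\!\le\!r$ forces $B(\mathbf{x},r)\!\subset\!B(\mathbf{x}_0,2r)$, the inner piece is bounded by $\varepsilon\!\cdot\!\int p_t(\mathbf{x},\cdot)\,dw=\varepsilon$ and the outer piece is bounded by $2\|f\|_{L^\infty}\varepsilon$, giving continuity at $(0,\mathbf{x}_0)$. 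The only mildly delicate step is the construction of a locally uniform integrable dominator in the interior case; it is handled by combining \eqref{Poisson_low_up} with doubling, everything else being a direct consequence of Proposition \ref{PoissonApprox} and the approximate-identity structure of the Poisson kernel.
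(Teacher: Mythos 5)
Your proof is correct and follows exactly the route the paper indicates (it cites \eqref{Poisson_low_up}, \eqref{PHL} and Proposition \ref{PoissonApprox} without writing out details): boundedness and interior continuity from the kernel bounds plus dominated convergence, and boundary continuity from the approximate-identity property. The only cosmetic point is that the integral of your interior majorant is bounded by, rather than literally comparable to, a Poisson integral — one sums over $\sigma\in G$ as in \eqref{PHL}, since the lower bound in \eqref{Poisson_low_up} involves $\|\mathbf{x}-\mathbf{y}\|$ while your majorant involves $d(\mathbf{x}_0,\mathbf{y})$ — which does not affect the argument.
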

\begin{corollary}\label{PoissonConv}
Let $f\in L^p({dw})$ {with} $1\leq p\leq \infty$. Then for  almost every \,$\mathbf{x}\!\in\hspace{-.5mm}\mathbb{R}^N$
$$
\lim_{t\to0}\,\sup_{{\|}\mathbf{y}-\mathbf{x}{\|}<t}\,\bigl|P_tf(\mathbf{y})-f(\mathbf{x})\bigr|=0.
$$
\end{corollary}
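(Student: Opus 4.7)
The plan is a classical density argument applied to the nontangential oscillation
$$
\Omega f(\mathbf{x})=\limsup_{t\to 0^+}\,\sup_{\|\mathbf{y}-\mathbf{x}\|<t}|P_tf(\mathbf{y})-f(\mathbf{x})|.
$$
The goal is to show $\Omega f(\mathbf{x})=0$ almost everywhere. I would use the trivial sublinearity $\Omega(\varphi_1+\varphi_2)\le\Omega\varphi_1+\Omega\varphi_2$ together with the pointwise majorization $\Omega\varphi(\mathbf{x})\le\mathcal{M}_P\varphi(\mathbf{x})+|\varphi(\mathbf{x})|$.

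First I would verify that $\Omega g\equiv 0$ whenever $g\in C_c(\mathbb{R}^N)$: by Corollary \ref{PoissonCont}, the map $(t,\mathbf{y})\longmapsto P_tg(\mathbf{y})$ extends continuously to $[0,\infty)\times\mathbb{R}^N$ with boundary value $g$, and uniform continuity on compact sets then forces the oscillation to vanish. Next, \eqref{PHL} combined with the standard weak-$(1,1)$ and strong-$(p,p)$ bounds for $\mathcal{M}_{HL}$ on the space of homogeneous type $(\mathbb{R}^N,\|\cdot\|,dw)$ yields the same boundedness for $\mathcal{M}_P$.

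For $1\le p<\infty$, I would fix $\varepsilon>0$, choose $g\in C_c(\mathbb{R}^N)$ with $\|f-g\|_{L^p(dw)}<\varepsilon$, and estimate
$$
\Omega f(\mathbf{x})\le\Omega g(\mathbf{x})+\Omega(f-g)(\mathbf{x})\le\mathcal{M}_P(f-g)(\mathbf{x})+|f(\mathbf{x})-g(\mathbf{x})|.
$$
Chebyshev's inequality, together with the maximal bound, then gives $w\{\mathbf{x}:\Omega f(\mathbf{x})>\lambda\}\lesssim\lambda^{-p}\varepsilon^p$ (with the usual weak-type modification at $p=1$); sending $\varepsilon\to 0$ yields $\Omega f=0$ almost everywhere.

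For $p=\infty$, where $C_c$ is not dense in $L^\infty$, I would localize. Fix a ball $B=B(\mathbf{x}_0,r_B)$ and split $f=f_1+f_2$ with $f_1=f\hspace{.25mm}\mathbf{1}_{\mathcal{O}(2B)}$. Since $\mathcal{O}(2B)$ has finite $w$-measure, $f_1\in L^q(dw)$ for every finite $q$, so the previous case gives $\Omega f_1=0$ almost everywhere on $B$. For the far piece, elementary geometry shows that if $\mathbf{x}\in\overline{B(\mathbf{x}_0,r_B/2)}$ and $\|\mathbf{y}-\mathbf{x}\|<r_B/4$, then $\|\mathbf{y}-\mathbf{z}\|>r_B$ for every $\mathbf{z}\in\operatorname{supp}f_2\subset\mathcal{O}(2B)^c$, so
$$
|P_tf_2(\mathbf{y})|\le\|f\|_{L^\infty(dw)}\int_{\|\mathbf{y}-\mathbf{z}\|>r_B}p_t(\mathbf{y},\mathbf{z})\,dw(\mathbf{z})
$$
tends to $0$ uniformly in such $\mathbf{y}$ by Proposition \ref{PoissonApprox}. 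Since $f_2$ vanishes on $B$, this gives $\Omega f_2\equiv 0$ on $\overline{B(\mathbf{x}_0,r_B/2)}$. Covering $\mathbb{R}^N$ by countably many such balls completes the proof. The main obstacle is precisely this $L^\infty$ case: the key observation is that the $G$-orbit structure makes $\mathcal{O}(2B)$ (the $d$-ball) the natural near/far decomposition, since the decay of $p_t$ is controlled by the orbit distance rather than by $\|\mathbf{x}-\mathbf{y}\|$; with this choice the far piece is uniformly controlled by Proposition \ref{PoissonApprox}, while the near piece, of finite total $w$-mass, is reduced to the earlier $L^q$ case.
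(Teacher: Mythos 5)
Your proof is correct and follows exactly the route the paper intends: the authors give no details, stating only that the corollary ``follows from \eqref{Poisson_low_up}, \eqref{PHL}, and Proposition \ref{PoissonApprox}'', and your write-up is the standard density argument built from precisely those ingredients (maximal-function control of the oscillation via \eqref{PHL}, vanishing oscillation on $C_c$ via Corollary \ref{PoissonCont}, and the near/far splitting with $\mathcal{O}(2B)$ plus Proposition \ref{PoissonApprox} for $p=\infty$). Nothing is missing; the $L^\infty$ localization and the geometric check that the far piece is supported at $\|\cdot\|$-distance $>r_B$ from the relevant cone are both handled correctly.
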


\begin{remark}\normalfont The assertion of Proposition \ref{PoissonApprox} remains valid with the same proof if
$p_t(\mathbf x,\mathbf y)$  is replaced by $\Phi_t(\mathbf x,\mathbf y)=\tau_{\mathbf x} \Phi_t(-\mathbf y)$,
where $\Phi\in\mathcal S(\mathbb R^N)$ is radial, nonnegative, and $\int \Phi (\mathbf x)\, dw(\mathbf x)=1$.
\end{remark}

\section{Conjugate harmonic functions - subharmonicity}\label{SectionSub}

For $\sigma\in G$ let $f^\sigma(\mathbf{x})=f(\sigma(\mathbf{x}))$. It is easy to check that
\begin{equation}\label{eqsigma}T_{\xi} f^\sigma(\mathbf{x})= (T_{\sigma\xi} f)^\sigma (\mathbf{x}), \ \  \sigma\in G,  \ \mathbf{x}, \xi\in\mathbb{R}^N,
\end{equation}

$$({\Delta} f^\sigma) (\mathbf{x}) = ({\Delta} f)^\sigma (\mathbf{x}).$$
Let $\{\sigma_{ij}\}_{i,j=1}^N$ denote the matrix of $\sigma\in G$ written  in the canonical basis $e_1,{\dots},e_N$ of $\mathbb{R}^N$. Clearly, $\{\sigma_{ij}\}\in O(N)$.

\begin{lemma}
Assume that $\mathbf{u}(x_0,\mathbf{x})=(u_0(x_0,\mathbf{x}), u_1(x_0,\mathbf{x}),{\dots},u_N(x_0,\mathbf{x}))$ satisfies the Cauchy-Riemann equations \eqref{C-R}. For $\sigma\in G$ set
\begin{equation}\label{relation}
u_{\sigma, 0}(x_0,\mathbf{x})= u_0(x_0,\sigma{(}\mathbf{x}{)}), \ \ \ u_{\sigma, j}(x_0,\mathbf{x})= \sum_{i=1}^N \sigma_{ij} u_i(x_0,\sigma (\mathbf{x})), \ j=1,2,{\dots},N.
\end{equation}
Then $\mathbf{u}_\sigma(x_0,\mathbf{x})=( u_{\sigma, 0}(x_0,\mathbf{x}),  u_{\sigma, 1}(x_0,\mathbf{x}),{\dots}, u_{\sigma, N}(x_0,\mathbf{x}))$ satisfies the Cauchy-Riemann equations.
Moreover,
\begin{equation}\label{u_sigma}| \mathbf{u}_\sigma (x_0,\mathbf{x})|=|\mathbf{u}(x_0, \sigma (\mathbf{x}))|.
\end{equation}
\end{lemma}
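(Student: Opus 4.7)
The norm identity \eqref{u_sigma} will come out immediately from orthogonality of $\sigma$. By definition,
$$
|\mathbf{u}_\sigma(x_0,\mathbf{x})|^2=u_0(x_0,\sigma(\mathbf{x}))^2+\sum_{j=1}^{N}\Bigl(\sum_{i=1}^{N}\sigma_{ij}\,u_i(x_0,\sigma(\mathbf{x}))\Bigr)^{\!2},
$$
and expanding the square and summing over $j$ gives $\sum_{i,k}\bigl(\sum_{j}\sigma_{ij}\sigma_{kj}\bigr)u_iu_k=\sum_{i}u_i(x_0,\sigma(\mathbf{x}))^2$, since $\{\sigma_{ij}\}\in O(N)$. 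Adding the $j=0$ term then yields $|\mathbf{u}_\sigma(x_0,\mathbf{x})|^2=|\mathbf{u}(x_0,\sigma(\mathbf{x}))|^2$.

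For the Cauchy--Riemann system the plan is to compute, once and for all, the action of $T_k$ on $u_i^\sigma(x_0,\mathbf{x})=u_i(x_0,\sigma(\mathbf{x}))$. Using \eqref{eqsigma} together with $\sigma e_k=\sum_{m}\sigma_{mk}e_m$, one has for $k\ge1$
$$
T_k u_i^\sigma(x_0,\mathbf{x})=(T_{\sigma e_k}u_i)^\sigma(x_0,\mathbf{x})=\sum_{m=1}^{N}\sigma_{mk}\,(T_m u_i)(x_0,\sigma(\mathbf{x})),
$$
whereas $T_0$ commutes with the substitution $\mathbf{x}\mapsto\sigma(\mathbf{x})$ and acts trivially on the index structure. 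Substituting into $u_{\sigma,j}$ gives the master formulas
$$
T_k u_{\sigma,j}(x_0,\mathbf{x})=\sum_{a,b=1}^{N}\sigma_{ak}\sigma_{bj}\,(T_a u_b)(x_0,\sigma(\mathbf{x}))\quad(j,k\ge1),
$$
and analogous formulas when one or both indices equal $0$.

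With these in hand each Cauchy--Riemann identity reduces to the corresponding one for $\mathbf{u}$: for $i,j\ge1$ with $i\ne j$, relabeling the dummy indices shows
$$
T_i u_{\sigma,j}-T_j u_{\sigma,i}=\sum_{a,b=1}^{N}\sigma_{ai}\sigma_{bj}\,(T_a u_b-T_b u_a)^\sigma=0;
$$
for the mixed case $T_0 u_{\sigma,j}=T_j u_{\sigma,0}$ with $j\ge1$, both sides equal $\sum_{a}\sigma_{aj}(T_0 u_a)^\sigma=\sum_{a}\sigma_{aj}(T_a u_0)^\sigma$; finally for the divergence, the orthogonality relation $\sum_{j=1}^{N}\sigma_{aj}\sigma_{bj}=\delta_{ab}$ collapses
$$
\sum_{j=0}^{N}T_j u_{\sigma,j}(x_0,\mathbf{x})=\sum_{a=0}^{N}(T_a u_a)(x_0,\sigma(\mathbf{x}))=0.
$$

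The argument is essentially an exercise in index bookkeeping once \eqref{eqsigma} is in place; the only mild care needed is to treat the $0$-th coordinate separately (because $\sigma$ fixes $x_0$ and $T_0=\partial_{x_0}$), and to remember that in the first Cauchy--Riemann identity for $\mathbf{u}_\sigma$ the terms where $a=b$ vanish automatically, so that the hypothesis $i\ne j$ in \eqref{C-R} is sufficient. No further analytic input beyond \eqref{eqsigma} and the orthogonality of $\sigma$ is required.
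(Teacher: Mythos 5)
Your proposal is correct and follows essentially the same route as the paper: compute $T_k u_{\sigma,j}$ via the transformation rule \eqref{eqsigma}, reduce each identity to the Cauchy--Riemann equations for $\mathbf{u}$ by relabeling indices, and use $\sigma\in O(N)$ both for the divergence identity and for \eqref{u_sigma}. Your explicit handling of the mixed case $T_0u_{\sigma,j}=T_ju_{\sigma,0}$ is slightly more detailed than the paper's (which calls it straightforward), but the argument is the same.
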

\begin{proof} Let $1\leq k,j\leq N$. Then
\begin{equation}\begin{split}\label{eq0}
T_{k}u_{\sigma, j}(x_0,\mathbf{x})
&= \sum_{i=1}^N \sigma_{ij}T_k(u_i(x_0,\sigma \cdot))(\mathbf{x})
=\sum_{i=1}^N \sigma_{ij}\sum_{\ell=1}^N \sigma_{\ell k}(T_\ell u_i)(x_0,\sigma( \mathbf{x})),\end{split}\end{equation}
 and, similarly,

\begin{equation}\begin{split}\label{eq1}
T_{j}u_{\sigma, k}(x_0,\mathbf{x})
&=\sum_{i=1}^N \sigma_{ik}\sum_{\ell=1}^N \sigma_{\ell j}(T_\ell u_i)(x_0,\sigma (\mathbf{x})).\\
\end{split}\end{equation}
Recall that $T_{\ell}u_i=T_{i}u_\ell$. Hence,  \eqref{eq1} becomes
\begin{equation}\begin{split}\label{eq2}
T_{j}u_{\sigma, k}(x_0,\mathbf{x})
&=\sum_{i=1}^N \sigma_{ik}\sum_{\ell=1}^N \sigma_{\ell j}(T_i u_\ell )(x_0,\sigma (\mathbf{x})).\\
\end{split}\end{equation}
Now we see that \eqref{eq0} and \eqref{eq2} are equal.
The proof that $T_ku_{\sigma, 0}=T_0u_{\sigma, k}$ is straightforward. The second  equality of \eqref{C-R} follows directly from \eqref{eq2}  and the fact that $\sigma^{-1}=\sigma^*$.

Since $\{\sigma_{ij}\}\in O(N)$,
\begin{equation}\begin{split}
|u_{\sigma, 0}(x_0,\mathbf{x})|^2+ \sum_{j=1}^N|u_{\sigma, j}(x_0,\mathbf{x})|^2
&= |u_{ 0}(x_0,\sigma (\mathbf{x}))|^2+ \sum_{j=1}^N\left|\sum_{i=1}^N \sigma_{ij} u_{ i}(x_0,\sigma (\mathbf{x}))\right|^2\\
&=  |u_{ 0}(x_0,\sigma (\mathbf{x}))|^2+ \sum_{i=1}^N | u_{ i}(x_0,\sigma (\mathbf{x}))|^2,\\
\end{split}\end{equation}
which proves \eqref{u_sigma}.
\end{proof}
Let
\begin{equation}\label{functionF}
F(t,\mathbf{x})= \{\mathbf{u}_\sigma (t,\mathbf{x})\}_{\sigma\in  G}.
\end{equation}
We shall always assume that $\mathbf{u}$ and $\mathbf{u}_\sigma$ are related by \eqref{relation}.
Then, by \eqref{u_sigma},
$$ |F(x_0,\mathbf{x})|^2=\sum_{\sigma\in G} \sum_{\ell=0}^N |u_{\sigma, \ell} (x_0, \mathbf{x})|^2=\sum_{\sigma\in G}|\mathbf{u}_{\sigma}(x_0,\mathbf{x})|^2 = \sum_{\sigma\in G}|\mathbf{u} (x_0,\sigma (\mathbf{x}))|^2. $$
Observe that
$|F(x_0,\mathbf{x})|=|F(x_0,\sigma{(}\mathbf{x}{)})|$ for every $\sigma\in  G$.

Consequently, for every $\alpha\in R$,

\begin{equation}\begin{split}
\sum_{\sigma\in G} \sum_{\ell=0}^N \Big(u_{\sigma, \ell}(x_0,\mathbf{x}) -u_{\sigma,\ell}(x_0,\sigma_\alpha{(}\mathbf{x}{)})\Big)\cdot u_{\sigma,\ell}(x_0,\mathbf{x})\\
=\frac{1}{2}\sum_{\sigma\in G} \sum_{\ell=0}^N \Big|u_{\sigma, \ell}(x_0,\mathbf{x}) -u_{\sigma,\ell}(x_0,\sigma_\alpha{(}\mathbf{x}{)})\Big|^2.\\
\end{split}
\end{equation}

We shall need the following auxiliary lemma.
\begin{lemma}\label{auxiliary}
For every $\varepsilon >0$ there is $\delta>0$ such that for every  matrix $A=\{a_{ij}\}_{i,j=0}^{N}$ with real entries $a_{ij}$ one has
$$ \| A\|^2\leq  \varepsilon \Big((\text{\rm tr} A)^2+\sum_{i<j} (a_{ij}-a_{ji})^2 \Big)+ (1-\delta ) \| A\|_{\rm HS}^2,$$
 where $\| A\|_{\rm HS}$ denotes the Hilbert-Schmidt norm of $A$.
  \end{lemma}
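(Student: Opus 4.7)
The plan is to reduce the claim to a compactness statement on the Hilbert--Schmidt unit sphere. Both sides of the desired inequality are homogeneous of degree $2$ in $A$, so I may assume $\|A\|_{\mathrm{HS}}=1$; the case $A=0$ is trivial. Writing
\[
Q(A)=(\operatorname{tr} A)^{2}+\sum_{i<j}(a_{ij}-a_{ji})^{2}\ge 0,
\]
the inequality becomes
\[
\|A\|^{2}\le \varepsilon\,Q(A)+1-\delta,
\]
i.e., I must find $\delta=\delta(\varepsilon)>0$ such that the continuous function
\[
g_{\varepsilon}(A):=\varepsilon\,Q(A)+1-\|A\|^{2}
\]
satisfies $g_{\varepsilon}(A)\ge \delta$ on the set $\Sigma=\{A:\|A\|_{\mathrm{HS}}=1\}$. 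Note $g_{\varepsilon}\ge 0$ on $\Sigma$, since $\|A\|\le\|A\|_{\mathrm{HS}}=1$. As $\Sigma$ is compact, $g_{\varepsilon}$ attains a minimum $\delta_{0}\ge 0$ at some $A^{*}\in\Sigma$.

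Next, I would show $\delta_{0}>0$ by contradiction. Suppose $g_{\varepsilon}(A^{*})=0$. Then $Q(A^{*})=0$ and $\|A^{*}\|=1=\|A^{*}\|_{\mathrm{HS}}$. The vanishing of $Q$ forces $A^{*}$ to be symmetric and trace-free. By the spectral theorem, $A^{*}$ has real eigenvalues $\lambda_{0},\dots,\lambda_{N}$ with
\[
\sum\nolimits_{i=0}^{N}\lambda_{i}^{2}=\|A^{*}\|_{\mathrm{HS}}^{2}=1,\qquad
\max\nolimits_{i}\lambda_{i}^{2}=\|A^{*}\|^{2}=1,\qquad
\sum\nolimits_{i=0}^{N}\lambda_{i}=0.
\]
The combination of the first two identities forces exactly one eigenvalue to be $\pm 1$ and the others to vanish, but then the trace equals $\pm 1\ne 0$, contradicting the third. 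Hence $\delta_{0}>0$, and the choice $\delta:=\delta_{0}$ (which depends on $\varepsilon$) proves the lemma.

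The only genuine ingredient here is the rigidity statement that a matrix saturating $\|A\|\le\|A\|_{\mathrm{HS}}$ must have rank one; that is immediate from the singular-value decomposition (or, in the symmetric case above, directly from the spectral theorem). The rest is standard compactness. The mild obstacle is interpreting $\|A\|$: the argument needs only that $\|\cdot\|$ be continuous and satisfy $\|A\|\le\|A\|_{\mathrm{HS}}$ with equality forcing at most one nonzero singular value, which is the case for the operator (spectral) norm used in the Stein--Weiss subharmonicity scheme to which this lemma will subsequently be applied.
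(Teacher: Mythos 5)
Your proof is correct. It follows the same overall strategy as the paper --- normalize to $\|A\|_{\mathrm{HS}}=1$ and use compactness of the Hilbert--Schmidt unit sphere to extract an extremal (or limiting) matrix $A^{*}$ that is symmetric, trace-free, and satisfies $\|A^{*}\|=\|A^{*}\|_{\mathrm{HS}}$ --- but it diverges in how that degenerate configuration is ruled out. The paper at this point simply invokes the known Stein--Weiss inequality for trace-zero symmetric matrices (\cite[Lemma 2.2]{SW}), which asserts $\|A\|^{2}\le(1-\delta)\|A\|_{\mathrm{HS}}^{2}$ for such matrices and hence contradicts $\|A^{*}\|^{2}\ge\|A^{*}\|_{\mathrm{HS}}^{2}$. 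You instead resolve the equality case directly: equality in $\|A\|\le\|A\|_{\mathrm{HS}}$ forces a single nonzero eigenvalue (rank one), which is incompatible with vanishing trace. This makes the argument self-contained at the cost of a two-line spectral computation, whereas the paper's version is shorter but rests on an external citation; the mathematical content is the same. One cosmetic remark: your direct minimization of $g_{\varepsilon}$ on the sphere is cleaner than the paper's sequence-of-counterexamples formulation, and it also makes explicit that the resulting $\delta$ may be taken to be the minimum value $\delta_{0}$, i.e.\ it depends only on $\varepsilon$ and $N$, which is all the application in Theorem \ref{subharmonic} requires.
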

\begin{proof}
The lemma was proved in \cite{Dz}. For the convenience of the reader we present a short proof.
 The inequality is known for trace zero symmetric $A$ (see Stein and Weiss \cite[Lemma 2.2]{SW}). By homogeneity we may assume that $\| A\|_{\rm HS}=1$.
 Assume that the inequality does not hold. Then  there is $\varepsilon >0$ such that  for every $ n>0$ there is  $A_n=\{a_{ij}^{\{n\}}\}_{i,j=0}^N, \ \| A_n\|_{\rm HS}=1$ such that

$$ \| A_n\|^2 > \varepsilon \Big((\text{tr} A_k)^2+\sum_{i<j} (a^{\{n\}}_{ij}-a^{\{n\}}_{ji})^2 \Big)+ \left(1-\frac{1}{n} \right) \|A_n\|^2_{\rm HS}.$$
Thus there is a subsequence  $n_{s}$ such that $A_{n_s}\to A$, $\| A\|_{\rm HS}=1$ and
$$ \| A\|^2\geq \varepsilon \Big((\text{tr} A)^2+\sum_{i<j} (a_{ij}-a_{ji})^2 \Big)+\|A\|^2_{\rm HS}.$$
 But then $A=A^*$ and $\text{tr} A=0$,  and so, $\| A\|^2\geq \| A\|_{\rm HS}^2.$ This contradicts  the already known inequality.
\end{proof}

 We now state and prove the main theorem of Section \ref{SectionSub}, which is the analog in the Dunkl setting of a Euclidean subharmonicity property (see \cite[Chapter VII, Section 3.1]{St1}) and which was proved in the product case in \cite[Proposition 4.1]{Dz}. Recall \eqref{operatorL} that $\mathcal L=T_0^2+{\Delta}$.

\begin{theorem}\label{subharmonic}
 There is an exponent $0<q<1$ which depends on $k$  such that if $\mathbf{u}=(u_0,u_1,{\dots},u_N)\in C^2$ satisfies the Cauchy-Riemann equations \eqref{C-R}, then  the function  $|F|^q$ is $\mathcal L$-subharmonic,
 that is, $\mathcal L(|F|^q)(t,\mathbf{x})\geq 0$ on the set
 where $|F|>0$.
\end{theorem}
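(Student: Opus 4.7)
The plan is to set $\phi=|F|^2=\sum_{\sigma\in G}\sum_{\ell=0}^N u_{\sigma,\ell}^2$, which is $G$-invariant in $\mathbf{x}$, and adapt the classical Stein--Weiss strategy \cite[Ch.~VII,\S3.1]{St1}, with Lemma \ref{auxiliary} playing the role of their trace-zero symmetric matrix lemma. Because $\phi$ and hence $\phi^{q/2}$ are $G$-invariant, every difference operator simplifies to
$\delta_\alpha(\phi^{q/2})=\partial_\alpha(\phi^{q/2})/\langle\alpha,\mathbf{x}\rangle=(q/2)\phi^{q/2-1}\delta_\alpha\phi$, and combined with the Euclidean chain rule for $\partial_t^2+\Delta_{\mathrm{eucl},\mathbf{x}}$ this gives
\[
\mathcal L(\phi^{q/2})=\tfrac{q}{2}\phi^{q/2-1}\mathcal L\phi+\tfrac{q}{2}\bigl(\tfrac{q}{2}-1\bigr)\phi^{q/2-2}|\nabla_{\mathrm{eucl}}\phi|^2,
\]
so the target $\mathcal L(|F|^q)\ge 0$ on $\{\phi>0\}$ will be equivalent to the pointwise inequality $\phi\,\mathcal L\phi\ge(1-q/2)|\nabla_{\mathrm{eucl}}\phi|^2$, where $\nabla_{\mathrm{eucl}}$ acts in all of $(t,\mathbf{x})$.

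Next I would apply the Leibniz identity $\delta_\alpha(f^2)=2f\,\delta_\alpha f+(f-f^{\sigma_\alpha})^2/\langle\alpha,\mathbf{x}\rangle^2$ and the Euclidean Leibniz rule to each $u_{\sigma,\ell}^2$; since every $u_{\sigma,\ell}$ is $\mathcal L$-harmonic, this should produce
\[
\mathcal L\phi=2P+2Q,\qquad P:=\sum_{\sigma,\ell}|\nabla_{\mathrm{eucl}}u_{\sigma,\ell}|^2,\qquad Q:=\sum_{\alpha\in R^+}k(\alpha)\sum_{\sigma,\ell}\frac{(u_{\sigma,\ell}-u_{\sigma,\ell}^{\sigma_\alpha})^2}{\langle\alpha,\mathbf{x}\rangle^2}.
\]

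For each $\sigma\in G$ I would introduce the $(N\!+\!1)\!\times\!(N\!+\!1)$ matrix $B_\sigma=(\partial_iu_{\sigma,j})_{0\le i,j\le N}$ and split $T_i=\partial_i-D_i$ (with $D_0=0$ and $D_if(\mathbf{x})=\sum_{\alpha\in R^+}k(\alpha)\alpha_i[f(\mathbf{x})-f(\sigma_\alpha\mathbf{x})]/\langle\alpha,\mathbf{x}\rangle$ for $i\ge 1$). The Cauchy--Riemann equations for $\mathbf u_\sigma$, furnished by the first lemma of this section, then translate to $\operatorname{tr} B_\sigma=-\sum_iD_iu_{\sigma,i}$ and $B_{\sigma,ij}-B_{\sigma,ji}=-(D_iu_{\sigma,j}-D_ju_{\sigma,i})$, so a short Cauchy--Schwarz in $\alpha$ yields
\[
(\operatorname{tr} B_\sigma)^2+\sum_{i<j}(B_{\sigma,ij}-B_{\sigma,ji})^2\le C\,Q_\sigma,
\]
with $Q_\sigma$ the $\sigma$-contribution to $Q$ and $C$ depending only on $R,k$. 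Lemma \ref{auxiliary} then gives $\|B_\sigma\|^2\le\varepsilon C\,Q_\sigma+(1-\delta)\|B_\sigma\|_{HS}^2$. Writing $\phi_\sigma:=|\mathbf u_\sigma|^2$ so that $\partial_j\phi=2\sum_\sigma(B_\sigma\mathbf u_\sigma)_j$, a weighted Cauchy--Schwarz $\bigl(\sum_\sigma a_\sigma\bigr)^2\le\bigl(\sum_\sigma\phi_\sigma\bigr)\sum_\sigma a_\sigma^2/\phi_\sigma$ (omitting indices where $\phi_\sigma=0$) produces
\[
|\nabla_{\mathrm{eucl}}\phi|^2\le 4\phi\sum_\sigma\|B_\sigma\|^2\le 4\phi\bigl[\varepsilon C\,Q+(1-\delta)P\bigr].
\]

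Feeding these bounds into the subharmonicity criterion,
\[
\phi\,\mathcal L\phi-(1-q/2)|\nabla_{\mathrm{eucl}}\phi|^2\ge\phi\Bigl\{P\bigl[2-(2-q)(1-\delta)\bigr]+Q\bigl[2-(2-q)\varepsilon C\bigr]\Bigr\}.
\]
I would first fix $\varepsilon>0$ with $\varepsilon C\le 1$, obtaining $\delta=\delta(\varepsilon)>0$ from Lemma \ref{auxiliary}, and then pick $q\in(0,1)$ small enough that $q\le 2\delta/(1-\delta)$; both brackets are then non-negative, producing the required $q=q(k)\in(0,1)$. The hard part will be the bound on $|\nabla_{\mathrm{eucl}}\phi|^2$: a naive Cauchy--Schwarz across $\sigma$ wastes a factor $|G|$ that would ruin the final inequality, so the weighted version with weights $\phi_\sigma$ is essential. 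Equally delicate is the identification, via the Cauchy--Riemann equations for $\mathbf u_\sigma$, of the trace and antisymmetric defect of $B_\sigma$ with precisely the Dunkl difference terms constituting $Q$ in $\mathcal L\phi$, which is what lets Lemma \ref{auxiliary} absorb those defects into a sharp bound.
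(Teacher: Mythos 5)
Your route is essentially the paper's: passing to $\phi=|F|^2$ and exploiting its $G$-invariance to reduce $\mathcal L(\phi^{q/2})\ge0$ to $\phi\,\mathcal L\phi\ge(1-\tfrac q2)|\nabla\phi|^2$, computing $\mathcal L\phi=2P+2Q$ from harmonicity and the Leibniz identity for $\delta_\alpha$, bounding $|\nabla\phi|^2\le4\phi\sum_\sigma\|B_\sigma\|^2$, identifying $\operatorname{tr}B_\sigma$ and the antisymmetric defects with the Dunkl difference quotients via the Cauchy--Riemann system for $\mathbf u_\sigma$ and Cauchy--Schwarz, and invoking Lemma \ref{auxiliary}. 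All of these steps are sound and coincide with the paper's proof (which differentiates $|F|^q$ directly and bounds $|\nabla\phi|^2$ through the operator norm of the block matrix $\mathbf B=\{B_\sigma\}_{\sigma\in G}$ rather than your weighted Cauchy--Schwarz; the two devices give the same bound $4\phi\sum_\sigma\|B_\sigma\|^2$).

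The final step, however, fails as written. Since $(1-\tfrac q2)\cdot4=2(2-q)$, the correct estimate is $\phi\,\mathcal L\phi-(1-\tfrac q2)|\nabla\phi|^2\ge 2\phi\bigl\{P\,[1-(2-q)(1-\delta)]+Q\,[1-(2-q)\varepsilon C]\bigr\}$; your display drops a factor $2$ from the subtracted terms. More seriously, nonnegativity of the $P$-bracket requires $(2-q)(1-\delta)\le1$, i.e.\ $q\ge\frac{1-2\delta}{1-\delta}$, so $q$ must be taken \emph{close to $1$ from below}, not small: decreasing $q$ increases $(1-\tfrac q2)$ and makes the target inequality strictly harder (for $q$ near $2$ it is trivial, for $q$ near $0$ it is false in general). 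With your prescription ``pick $q$ small enough that $q\le2\delta/(1-\delta)$'' the bracket $1-(2-q)(1-\delta)$ is negative whenever $\delta<\tfrac12$ (as $q\to0$ it tends to $2\delta-1$), and the argument does not close. The fix is immediate --- choose $\varepsilon$ with $\varepsilon C\le\tfrac12$, obtain $\delta=\delta(\varepsilon)$ from Lemma \ref{auxiliary}, and take any $q\in\bigl[\tfrac{1-2\delta}{1-\delta},1\bigr)$, which is exactly the paper's condition $(1-\delta)\le(2-q)^{-1}$ --- but as stated your selection of $q$ goes in the wrong direction. (The sign in your splitting $T_i=\partial_i-D_i$ is also inconsistent with your definition of $D_i$, though this is harmless since only squares enter.)
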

\begin{proof}  Observe that $|F|^q$ is $C^2$ on the set where $|F|>0$.
 Let $\cdot$ denote the inner product in $\mathbb{R}^{(N+1)\cdot |G|}$.  For $j=0,1,{\dots},N$,  we have
 \begin{equation*}\begin{aligned}
        \partial_{e_j} |F|^q &=q|F|^{q-2}\Big((\partial_{e_j} F)\cdot F\Big)\\
        \partial_{e_j}^2 |F|^q&=q(q-2)|F|^{q-4}\Big((\partial_{e_j}F)\cdot F\Big)^2 +q|F|^{q-2}\Big((\partial_{e_j}^2F)\cdot
        F+|\partial_{e_j}F|^2\Big).
        \end{aligned}
   \end{equation*}
Recall that $|F(x_0,\mathbf{x})|=|F(x_0,\sigma{(}\mathbf{x}{)})|$. Hence,
\begin{equation}\label{eq366}
 \begin{aligned}
\mathcal L|F|^q&=  q(q-2)|F|^{q-4}\Big\{ \Big(
\sum_{j=0}^N\Big((\partial_{e_j}F)\cdot F\Big)^2\Big\}\\
& \ + q|F|^{q-2}\Big\{ \Big(\sum_{j=0}^N \partial_{e_j}^2 F+2\sum_{\alpha\in R^+} \frac{k(\alpha)}{\langle \alpha , \mathbf{x}\rangle} \partial_{\alpha} F \Big)\cdot F+\sum_{j=0}^N|\partial_{e_j} F|^2 \Big\}.\\
\end{aligned}
\end{equation}
 Since $T_jT_\ell =T_\ell T_j $, we conclude from (\ref{C-R}) applied to $\mathbf{u}_\sigma$ that for $\ell =0,1,{\dots},N$,  we have
 $$  \sum_{j=0}^N \partial_{e_j}^2 u_{\sigma,\ell}(x_0,\mathbf{x}) +2\sum_{\alpha\in R^+} \frac{k(\alpha)}{\langle \alpha , \mathbf{x}\rangle} \partial_\alpha u_{\sigma,\ell}(x_0,\mathbf{x})
 = \sum_{\alpha\in R^+} k(\alpha)\|\alpha\|^2\frac{u_{\sigma, \ell}(x_0,\mathbf{x}) -u_{\sigma,\ell}(x_0,\sigma_\alpha{(}\mathbf{x}{)})}{\langle \alpha, \mathbf{x}\rangle^2}.$$
Thus,
\begin{equation}\label{eq367}\begin{split}
&\Big(\sum_{j=0}^N \partial_{e_j}^2 F+2\sum_{\alpha\in R^+} \frac{k(\alpha)}{\langle \alpha , \mathbf{x}\rangle} \partial_{\alpha} F \Big)\cdot F\\
&=\sum_{\sigma\in  G} \sum_{\ell =0}^N
\Big(\sum_{j=0}^N \partial_{e_j}^2u_{\sigma, \ell} (x_0,\mathbf{x})
+ 2\sum_{\alpha\in R^+} \frac{k(\alpha)}{\langle \alpha , \mathbf{x}\rangle} \partial_\alpha u_{\sigma,\ell}(x_0,\mathbf{x})
\Big)u_{\sigma,\ell}(x_0,\mathbf{x})\\
&= \sum_{\sigma\in G} \sum_{\ell=0}^N \sum_{\alpha\in R^+} k(\alpha)\|\alpha\|^2\frac{u_{\sigma, \ell}(x_0,\mathbf{x}) -u_{\sigma,\ell}(x_0,\sigma_\alpha{(}\mathbf{x}{)})}{\langle \alpha, \mathbf{x}\rangle^2}u_{\sigma,\ell}(x_0,\mathbf{x})\\
&= \sum_{\alpha\in R^+} \frac{k(\alpha)\|\alpha\|^2}{\langle \alpha, \mathbf{x}\rangle^2}\sum_{\sigma\in G} \sum_{\ell=0}^N \Big(u_{\sigma, \ell}(x_0,\mathbf{x}) -u_{\sigma,\ell}(x_0,\sigma_\alpha{(}\mathbf{x}{)})\Big)u_{\sigma,\ell}(x_0,\mathbf{x})\\
&=\frac{1}{2} \sum_{\alpha\in R^+} \frac{k(\alpha)\|\alpha\|^2}{\langle \alpha, \mathbf{x}\rangle^2}\sum_{\sigma\in G} \sum_{\ell=0}^N \Big(u_{\sigma, \ell}(x_0,\mathbf{x}) -u_{\sigma,\ell}(x_0,\sigma_\alpha{(}\mathbf{x}{)})\Big)^2\\
\end{split}
\end{equation}
Thanks to (\ref{eq366}) and (\ref{eq367}), it suffices to prove that there is $0<q<1$ such that
\begin{equation}\label{eq3.1}
 \begin{aligned}
  &(2-q) \sum_{j=0}^N\Big((\partial_{e_j}F(x_0,\mathbf{x}))\cdot F(x_0,\mathbf{x})\Big)^2 \\
  &\leq \frac{1}{2}|F(x_0,\mathbf{x})|^{2}\sum_{\sigma\in G} \sum_{\ell=0}^N  \sum_{\alpha\in R^+} \frac{k(\alpha)\|\alpha\|^2}{\langle \alpha, \mathbf{x}\rangle^2}
  \Big(u_{\sigma, \ell}(x_0,\mathbf{x}) -u_{\sigma,\ell}(x_0,\sigma_\alpha{(}\mathbf{x}{)})\Big)^2\\
 &\ \ +|F(x_0,\mathbf{x})|^{2}\Big(\sum_{j=0}^N|\partial_{e_j} F(x_0,\mathbf{x})|^2\Big).
 \end{aligned}
\end{equation}
Set
$$B_\sigma=\left[\begin{array}{cccc}
\partial_{e_0}  u_{\sigma, 0} &\partial_{e_0}  u_{\sigma , 1}& {\dots} &\partial_{e_0}   u_{\sigma , N}\\
\partial_{e_1}  u_{\sigma, 0} &\partial_{e_1}  u_{\sigma , 1}& {\dots} &\partial_{e_1}   u_{\sigma , N}\\
 \  & \  & {\dots} & \ \\
 \partial_{e_N}  u_{\sigma, 0} &\partial_{e_N}  u_{\sigma , 1}& {\dots} &\partial_{e_N}   u_{\sigma , N}\\
\end{array}\right].$$
Let $\mathbf B=\{ B_\sigma\}_{\sigma\in G}$ be matrix with $N+1$ rows and $(N+1)\cdot | G|$ columns. It represents a linear operator (denoted by $\mathbf B$) from $\mathbb{R}^{(N+1)\cdot | G|}$ into $\mathbb{R}^{1+N}$. Let $\| \mathbf B\|$ be its norm.

Observe that for $0<q<1$ we have
$$ (2-q) \sum_{j=0}^N\Big((\partial_{e_j}F)\cdot F\Big)^2
\leq (2-q)|F|^2  \| \mathbf B\|^2 ,$$
$$ | F|^{2} \sum_{j=0}^N|\partial_{e_j} F|^2 = |F|^2\| \mathbf B\|_{\text{\rm HS}}^2.$$
Clearly,
$$\| \mathbf B\|^2\leq \sum_{\sigma\in G} \| B_\sigma\|^2, \ \ \ \| \mathbf B\|_{\text{\rm HS}}^2=\sum_{\sigma\in G}\| B_\sigma\|_{\text{\rm HS}}^2.$$
Therefore the inequality  (\ref{eq3.1}) will be proven if we show that
\begin{equation}
\label{eq3.2}\begin{split}
(2-q)\sum_{\sigma\in  G}\|B_{\sigma}\|^2
&\leq \sum_{\sigma\in  G}  \| B_\sigma\|_{\text{\rm HS}}^2\\
&\ \ +  \frac{1}{2}\sum_{\sigma\in G} \sum_{\ell=0}^N  \sum_{\alpha\in R^+} \frac{k(\alpha)\|\alpha\|^2}{\langle \alpha, \mathbf{x}\rangle^2}
  \Big(u_{\sigma, \ell}(x_0,\mathbf{x}) -u_{\sigma,\ell}(x_0,\sigma_\alpha{(}\mathbf{x}{)})\Big)^2.\\
\end{split}\end{equation}
Recall that
$$\gamma= \sum_{j=1}^N \sum_{\alpha\in R^+} \frac{k(\alpha) \langle \alpha ,e_j\rangle^2}{\|\alpha\|^2}=\sum_{j=0}^N \sum_{\alpha\in R^+} \frac{k(\alpha) \langle \alpha ,e_j\rangle^2}{\|\alpha\|^2}$$
(see \eqref{gamma}).
{By a}pplying {first} the Cauchy-Riemann equations (\ref{C-R}) and {next} the {Cauchy-}Schwarz inequality, we obtain
\begin{equation}\label{eq368}\begin{split}
( \text{\rm tr} B_\sigma )^2
& = \left(-\sum_{j=1}^N\sum_{\alpha\in R^+} k(\alpha ) \langle \alpha, e_j\rangle \frac{ u_{\sigma, j}(x_0,\mathbf{x})-u_{\sigma, j}(x_0, \sigma_\alpha{(}\mathbf{x}{)})}{\langle \alpha, \mathbf{x}\rangle}\right)^2\\
&\leq \Big(\sum_{j=1}^N \sum_{\alpha\in R^+} \frac{k(\alpha) \langle \alpha ,e_j\rangle^2}{\|\alpha\|^2}\Big) \Big(\sum_{j=1}^N \sum_{\alpha\in R^+}\|\alpha\|^2k(\alpha)  \frac{\big(u_{\sigma, j}(x_0,\mathbf{x}) -u_{\sigma , j}(x_0,\sigma_\alpha{(}\mathbf{x}{)})\big)^2}{\langle \alpha , \mathbf{x}\rangle^2}\Big)\\
&\leq \gamma\sum_{j=0}^N \sum_{\alpha\in R^+}\|\alpha\|^2k(\alpha)  \frac{\big(u_{\sigma, j}(x_0,\mathbf{x}) -u_{\sigma , j}(x_0,\sigma_\alpha{(}\mathbf{x}{)})\big)^2}{\langle \alpha , \mathbf{x}\rangle^2}.
\end{split}\end{equation}
Utilising  again the Cauchy-Riemann equations \eqref{C-R}, we get
\begin{equation}\label{eq369}\begin{split}
 & \sum_{0\leq i<j\leq N} \big(\partial_{e_i}u_{\sigma , j}(x_0, \mathbf{x}) -\partial_{e_j}  u_{\sigma ,i}(x_0,\mathbf{x})\big)^2 \\
 &=\sum_{j=1}^N\Big(\sum_{\alpha\in R^+} k(\alpha ) \langle \alpha, e_j\rangle \frac{ u_{\sigma, 0}(x_0,\mathbf{x})-u_{\sigma , 0}(x_0,\sigma_\alpha{(}\mathbf{x}{)})}{\langle \alpha , \mathbf{x}\rangle}\Big)^2 \\
 &\ \  +\sum_{1\leq i<j\leq N} \Big(\sum_{\alpha\in R^+} - k(\alpha ) \langle \alpha, e_i\rangle
 \frac{ u_{\sigma, j}(x_0,\mathbf{x})-u_{\sigma , j}(x_0,\sigma_\alpha{(}\mathbf{x}{)})}{\langle \alpha , \mathbf{x}\rangle}\\
 &\hskip3cm+  k(\alpha ) \langle \alpha, e_j\rangle
 \frac{ u_{\sigma, i}(x_0,\mathbf{x})-u_{\sigma , i}(x_0,\sigma_\alpha{(}\mathbf{x}{)})}{\langle \alpha , \mathbf{x}\rangle} \Big)^2\\
 &\leq 2 \Big(\sum_{j=0}^N \sum_{\alpha\in R^+} \frac{k(\alpha) \langle \alpha ,e_j\rangle^2}{\|\alpha\|^2}\Big) \Big(\sum_{j=0}^N \sum_{\alpha\in R^+}\|\alpha\|^2k(\alpha)  \frac{\big(u_{\sigma, j}(x_0,\mathbf{x}) -u_{\sigma , j}(x_0,\sigma_\alpha{(}\mathbf{x}{)})\big)^2}{\langle \alpha , \mathbf{x}\rangle^2}\Big).
\end{split} \end{equation}
Using  the auxiliary Lemma \ref{auxiliary} together with (\ref{eq368}) and (\ref{eq369}) we have that for every $\varepsilon >0$  there is $0<\delta <1$ such that
\begin{equation}\begin{split}\label{eq3.44}
 \sum_{\sigma\in G} \| B_\sigma\|^2
 &\leq (1-\delta)\sum_{\sigma\in G}\| B_\sigma\|_{\text{\rm HS}}^2 \\
 &\  \ +  3 \varepsilon \gamma \sum_{\sigma \in G}\sum_{j=0}^N \sum_{\alpha\in R^+}\|\alpha\|^2k(\alpha)  \frac{\big(u_{\sigma, j}(x_0,\mathbf{x}) -u_{\sigma , j}(x_0,\sigma_\alpha{(}\mathbf{x}{)})\big)^2}{\langle \alpha , \mathbf{x}\rangle^2}.
\end{split}\end{equation}
Taking $\varepsilon>0$ such that $ 3 \varepsilon \gamma \leq \frac{1}{4}$ and utilizing (\ref{eq3.44}) we deduce that (\ref{eq3.2}) holds for $q$ such that  $(1-\delta)\leq (2-q)^{-1}$.
\end{proof}

\section{Harmonic functions in the Dunkl setting.}\label{HarmFunt}

In this section we characterize certain $\mathcal L$-harmonic functions in the half-space $\mathbb{R}_+^{{1+N}}$  by adapting the classical proofs (see, e.g., \cite{FS}, \cite{St1} and \cite{SW}). Let us first construct an auxiliary  barrier function.
\smallskip

{\bf Barrier function.}
For fixed $\delta>0$ let $v_1,{\dots},v_s\in\mathbb{R}^N$ be a set of vectors of the unit sphere in $ S^{N-1}=\{ \mathbf{x}\in\mathbb{R}^N: \|\mathbf{x}\|=1\}$ which forms a  $\delta$--net on $S^{N-1}$. Let $M,\varepsilon >0$. Define
\begin{equation}\label{V_m}
 \mathcal V_m(x_0,\mathbf{x})=2M\varepsilon x_0+\varepsilon {E}\Big(\frac{\varepsilon\pi}{4}\mathbf{x},v_m\Big)\cos \Big(\frac{\varepsilon\pi}{4}x_0\Big)
\end{equation}
 (cf. \cite[Chapter VII, Section 1.2]{St1} in the classical setting). The function $\mathcal V_m$ is $\mathcal L$-harmonic and strictly positive on $[0,\varepsilon^{-1}]\times\mathbb{R}^N$. Set
$$\mathcal V(x_0,\mathbf{x})=\sum_{m=1}^s \mathcal V_m(x_0,\mathbf{x}).$$
By Corollary \ref{EstimateNearDiagonal},
\begin{equation}
\lim_{\|\mathbf{x}\|\to\infty}\mathcal V(x_0,\mathbf{x})=\infty \ \ \
\text{\rm uniformly in }
x_0\in [0,\varepsilon^{-1}].
\end{equation}

{\bf Maximum principle and the mean value property.}
As we have already remarked in Section \ref{preliminaries}, the operator $\mathcal L$ is the Dunkl-Laplace operator associated with the root system $R$ as a subset of
$\mathbb{R}^{1+N}=\mathbb{R}\times\mathbb{R}^N$. We shall denote the element of $\mathbb{R}^{1+N}$ by
$\boldsymbol x=(x_0,\mathbf{x})$. The associated measure will be denoted by $\boldsymbol {w}$. Clearly,
$d\boldsymbol {w}(\boldsymbol x)={w}(\mathbf{x})\,d\mathbf{x}\, dx_0$. Moreover, ${E}(\boldsymbol x,\boldsymbol y)=e^{x_0 y_0} {E}(\mathbf{x},\mathbf{y})$. We shall slightly abuse notation and use the same letter $\sigma$ for the action of the group $G$ in $\mathbb{R}^{1+N}$, so $\sigma(\boldsymbol x)=\sigma (x_0,\mathbf{x})=(x_0,\sigma (\mathbf{x}))$.

The following weak maximum principle for $\mathcal L$-subharmonic functions  was actually proved in  Theorem 4.2 of R\"osler \cite{Roesler2}.
\begin{theorem}\label{max_priciple} Let $\Omega\subset\mathbb{R}^{1+N}$ be open, bounded, and $\overline{ \Omega}\subset (0,\infty)\times\mathbb{R}^N$.
Assume that $\Omega$ is $ G$-invariant, that is, $(x_0,\sigma(\mathbf{x})) \in \Omega$ for $(x_0,\mathbf{x})\in \Omega$  and all $\sigma \in  G$.
Let  $f\in C^2(\Omega)\cap C( \overline{\Omega})$
 be real-valued and $\mathcal L$-subharmonic. Then
$$ \max_{\overline{ \Omega}} f=\max_{\partial \Omega}f. $$
\end{theorem}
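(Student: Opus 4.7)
My plan is to follow the classical Hopf-type argument by perturbing with a strictly $\mathcal{L}$-subharmonic function and then arguing at an interior maximum. The natural choice of perturbation is $g(\boldsymbol{x})=\|\boldsymbol{x}\|^2$. A direct computation gives $\mathcal{L}g\equiv 2(1+\mathbf{N})$: the Euclidean Laplacian in $\mathbb{R}^{1+N}$ contributes $2(1+N)$, while each $\delta_\alpha g$ equals $2$, since $g$ is $G$-invariant (so the difference term vanishes) and $\partial_\alpha g(\boldsymbol{x})=2\langle\alpha,\boldsymbol{x}\rangle$ (so the first term of $\delta_\alpha$ equals $2$). Hence for every $\varepsilon>0$ the perturbation $f_\varepsilon=f+\varepsilon g$ is strictly $\mathcal{L}$-subharmonic on $\Omega$.

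The heart of the proof is to show $\max_{\overline{\Omega}}f_\varepsilon=\max_{\partial\Omega}f_\varepsilon$. Assume this fails; then $f_\varepsilon$ attains its maximum at some interior point $\boldsymbol{x}_0=(x_0,\mathbf{x}_0)\in\Omega$. The classical first and second derivative tests yield $\partial_\alpha f_\varepsilon(\boldsymbol{x}_0)=0$ for every $\alpha\in R$ and $\Delta_{\text{eucl}}f_\varepsilon(\boldsymbol{x}_0)\le 0$. The nonlocal terms are controlled via the $G$-invariance of $\Omega$: for each $\alpha\in R^+$ the reflected point $\sigma_\alpha(\boldsymbol{x}_0)$ belongs to $\Omega$, so $f_\varepsilon(\boldsymbol{x}_0)\ge f_\varepsilon(\sigma_\alpha\boldsymbol{x}_0)$. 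If $\langle\alpha,\mathbf{x}_0\rangle\ne 0$ this immediately gives
\[
\delta_\alpha f_\varepsilon(\boldsymbol{x}_0)\,=\,-\,\frac{f_\varepsilon(\boldsymbol{x}_0)-f_\varepsilon(\sigma_\alpha\boldsymbol{x}_0)}{\langle\alpha,\mathbf{x}_0\rangle^2}\,\le\,0.
\]
If $\langle\alpha,\mathbf{x}_0\rangle=0$, one parameterizes by $\mathbf{x}_0+s\alpha$ (noting $\sigma_\alpha(\mathbf{x}_0+s\alpha)=\mathbf{x}_0-s\alpha$) and observes that the two apparently singular terms in the definition of $\delta_\alpha$ cancel thanks to the normalization $\|\alpha\|^2=2$; the Taylor expansion yields the continuous extension $\delta_\alpha f_\varepsilon(\boldsymbol{x}_0)=\tfrac{1}{2}\partial_\alpha^2 f_\varepsilon(\boldsymbol{x}_0)\le 0$ by the second derivative test. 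Summing with the positive weights $2k(\alpha)$ produces $\mathcal{L}f_\varepsilon(\boldsymbol{x}_0)\le 0$, contradicting $\mathcal{L}f_\varepsilon(\boldsymbol{x}_0)>0$.

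To finish, since $g$ is bounded on the compact set $\overline{\Omega}$,
\[
\max_{\overline{\Omega}}f\,\le\,\max_{\overline{\Omega}}f_\varepsilon\,\le\,\max_{\partial\Omega}f_\varepsilon\,\le\,\max_{\partial\Omega}f+\varepsilon\max_{\overline{\Omega}}g,
\]
and letting $\varepsilon\to 0$ gives $\max_{\overline{\Omega}}f\le\max_{\partial\Omega}f$; the reverse inequality is trivial. The only subtle point in the plan is the analysis of an interior maximum lying on a reflection hyperplane, where the nonlocal operator $\delta_\alpha$ has to be interpreted through a Taylor cancellation rather than by direct evaluation; everything else is a routine Dunkl counterpart of the Euclidean second derivative test, with the $G$-invariance of $\Omega$ playing the role that locality plays classically.
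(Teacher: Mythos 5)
Your proof is correct and self-contained: the perturbation $f+\varepsilon\|\boldsymbol{x}\|^2$ with $\mathcal{L}\|\boldsymbol{x}\|^2=2(1+\mathbf{N})>0$, the sign analysis of $\delta_\alpha f_\varepsilon$ at an interior maximum using $G$-invariance of $\Omega$, and the Taylor cancellation $\delta_\alpha f_\varepsilon(\boldsymbol{x}_0)=\tfrac12\partial_\alpha^2f_\varepsilon(\boldsymbol{x}_0)$ on the reflection hyperplanes are all handled properly. Note that the paper does not prove this statement at all; it simply cites Theorem 4.2 of R\"osler \cite{Roesler2}, whose argument is essentially the one you reconstructed, so your write-up supplies the omitted details rather than taking a genuinely different route.
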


Let $ f^{\{r\}}(\boldsymbol x)=\chi_{B(0, r)}(\boldsymbol x)$ be the characteristic function of the ball in $\mathbb{R}^{1+N}$.
Set
$$ f(r,\boldsymbol x,\boldsymbol y)= \tau_{\boldsymbol x} f^{\{r\}}(-\boldsymbol y).$$
Clearly, $0\leq f(r,\boldsymbol x,\boldsymbol y) \leq 1$.
The following mean value theorem was proved in  \cite[Theorem 3.2]{GR}.

\begin{theorem}\label{mean-value}
  Let $\Omega\subset\mathbb{R}^{1+N}$ be an open and $G$-invariant  set and let $u$ be a $C^2$ function in $\Omega$. Then $u$ is $\mathcal L$-harmonic if and only if $u$ has the following  mean value property: for all $\boldsymbol x\in\Omega$ and $\rho>0$ such that
  $B(\boldsymbol x , \rho )\subset \Omega$ we have
  $$ u(\boldsymbol x)=\frac{1}{\boldsymbol {w}(B(0,r))}\int_{\Omega} f(r,\boldsymbol x,\boldsymbol y) u(\boldsymbol y)d\boldsymbol {w}(\boldsymbol y)\ \ \ \text{for} \ 0<r<\rho\slash 3.$$
\end{theorem}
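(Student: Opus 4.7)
The plan is to derive the mean value identity from Dunkl-convolution calculus, by reducing the statement---via a smooth approximation of $\chi_{B(0,r)}$---to a Green-type formula that follows from two facts: the commutation of $\mathcal{L}$ with every Dunkl translation $\tau_{\boldsymbol{x}}$, and the skew-symmetry of each Dunkl operator with respect to $d\boldsymbol{w}$. Set
$$
M_r u(\boldsymbol{x})=\frac{1}{\boldsymbol{w}(B(0,r))}\int f(r,\boldsymbol{x},\boldsymbol{y})\,u(\boldsymbol{y})\,d\boldsymbol{w}(\boldsymbol{y})=\frac{(u*\chi_{B(0,r)})(\boldsymbol{x})}{\boldsymbol{w}(B(0,r))}\,.
$$
Since $\operatorname{supp}\tau_{\boldsymbol{x}}\chi_{B(0,r)}\subset\mathcal{O}(B(\boldsymbol{x},r))$, the restriction $3r<\rho$ together with the $G$-invariance of $\Omega$ places this support strictly inside $\Omega$, leaving room for smooth cutoffs. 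Accordingly, I would first approximate $\chi_{B(0,r)}$ by nonnegative radial $\phi_n\in C_c^\infty$ supported in $B(0,r+1/n)$ and equal to $1$ on $B(0,r-1/n)$, so that bounded convergence reduces everything to the analogous statement for each $\phi_n$.

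The central algebraic ingredient is the Green-type identity
$$
\int\tau_{\boldsymbol{x}}\phi(-\boldsymbol{y})\,\mathcal{L}u(\boldsymbol{y})\,d\boldsymbol{w}(\boldsymbol{y})=\int\tau_{\boldsymbol{x}}(\mathcal{L}\phi)(-\boldsymbol{y})\,u(\boldsymbol{y})\,d\boldsymbol{w}(\boldsymbol{y})\,,
$$
valid for any smooth radial compactly supported $\phi$ whose translated $G$-orbit lies in $\Omega$; it is obtained by commuting $\mathcal{L}$ with $\tau_{\boldsymbol{x}}$ and then integrating by parts through the skew-symmetry of each $T_j$, the boundary pieces being killed by compact support. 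Equivalently, $\mathcal{L}_{\boldsymbol{x}}(u*\phi)=(\mathcal{L}u)*\phi=u*(\mathcal{L}\phi)$. For the forward direction ($\mathcal{L}u\equiv 0\Rightarrow\text{MVP}$), I would show $\frac{d}{dr}M_r u(\boldsymbol{x})\equiv 0$: passing to the smooth family $\phi_n$ and differentiating, the derivative reduces by the Green identity to an integral of $\mathcal{L}u$ against a translated smooth function and thus vanishes. Combining this with $\lim_{r\downarrow 0}M_r u(\boldsymbol{x})=u(\boldsymbol{x})$---the kernel $\boldsymbol{w}(B(0,r))^{-1}f(r,\boldsymbol{x},\cdot)$ being a nonnegative unit-mass radial approximate identity whose support collapses around $\boldsymbol{x}$ as $r\to 0$, together with continuity of $u$---yields $M_r u(\boldsymbol{x})\equiv u(\boldsymbol{x})$ throughout the admissible range.

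For the converse (MVP$\Rightarrow\mathcal{L}u=0$), I would Taylor-expand $M_r u(\boldsymbol{x})-u(\boldsymbol{x})$ in $r$ about $r=0$. Using the smooth approximants, the commutation formula and the radial symmetry of the kernel, the first nontrivial coefficient is a positive multiple---depending only on the homogeneous dimension $\mathbf{N}$---of $\mathcal{L}u(\boldsymbol{x})$, while MVP forces this difference to vanish for every small $r$, giving $\mathcal{L}u(\boldsymbol{x})=0$. The main obstacle is the differentiation past the nonsmooth characteristic function and the identification of the leading Taylor coefficient as $\mathcal{L}u(\boldsymbol{x})$ rather than some other second-order combination; both are handled by performing the calculus on each smooth $\phi_n$ first and then passing to the limit via bounded convergence on the compact orbit $\mathcal{O}(\overline{B(\boldsymbol{x},r+1/n)})\subset\Omega$, where $u$ and its $C^2$-data are bounded.
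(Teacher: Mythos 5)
First, note that the paper does not prove Theorem \ref{mean-value} at all: it is quoted verbatim from \cite[Theorem 3.2]{GR}, so there is no internal proof to compare against. Judged on its own, your outline has the right skeleton — the commutation $\mathcal{L}\tau_{\boldsymbol{x}}=\tau_{\boldsymbol{x}}\mathcal{L}$, the symmetry of $\mathcal{L}$ with respect to $d\boldsymbol{w}$ on compactly supported $G$-invariant data (which is where the $G$-invariance of $\Omega$ and the slack $3r<\rho$ genuinely enter), and the approximate-identity limit as $r\downarrow0$ — but both decisive steps are asserted rather than proved, and in both cases the classical heuristic you invoke does not transfer because of the nonlocality of Dunkl translations.

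For the forward direction, the claim that ``differentiating, the derivative reduces by the Green identity to an integral of $\mathcal{L}u$'' is a gap: your Green identity moves $\mathcal{L}$ \emph{from} $u$ \emph{onto} the test function, so to apply it you must first exhibit $\tfrac{d}{dr}\bigl[\boldsymbol{w}(B(0,r))^{-1}\chi_{B(0,r)}\bigr]$ (equivalently $g_r-g_{r'}$) as $\mathcal{L}\Psi$ for a \emph{compactly supported} radial $\Psi$. That is the Dunkl analogue of the divergence theorem on balls / Newton's theorem: one must know that $\mathcal{L}$ acts on radial functions of $\mathbb{R}^{1+N}$ as the Bessel operator $\partial_s^2+\tfrac{\mathbf{N}}{s}\partial_s$ and solve the radial ODE, using the zero-total-charge condition to get compact support of $\Psi$. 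This is the heart of the Gallardo--Rejeb proof and is missing from your sketch. For the converse, the identification of the $r^2$-coefficient of $M_ru(\boldsymbol{x})-u(\boldsymbol{x})$ with $c\,\mathcal{L}u(\boldsymbol{x})$ cannot be obtained by Taylor expansion plus ``radial symmetry of the kernel'': by \eqref{translation-radial} and \eqref{A1}, $\tau_{\boldsymbol{x}}\chi_{B(0,r)}(-\cdot)$ is supported in the whole orbit $\mathcal{O}(B(\boldsymbol{x},r))$, which shrinks to $\mathcal{O}(\boldsymbol{x})$, not to $\boldsymbol{x}$ (so also your phrase ``support collapses around $\boldsymbol{x}$'' is literally false — only the mass concentrates there). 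The difference terms $\bigl(u(\boldsymbol{x})-u(\sigma_\alpha\boldsymbol{x})\bigr)/\langle\alpha,\boldsymbol{x}\rangle^2$ in $\mathcal{L}u(\boldsymbol{x})$ must be recovered from the precise small-$r$ asymptotics of the kernel mass near each reflected point $\sigma(\boldsymbol{x})$, i.e.\ from the boundary behaviour of R\"osler's measure $\mu_{\boldsymbol{x}}$; replacing $\chi_{B(0,r)}$ by smooth $\phi_n$ and invoking bounded convergence does nothing to address this. Until these two points are supplied, the proposal is an outline of the right strategy rather than a proof.
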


{\bf Characterizations of $\mathcal L$-harmonic functions in the upper half-space.}

\begin{theorem}\label{harmonic1}
  Suppose that $u$ is a $C^2$ function on $\mathbb{R}_+^{{1+N}}$. Then $u$ is a Poisson integral of a bounded function on $\mathbb{R}^N$ if and only if $u$ is $\mathcal L$-harmonic and bounded.
\end{theorem}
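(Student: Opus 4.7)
The direct implication is immediate: if $u(x_0, \mathbf{x}) = P_{x_0} f(\mathbf{x})$ for some $f \in L^\infty(dw)$, then $u$ is $\mathcal{L}$-harmonic by the subordination formula \eqref{subordination}, and $|u| \le \|f\|_\infty$ since the Poisson kernel is a probability density on each slice. For the converse, assume $u$ is bounded and $\mathcal{L}$-harmonic on $\mathbb{R}_+^{1+N}$. The slices $f_\varepsilon(\mathbf{x}) := u(\varepsilon, \mathbf{x})$ form a uniformly bounded family in $L^\infty(dw)$, so by the Banach--Alaoglu theorem there exist a sequence $\varepsilon_n \downarrow 0$ and $f \in L^\infty(dw)$ with $\|f\|_\infty \le \|u\|_\infty$ and $f_{\varepsilon_n} \to f$ in the weak-$*$ topology $\sigma(L^\infty,L^1)$. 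The target identity $u(x_0,\mathbf{x})=P_{x_0}f(\mathbf{x})$ will then follow from the shift formula
$$u(x_0 + \varepsilon, \mathbf{x}) = P_{x_0} f_\varepsilon(\mathbf{x}) \qquad (\varepsilon>0,\;(x_0,\mathbf{x})\in\mathbb{R}_+^{1+N}),$$
by letting $\varepsilon=\varepsilon_n\to0$: the left side tends to $u(x_0,\mathbf{x})$ by continuity of $u$, and the right side tends to $P_{x_0}f(\mathbf{x})$ because $p_{x_0}(\mathbf{x},\cdot)\in L^1(dw)$ can be paired against the weak-$*$ converging sequence.

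The shift formula reduces to the following uniqueness principle: \emph{any bounded $\mathcal{L}$-harmonic function $v$ on $\mathbb{R}_+^{1+N}$ which extends continuously to $[0,\infty)\times\mathbb{R}^N$ and vanishes on $\{0\}\times\mathbb{R}^N$ is identically zero.} Granting this, set $v(x_0,\mathbf{x}):=u(x_0+\varepsilon,\mathbf{x})-P_{x_0}f_\varepsilon(\mathbf{x})$: both summands are bounded and $\mathcal{L}$-harmonic on $\mathbb{R}_+^{1+N}$ and extend continuously to the closed half-space (the Poisson integral by Corollary \ref{PoissonCont}, the shift by smoothness of $u$), with common boundary trace $f_\varepsilon$, so $v$ satisfies the hypotheses. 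To prove the uniqueness principle itself, fix an interior point $(x_0^*,\mathbf{x}^*)\in\mathbb{R}_+^{1+N}$ and $\eta>0$, and choose the parameters in \eqref{V_m} as follows: $\varepsilon>0$ so small that $\varepsilon^{-1}>x_0^*$ and $\cos(\varepsilon\pi x_0/4)>0$ on $[0,\varepsilon^{-1}]$; $M=\|v\|_\infty/(2s\eta)$, so that on the top face $x_0=\varepsilon^{-1}$ the linear term of the barrier forces $\eta\mathcal{V}\ge 2Ms\eta=\|v\|_\infty$; $R$ so large that $\eta\mathcal{V}(x_0,\mathbf{x})\ge\|v\|_\infty$ on the lateral face $\|\mathbf{x}\|=R$ for $x_0\in[0,\varepsilon^{-1}]$, using the uniform divergence of $\mathcal{V}$; and finally $\delta$ so small that $|v(\delta,\cdot)|<\eta$ on the compact set $\mathcal{O}(\overline{B(0,R)})$, by continuity of $v$ up to the boundary. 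Applying the weak maximum principle (Theorem \ref{max_priciple}) to the $\mathcal{L}$-harmonic (hence subharmonic) function $v-\eta\mathcal{V}$ on the $G$-invariant bounded domain $\Omega=(\delta,\varepsilon^{-1})\times\mathcal{O}(B(0,R))$ yields $v-\eta\mathcal{V}\le\eta$ on $\overline{\Omega}$, since all three faces of $\partial\Omega$ satisfy this bound. Using the explicit form \eqref{V_m}, one checks
$$\eta\mathcal{V}(x_0^*,\mathbf{x}^*)=\|v\|_\infty\,\varepsilon\,x_0^*+\eta\,\varepsilon\,\cos\bigl(\tfrac{\varepsilon\pi x_0^*}{4}\bigr)\sum\nolimits_{m=1}^{s}E\bigl(\tfrac{\varepsilon\pi}{4}\mathbf{x}^*,v_m\bigr)\longrightarrow 0\quad(\varepsilon\to 0),$$
so $v(x_0^*,\mathbf{x}^*)\le\eta$. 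Letting $\eta\to0$ and replacing $v$ by $-v$ completes the proof.

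The sole technical obstacle is orchestrating the five parameters $\eta,\varepsilon,M,R,\delta$ in the barrier argument; everything hinges on the dual role of $\mathcal{V}$, namely that the engineered linear term $2M\varepsilon x_0$ in \eqref{V_m} absorbs $v$ on the top face while the Dunkl-kernel part diverges uniformly in $x_0\in[0,\varepsilon^{-1}]$ as $\|\mathbf{x}\|\to\infty$, so that $\mathcal{V}$ dominates $v$ on the lateral face yet remains arbitrarily small at the interior point $(x_0^*,\mathbf{x}^*)$ as $\varepsilon\to0$. Once the uniqueness principle is in hand, all other ingredients---weak-$*$ compactness, the boundary continuity of $P_{x_0}f_\varepsilon$, and the weak-$*$/$L^1$ pairing with $p_{x_0}(\mathbf{x},\cdot)$---are routine.
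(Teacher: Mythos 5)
Your proposal is correct and follows essentially the same route as the paper: boundary slices, the shift identity $u(x_0+\varepsilon,\cdot)=P_{x_0}u(\varepsilon,\cdot)$ proved via the barrier $\mathcal V$ of \eqref{V_m} and the weak maximum principle (Theorem \ref{max_priciple}), and then weak-$*$ compactness in $L^\infty(dw)$ to pass to the limit. Your only deviation is a minor technical refinement — working on $(\delta,\varepsilon^{-1})\times B(0,R)$ with $\delta>0$ so that $\overline\Omega\subset(0,\infty)\times\mathbb{R}^N$ as the maximum principle formally requires, and scaling the barrier by $\eta$ — which is sound.
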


\begin{proof}
  The proof is identical to that of Stein \cite{St1}. Clearly,
  the Poisson integral of a bounded function is bounded and $\mathcal L$-harmonic.
To prove the converse assume that $u$ is $\mathcal L$-harmonic and bounded, so $|u|\leq M$. Set $f_n(\mathbf{x})=u(\frac{1}{n} , \mathbf{x})$ and $u_n(x_0,\mathbf{x})=P_{x_0}f_n(\mathbf{x})$.
  Then $U_n(x_0,\mathbf{x})=u(x_0+\frac{1}{n},\mathbf{x})-u_n(x_0,\mathbf{x})$ is $\mathcal L$-harmonic,  $|U_n|\leq 2M$, continuous on $[0,\infty)\times\mathbb{R}^N$, and $U_n(0,\mathbf{x})=0$. We shall prove that $U_n\equiv 0$. Fix $(y_0, \mathbf{y})\in\mathbb{R}_+^{{1+N}}$. Set
  $$ U(x_0,\mathbf{x})= U_n(x_0,\mathbf{x})+\mathcal V(x_0,\mathbf{x})$$
  and consider the function $U$ on the closure of the  set $\Omega=(0,\varepsilon ^{-1})\times {B(0,R)}$, with $\varepsilon >0$ small and $R$ large enough. Then $U$ is $\mathcal L$-harmonic in $\Omega$, continuous on $\bar\Omega$,
  and positive on the boundary of the $\partial \Omega$. Thus, by the maximum principle, $U$ is positive in $\bar \Omega$, so
  $$ U_n(y_0,\mathbf{y})>-2M\varepsilon y_0 -\sum_{m=1}^s \varepsilon E\Big(\frac{\varepsilon\pi}{4}\mathbf{y},v_m\Big)\cos \Big(\frac{\varepsilon\pi}{4}y_0\Big).$$
Letting $\varepsilon\to 0$ we obtain $U_n(y_0,\mathbf{y})\geq 0$.
The same argument applied to $-u$ gives $-U_n(y_0,\mathbf{y})\geq 0$, so $U_n\equiv 0$, which can be written as
\begin{equation}\label{P1}
  u\Big(x_0+\frac{1}{n},\mathbf{x}\Big)=P_{x_0}f_n(\mathbf{x})=\int p_{x_0}(\mathbf{x},\mathbf{y})f_n(\mathbf{y})\, {dw}(\mathbf{y}).
\end{equation}
Clearly $|f_n|\leq M$, so by the *-weak compactness, there is  a subsequence $n_j$ and $f\in L^{\infty}(\mathbb{R}^N)$ such that for $\varphi \in L^1({dw})$, we have
$$\lim_{j\to\infty} \int \varphi(\mathbf{y})f_{n_j}(\mathbf{y})\, {dw}(\mathbf{y})=\int \varphi (\mathbf{y})f(\mathbf{y})\, {dw}(\mathbf{y}).$$
So,
\begin{equation*}
\begin{split}
u(x_0,\mathbf{x})&=\lim_{j\to\infty} u\Big(x_0+\frac{1}{n_j},\mathbf{x}\Big)=\lim_{j\to\infty} \int p_{x_0}(\mathbf{x},\mathbf{y})f_{n_j}(\mathbf{y})\, {dw}(\mathbf{y})\\
&= \int p_{x_0}(\mathbf{x},\mathbf{y})f(\mathbf{y})\, {dw}(\mathbf{y}).
\end{split}
\end{equation*}
\end{proof}

\begin{corollary}
  If $u$ is $\mathcal L$-harmonic and bounded in $\mathbb{R}_+^{{1+N}}$ then $u$ has a nontangential limit at almost every point of the boundary.
\end{corollary}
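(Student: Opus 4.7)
My plan is to combine the two immediately preceding results. By Theorem \ref{harmonic1}, any bounded $\mathcal{L}$-harmonic function $u$ on $\mathbb{R}_+^{1+N}$ can be written as $u(x_0,\mathbf{x}) = P_{x_0}f(\mathbf{x})$ for some $f\in L^\infty(dw)$. I would then invoke Corollary \ref{PoissonConv} with $p=\infty$, which asserts that for almost every $\mathbf{x}\in\mathbb{R}^N$,
$$
\lim_{t\to 0}\,\sup_{\|\mathbf{y}-\mathbf{x}\|<t}\bigl|P_tf(\mathbf{y})-f(\mathbf{x})\bigr|=0.
$$
Rewriting this in terms of $u$, we obtain that for a.e. $\mathbf{x}\in\mathbb{R}^N$,
$$
\lim_{\substack{(x_0,\mathbf{x}')\to(0,\mathbf{x})\\ \|\mathbf{x}'-\mathbf{x}\|<x_0}}u(x_0,\mathbf{x}')=f(\mathbf{x}),
$$
which is precisely the statement that $u$ admits a nontangential limit (namely $f(\mathbf{x})$) at almost every boundary point, with the cone aperture matching the one used in the definition \eqref{star} of the nontangential maximal function.

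There is no genuine obstacle here, since the substantive work has already been carried out upstream. On the one hand, Theorem \ref{harmonic1} provides the Poisson representation via a weak-$*$ compactness argument based on the barrier function $\mathcal{V}$ and R\"osler's maximum principle (Theorem \ref{max_priciple}). On the other hand, Corollary \ref{PoissonConv} derives pointwise nontangential convergence for $L^p$ boundary data from the approximation-of-the-identity property (Proposition \ref{PoissonApprox}) together with the domination \eqref{PHL} of $\mathcal{M}_Pf$ by a sum of Hardy--Littlewood maximal functions on the space of homogeneous type $(\mathbb{R}^N,\|\cdot\|,dw)$. Hence the corollary follows at once by chaining these two results.
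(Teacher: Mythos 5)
Your proposal is correct and is exactly the argument the paper intends: the corollary is stated without proof immediately after Theorem \ref{harmonic1} precisely because it follows by combining that theorem (Poisson representation of bounded $\mathcal{L}$-harmonic functions with $L^\infty$ boundary data) with Corollary \ref{PoissonConv} applied with $p=\infty$. No gaps.
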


\begin{theorem}\label{LpBounds}
  Suppose that $u$ is a $C^2$-function on $\mathbb{R}_+^{{1+N}}$.
  If $1<p<\infty$ then $u$ is a Poisson integral of an $L^p({dw})$ function if and only if $u$ is $\mathcal L$-harmonic and
  \begin{equation}\label{HpCondition}
    \sup_{x_0>0} \|u(x_0,\cdot)\|_{L^p({dw})}<\infty.
  \end{equation}
  If $p=1$ then $u$ is a Poisson integral of a bounded measure $\omega$ if and only if $u$ is $\mathcal L$-harmonic and
  \begin{equation}\label{H1Condition}
    \sup_{x_0>0} \|u(x_0,\cdot)\|_{L^1({dw})}<\infty.
  \end{equation}
 Moreover, if $u^*\in L^1({dw})$ (see~\eqref{star}), then $d\omega(\mathbf{x})= f(\mathbf{x}){dw}(\mathbf{x})$, where $f\in L^1({dw})$.
\end{theorem}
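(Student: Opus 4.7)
The plan is to adapt the approach of Theorem \ref{harmonic1}, extracting the boundary values of $u$ via weak compactness applied to the slices $f_n(\mathbf{x})=u(1/n,\mathbf{x})$.

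\medskip
\noindent\textbf{Forward direction.} If $u(x_0,\mathbf{x})=P_{x_0}f(\mathbf{x})$ with $f\in L^p(dw)$, Jensen's inequality applied to the probability kernel $p_{x_0}(\mathbf{x},\cdot)$, together with Fubini, the symmetry $p_{x_0}(\mathbf{x},\mathbf{y})=p_{x_0}(\mathbf{y},\mathbf{x})$, and the identity $\int p_{x_0}(\mathbf{x},\mathbf{y})\,dw(\mathbf{x})=1$, yields $\|u(x_0,\cdot)\|_{L^p(dw)}\le\|f\|_{L^p(dw)}$. An analogous computation for a finite Borel measure $\omega$ gives $\|u(x_0,\cdot)\|_{L^1(dw)}\le|\omega|(\mathbb{R}^N)$.

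\medskip
\noindent\textbf{Converse, key identity.} The crucial step is, exactly as in Theorem \ref{harmonic1}, to establish
\begin{equation*}
u\bigl(x_0+\tfrac{1}{n},\mathbf{x}\bigr)=P_{x_0}f_n(\mathbf{x}).
\end{equation*}
Set $U_n(x_0,\mathbf{x})=u(x_0+1/n,\mathbf{x})-P_{x_0}f_n(\mathbf{x})$. Since $f_n$ is smooth (by $\mathcal{L}$-harmonicity of $u$) and locally bounded (via the mean value Theorem \ref{mean-value} combined with H\"older applied to $\sup_{x_0}\|u(x_0,\cdot)\|_{L^p(dw)}<\infty$), a truncation argument and Corollary \ref{PoissonCont} show that $U_n$ is continuous on $[0,\infty)\times\mathbb{R}^N$ with $U_n(0,\mathbf{x})=0$. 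Unlike in Theorem \ref{harmonic1}, $U_n$ is not a priori globally bounded, but on each cylinder $\Omega=(0,\varepsilon^{-1})\times B(0,R)$ it is: the term $u(\cdot+1/n,\cdot)$ is bounded on $\overline{\Omega}$ by continuity, and H\"older's inequality combined with \eqref{Poisson_low_up} gives $|P_{x_0}f_n(\mathbf{x})|\le\|p_{x_0}(\mathbf{x},\cdot)\|_{L^{p'}(dw)}\|f_n\|_{L^p(dw)}$ when $p>1$, or $|P_{x_0}f_n(\mathbf{x})|\le\|p_{x_0}(\mathbf{x},\cdot)\|_{L^\infty(dw)}\|f_n\|_{L^1(dw)}$ when $p=1$, each uniform on $\overline{\Omega}$. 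The barrier $\mathcal{V}$ of \eqref{V_m} dominates $\pm U_n$ on $\partial\Omega$ once $R$ is large enough; the maximum principle (Theorem \ref{max_priciple}) therefore forces $|U_n|\le\mathcal{V}$ on $\Omega$, and letting $R\to\infty$ then $\varepsilon\to 0$ gives $U_n\equiv 0$.

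\medskip
\noindent\textbf{Passage to the limit.} For $1<p<\infty$, $\{f_n\}$ is bounded in the reflexive space $L^p(dw)$, so some subsequence $f_{n_j}$ converges weakly to $f\in L^p(dw)$. Estimate \eqref{Poisson_low_up} shows $p_{x_0}(\mathbf{x},\cdot)\in L^{p'}(dw)$ for each fixed $(x_0,\mathbf{x})$, so weak convergence yields $P_{x_0}f_{n_j}(\mathbf{x})\to P_{x_0}f(\mathbf{x})$, while continuity of $u$ gives $u(x_0+1/n_j,\mathbf{x})\to u(x_0,\mathbf{x})$; together with the key identity we obtain $u=P_{x_0}f$. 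For $p=1$, the measures $f_n\,dw$ have uniformly bounded total variation, so Banach--Alaoglu on $C_0(\mathbb{R}^N)^*$ yields a subsequence converging weak-$*$ to a finite Radon measure $\omega$; since $p_{x_0}(\mathbf{x},\cdot)\in C_0(\mathbb{R}^N)$ (again by \eqref{Poisson_low_up}), the same argument gives $u(x_0,\mathbf{x})=\int p_{x_0}(\mathbf{x},\mathbf{y})\,d\omega(\mathbf{y})$.

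\medskip
\noindent\textbf{The \emph{moreover} statement.} If $u^*\in L^1(dw)$, then taking $\mathbf{x}'=\mathbf{x}$ in \eqref{star} yields $|f_n(\mathbf{x})|\le u^*(\mathbf{x})$. Hence $\{f_n\}$ is dominated by an $L^1(dw)$ function, so uniformly integrable and tight; the Dunford--Pettis theorem produces a subsequence converging weakly in $L^1(dw)$ to some $f\in L^1(dw)$. Pairing against $p_{x_0}(\mathbf{x},\cdot)\in L^\infty(dw)$ and invoking the key identity gives $u=P_{x_0}f$, so $d\omega=f\,dw$.

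\medskip
\noindent\textbf{Main obstacle.} The technical heart is the identity $u(x_0+1/n,\cdot)=P_{x_0}f_n$ in the absence of a global bound on $u$: one must combine the $L^p$ control on slices with the Poisson decay \eqref{Poisson_low_up} to bound $P_{x_0}f_n$ pointwise on cylinders, verify continuity of $U_n$ up to $x_0=0$, and check that $\mathcal{V}$ dominates $U_n$ on the lateral and top faces of $\Omega$ so that the maximum principle can be triggered.
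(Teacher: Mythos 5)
Your overall strategy (slices $f_n=u(1/n,\cdot)$, the identity $u(x_0+1/n,\cdot)=P_{x_0}f_n$ via a barrier and the maximum principle, then weak-$*$ compactness, and Dunford--Pettis for the \emph{moreover} part) is the paper's strategy, and the forward direction and the limit passages are fine. But there is a genuine gap at the step you yourself flag as the technical heart: the claim that the barrier $\mathcal V$ of \eqref{V_m} dominates $\pm U_n$ on $\partial\Omega$, $\Omega=(0,\varepsilon^{-1})\times B(0,R)$, once $R$ is large. You only establish that $U_n$ is bounded on each cylinder, i.e.\ $\sup_{\overline\Omega}|U_n|\le C(R)$ with a constant that may depend on $R$. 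To trigger Theorem \ref{max_priciple} you need $|U_n|\le\mathcal V$ on the lateral face $\{\|\mathbf x\|=R\}$, which requires comparing $C(R)$ with $\inf_{\|\mathbf x\|=R}\mathcal V$; nothing in your argument rules out that $\sup_{\|\mathbf x\|=R,\,0\le x_0\le\varepsilon^{-1}}|u(x_0+1/n,\mathbf x)|$ grows faster than the barrier as $R\to\infty$. (A secondary inaccuracy: $\|p_{x_0}(\mathbf x,\cdot)\|_{L^{p'}(dw)}\sim w(B(\mathbf x,x_0))^{-1/p}$ blows up as $x_0\to0$, so your H\"older bound on $P_{x_0}f_n$ is not uniform on all of $\overline\Omega$, only for $x_0$ bounded away from $0$.)

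The gap is filled by upgrading your ``locally bounded'' to the global bound that the paper derives in one line: the mean value property (Theorem \ref{mean-value}), with the kernel estimate $f(r,\boldsymbol x,\boldsymbol y)\lesssim r^{\mathbf N+1}\,\boldsymbol w(B(\boldsymbol x,r))^{-1}$ coming from Proposition \ref{translation1}, combined with H\"older against \eqref{HpCondition} or \eqref{H1Condition}, yields $\sup_{x_0>0}\sup_{\mathbf x}|u(x_0+\varepsilon,\mathbf x)|\le C_\varepsilon<\infty$ (this is \eqref{L-infty}). Hence $u(\cdot+1/n,\cdot)$ is a \emph{bounded} $\mathcal L$-harmonic function, $f_n\in L^\infty$, $\|P_{x_0}f_n\|_\infty\le\|f_n\|_\infty$, and $U_n$ is globally bounded after all; your premise ``not a priori globally bounded'' is false, and one can simply quote Theorem \ref{harmonic1} for the identity $u(x_0+1/n,\cdot)=P_{x_0}f_n$ instead of re-running the barrier argument. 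With that correction the remaining steps (reflexivity for $1<p<\infty$, Banach--Alaoglu on measures for $p=1$, Dunford--Pettis under $u^*\in L^1(dw)$) go through as you wrote them.
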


\begin{proof}
Assume that either \eqref{HpCondition} or \eqref{H1Condition} holds. Then, by Theorem \ref{mean-value},  for every $\varepsilon>0$
  \begin{equation}\label{L-infty}
    \sup_{x_0>0}\sup_{\mathbf{x}\in\mathbb{R}^N} |u(x_0+\varepsilon, \mathbf{x})|\leq C_\varepsilon <\infty.
  \end{equation}
  Set $f_n(\mathbf{x})=u(\frac{1}{n},\mathbf{x})$. From Theorem \ref{harmonic1} we conclude that $u(\frac{1}{n}+x_0, \mathbf{x})=P_{x_0}f_n(\mathbf{x})$.
  Moreover, there is a subsequence $n_j$ such that  $f_{n_j}$ converges  weakly-* to $f\in L^p({dw})$ (if $1<p<\infty$) or to a measure $\omega$ (if $p=1$). In both cases $u$ is the Poisson integral either  of $f$ or $\omega$. If additionally $u^*\in L^1({dw})$, then the measure $\omega$ is absolutely continuous with respect to ${dw}$.
  \end{proof}

{\bf Proof of a part of Theorem \ref{main1}.} We are now in a position to prove a part of Theorem \ref{main1}, which is stated in the following proposition. The converse is proven at the very end of Section \ref{Atomic} (see Proposition \ref{converse}).

\begin{proposition}\label{PropMain1Part1}
Assume that $\mathbf{u}\in\mathcal{H}^1_k$.
Then
\begin{equation}\label{eqMax}\| \mathbf{u}^*\|_{L^1({dw})}\leq C\| \mathbf{u}\|_{\mathcal{H}^1_k}.
\end{equation}
\end{proposition}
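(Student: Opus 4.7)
The plan is to exploit the $\mathcal L$-subharmonicity of $|F|^q$ established in Theorem \ref{subharmonic}, construct a harmonic (Poisson) majorant of $|F|^q$, and then convert the pointwise bound on the nontangential maximal function into an $L^1$ bound via the Hardy--Littlewood maximal operator on $L^{1/q}$. Throughout, the vector $F=\{\mathbf{u}_\sigma\}_{\sigma\in G}$ satisfies $|F(x_0,\mathbf{x})|^2=\sum_{\sigma\in G}|\mathbf{u}(x_0,\sigma(\mathbf{x}))|^2$, so in particular $|\mathbf{u}(x_0,\mathbf{x})|\le|F(x_0,\mathbf{x})|$ (hence $\mathbf{u}^*\le|F|^*$) and, by $G$-invariance of $dw$, $\|F(x_0,\cdot)\|_{L^1(dw)}\le C_G\|\mathbf{u}(x_0,\cdot)\|_{L^1(dw)}$. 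Thus it suffices to control $\||F|^*\|_{L^1(dw)}$.

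The first main step is to show that $|F|^q$ admits a Poisson majorant. Fix $0<q<1$ given by Theorem \ref{subharmonic}, so $|F|^q$ is $\mathcal L$-subharmonic wherever $|F|>0$; a standard $\varepsilon$-regularization (replacing $|F|$ by $(|F|^2+\varepsilon)^{1/2}$, whose $q$-th power is still $\mathcal L$-subharmonic everywhere) lets us work globally. For each $n$, set $f_n(\mathbf{x})=(|F|^q)(1/n,\mathbf{x})$ and $v_n(x_0,\mathbf{x})=P_{x_0}f_n(\mathbf{x})$. By Theorem \ref{max_priciple} applied on $G$-invariant boxes $(0,\varepsilon^{-1})\times B(0,R)$ to the $\mathcal L$-subharmonic function $|F|^q(x_0+1/n,\mathbf{x})-v_n(x_0,\mathbf{x})-\mathcal V(x_0,\mathbf{x})$, where $\mathcal V$ is the barrier from \eqref{V_m} (which forces the required function to be negative outside a large enough ball), and then letting $R\to\infty$ and $\varepsilon\to0$, one obtains
\begin{equation*}
|F|^q(x_0+1/n,\mathbf{x})\le P_{x_0}f_n(\mathbf{x})\qquad\forall\;x_0>0\,,\;\forall\;\mathbf{x}\in\mathbb{R}^N.
\end{equation*}
Since $\|f_n\|_{L^{1/q}(dw)}=\|F(1/n,\cdot)\|_{L^1(dw)}^{\,q}\le(C_G\|\mathbf{u}\|_{\mathcal H^1_k})^q$ is bounded uniformly in $n$, weak-$*$ compactness in $L^{1/q}(dw)$ (dual of $L^{(1-q)^{-1}}$, since $1/q>1$) yields a subsequence $f_{n_j}\rightharpoonup g$ with $\|g\|_{L^{1/q}(dw)}\le(C_G\|\mathbf u\|_{\mathcal H^1_k})^q$; continuity of the Poisson integral in the weak-$*$ topology then gives
\begin{equation*}
|F|^q(x_0,\mathbf{x})\le P_{x_0}g(\mathbf{x})\qquad\forall\;(x_0,\mathbf{x})\in\mathbb{R}^{1+N}_+.
\end{equation*}

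The final step is a standard maximal-function argument. Raising the majorant to the power $1/q$ and taking the nontangential supremum,
\begin{equation*}
\mathbf{u}^*(\mathbf{x})\le|F|^*(\mathbf{x})\le\bigl(\mathcal M_Pg(\mathbf{x})\bigr)^{1/q}.
\end{equation*}
By \eqref{PHL}, $\mathcal M_Pg(\mathbf{x})\lesssim\sum_{\sigma\in G}\mathcal M_{HL}(g)(\sigma(\mathbf{x}))$. Since $1/q>1$, the Hardy--Littlewood maximal operator is bounded on $L^{1/q}(dw)$ (the space $(\mathbb{R}^N,\|\cdot\|,dw)$ is of homogeneous type), so $\|\mathcal M_Pg\|_{L^{1/q}(dw)}\le C\|g\|_{L^{1/q}(dw)}$ by $G$-invariance of $dw$. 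Therefore
\begin{equation*}
\|\mathbf{u}^*\|_{L^1(dw)}\le\bigl\|(\mathcal M_Pg)^{1/q}\bigr\|_{L^1(dw)}=\|\mathcal M_Pg\|_{L^{1/q}(dw)}^{1/q}\le C\,\|g\|_{L^{1/q}(dw)}^{1/q}\le C'\,\|\mathbf{u}\|_{\mathcal H^1_k}.
\end{equation*}

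The principal technical obstacle is the harmonic majorant step: one must justify the maximum-principle comparison of $|F|^q(x_0+1/n,\mathbf{x})$ with its Poisson integral on the unbounded slab $(0,\infty)\times\mathbb R^N$. Theorem \ref{max_priciple} is only a weak maximum principle on bounded $G$-invariant domains, so control at infinity is achieved by adding the explicit barrier $\mathcal V$ from \eqref{V_m} (which grows to $+\infty$ as $\|\mathbf{x}\|\to\infty$ uniformly in $x_0\in[0,\varepsilon^{-1}]$, by Corollary \ref{EstimateNearDiagonal}) before letting first $R\to\infty$ and then $\varepsilon\to 0$. A further minor subtlety is that $|F|^q$ is only subharmonic on $\{|F|>0\}$, which is handled by the aforementioned $\varepsilon$-regularization, preserving $\mathcal L$-subharmonicity by the same computation as in Theorem \ref{subharmonic}.
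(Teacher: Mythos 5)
Your proof is correct and follows essentially the same route as the paper: $\mathcal L$-subharmonicity of $|F|^q$, a Poisson (harmonic) majorant obtained from the maximum principle, and then the boundedness of the maximal operator on $L^{1/q}(dw)$ via \eqref{PHL}. The differences are only in the execution of the majorant step — the paper shifts $\mathbf u$ by $\varepsilon$, uses the explicit majorant $P_{x_0}\bigl(|F(\varepsilon,\cdot)|^q\bigr)$ together with the decay at infinity supplied by \eqref{DtDxDyPoisson} rather than the barrier $\mathcal V$, and concludes by monotone convergence as $\varepsilon\to0$ instead of passing to a weak-$*$ limit $g$ of the boundary data — and your explicit regularization $(|F|^2+\varepsilon)^{q/2}$ is a welcome clarification of a point the paper glosses over.
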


\begin{proof}
Fix $\varepsilon>0$. Set $u_{j,\varepsilon}(x_0,\mathbf{x})= u_j(\varepsilon +x_0, \mathbf{x})$, $f_{j,\varepsilon}(\mathbf{x})=u_j(\varepsilon,\mathbf{x})$. Then, by Theorem \ref{mean-value}, the $\mathcal L$-harmonic function $u_{j,\varepsilon} (x_0,\mathbf{x})$ is  bounded  and continuous  on the closed set $[0,\infty)\times\mathbb{R}^N$. In particular  $f_{j,\varepsilon}\in L^\infty\cap L^1({dw})\cap C^2$. By Theorem \ref{harmonic1},
$$u_{j, \varepsilon} (x_0,\mathbf{x})=P_{x_0}f_{j,\varepsilon}(\mathbf{x}).$$
It is not difficult to conclude using \eqref{DtDxDyPoisson} (with $m=0$) that
$\lim_{\| (x_0,\mathbf{x})\|\to\infty} |u_{j,\varepsilon }(x_0,\mathbf{x})|=0$.
Thus also $\lim_{\| \mathbf{x}\|\to\infty }f_{j,\varepsilon}(\mathbf{x})=0$.
Set $\mathbf{u}_\varepsilon= (u_{0,\varepsilon}, u_{1,\varepsilon},{\dots}, u_{N,\varepsilon})$. Clearly,
$\mathbf{u}_\varepsilon \in\mathcal{H}^1_k$.
Let $F_\varepsilon(x_0,\mathbf{x})=F (\varepsilon+x_0,\mathbf{x})$, where $F(x_0,\mathbf{x})$ is defined by \eqref{functionF}. Set $\boldsymbol f_{\varepsilon} (\mathbf{x})=|F(\varepsilon, \mathbf{x})|$. Let $0<q<1$ be as in Theorem \ref{subharmonic} and $p=q^{-1}>1$.
Observe that the function $|F_\varepsilon (x_0,\mathbf{x})|^q- P_{x_0} (\boldsymbol f_{\varepsilon}^q ) (\mathbf{x})$ vanishes for $x_0=0$ and
$$ \lim_{\|(x_0,\mathbf{x})\|\to\infty } \Big(|F_\varepsilon (x_0,\mathbf{x})|^q- P_{x_0} (\boldsymbol f_{\varepsilon}^q) (\mathbf{x})\Big)=0.$$
So, by Theorem \ref{subharmonic} and the maximum principle (see Theorem \ref{max_priciple}),
 \begin{equation}\label{eq55}
 |\mathbf{u}(\varepsilon+x_0,\mathbf{x})|^q\leq |F_\varepsilon (x_0,\mathbf{x})|^q\leq  P_{x_0}(\boldsymbol f_\varepsilon ^q)(\mathbf{x}).
 \end{equation}
Set $\mathbf{u}^*_{\varepsilon}(\mathbf{x})=\sup_{\|\mathbf{x}-\mathbf{y}\|<x_0} |\mathbf{u}(\varepsilon+x_0,\mathbf{y})|$. Then, by \eqref{eq55} and \eqref{PHL},
$$ \| \mathbf{u}_\varepsilon^*\|_{L^1({dw})}\leq C_p\| \boldsymbol f_\varepsilon ^q\|_{L^p({dw})}^p=C_p\| \boldsymbol f_\varepsilon \|_{L^1({dw})}\leq C_p\| \mathbf{u}\|_{\mathcal{H}^1_k}.$$
Since $\mathbf{u}^*_\varepsilon (\mathbf{x})\to \mathbf{u}^*(\mathbf{x})$ as $\varepsilon \to 0$  and the convergence is monotone, we use the Lebesgue monotone convergence  theorem and get \eqref{eqMax}.

\end{proof}

From  Theorem \ref{LpBounds} and Proposition \ref{PropMain1Part1} we obtain the following corollary.
\begin{corollary}\label{Limits}
If $\mathbf{u}\in\mathcal{H}^1_k$, then there are $f_j\in L^1({dw})$, $j=0,1,{\dots},N$,  such that
$|f_j(\mathbf{x})|\leq \mathbf{u}^*(\mathbf{x})$ and
$u_j(x_0,\mathbf{x})=P_{x_0}f_j(\mathbf{x})$. Moreover, the limit $\lim_{x_0\to 0} u_j(x_0,\mathbf{x})=f_j(\mathbf{x})$ exists in $L^1({dw})$.
\end{corollary}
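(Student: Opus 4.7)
\textbf{Proof plan for Corollary \ref{Limits}.} The strategy is to apply Theorem \ref{LpBounds} componentwise, identify the boundary measures as $L^1$ functions by using the maximal bound from Proposition \ref{PropMain1Part1}, and finally pass to the limit via dominated convergence.

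First, fix $j\in\{0,1,\dots,N\}$. Since $|u_j|\le|\mathbf{u}|$, the definition of $\mathcal{H}^1_k$ gives
\[
\sup_{x_0>0}\|u_j(x_0,\cdot)\|_{L^1(dw)}\le\|\mathbf{u}\|_{\mathcal{H}^1_k}<\infty,
\]
so Theorem \ref{LpBounds} (case $p=1$) produces a bounded measure $\omega_j$ on $\mathbb{R}^N$ with $u_j(x_0,\mathbf{x})=P_{x_0}\omega_j(\mathbf{x})$. By Proposition \ref{PropMain1Part1}, $\mathbf{u}^*\in L^1(dw)$, and since $u_j^{*}\le\mathbf{u}^{*}$ the nontangential maximal function of each component also lies in $L^1(dw)$. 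The second part of Theorem \ref{LpBounds} then upgrades $d\omega_j$ to $f_j\,dw$ with $f_j\in L^1(dw)$, so that $u_j=P_{x_0}f_j$.

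Next, I would establish the pointwise bound $|f_j(\mathbf{x})|\le\mathbf{u}^*(\mathbf{x})$ a.e. Since $f_j\in L^1(dw)$, Corollary \ref{PoissonConv} applied to $f_j$ asserts that for a.e.\ $\mathbf{x}\in\mathbb{R}^N$,
\[
\lim_{x_0\to0}\,\sup_{\|\mathbf{y}-\mathbf{x}\|<x_0}\bigl|P_{x_0}f_j(\mathbf{y})-f_j(\mathbf{x})\bigr|=0.
\]
Taking $\mathbf{y}=\mathbf{x}$ shows $u_j(x_0,\mathbf{x})\to f_j(\mathbf{x})$ a.e., while the inequality $|u_j(x_0,\mathbf{y})|\le\mathbf{u}^{*}(\mathbf{x})$ for every $(x_0,\mathbf{y})$ with $\|\mathbf{y}-\mathbf{x}\|<x_0$ passes to the limit and yields $|f_j(\mathbf{x})|\le\mathbf{u}^{*}(\mathbf{x})$ a.e.

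Finally, for the $L^1$-convergence, the same a.e.\ pointwise convergence $u_j(x_0,\mathbf{x})\to f_j(\mathbf{x})$ combined with the uniform pointwise domination $|u_j(x_0,\mathbf{x})-f_j(\mathbf{x})|\le 2\,\mathbf{u}^{*}(\mathbf{x})\in L^1(dw)$ allows the Lebesgue dominated convergence theorem to be invoked, giving
\[
\lim_{x_0\to0}\int_{\mathbb{R}^N}|u_j(x_0,\mathbf{x})-f_j(\mathbf{x})|\,dw(\mathbf{x})=0.
\]
There is no real obstacle here: once Proposition \ref{PropMain1Part1} is in hand, everything reduces to an application of already-established results (Theorem \ref{LpBounds} and Corollary \ref{PoissonConv}). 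The only mild care needed is to observe that a bounded-measure representation is automatically absolutely continuous with $L^1$ density as soon as the nontangential maximal function is integrable, which is precisely the supplementary clause in Theorem \ref{LpBounds}.
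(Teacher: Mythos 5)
Your proposal is correct and follows exactly the route the paper intends: the paper states the corollary as an immediate consequence of Theorem \ref{LpBounds} and Proposition \ref{PropMain1Part1}, and your write-up supplies precisely the missing details (componentwise application of Theorem \ref{LpBounds}, the bound $u_j^*\le\mathbf{u}^*$ to invoke the absolute-continuity clause, Corollary \ref{PoissonConv} for the a.e.\ limit and the pointwise bound, and dominated convergence for the $L^1$ limit). No gaps.
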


\section{Riesz transform characterization of $H^1_{{\Delta}}$}\label{SectionRiesz}

{\bf Riesz transforms.}
The Riesz transforms in the Dunkl setting are defined by
$$\mathcal{F} (R_jf)(\xi) = - i\frac{\xi_j}{\|\xi\|} (\mathcal{F} f)(\xi), \ \ j=1,2,{\dots},N.$$
  They are bounded operators on $L^2({dw})$. Clearly,
  $$R_jf=-T_{e_j} (-{\Delta})^{-1\slash 2} f =- \lim_{\varepsilon\to 0, \, M\to\infty} c  \int_{\varepsilon}^M T_{e_j} e^{t{\Delta}}f\frac{dt}{\sqrt{t\,}},$$
  and the convergence is in $L^2({dw})$ for $f\in L^2({dw})$.
  It follows from \cite{AS} that $R_j$ are bounded operators on $L^p({dw})$ for $1<p<\infty$.

  Our task is to define $R_jf$ for $f \in L^1({dw})$. To this end we set
  $$\mathcal T_k=\{ \varphi \in L^2({dw}):
  (\mathcal{F}\varphi) (\xi)(1+\|\xi\|)^n\in L^2({dw}), \ n=0,1,2, {\dots}\}.$$
  It is not difficult to check that if $\varphi \in\mathcal T_k$, then
   $\varphi\in C_0(\mathbb{R}^N)$ and
  $R_j\varphi\in C_0(\mathbb{R}^N)\cap L^2({dw})$. Moreover, for fixed $\mathbf{y}\in\mathbb{R}^N$ the function $p_t(\mathbf{x},\mathbf{y})$ belongs to $\mathcal T_k$.
  Now   $R_jf $ for $f\in L^1({dw})$ is defined in a weak sense as a functional on $\mathcal T_k$, by
  $$ \langle R_jf,\varphi\rangle = -\int_{\mathbb{R}^N} f(\mathbf{x})R_j\varphi(\mathbf{x})\, {dw}(\mathbf{x}).$$

{\bf Proof of Theorem \ref{main4}.} Assume that $f\in L^1({dw})$ is such that $R_jf$ belong to $L^1({dw})$ for $j=1,2,{\dots},N$. Set $f_0(\mathbf{x})=f(\mathbf{x})$, $f_j(\mathbf{x})=R_jf(\mathbf{x})$, $u_0(x_0,\mathbf{x})=P_{x_0}f(\mathbf{x})$, $u_j(x_0,\mathbf{x})=P_{x_0}f_j(\mathbf{x})$. Then $\mathbf{u}=(u_0,u_1,{\dots},u_n)$ satisfies \eqref{C-R}.  Moreover,
$$ \sup_{x_0>0}\int_{\mathbb{R}^N}| u_j(x_0,\mathbf{x})|\, {dw}(\mathbf{x})\leq \| f_j\|_{L^1({dw})}  \ \ \text{\rm for  } j=0,1,{\dots},N.$$
Thus $\mathbf{u}\in\mathcal{H}^1_k$ and
$$\| f\|_{H^1_{{\Delta}}}=\| \mathbf{u}\|_{\mathcal{H}^1_k}\leq \| f\|_{L^1({dw})}+\sum_{j=1}^N \| R_jf\|_{L^1({dw})}.$$

We turn to prove the converse. Assume that $f_0\in H^1_{{\Delta}}$. By the definition of $H^1_{{\Delta}}$ there is a system
$\mathbf{u}=(u_0,u_1,{\dots}, u_N)\in\mathcal{H}^1_k$ such that $f_0(\mathbf{x})=\lim_{x_0\to 0}u_0(x_0,\mathbf{x})$ (convergence in $L^1({dw}))$. Set $f_j(\mathbf{x})=\lim_{x_0\to 0}u_j(x_0,\mathbf{x})$, where limits exist in $L^1({dw})$ (see Corollary \ref{Limits}). We have $u_j(x_0,\mathbf{x})=P_{x_0} f_j(\mathbf{x})$. It suffices to prove that $R_jf_0=f_j$. To this end, for $\varepsilon >0$, let $f_{j,\varepsilon} (\mathbf{x})=u_j(\varepsilon ,\mathbf{x})$, $u_{j,\varepsilon}(x_0,\mathbf{x})=u_j(x_0+\varepsilon,\mathbf{x})$.  Then $f_{j,\varepsilon} \in L^1({dw})\cap C_0(\mathbb{R}^N)$. In particular $f_{j,\varepsilon} \in L^2({dw})$. Set $g_j=R_jf_{0,\varepsilon}$, $v_j(x_0,\mathbf{x})=P_{x_0}g_j(\mathbf{x})$. Then $\mathbf v=(u_{0,\varepsilon}, v_1,{\dots}, v_N)$ satisfies the Cauchy-Riemann equations \eqref{C-R}. Therefore, $T_j u_{0,\varepsilon}(x_0,\mathbf{x})=T_0u_{j,\varepsilon}(x_0,\mathbf{x})=T_0v_j(x_0,\mathbf{x})$.
Hence, $u_{j,\varepsilon}(x_0,\mathbf{x})-v_j(x_0,\mathbf{x})=c_j(\mathbf{x})$.  But $\lim_{x_0\to\infty } u_{j,\varepsilon} (x_0,\mathbf{x})=0= \lim_{x_0\to\infty } v_{j}(x_0,\mathbf{x})$ for every $\mathbf{x}\in\mathbb{R}^N$. Consequently, $u_{j,\varepsilon} (x_0,\mathbf{x})=v_j(x_0,\mathbf{x})$.
Thus, $f_{j,\varepsilon }=R_jf_{0,\varepsilon}$. Since $\lim_{\varepsilon \to 0} f_{j, \varepsilon} = f_j$ in $L^1({dw})$ and $R_jf_{0,\varepsilon}\to Rf_0$ in the sense of distributions, we have $f_j=R_jf_0$.

\section{Inclusion $H^1_{(1,q,M)}\subset H^1_{{\Delta}}$}\label{Sect9}
In this section we show that the atomic space $H^1_{(1,q,M)}$ with $M>\mathbf{N}$  is contained in the Hardy space $H^1_{{\Delta}}$ and there exists $C=C_{k,q,M}$ such that
\begin{equation}\label{inAtom}
\| f\|_{H^1_{{\Delta}}}\leq C\| f\|_{H^1_{(1,q,M)}}.
\end{equation}
 Let $f\in H^1_{(1,q,M)}$. According to Theorem \ref{main4}, it is enough to show that $R_jf\in L^1({dw})$ and $\| R_j f\|_{L^1({dw})}\leq C\| f\|_{H^1_{(1,q,M)}}$. By the definition of the atomic space  there is a sequence $a_j$ of $(1,q,M)$ atoms and $\lambda_i\in\mathbb C$ such that $f=\sum_{i}\lambda_i a_i$ and $\sum_{i}|\lambda_i|\leq 2 \| f\|_{H^1_{(1,q,M)}}$. Observe that the series converges  in $L^1({dw})$, hence $R_jf=\sum_{i} \lambda_j R_ja_j$ in the sense of distributions. Therefore it suffices to prove that there is a constant $C>0$ such $\| R_ja\|_{L^1({dw})}\leq C$ for every $a$ being  a $(1,q,M)$-atom. Our proof follows ideas of \cite{HMMLY}. Let $b\in\mathcal{D}({\Delta}^M)$ and $B(\mathbf{y_0}, r)$ be as in the definition of $(1,q,M)$ atom. Since $R_j$ is bounded on $L^q({dw})$, by the H\"older inequality,
 we have
$$ \| R_ja\|_{L^1(\mathcal{O}(B(\mathbf{y_0}, 4r)))}\leq C.$$
In order to estimate $R_ja$ on the set $\mathcal{O}(B(\mathbf{y_0}, 4r))^c$ we write
\begin{equation*}
\begin{split}
R_ja
& =c''_k\int_0^\infty T_{j,\mathbf{x}} e^{t{\Delta}} a\frac{dt}{\sqrt{t\,}}\\
&=c''_k\int_0^{r^2} T_{j,\mathbf{x}} e^{t{\Delta}} a\frac{dt}{\sqrt{t\,}}+
c''_k\int_{r^2}^\infty T_{j,\mathbf{x}} e^{t{\Delta}} ({\Delta})^Mb\frac{dt}{\sqrt{t\,}}\\
&=c''_k\int_0^{r^2} T_{j,\mathbf{x}} e^{t{\Delta}} a\frac{dt}{\sqrt{t\,}}+
c''_k\int_{r^2}^\infty T_{j,\mathbf{x}}{\partial_t^M}e^{t{\Delta}}b\frac{dt}{\sqrt{t\,}}\\
&= R_{j,0}a+R_{j,\infty} a.
 \end{split}
 \end{equation*}
 Further, using \eqref{TxiDtHeat} with $m=0$  together with \eqref{growth}, we get
 \begin{equation}\begin{split}\label{a0}
 |R_{j,0} a(\mathbf{x})|
 &\leq C \int_0^{r^2} \int_{\mathbb{R}^N} t^{-1} {w}(B(\mathbf{y}, \sqrt{t\,}))^{-1}
 e^{-cd(\mathbf{x},\mathbf{y})^2\slash t}|a(\mathbf{y})|\, {dw}(\mathbf{y})dt\\
 &\leq C\frac{r^{\mathbf{N}+1}}
{d(\mathbf{x},\mathbf{y})^{\mathbf{N}+1}{w}(B(\mathbf{y}_0, r))}.
 \end{split}\end{equation}
To estimate $R_{j,\infty } a$ we recall that $\| b\|_{L^1({dw})}\leq r^{2M}$. Using \eqref{TxiDtHeat} with $m=M$, we obtain
 \begin{equation}\begin{split}\label{ainfty}
 |R_{j,\infty} a(\mathbf{x})|
 &\leq C \int_{r^2}^\infty \int_{\mathbb{R}^N} t^{-M-1} {w}(B(\mathbf{y}, \sqrt{t\,}))^{-1}
 e^{-cd(\mathbf{x},\mathbf{y})^2\slash t}|b(\mathbf{y})|\, {dw}(\mathbf{y})dt\\
 &\leq C\frac{r^{2M}}
{d(\mathbf{x},\mathbf{y})^{2M}{w}(B(\mathbf{y}_0, r))}.
 \end{split}\end{equation}
Obviously,  \eqref{a0} and \eqref{ainfty} combined with  \eqref{growth} imply  $ \| R_ja\|_{L^1(\mathcal{O}(B(\mathbf{y_0}, 4r))^c)}\leq C$.

\section{Maximal functions}\label{Max=Max}

Let $\Phi(\mathbf{x})$ be a radial continuous function such that $|\Phi(\mathbf{x}) |\leq C (1+\|\mathbf{x}\|)^{-\mathbf \kappa -\beta}$ with $\kappa >\mathbf{N}$. Let $\Phi_t(\mathbf{x})=t^{-N}\Phi(t^{-1}\mathbf{x})$ and $\Phi_t(\mathbf{x},\mathbf{y})=\tau_{\mathbf{x}}\Phi_t(-\mathbf{y})$. Then, by Corollary \ref{translation2},
$$ |\Phi_t(\mathbf{x},\mathbf{y})|\leq C V(\mathbf{x},\mathbf{y}, t)^{-1} \Big(1+\frac{d(\mathbf{x},\mathbf{y})}{t}\Big)^{-\beta}.$$
Set $\mathcal{M}_{\Phi,a}f(\mathbf{x})=\sup_{\|\mathbf{x}-\mathbf{y}\|<a t} |\Phi_tf(\mathbf{y})|$, where
$$\Phi_tf(\mathbf{x})=\Phi_t*f(\mathbf{x})=\int_{\mathbb{R}^N} \Phi_t(\mathbf{x},\mathbf{y})\, f(\mathbf{y})\, {dw}(\mathbf{y}).$$
If $a=1$, then we simply write $\mathcal{M}_{\Phi}$.
We say that $f\in H^1_{{\rm max},\Phi}$ if $\mathcal{M}_{\Phi} f\in L^1({dw})$. Then we set
$\| f\|_{H^1_{{\rm max},\Phi}}=\|\mathcal{M}_{\Phi} f\|_{L^1({dw})}$.

{\bf The space $\mathcal N$. }\label{SpaceN}  The space $H^1_{{\rm max},\Phi}$ is  related with the tent space $\mathcal N$.
\begin{definition}\normalfont
For $a>0$, $\lambda>\mathbf{N}$,  and a function $u(t,\mathbf{x})$ denote
$$u_a^*(\mathbf{x})=\sup_{\|\mathbf{x}-\mathbf{y}\|<at} |u(t,\mathbf{y})|, \ \ u^{**}_\lambda(\mathbf{x})=\sup_{\mathbf{y}\in\mathbb{R}^N,\, t>0} |u(t,\mathbf{y})|\Big(\frac{t}{\|\mathbf{y}-\mathbf{x}\|+t}\Big)^\lambda.$$
The tent space $\mathcal N$ is defined by
 $$\mathcal N_a=\{ u(t,\mathbf{x}): \| u\|_{\mathcal N_a}=\| u_a^*\|_{L^1({dw})}<\infty\}.$$
If $a=1$, then we write  $\mathcal N$, $\| u\|_{\mathcal N}$, and $u^*$ (cf.~\eqref{star}).
\end{definition}

\begin{lemma}\label{Nab}
 There are constants $C,C_\lambda,c_\lambda>0$ such that
 \begin{equation}\label{Nab_eq}
 \| u\|_{\mathcal N_a}\leq C\left(\frac{a+b}{b}\right)^{\mathbf{N}} \| u\|_{\mathcal N_b},
 \end{equation}
 \begin{equation}\label{maxlamba}
c_\lambda \| u\|_{\mathcal N}\leq \| u^{**}_\lambda \|_{L^1({dw})}\leq C_\lambda \| u\|_{\mathcal N}.
 \end{equation}
\end{lemma}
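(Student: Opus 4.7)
The plan is to reduce both inequalities to the weak-type $(1,1)$ boundedness of the Hardy--Littlewood maximal function $\mathcal{M}_{HL}$ on the space of homogeneous type $(\mathbb{R}^N,\|\cdot\|,dw)$, exploiting the doubling property \eqref{growth}.

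For \eqref{Nab_eq}, the case $a\le b$ is trivial since then $u_a^*\le u_b^*$ pointwise, so I restrict to $a>b$. The standard level-set comparison goes as follows: fix $\alpha>0$ and $\mathbf{x}$ with $u_a^*(\mathbf{x})>\alpha$. By definition there exist $t>0$ and $\mathbf{y}$ with $\|\mathbf{x}-\mathbf{y}\|<at$ and $|u(t,\mathbf{y})|>\alpha$. Every point $\mathbf{x}'\in B(\mathbf{y},bt)$ then satisfies $u_b^*(\mathbf{x}')>\alpha$, so $B(\mathbf{y},bt)\subset E_\alpha:=\{u_b^*>\alpha\}$. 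Since $B(\mathbf{y},bt)\subset B(\mathbf{x},(a+b)t)\subset B(\mathbf{y},(2a+b)t)$, the doubling estimate \eqref{growth} yields
\begin{equation*}
\frac{w(E_\alpha\cap B(\mathbf{x},(a+b)t))}{w(B(\mathbf{x},(a+b)t))}\ge\frac{w(B(\mathbf{y},bt))}{w(B(\mathbf{y},(2a+b)t))}\ge c\Bigl(\frac{b}{a+b}\Bigr)^{\!\mathbf{N}}.
\end{equation*}
Consequently $\mathcal{M}_{HL}(\chi_{E_\alpha})(\mathbf{x})\ge c(b/(a+b))^{\mathbf{N}}$ on $\{u_a^*>\alpha\}$, and the weak $(1,1)$ bound for $\mathcal{M}_{HL}$ gives $w(\{u_a^*>\alpha\})\le C((a+b)/b)^{\mathbf{N}}w(E_\alpha)$. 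Integrating in $\alpha$ via the layer-cake formula yields \eqref{Nab_eq}.

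For \eqref{maxlamba}, the lower bound is immediate: if $\|\mathbf{y}-\mathbf{x}\|<t$, then $(t/(\|\mathbf{y}-\mathbf{x}\|+t))^\lambda\ge 2^{-\lambda}$, so $u^*(\mathbf{x})\le 2^\lambda u^{**}_\lambda(\mathbf{x})$, giving $c_\lambda\|u\|_{\mathcal{N}}\le\|u^{**}_\lambda\|_{L^1(dw)}$ with $c_\lambda=2^{-\lambda}$. For the upper bound I partition the supremum defining $u^{**}_\lambda$ according to dyadic shells of $\|\mathbf{y}-\mathbf{x}\|+t$: when $\|\mathbf{y}-\mathbf{x}\|+t\in[2^{k-1}t,2^kt)$ with $k\ge 0$, one has $\|\mathbf{y}-\mathbf{x}\|<2^kt$, so $|u(t,\mathbf{y})|\le u_{2^k}^*(\mathbf{x})$, while $(t/(\|\mathbf{y}-\mathbf{x}\|+t))^\lambda\le 2^\lambda\cdot 2^{-k\lambda}$. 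Hence
\begin{equation*}
u^{**}_\lambda(\mathbf{x})\le 2^\lambda\sum_{k\ge 0}2^{-k\lambda}\,u_{2^k}^*(\mathbf{x}).
\end{equation*}
Integrating and applying \eqref{Nab_eq} with $a=2^k$, $b=1$ gives $\|u_{2^k}^*\|_{L^1(dw)}\le C(1+2^k)^{\mathbf{N}}\|u\|_{\mathcal{N}}\lesssim 2^{k\mathbf{N}}\|u\|_{\mathcal{N}}$, so
\begin{equation*}
\|u^{**}_\lambda\|_{L^1(dw)}\lesssim\sum_{k\ge 0}2^{k(\mathbf{N}-\lambda)}\,\|u\|_{\mathcal{N}},
\end{equation*}
which is finite precisely because $\lambda>\mathbf{N}$; this furnishes the constant $C_\lambda$.

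The main subtlety I expect is confirming that the Hardy--Littlewood weak-$(1,1)$ inequality applies with constants depending only on the doubling constant of $(\mathbb{R}^N,\|\cdot\|,dw)$, which is already guaranteed by \eqref{growth}; all other steps are essentially geometric cone-enlargement arguments of the Fefferman--Stein type, transposed verbatim to the Dunkl homogeneous space.
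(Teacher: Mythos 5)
Your proof is correct and follows exactly the classical Fefferman--Stein aperture-change and grand-maximal-function arguments (weak-$(1,1)$ for $\mathcal{M}_{HL}$ on the doubling space plus the level-set comparison, and the dyadic-shell decomposition for $u^{**}_\lambda$), which is precisely what the paper does: it simply cites \cite[Chapter II]{St2} and \cite[page 114]{FollanStein} for these same proofs. The only cosmetic point is that your dyadic shells $\|\mathbf{y}-\mathbf{x}\|+t\in[2^{k-1}t,2^kt)$ are nonempty only for $k\ge1$, which does not affect the convergence of $\sum_k 2^{k(\mathbf{N}-\lambda)}$.
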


\begin{proof}
The proofs are the same as those in
\cite[Chapter II]{St2} and \cite[page 114]{FollanStein} .
\end{proof}

If $\Omega \subset\mathbb{R}^N$ is an open set, then the tent over $\Omega$ is given by
$$ \widehat\Omega=\Big( (0,\infty) \times \mathbb{R}^N\Big) \setminus \bigcup_{\mathbf{x} \in\Omega^c} \Gamma (\mathbf{x}), \ \ \text{\rm where } \Gamma (\mathbf{x})=\{ (t, \mathbf{y}): \| \mathbf{x}-\mathbf{y}\|<4t\}.$$

The space $\mathcal N$ admits the following atomic decomposition (see \cite{St2}).

\begin{definition}\label{N-atom}\normalfont
A function $A(t,\mathbf{x})$ is an atom for $\mathcal N$ if there is a ball $B$ such that

$\bullet$ $\text{\rm supp}\, A\subset \hat B$,

$\bullet$ $\| A\|_{L^\infty}\leq {w}(B)^{-1}$.
\end{definition}
Clearly, $\| A\|_{\mathcal N}\leq 1$ for every atom $A$  for $\mathcal N$.
Moreover, every $u\in\mathcal N$ can be written as
$ u=\sum_j \lambda_j A_j$, where $A_j$ are atoms for $\mathcal N$, $\lambda_j\in\mathbb C$,  and $\sum_j |\lambda_j|\leq C \| u\|_{\mathcal N}$.

\begin{proposition}\label{Poiss<Poiss} Let $u(t,\mathbf{x})=P_t f(\mathbf{x})$, $v(t,\mathbf{x})=t^n \frac{d^n}{dt^n} P_tf(\mathbf{x})$. Then for $f\in L^1({dw})$ we have
$$\|v\|_{\mathcal N} \leq C_n \| u\|_{\mathcal N},  $$
where $P_t  =e^{-t\sqrt{-{\Delta}}}$ is the Poisson semigroup.
 \end{proposition}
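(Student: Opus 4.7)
The idea is to exploit the semigroup relation $P_t=P_{t/2}\circ P_{t/2}$ in order to express $v$ as a Poisson-type convolution of $u$ at the smaller scale $t/2$, for which the derivative estimate \eqref{DtDxDyPoisson} furnishes an effective pointwise bound. Freezing the inner $P_{t/2}$-factor as $u(t/2,\cdot)$ and differentiating $n$ times in $t$ (via the substitution $s=t/2$) gives
\[
v(t,\mathbf{x})=t^n\int_{\mathbb{R}^N}\bigl[\partial_s^n p_s(\mathbf{x},\mathbf{y})\bigr]_{s=t/2}u(t/2,\mathbf{y})\,dw(\mathbf{y}),
\]
and \eqref{DtDxDyPoisson} with $m=n$, $\beta=\beta'=0$ then yields $t^n\bigl|\bigl[\partial_s^n p_s(\mathbf{x},\mathbf{y})\bigr]_{s=t/2}\bigr|\le C_n\,p_{t/2}(\mathbf{x},\mathbf{y})$, so that
\[
|v(t,\mathbf{x})|\le C_n\int_{\mathbb{R}^N}p_{t/2}(\mathbf{x},\mathbf{y})\,|u(t/2,\mathbf{y})|\,dw(\mathbf{y}).
\]

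\textbf{Reduction to $u^*$.} For every $\mathbf{z}\in B(\mathbf{y},t/2)$ the point $(t/2,\mathbf{y})$ lies in the cone $\Gamma(\mathbf{z})$, so $|u(t/2,\mathbf{y})|\le u^*(\mathbf{z})$. Averaging over $\mathbf{z}\in B(\mathbf{y},t/2)$, substituting, and applying Fubini together with the doubling property of $w$ to compare $w(B(\mathbf{y},t/2))$ with $w(B(\mathbf{z},t/2))$ for $\|\mathbf{y}-\mathbf{z}\|<t/2$ produces
\[
|v(t,\mathbf{x})|\le C\int_{\mathbb{R}^N}q_t(\mathbf{x},\mathbf{z})\,u^*(\mathbf{z})\,dw(\mathbf{z}),
\]
where $q_t(\mathbf{x},\mathbf{z})\le C\,p_t(\mathbf{x},\mathbf{z})$ by the Poisson upper bound \eqref{Poisson_low_up}. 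Taking the nontangential supremum over $(t,\mathbf{x})\in\Gamma(\mathbf{x}_0)$ and invoking \eqref{PHL} together with the $G$-invariance of $dw$ then yields the pointwise inequality
\[
v^*(\mathbf{x}_0)\le C\,\mathcal M_P(u^*)(\mathbf{x}_0)\le C\sum_{\sigma\in G}\mathcal M_{HL}(u^*)(\sigma(\mathbf{x}_0)).
\]

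\textbf{Main obstacle and resolution.} The principal difficulty is that $\mathcal M_{HL}$ is only of weak type $(1,1)$ on $L^1(dw)$, so the pointwise bound just obtained does not directly furnish the required $L^1$-control of $v^*$. The cleanest resolution is to combine the pointwise estimate with the atomic decomposition of $u$ (equivalently, of the boundary datum $f\in H^1_\Delta$, which is available since $\|u\|_{\mathcal N}<\infty$ implies $f\in H^1_\Delta$ via Corollary \ref{Limits} and Theorem \ref{main1}): write $f=\sum_j\lambda_ja_j$ with $a_j$ a $(1,q,M)$-atom supported in $\mathcal O(B_j)$ and $\sum|\lambda_j|\lesssim\|u\|_{\mathcal N}$. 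Then $v=\sum_j\lambda_j\bigl(t^n\partial_t^nP_ta_j\bigr)$ by linearity, and the proof reduces to the uniform bound $\|(t^n\partial_t^nP_ta)^*\|_{L^1(dw)}\le C$ for each atom $a$. Near $\mathcal O(B)$, Cauchy--Schwarz together with the $L^2$-boundedness of the operator $u\mapsto(t^n\partial_t^nP_tu)^*$ and the atomic $L^2$-normalisation handles the contribution; away from $\mathcal O(B)$, the identity $a=\Delta^Mb$ and $\mathcal L$-harmonicity of $p_t$ rewrite $t^n\partial_t^nP_ta=(-1)^Mt^n\partial_t^{n+2M}P_tb$, which is then estimated via the kernel decay \eqref{DtDxDyPoisson} with $m=n+2M$ large enough, combined with $\|b\|_{L^1}\lesssim r_B^{2M}$, exactly in the spirit of Section \ref{Sect9}. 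Summing over the atomic decomposition delivers $\|v^*\|_{L^1(dw)}\le C_n\|u\|_{\mathcal N}$.
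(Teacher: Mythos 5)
Your first two steps (writing $v(t,\cdot)=2^nQ_{t/2}P_{t/2}f$ via the semigroup property and bounding the kernel of $t^n\partial_t^nP_t$ at scale $t/2$ by $C_n\,p_{t/2}$ using \eqref{DtDxDyPoisson}) match the paper's starting point, and you correctly identify that the resulting pointwise bound $v^*\lesssim\sum_{\sigma}\mathcal M_{HL}(u^*)\circ\sigma$ is useless in $L^1$ because $\mathcal M_{HL}$ is only of weak type $(1,1)$. The problem is your resolution: it is circular. You invoke the atomic decomposition of the boundary datum $f$ with $\sum_j|\lambda_j|\lesssim\|u\|_{\mathcal N}=\|\mathcal M_Pf\|_{L^1(dw)}$, i.e.\ the inclusion $H^1_{{\rm max},P}\subset H^1_{(1,q,M)}$, together with the converse direction of Theorem \ref{main1}. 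But that inclusion is Theorem \ref{AtomPoisson} (via Theorem \ref{AtomicL2}), whose proof relies on \eqref{hP}, hence on Proposition \ref{PropMax}, hence on the very Proposition \ref{Poiss<Poiss} you are trying to prove; likewise Proposition \ref{converse} (the converse of Theorem \ref{main1}) is proved only at the end of Section \ref{Atomic} using Theorem \ref{AtomPoisson}. Also, Corollary \ref{Limits} presupposes a full conjugate system $\mathbf u\in\mathcal H^1_k$, which you do not have here. So the argument as written cannot be used at this point in the logical development.

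The non-circular device — and what the paper actually does — is to decompose not $f$ but the half-space function itself: apply the atomic decomposition of the tent space $\mathcal N$ (Definition \ref{N-atom}, quoted from \cite{St2} and independent of everything in Sections \ref{Atomic}--\ref{SectionMax}) to $u^{\{1\}}(t,\mathbf x)=u(t/2,\mathbf x)$, writing $u^{\{1\}}=\sum_jc_jA_j$ with $L^\infty$-normalized atoms $A_j$ supported in tents $\widehat{B_j}$ and $\sum_j|c_j|\lesssim\|u\|_{\mathcal N}$. Then $v=2^n\sum_jc_j\,Q_{t/2}A_j$, and the whole proof reduces to the uniform bound $\|Q_{t/2}A\|_{\mathcal N}\le C$ for a single tent atom $A$, which follows from the kernel bounds of Proposition \ref{Poiss_new} together with ${\rm supp}\,A\subset\widehat B$ and $\|A\|_{L^\infty}\le w(B)^{-1}$ (split into $\mathcal O(4B)$, where one uses the normalization directly, and its complement, where one uses the decay of the kernel). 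If you replace your Hardy-space atomic decomposition by this tent-space one, the rest of your outline (in particular the kernel estimates you already set up) goes through.
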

\begin{proof}
Assume that $\| u\|_{\mathcal N}<\infty$. Clearly,
$v(t,\mathbf{x})=2^nQ_{t\slash 2} P_{t\slash 2}f(\mathbf{x})$, where $Q_t=t^n\frac{d^n}{dt^n} P_t$. Set $u^{\{1\}}(t,\mathbf{x})=u(\frac{t}{2}, \mathbf{x})$. Then
$$\|u^{\{1\}}\|_{\mathcal N} \leq C \| u\|_{\mathcal N}.$$
By the atomic decomposition we write
$ u^{\{1\}}=\sum_j c_j A_j$, where $A_j$ are atoms for $\mathcal N$, $c_j\in\mathbb C$, and $\sum |c_j|\lesssim \| u\|_{\mathcal N}$, (see Definition \ref{N-atom}). Thus, by Lemma \ref{Nab}, we have
$$ v(t,\mathbf{x})=2^n \sum_j c_j Q_{t\slash 2} A_j (t,\mathbf{x}),$$
$$  Q_{t\slash 2} A_j (t,\mathbf{x})=\int Q_{t\slash 2}(\mathbf{x},\mathbf{y})A_j(t, \mathbf{y}){dw}(\mathbf{y}).$$
From Proposition \ref{Poiss_new} and the definition of $\mathcal N$ atoms we conclude that  $\| Q_{t\slash 2} A_j (t,\mathbf{x})\|_{\mathcal N}\leq C$.
\end{proof}

{\bf Calder\'on reproducing formula.} {Fix a positive integer $m$ sufficiently large.  Let $\tilde \Theta\in C^m(\mathbb{R})$ be an even function such that
$\| \tilde \Theta\|_{\mathcal{S}^m}<\infty$ (see \eqref{seminorm}). Set $\Theta(\mathbf{x})=\tilde\Theta (\|\mathbf{x}\|)$. Assume that $\int_{\mathbb{R}^N}\Theta(\mathbf{x})\, {dw}(\mathbf{x})=0$. The Plancherel theorem for the Dunkl transform implies
\begin{equation}\label{L2toL2}
\| \Theta_t* f(\mathbf{x})\|_{L^2(\mathbb{R}_+^{{1+N}},\, {dw}(\mathbf{x})\frac{dt}{t})}\leq C \| f\|_{L^2({dw})}.
\end{equation}
By duality,
\begin{equation}\label{L2backL2}
\| \pi_{\Theta}F(\mathbf{x})\|_{L^2({dw}(\mathbf{x}))}\leq C \| F(t,\mathbf{x})\|_{L^2(\mathbb{R}_+^{{1+N}},\, {dw}(\mathbf{x})\frac{dt}{t})},
\end{equation}
where
$$\pi_\Theta F(\mathbf{x})=\int_0^\infty (\Theta_t * F(t,\cdot ))(\mathbf{x})\frac{dt}{t}=\int_0^\infty\int_{\mathbb{R}^N} \Theta_t(\mathbf{x},\mathbf{y})F(t,\mathbf{y})\, {dw}(\mathbf{y})\frac{dt}{t}.$$ }

Let $\Phi (\mathbf{x})\geq 0$ be a radial $C^\infty$ real-valued function on $\mathbb{R}^N$ supported by $B(0,1\slash 4)$, $\Phi (\mathbf{x}) \equiv 1$ on $B(0,1\slash 8)$. Let $\kappa$ be a positive integer, $\kappa >\mathbf{N}\slash 2$.  Set
$$\Psi(\mathbf{x})={\Delta}^{2\kappa} (\Phi * \Phi)(\mathbf{x})=({\Delta}^{\kappa} \Phi )* ({\Delta}^\kappa \Phi) (\mathbf{x}).$$
Then $\Psi$ is radial and  real-valued,
$$\text{supp}\, \Psi \subset B(0,1\slash 2),$$
$$\int \Psi (\mathbf{x}){dw}(\mathbf{x})=0,$$
$$\mathcal{F}\Psi (\xi)=c_k\| \xi\|^{4\kappa}  (\mathcal{F}\Phi)^{2} (\xi)=c_k \| \xi\|^{4\kappa}  |\mathcal{F}\Phi(\xi)|^2. $$
Clearly,
 \begin{equation}\label{supprt_Psi}
 \Phi_t(\mathbf{x},\mathbf{y})=\Psi_t(\mathbf{x},\mathbf{y})=0 \ \ \ \text{\rm if }  d(\mathbf{x},\mathbf{y})>t\slash 2
 \end{equation} and
$$\int \Psi_t(\mathbf{x}, \mathbf{y})\, {dw}(\mathbf{y})=\int \Psi_t(\mathbf{x},\mathbf{y})\, {dw}(\mathbf{x})=0.$$
Moreover, for  $n=0,1,2,{\dots}$, and $f\in L^2({dw})$ we have the Calder\'on reproducing formulae:
$$ f=  c'_n \int_0^\infty \Psi_t t^n (\sqrt{-{\Delta}})^ne^{-t\sqrt{-{\Delta}}} f\frac{dt}{t}=c'\int_0^\infty t^2\Psi_t{\Delta}e^{t^2{\Delta}} f\frac{dt}{t} $$
and the integrals  converge in the $L^2({dw})$-norm.

 Fix a positive integer $m$ (large enough). Let $\Phi^{\{j\}}(\mathbf{x})=\tilde\Phi^{\{j\}}(\|\mathbf{x}\|)$, $j=1,2$, where $\tilde\Phi^{\{j\}}$ are even $C^m$-functions such that $\| \tilde\Phi^{\{j\}}\|_{\mathcal{S}^m}<\infty$  and
 \begin{equation}\label{integral_one} \int_{\mathbb{R}^N} \Phi^{\{j\}}(\mathbf{x})\, {dw}(\mathbf{x})=1, \ \ j=1,2.
 \end{equation}
  Taking instead of $\Phi^{\{j\}}$ their  dilations $\Phi^{\{j\}}_s(\mathbf{x})=s^{-\mathbf{N}} \Phi^{\{j\}}(\mathbf{x}\slash s)$ if necessary, we may assume that
 \begin{equation}
 \label{Calderon}f= c_j'' \int_0^\infty  \Psi_t\Phi^{\{j\}}_tf\frac{dt}{t} ,\ \ f\in L^2({dw}), \ j=1,2,
 \end{equation}
 where the integrals converge in the $L^2$-norm. Moreover, by Lemma \ref{Nab}, there is a constant $C_s>0$ such that if $u^{\{j\}}(t,\mathbf{x})=\Phi^{\{j\}}_tf(\mathbf{x})$ and $v^{\{j\}}(t,\mathbf{x})=\Phi^{\{j\}}_{ts}f(\mathbf{x})=u(st,\mathbf{x})$, then
 $$ C_s^{-1}\|v^{\{j\}}\|_{\mathcal N} \leq  \|u^{\{j\}}\|_{\mathcal N} \leq  C_s\|v^{\{j\}}\|_{\mathcal N}.$$

We are in a position to state the main results of this section.

\begin{proposition}\label{max=max} For $\Phi^{\{1\}}$ and $\Phi^{\{2\}}$ as above and every $f\in L^2({dw})$ we have
\begin{align*}
\| \Phi^{\{1\}}_tf\|_{\mathcal N_\alpha}=\|M_{\Phi^{\{1\}},\alpha }f\|_{L^1({dw})} &\leq C_{\Phi^{\{1\}},\Phi^{\{2\}}, \alpha, \alpha'}\|M_{\Phi^{\{2\}},\alpha'} f\|_{L^1({dw})}
\\&= C_{\Phi^{\{1\}},\Phi^{\{2\}}, \alpha, \alpha'}  \| \Phi^{\{2\}}_tf\|_{\mathcal N_{\alpha'}}.
\end{align*}
\end{proposition}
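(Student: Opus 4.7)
The equality \,$\|\Phi^{\{1\}}_t f\|_{\mathcal{N}_\alpha} = \|\mathcal{M}_{\Phi^{\{1\}},\alpha} f\|_{L^1(dw)}$\, is immediate from the definitions of the two norms, so only the middle inequality requires proof. I would mimic the classical Fefferman--Stein subordination scheme, with the Calder\'on reproducing formula \eqref{Calderon} and the convolution kernel bound of Proposition \ref{Psi_st} taking the places of their Euclidean counterparts. Applying \eqref{Calderon} to $f \in L^2(dw)$ with $j = 2$ and then convolving with $\Phi^{\{1\}}_s$ yields
$$\Phi^{\{1\}}_s f \,=\, c''_2\int_0^\infty\bigl(\Phi^{\{1\}}_s*\Psi_t\bigr)*\bigl(\Phi^{\{2\}}_tf\bigr)\,\frac{dt}{t}\,.$$

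Writing $v(t,\mathbf{y}) = \Phi^{\{2\}}_t f(\mathbf{y})$ and denoting by $K_{s,t}(\mathbf{x},\mathbf{y})$ the convolution kernel of $\Phi^{\{1\}}_s * \Psi_t$, Proposition \ref{Psi_st} applies with $\ell_1 = 0$ and $\ell_2 = 4\kappa$ (since $\mathcal{F}\Psi(\xi) = c_k\|\xi\|^{4\kappa}|\mathcal{F}\Phi(\xi)|^2$) and gives, for any prescribed $\kappa'$,
$$|K_{s,t}(\mathbf{x},\mathbf{y})| \,\leq\, C\,\min\bigl\{1,(t/s)^{4\kappa}\bigr\}\,V(\mathbf{x},\mathbf{y},s+t)^{-1}\Bigl(1+\frac{d(\mathbf{x},\mathbf{y})}{s+t}\Bigr)^{-\kappa'}.$$
Next I would dominate $|v(t,\mathbf{y})|\leq v^{**}_\lambda(\mathbf{x})\bigl((\|\mathbf{y}-\mathbf{x}\|+t)/t\bigr)^\lambda$ for a suitable $\lambda > \mathbf{N}$, substitute, and split the $t$-integral at $t=s$. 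The factor $(t/s)^{4\kappa}$, which records the vanishing moments of $\Psi$, forces convergence as $t \to 0$, while the decay $(1+d(\mathbf{x},\mathbf{y})/(s+t))^{-\kappa'}$ combined with the doubling estimates \eqref{growth} handles the space integral. After taking the supremum over $\|\mathbf{z}-\mathbf{x}\|<\alpha s$ and $s>0$, I expect a pointwise bound of the form
$$\mathcal{M}_{\Phi^{\{1\}},\alpha}f(\mathbf{x}) \,\leq\, C_{\alpha,\lambda}\sum_{\sigma\in G}v^{**}_\lambda(\sigma(\mathbf{x}))\,.$$

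Integrating over $\mathbf{x}$ with respect to $dw$, exploiting $G$-invariance of the measure, and invoking \eqref{maxlamba} and \eqref{Nab_eq} of Lemma \ref{Nab} then gives
$$\|\mathcal{M}_{\Phi^{\{1\}},\alpha}f\|_{L^1(dw)} \,\lesssim\, \|v^{**}_\lambda\|_{L^1(dw)} \,\lesssim\, \|v\|_{\mathcal{N}} \,\lesssim\, \|v\|_{\mathcal{N}_{\alpha'}} \,=\, \|\mathcal{M}_{\Phi^{\{2\}},\alpha'}f\|_{L^1(dw)}\,,$$
as required. The main obstacle I anticipate is reconciling the orbit distance $d(\mathbf{x},\mathbf{y})$ which appears naturally in the kernel bound with the Euclidean distance $\|\mathbf{y}-\mathbf{x}\|$ built into $v^{**}_\lambda$; this is why the Weyl-group symmetrization $\sum_{\sigma\in G}$ enters, in analogy with \eqref{PHL}. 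Care must also be taken to choose $\lambda$ strictly larger than $\mathbf{N}$ (so that the equivalence \eqref{maxlamba} applies) and $\kappa$ large enough relative to $\lambda$ (so that the $t$-integral converges near $0$ after substitution); the freedom in selecting the integers $m$ and $\kappa$ in the constructions preceding the proposition is exactly what provides this flexibility.
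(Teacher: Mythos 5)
Your reduction to the middle inequality, your use of the Calder\'on reproducing formula together with Proposition \ref{Psi_st}, and your handling of the $d(\mathbf{x},\mathbf{y})$ versus $\|\mathbf{x}-\mathbf{y}\|$ issue via a sum over $G$ followed by \eqref{maxlamba} are all in the spirit of the paper's argument. But there is a genuine gap at the large-$t$ end of your $\tfrac{dt}{t}$ integral. Since $\mathcal{F}\Phi^{\{1\}}(0)\neq0$, Proposition \ref{Psi_st} applies only with $\ell_1=0$, and the kernel bound you quote, $\min\{1,(t/s)^{4\kappa}\}\,V(\mathbf{x},\mathbf{y},s+t)^{-1}(1+d(\mathbf{x},\mathbf{y})/(s+t))^{-\kappa'}$, carries no smallness whatsoever in the regime $t\ge s$: for $t\gg s$ the operator $\Phi^{\{1\}}_s*\Psi_t$ is essentially $\Psi_t$ and its kernel is of full size $V(\mathbf{x},\mathbf{y},t)^{-1}$. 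After you replace $|v(t,\mathbf{y})|$ by the bound $v^{**}_\lambda(\mathbf{x})\bigl((\|\mathbf{y}-\mathbf{x}\|+t)/t\bigr)^\lambda$ (which is $\simeq v^{**}_\lambda(\mathbf{x})$, independent of $t$, for $\|\mathbf{y}-\mathbf{x}\|\lesssim t$) and perform the space integral, the remaining $t$-integral over $[s,\infty)$ is $\gtrsim\int_s^\infty\frac{dt}{t}=\infty$. The convergence of $\int_0^\infty\Psi_t\Phi^{\{2\}}_tf\,\frac{dt}{t}$ at $t=\infty$ rests on cancellation (the Schwartz decay of $\mathcal{F}\Psi$ at infinity), which is destroyed once you pass to absolute values and to the crude majorant $v^{**}_\lambda$. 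Your claimed pointwise bound $\mathcal{M}_{\Phi^{\{1\}},\alpha}f\lesssim\sum_{\sigma\in G}v^{**}_\lambda\circ\sigma$ is in fact true, but it cannot be reached along this route.

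The paper's proof repairs exactly this point by a preliminary subtraction: it sets $\Psi^{\{1\}}=\Phi^{\{1\}}-\Phi^{\{2\}}$, so that by \eqref{integral_one} one has $\mathcal{F}\Psi^{\{1\}}(\xi)=O(\|\xi\|^2)$ near the origin, and it suffices to bound $\|\Psi^{\{1\}}_tf\|_{\mathcal N}$ (the $\Phi^{\{2\}}_tf$ part being trivially dominated by $v^{**}_\lambda$). Feeding $\Psi^{\{1\}}_t\Psi_s$ into Proposition \ref{Psi_st} then yields the two-sided gain $\min\bigl((t/s)^2,(s/t)^\ell\bigr)$, which makes the scale integral converge at both ends: the factor $(s/t)^\ell$ with $\ell>\lambda$ absorbs the growth $(1+d(\mathbf{x},\mathbf{z})/s)^\lambda$ for small $s$, and the factor $(t/s)^2$ handles large $s$. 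If you insert this first step, the rest of your plan goes through essentially as you describe.
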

\begin{proof}
 Let $\Psi^{\{1\}}=\Phi^{\{1\}}-\Phi^{\{2\}}$. Then $\Psi^{\{1\}}$ is radial and thanks to \eqref{integral_one}, we have  $\mathcal{F}\Psi^{\{1\}}(\xi)=O(\|\xi\|^2)$ for $\|\xi\|<1$.  It suffices to prove that
$$\| \Psi^{\{1\}}_tf\|_{\mathcal N}\leq C\| \Phi^{\{2\}}_tf\|_{\mathcal N}.$$
Using the Calder\'on reproducing formula \eqref{Calderon}, we obtain
$$ \Psi^{\{1\}}_tf=c_2'\int_0^\infty \Psi^{\{1\}}_t\Psi_s\Phi^{\{2\}}_sf\frac{ds}{s}$$
According to Proposition \ref{Psi_st}, for any $\eta, \ell >0$ the integral kernel $K_{t,s}(\mathbf{y},\mathbf z)$ of the operator $\Psi^{\{1\}}_t\Psi_s$ satisfies
 $$|K_{t,s}(\mathbf{y},\mathbf z)| \leq C_{\eta, \ell} \min \Big(\big(\frac{t}{s}\big)^{2}, \big(\frac{s}{t}\big)^{\ell}\Big) \frac{1}{V(\mathbf{y},\mathbf z, s+t )}\Big(1+\frac{d(\mathbf{y},\mathbf z)}{s+t}\Big)^{-\mathbf{N}-\eta}. $$
We take $\mathbf{N}<\lambda<\eta<\ell$. Then for $\|\mathbf{x}-\mathbf{y}\|<t$ we have
\begin{equation}\label{ts}
\int | K_{t,s}(\mathbf{y},\mathbf z)|\Big(1+\frac{d(\mathbf{x},\mathbf z)}{s}\Big)^\lambda \, {dw}(\mathbf z)
\leq C'\min \Big(\big(\frac{s}{t}\big)^{\ell-\lambda} , \big(\frac{t}{s}\big)^2\Big).
\end{equation}
Therefore, using \eqref{ts} we obtain,
\begin{equation}\begin{split}
\sup_{\|\mathbf{x}-\mathbf{y}\|<t}|\Psi^{\{1\}}_tf(\mathbf{y})|
&=c_2'\sup_{\|\mathbf{x}-\mathbf{y}\|<t} \left|\int_0^\infty \int K_{t,s} (\mathbf{y},\mathbf z)\Phi^{\{2\}}_sf(\mathbf z)\, {dw}(\mathbf z)\frac{ds}{s}
\right|\\
&\leq c_2' \sup_{\mathbf z,s} |\Phi^{\{2\}}_sf(\mathbf z)|\Big(1+\frac{d(\mathbf{x},\mathbf z)}{s}\Big)^{-\lambda}\\
&\times
\sup_{\|\mathbf{x}-\mathbf{y}\|<t}\int_0^\infty \int | K_{t,s}(\mathbf{y},\mathbf z)|\Big(1+\frac{d(\mathbf{x},\mathbf z)}{s}\Big)^\lambda \, {dw}(\mathbf z)\frac{ds}{s}\\
&\leq C \sup_{\mathbf z,s} |\Phi^{\{2\}}_sf(\mathbf z)|\Big(1+\frac{d(\mathbf{x},\mathbf z)}{s}\Big)^{-\lambda}.
\end{split}\end{equation}
The proof is complete, by applying~\eqref{maxlamba}.
\end{proof}

\begin{remark}\label{r33}
  \normalfont
  It follows from the proof of Proposition \ref{max=max} that if $\Theta\in\mathcal{S}(\mathbb{R}^N)$ is radial and $\int_{\mathbb{R}^N}\Theta(\mathbf{x})\, {dw}(\mathbf{x})=0$, and $\Phi^{\{2\}}$ is as above, then for $f\in L^2({dw})$ we have
  $$\| \Theta_tf\|_{\mathcal N} \leq C \| \Phi^{\{2\}}_tf\|_{\mathcal N}$$
\end{remark}
\begin{proposition}\label{PropMax} For a function $\Phi^{\{1\}}$ as described above and $\alpha, \alpha'>0$  there is a constant $C_{\Phi^{\{1\}},\alpha, \alpha'}>0$ such that
$$\|M_{\Phi^{\{1\}},\alpha }f\|_{L^1({dw})} \leq C_{\Phi^{\{1\}}, \alpha, \alpha'}\|M_{P,\alpha'} f\|_{L^1({dw})}, \ \ \text{for } f\in L^1({dw})\cap L^2({dw}), $$
where ${P_t=e^{-t\sqrt{\Delta}}}$ is the Poisson semigroup.
 \end{proposition}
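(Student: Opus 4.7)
Following the strategy of Proposition \ref{max=max}, I would use a Calder\'on reproducing formula adapted to the Poisson semigroup. Fix a large even integer $m$, and use the identity
$$f = c_m \int_0^\infty \Psi_t\,Q_t f\,\frac{dt}{t}, \qquad Q_t = (-t\sqrt{-\Delta})^m e^{-t\sqrt{-\Delta}},$$
which holds in $L^2(dw)$, with $\Psi$ as in Section \ref{Max=Max}. Applying $\Phi^{\{1\}}_s$ yields
$$\Phi^{\{1\}}_s f(\mathbf{y}) = c_m \int_0^\infty [\Phi^{\{1\}}_s\Psi_t](Q_t f)(\mathbf{y})\,\frac{dt}{t}.$$
The kernel $K_{s,t}$ of $\Phi^{\{1\}}_s\Psi_t$ is controlled by Proposition \ref{Psi_st} (with $\ell_1=0$, $\ell_2=4\kappa$). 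To control $Q_tf$, I would use Proposition \ref{Poiss<Poiss} combined with Lemma \ref{Nab}: setting $v(t,\mathbf{z})=P_tf(\mathbf{z})$ and writing $(Q_tf)^{**}_\lambda$ for the associated tent-space quantity of Lemma \ref{Nab}, we get
$$\|(Q_tf)^{**}_\lambda\|_{L^1(dw)} \leq C\,\|v^*\|_{L^1(dw)} \lesssim \|M_{P,\alpha'} f\|_{L^1(dw)},$$
and pointwise $|Q_tf(\mathbf{z})|\le(Q_tf)^{**}_\lambda(\mathbf{x})(1+\|\mathbf{z}-\mathbf{x}\|/t)^\lambda$.

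Combining these ingredients, integrating in $\mathbf{z}$ using doubling and the smoothness of $V$, and then in $t$, one aims at the pointwise bound
$$\sup_{\|\mathbf{y}-\mathbf{x}\|<\alpha s}|\Phi^{\{1\}}_s f(\mathbf{y})|\le C\,(Q_tf)^{**}_\lambda(\mathbf{x}),$$
whence integrating gives $\|M_{\Phi^{\{1\}},\alpha}f\|_{L^1(dw)}\le C\,\|M_{P,\alpha'}f\|_{L^1(dw)}$.

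\textbf{Main obstacle.} The only (but serious) subtlety is that $\Phi^{\{1\}}$ has no cancellation ($\int\Phi^{\{1\}}\,dw=1$), so Proposition \ref{Psi_st} applied with $\ell_1=0$ gives merely $\min\{1,(t/s)^{4\kappa}\}$ instead of the two-sided decay used in Proposition \ref{max=max}. Consequently the integrand lacks decay for $t>s$, and the $t$-integral over $(s,\infty)$ is borderline divergent. The natural remedy is to split
$$\Phi^{\{1\}}_s f = P_s f + G_s f, \qquad G_s := \Phi^{\{1\}}_s - P_s\,:$$
the Poisson piece is controlled pointwise by $M_{P,\alpha}f(\mathbf{x})$ (and then by $M_{P,\alpha'}f$ via Lemma \ref{Nab}), while $G_s$ has vanishing total integral and its Fourier multiplier $\tilde\psi^{\{1\}}(s\|\xi\|)-e^{-s\|\xi\|}$ vanishes at the origin, so that $G_s\Psi_t$ should enjoy the two-sided decay $\min\{(s/t)^\nu,(t/s)^{4\kappa}\}$ required for the $t$-integration to converge uniformly in $s$. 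The technical point is that Proposition \ref{Psi_st} is stated for \emph{even Schwartz} multipliers, whereas $e^{-r}$ is only $C^0$ (not $C^1$) at the origin; one must therefore either adapt the proof of Proposition \ref{Psi_st} to symbols with limited smoothness at the origin but sufficient polynomial decay at infinity, or realize $P_s$ via the subordination formula as a superposition of heat semigroups $H_{s^2/(4u)}$ (each of which \emph{is} a Schwartz bump) and integrate the resulting estimates against the measure $\pi^{-1/2}e^{-u}u^{-1/2}du$.
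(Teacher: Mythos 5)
Your architecture is reasonable and you have correctly located the main obstacle, but the proof is incomplete precisely at its crux, and there is a second obstruction you did not flag. First, the two\--sided decay for the kernel of $G_s\Psi_t$ is only asserted (``should enjoy''), and neither proposed remedy is routine: extending Proposition \ref{Psi_st} to the symbol $e^{-|r|}$ means redoing its proof for multipliers that are not $C^1$ at the origin, while the subordination route replaces $p_s$ by Gaussians at scale $s/(2\sqrt{u\,})$ rather than $s$, so the cancellation against $\Phi^{\{1\}}_s$ cannot be seen term by term in $u$ and the $\mathcal{S}_m$\--norms of the rescaled Gaussian symbols blow up as $u\to0$; one must split the $u$\--integral according to $s/\sqrt{u\,}\lessgtr t$ and argue separately. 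Second, and more seriously, any kernel built from the Poisson kernel inherits only the spatial decay $V(\mathbf{y},\mathbf{z},s+d(\mathbf{y},\mathbf{z}))^{-1}\bigl(1+d(\mathbf{y},\mathbf{z})/s\bigr)^{-1}$ of \eqref{Poisson_low_up}. That decay is too weak to be integrated against the weight $\bigl(1+d(\mathbf{x},\mathbf{z})/s\bigr)^{\lambda}$ with $\lambda>\mathbf{N}$ which the $u^{**}_\lambda$\--machinery of Lemma \ref{Nab} requires: the analogue of \eqref{ts} diverges whenever $\lambda\ge1$. So even granting the cancellation in $s/t$, the splitting $\Phi^{\{1\}}_s=P_s+G_s$ does not feed into the scheme of Proposition \ref{max=max} as it stands.

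The paper sidesteps both difficulties with one multiplier trick. Set $\phi(\xi)=e^{-\|\xi\|}\sum_{j=0}^{n+1}\|\xi\|^j/j!$ for $n$ large; since $\phi(\xi)-1=O(\|\xi\|^{n+1})$ near the origin, $\phi$ is a $C^n$ radial function with exponential decay of all derivatives up to order $n$, so $\Phi^{\{2\}}=c_k^{-1}\mathcal{F}^{-1}\phi$ is an admissible comparison function with $\int\Phi^{\{2\}}\,dw=1$ and rapid polynomial decay. Proposition \ref{max=max}, which is designed exactly to compare two approximate identities (no cancellation needed, because it subtracts them internally), gives $\|\Phi^{\{1\}}_tf\|_{\mathcal N}\lesssim\|\Phi^{\{2\}}_tf\|_{\mathcal N}$. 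On the other hand,
\begin{equation*}
\Phi^{\{2\}}_tf=\sum\nolimits_{j=0}^{n+1}\tfrac{(-1)^j}{j!}\,t^j\tfrac{d^j}{dt^j}P_tf
\end{equation*}
is a finite linear combination of Poisson derivatives, each of which is dominated in $\mathcal N$ by $\|P_tf\|_{\mathcal N}$ thanks to Proposition \ref{Poiss<Poiss}; Lemma \ref{Nab} then converts between apertures $\alpha$ and $\alpha'$. If you wish to keep your decomposition, the viable version of it is $\Phi^{\{1\}}_s=\Phi^{\{2\}}_s+(\Phi^{\{1\}}_s-\Phi^{\{2\}}_s)$ rather than $\Phi^{\{1\}}_s=P_s+G_s$ --- and that is exactly what the proof of Proposition \ref{max=max} already carries out.
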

\begin{proof}
For a positive integer $n$ (large) set  $\phi (\xi)=e^{-\|\xi\|}\Big(\sum_{j=0}^{n+1} \frac{\|\xi\|^j}{j!}\Big)$. Then
$$\phi (\xi) -1=O(\|\xi\|^{n+1}) \ \ \text{for } \ \|\xi\|<1.$$
 So $\phi$ is a $C^n(\mathbb{R}^N)$ function such that $|\partial^\alpha \phi (\xi)|\leq C_\alpha \exp({-\|\xi\|\slash 2})$, $|\alpha |\leq n$. Put $\Phi^{\{2\}} =c_k^{-1}\mathcal{F}^{-1}\phi$.
 Applying Proposition \ref{max=max}, we have
 $$ \| \Phi_t^{\{1\}} f\|_{\mathcal N}\lesssim \| \Phi^{\{2\}}_tf\|_{\mathcal N}.$$
 Notice that $\frac{d^j}{dt^j}P_tf(\mathbf{x})=\mathcal{F}^{-1}(\|t\xi\|^je^{-\hspace{.25mm}t\hspace{.25mm}{\|}\xi{\|}}\mathcal{F}f(\xi))(\mathbf{x})$. Hence, from Proposition \ref{Poiss<Poiss} we conclude,
 $$\| \Phi^{\{2\}}_t f\|_{\mathcal N} \leq C \sum_{j=0}^{n+1}\Big\| t^j\frac{d^j}{dt^j}P_tf\Big\|_{\mathcal N}\leq C' \| P_tf\|_{\mathcal N}. $$
\end{proof}
\noindent

\section{Atomic decompositions; inclusions $H^1_{{\rm max},P}\subset H^1_{(1,\infty,M)}$ and  $H^1_{{\rm max},{H}}\subset H^1_{(1,\infty,M)}$}\label{Atomic}

Note that Proposition \ref{PropMax} implies that $H^1_{{\rm max},P}\cap L^2({dw}) \subset H^1_{{\rm max},{H}}$ and
\begin{equation}\label{hP}
\|\mathcal{M}_{H} f\|_{L^1({dw})} \leq C\|\mathcal{M}_P f\|_{L^1({dw})} \ \ \text{for} \ f\in L^2({dw}).
\end{equation}
\begin{lemma}\label{PoissonHeat} $H^1_{{\rm max},{H}}\subset H^1_{{\rm max},P}$ and there is a constant $C>0$ such that
\begin{equation}\label{Ph}
\|\mathcal{M}_P f\|_{L^1({dw})} \leq C\|\mathcal{M}_{H} f\|_{L^1({dw})} \ \ \text{for} \ f\in L^1({dw}).
\end{equation}
\end{lemma}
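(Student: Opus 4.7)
The plan is to leverage the subordination formula \eqref{subord}, which expresses the Poisson kernel as a weighted average of rescaled heat kernels. Applied to $f\in L^1(dw)$ and justified by Fubini (since $p_t$ is integrable with total mass $\pi^{1/2}$ after weighting by $e^{-u}u^{-1/2}du$, and $f\in L^1(dw)$), this gives the pointwise identity
$$
P_t f(\mathbf{y})=\pi^{-1/2}\int_0^\infty e^{-u}\,H_{t^2/(4u)}f(\mathbf{y})\,\frac{du}{\sqrt{u}},
$$
and hence the pointwise estimate
$$
|P_t f(\mathbf{y})|\leq\pi^{-1/2}\int_0^\infty e^{-u}\bigl|H_{t^2/(4u)}f(\mathbf{y})\bigr|\,\frac{du}{\sqrt{u}}.
$$

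The key step is then to dominate $|H_{t^2/(4u)}f(\mathbf{y})|$ by a widened-aperture heat maximal function evaluated at $\mathbf{x}$. Define, for $a\ge1$,
$$
\mathcal{M}_{H,a}f(\mathbf{x})=\sup_{\|\mathbf{x}-\mathbf{y}'\|^2<a^2 s}|H_s f(\mathbf{y}')|,
$$
so that $\mathcal{M}_{H,1}=\mathcal{M}_H$. Setting $s=t^2/(4u)$, the inequality $|H_s f(\mathbf{y})|\leq\mathcal{M}_{H,a}f(\mathbf{x})$ holds provided $\|\mathbf{x}-\mathbf{y}\|<a\sqrt{s}=at/(2\sqrt{u})$. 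For $\|\mathbf{x}-\mathbf{y}\|<t$, it therefore suffices to take $a=1+2\sqrt{u}$. Taking the supremum over $(\mathbf{y},t)$ with $\|\mathbf{x}-\mathbf{y}\|<t$ yields the pointwise bound
$$
\mathcal{M}_P f(\mathbf{x})\leq\pi^{-1/2}\int_0^\infty e^{-u}\,\mathcal{M}_{H,1+2\sqrt{u}}f(\mathbf{x})\,\frac{du}{\sqrt{u}}.
$$

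To conclude, I integrate in $L^1(dw)$ and apply Fubini. The widened heat maximal function is controlled by $\mathcal{M}_H$ via Lemma \ref{Nab}, applied to the function $v(t,\mathbf{y})=H_{t^2}f(\mathbf{y})$: this reparametrization converts the parabolic scaling natural to the heat semigroup into the elliptic scaling used to define $\mathcal{N}_a$, and yields $v^*_a(\mathbf{x})=\mathcal{M}_{H,a}f(\mathbf{x})$ and $v^*_1(\mathbf{x})=\mathcal{M}_Hf(\mathbf{x})$. Lemma \ref{Nab} therefore gives $\|\mathcal{M}_{H,a}f\|_{L^1(dw)}\leq C(1+a)^{\mathbf{N}}\|\mathcal{M}_Hf\|_{L^1(dw)}$, and the remaining integral
$$
\int_0^\infty e^{-u}(2+2\sqrt{u})^{\mathbf{N}}\,\frac{du}{\sqrt{u}}
$$
converges by the exponential decay, delivering the desired estimate $\|\mathcal{M}_P f\|_{L^1(dw)}\leq C\|\mathcal{M}_H f\|_{L^1(dw)}$.

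The only real obstacle is the bookkeeping between the two nontangential frameworks: Lemma \ref{Nab} is formulated in terms of the elliptic-scaled tent spaces $\mathcal{N}_a$, while the heat maximal function naturally has parabolic scaling. Once this correspondence is fixed by the substitution $s=t^2$, the subordination argument automatically handles the small $u$ regime (where $\sqrt{t^2/(4u)}>t$ already for $u<1/4$, so the unwidened $\mathcal{M}_H f(\mathbf{x})$ suffices) while the factor $e^{-u}$ absorbs the polynomial growth $(1+2\sqrt{u})^{\mathbf{N}}$ coming from Lemma \ref{Nab} in the large $u$ regime.
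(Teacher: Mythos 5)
Your proof is correct and follows essentially the same route as the paper: both rest on the subordination formula \eqref{subordination} together with the $L^1$ control of aperture changes for nontangential maximal functions provided by Lemma \ref{Nab}, applied to $v(t,\mathbf{y})=H_{t^2}f(\mathbf{y})$. The only (harmless) difference is that you invoke the change-of-aperture inequality \eqref{Nab_eq} under the subordination integral via Fubini, whereas the paper dominates the integrand pointwise by the penalized maximal function $u^{**}_\lambda$ and then applies \eqref{maxlamba}.
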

\begin{proof}The proof is standard. Let $f\in L^1({dw})$. Set $u(t,\mathbf{x})=e^{t^2{\Delta}}f(\mathbf{x})$.  By the subordination formula \eqref{subordination} for fixed $t>0$ we have
\begin{equation*}\begin{split}
\sup_{\|\mathbf{x}'-\mathbf{x}\|<t}|P_tf(\mathbf{x}')|
&\leq \frac{1}{2\sqrt{\pi}} \int_0^\infty \sup_{\|\mathbf{x}'-\mathbf{x}\|<t}|u(ts,\mathbf{x}')|e^{-\frac{1}{4s^2}}\frac{ds}{s^2}\\
&=\frac{1}{2\sqrt{\pi}} \int_0^\infty \sup_{\|\mathbf{x}'-\mathbf{x}\|<t}|u(ts,\mathbf{x}')|\Big(\frac{ts}{\|\mathbf{x}-\mathbf{x}'\|+ts}\Big)^\lambda
\Big(\frac{\|\mathbf{x}-\mathbf{x}'\|+ts}{ts}\Big)^\lambda
e^{-\frac{1}{4s^2}}\frac{ds}{s^2}\\
&\leq \frac{1}{2\sqrt{\pi}} \int_0^\infty u^{**}_\lambda (\mathbf{x})
\Big(\frac{1+s}{s}\Big)^\lambda
e^{-\frac{1}{4s^2}}\frac{ds}{s^2}\\
&\leq Cu^{**}_\lambda(\mathbf{x})
\end{split}
\end{equation*}
Now the lemma follows from \eqref{maxlamba}.
\end{proof}
Let us note that \eqref{hP} and \eqref{Ph} imply that $H^1_{{\rm max},{H}}\cap L^2({dw}) = H^1_{{\rm max},P}\cap L^2({dw})$ and
$$ \|\mathcal{M}_{P} f\|_{L^1({dw})}\sim \|\mathcal{M}_{H} f\|_{L^1({dw})} \ \ \text{for } f\in L^2({dw}).$$

In the next theorem we show that all elements in  $H^1_{{\rm max},{H}}\cap L^2({dw})=H^1_{{\rm max},P}\cap L^2({dw})$ admit atomic decompositions into $(1,\infty, M)$-atoms. The $L^2(dw)$ condition is removed afterwards in Theorem \ref{AtomPoisson}.

\begin{theorem}\label{AtomicL2}
 For every positive integer $M$ there is a constant $C_M>0$ such that every element
 $f\in H^1_{{\rm max},{H}}\cap L^2({dw})=H^1_{{\rm max},P}\cap L^2({dw})$  can be written  as
$$ f=\sum \lambda_j a_j$$
where  $a_j$ are $(1,\infty, M)$-atoms, $\sum |\lambda_j|\leq C_{M} \|\mathcal{M}_P f\|_{L^1({dw})}$. Moreover,  the convergence is in $L^2({dw})$.
\end{theorem}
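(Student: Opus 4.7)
The plan is to follow the Coifman--Meyer--Stein scheme: transfer the problem to a tent-space decomposition via the Calder\'on reproducing formula \eqref{Calderon}, and reassemble each tent atom into an $H^1$-atom of the required form.

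Given $f\in L^2(dw)\cap H^1_{\mathrm{max},P}$, set $u(t,\mathbf{x})=\Phi^{\{2\}}_t f(\mathbf{x})$ for the radial function $\Phi^{\{2\}}$ fixed in Section \ref{Max=Max}.  By Proposition \ref{max=max} and Lemma \ref{PoissonHeat}, $\|u\|_{\mathcal N}\lesssim\|\mathcal M_P f\|_{L^1(dw)}$, so $u\in\mathcal N$ admits the atomic decomposition (Definition \ref{N-atom} and the paragraph that follows it)
$$
u=\sum_j\mu_j A_j,\qquad \sum_j|\mu_j|\leq C\,\|\mathcal M_P f\|_{L^1(dw)},
$$
with each $A_j$ supported in a tent $\hat B_j$ over a Euclidean ball $B_j=B(\mathbf{x}_j,r_j)$ and $\|A_j\|_\infty\leq w(B_j)^{-1}$.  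Next I choose the parameter $\kappa$ of Section \ref{Max=Max} large enough that $2\kappa\geq M$, and factor the kernel there as $\Psi=\Delta^M\eta$, where $\eta=\Delta^{2\kappa-M}(\Phi*\Phi)$ is still radial, smooth, and supported in $B(0,1/2)$.  Scaling then gives $\Psi_t=t^{2M}\Delta^M\eta_t$ and $(t^2\Delta)^\ell\eta_t=(\Delta^\ell\eta)_t$.  Inserting the tent decomposition into \eqref{Calderon} and exchanging sum and integral produces
$$
f=c\int_0^\infty\Psi_t u(t,\cdot)\,\frac{dt}{t}=\sum_j\mu_j a_j,\qquad a_j=\Delta^M b_j,
$$
with $b_j=c\int_0^\infty t^{2M}\,\eta_t*A_j(t,\cdot)\,dt/t$.

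The main computation is to check that each $a_j$, up to a uniform constant, is a $(1,\infty,M)$-atom in the sense of Definition \ref{def-atom}, associated to a fixed enlargement $B_j^*\supset B_j$.  For the support: since $\eta_t(\mathbf{x},\mathbf{z})=0$ when $d(\mathbf{x},\mathbf{z})>t/2$ (cf.\ \eqref{supprt_Psi}) and $A_j(t,\cdot)$ vanishes unless $t\leq r_j/4$ and $\|\mathbf{z}-\mathbf{x}_j\|\leq r_j-4t$, one has $(\Delta^\ell b_j)(\mathbf{x})\neq 0$ only if $d(\mathbf{x},\mathbf{x}_j)\leq r_j+t/2\lesssim r_j$, so $\mathrm{supp}\,\Delta^\ell b_j\subset\mathcal O(B_j^*)$.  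For the size bounds, the identity above gives
$$
(r_j^2\Delta)^\ell b_j=c\,r_j^{2\ell}\int_0^{r_j/4}t^{2M-2\ell}\,(\Delta^\ell\eta)_t*A_j(t,\cdot)\,\frac{dt}{t};
$$
Corollary \ref{translation2} applied to the radial rapidly-decreasing $\Delta^\ell\eta$ gives $\int|(\Delta^\ell\eta)_t(\mathbf{x},\mathbf{z})|\,dw(\mathbf{z})\lesssim 1$, hence $\|(\Delta^\ell\eta)_t*A_j(t,\cdot)\|_\infty\lesssim w(B_j)^{-1}$, and integrating in $t$ produces the required bound $\|(r_j^2\Delta)^\ell b_j\|_\infty\lesssim r_j^{2M}\,w(B_j)^{-1}$.

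The main obstacle I anticipate is the $L^2(dw)$-convergence of $\sum_j\mu_j a_j$.  The Calder\'on formula \eqref{Calderon} converges in $L^2(dw)$ whenever $f\in L^2(dw)$, and the synthesis map $F\longmapsto\int_0^\infty\Psi_t F(t,\cdot)\,dt/t$ is $L^2$-bounded by the dual Plancherel estimate \eqref{L2backL2}.  I therefore plan to truncate the tent decomposition to a finite partial sum, apply the synthesis to the remainder, and estimate the difference in $L^2(dw)$ using that $\sum_j\mu_j A_j$ converges to $u$ in $L^2(dw\,dt/t)$ (the additional content extracted from the tent-space theory when $u\in L^2(dw\,dt/t)$, as is the case here).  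A secondary, purely classical point is to transcribe the tent-space atomic decomposition of \cite{St2} to the doubling metric-measure space $(\mathbb{R}^N,\|\cdot\|,dw)$, but this is routine once the Whitney--Calder\'on--Zygmund stopping-time machinery is in place.
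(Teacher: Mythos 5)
Your overall architecture (Calder\'on reproducing formula plus a tent--space atomic decomposition of a maximal--type quantity) is in the same spirit as the paper's proof, but the decomposition you chose makes the key estimate fail. The atoms of the space $\mathcal N$ (Definition \ref{N-atom}) are normalized only in $L^\infty$: $\operatorname{supp}A_j\subset\hat B_j$ and $\|A_j\|_{L^\infty}\le w(B_j)^{-1}$, with no cancellation in $t$ and no control of $\iint|A_j|^2\,dw\,\tfrac{dt}{t}$ (which is in fact logarithmically divergent for a generic such atom, so $A_j\notin T^2_2$ and \eqref{L2backL2} is of no help either). Your size estimate for $b_j$ works for $\ell<M$, but for $\ell=M$ --- i.e.\ for the atom $a_j=\Delta^Mb_j$ itself --- it reads
$$
\|a_j\|_{L^\infty}\;\lesssim\;w(B_j)^{-1}\int_0^{r_j/4}t^{2M-2M}\,\frac{dt}{t}\;=\;w(B_j)^{-1}\int_0^{r_j/4}\frac{dt}{t}\;=\;\infty,
$$
so the required bound $\|a_j\|_{L^\infty}\le C\,w(B_j)^{-1}$ does not follow, and indeed it is false: taking $A_j=w(B_j)^{-1}\chi_{\hat B_j}(t,\mathbf y)g(\mathbf y)$ with $g$ bounded but discontinuous already produces an unbounded $\pi_\Psi A_j$ (a logarithmic singularity survives because $\Psi_t*g$ need not decay as $t\to0$ at points where $g$ is rough). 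This is the classical obstruction: the synthesis map is bounded from the $L^2$-normalized tent space $T_2^1$ into $H^1$ (this is exactly what the paper uses in Section \ref{SectionSquare}), but not from the $L^\infty$-normalized maximal tent space $\mathcal N$.

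The paper's proof (Steps 1--9, following Song--Yan) avoids this precisely by \emph{not} discarding the structure of the coefficient function: it decomposes $\mathbb{R}^{1+N}_+$ into the sets $\mathbf T_{\{n,j\}}$ built from level sets of a maximal function that controls \emph{both} $|t^2\Delta e^{t^2\Delta}f|$ \emph{and} $|\Xi_tf|$, and then estimates $\int_0^\infty\Psi_t(\chi_{\mathbf T_{\{n,j\}}}t^2\Delta e^{t^2\Delta}f)\tfrac{dt}{t}$ by splitting the $t$-axis via Lemma \ref{key_lemma} into a bounded number of intervals: on intervals of bounded ratio $\omega_s\le3\delta_s$ the crude $L^\infty$ bound suffices (there $\int_{\delta_s}^{\omega_s}\tfrac{dt}{t}\le\log3$), while on the remaining intervals the cutoff is identically $1$ and the telescoping identity \eqref{differ} converts the divergent-looking integral into the difference $\Xi_{a_s}f-\Xi_{b_s}f$, which is bounded by $2^{j+2}$ on $(\hat\Omega_{j+1})^c$. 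Neither of these two mechanisms is available once you replace the specific function $\chi_{\mathbf T_{\{n,j\}}}t^2\Delta e^{t^2\Delta}f$ by an abstract $\mathcal N$-atom. To repair your argument you would have to either (i) pass to the $T^1_2$ decomposition of $Q_tf$ and settle for $(1,2,M)$-atoms (which is a different theorem, proved in Section \ref{SectionSquare}), or (ii) reinstate the level-set construction and the cancellation in $t$, which brings you back to the paper's proof.
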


\begin{proof}
The proof is a straightforward adaptation of \cite{SY} with the difference that tents are now constructed with respect to the orbit distance \hspace{.25mm}$d(\mathbf{x},\mathbf{y})$\hspace{.25mm}. We include details for the convenience of readers unfamiliar with \cite{SY}. More experienced readers may skip the proof and jump to Lemma \ref{L2Lemma}. Without loss of generality, we may assume that $M$ is an even integer $>\hspace{-.25mm}2\hspace{.5mm}\mathbf{N}$\hspace{.25mm}.
\smallskip

{\bf Step 1. Reproducing formulae.}\label{step1}
Let $\Phi$, $\Psi$ be as in the Calder\'on reproducing formula with $\kappa=M\slash 2$ (see Section \ref{Max=Max}). Denote $$\varphi(\xi)=\mathcal{F}(\Phi)(\xi)=\tilde \varphi(\| \xi\|),$$
$$\psi(\xi)=\mathcal{F}(\Psi)(\xi)=c_k\| \xi\|^{2M}|\varphi(\xi)|^2 = \tilde \psi(\| \xi\|)=c_k\| \xi\|^{2M}|\tilde \varphi(\| \xi\|)|^2.$$
Then there is a constant $c$ such that
$$f=\lim_{\varepsilon \to 0}c\int_\varepsilon^{\varepsilon^{-1}} \Psi_tt^2{\Delta}e^{t^2{\Delta}}f\frac{dt}{t}$$
with convergence in $L^2({dw})$. We have
$$\mathcal{F}f(\xi)= \lim_{\varepsilon \to 0}c_k c\int_\varepsilon^{\varepsilon^{-1}} t^2\| \xi\|^2 \tilde \psi (t\| \xi\|)e^{-t^2\| \xi\|^2}\mathcal{F}f(\xi)\frac{dt}{t}.$$
For $\xi\ne0$ set
$$ \eta(\xi)=c_k c\int_1^\infty t^2 \|\xi\| ^2\tilde \psi(t\|\xi\|)e^{-t^2\|\xi\|^2}\frac{dt}{t}=c_kc\int_{\|\xi\|}^\infty t^2\tilde \psi(t)e^{-t^2}\frac{dt}{t}. $$
Put $\eta(0)=1$. Then $\eta$ is a Schwartz class radial real-valued function.
Set $\Xi(\mathbf{x})=c_k^{-1}\mathcal{F}^{-1} \eta(\mathbf{x})$.
Then $\Xi\in\mathcal{S}(\mathbb{R}^N)$,  $\int \Xi(\mathbf{x})\, {dw}(\mathbf{x})=1$, and
\begin{equation}\label{differ}
c\int_a^b \Psi_tt^2{\Delta}e^{t^2{\Delta}}f\frac{dt}{t}=\Xi_af - \Xi_bf.
\end{equation}

{\bf Step 2. Space of orbits.}\label{step2} Let $X=\mathbb{R}^N\slash G$ be the space of orbits equipped with the metric $d(\mathcal{O}(\mathbf{x}),\mathcal{O}(\mathbf{y}))=d(\mathbf{x},\mathbf{y})$ and the measure $\boldsymbol m(A)={w}\Big(\bigcup_{\mathcal{O}(\mathbf{x})\in A}\mathcal{O}(\mathbf{x}) \Big)$.
So $(X,d,\boldsymbol m)$ is the space of homogeneous type in the sense of Coifman--Weiss. The space $X$ can be identified with a positive Weyl chamber. Any open set in $X$ of  finite measure admits the following easily proved  Whitney type covering lemma.

\begin{lemma}
\label{Whitney}
Suppose that ${\boldsymbol\Omega}\subset X$ is an open set with finite measure. Then there is a sequence of balls $B_X(\mathcal{O}(\mathbf{x}_{\{n\}}), r_{\{n\}})$ such that $r_{\{n\}}=d(\mathcal{O}(\mathbf{x}_{\{n\}}),{\boldsymbol\Omega}^c)$,
$$\bigcup_{n \in\mathbb{N}} B_X(\mathcal{O} (\mathbf{x}_{\{n\}}),r_{\{n\}}\slash 2)={\boldsymbol
\Omega},$$
 the balls $B_X(\mathcal{O}(\mathbf{x}_{\{n\}}),r_{\{n\}}\slash 10)$ are disjoint.
\end{lemma}

{\bf Step 3. Decomposition of $\mathbb{R}^{N+1}_+$.}\label{step3} Assume that $f\in H^1_{{\rm max},{H}} \cap L^2({dw})$. Let
$$F(t,\mathbf{x})=\Big(|t^2{\Delta} e^{t^2{\Delta}}f(\mathbf{x})|+|\Xi_tf(\mathbf{x})|\Big),$$
$$ \boldsymbol F(t,\mathbf{x})=\sup_{\sigma\in G} F(t,\sigma (\mathbf{x})),$$
 and
$$\mathcal{M}f(\mathbf{x})=\sup_{d(\mathbf{x},\mathbf{y})<5t} F(t,\mathbf{y})=\sup_{\| \mathbf{x}-\mathbf{y}\|<5t} \boldsymbol F(t,\mathbf{y}).$$
Then, by Proposition \ref{max=max} and Remark \ref{r33}, we have $\|\mathcal{M}f\|_{L^1({dw})}\leq C\| f\|_{H^1_{{\rm max},{H}}}$.
Observe that $\mathcal{M}f(\sigma (\mathbf{x}))=\mathcal{M}f( \mathbf{x})$.
Therefore  $\mathcal{M}f( \mathbf{x})$ can be identified with the function
$\boldsymbol {\mathcal{M}}f(\mathcal{O}(\mathbf  x))$
on $X$, moreover
$\|\mathcal{M}f( \mathbf{x})\|_{L^1({dw})}=\| \boldsymbol {\mathcal{M}}f(\mathcal{O}( \mathbf{x}))\|_{L^1(\boldsymbol m)}$.
For an open set $\boldsymbol{\Omega}\subset X$ let
$$ \hat{\boldsymbol{\Omega}}=\{ (t,\mathcal{O}(\mathbf{x})): B_X(\mathcal{O}(\mathbf{x}), 4t)\subset \boldsymbol{\Omega}\}$$
be the tent over $\boldsymbol{\Omega}$.  For $j \in\mathbb{Z}$ define
$$\boldsymbol{\Omega}_j=\{\mathcal{O}({\mathbf{x}})\in X: \boldsymbol {\mathcal{M}}f(\mathcal{O}(\mathbf  x))>2^j\}, \ \ \Omega_j=\{ \mathbf{x}\in\mathbb{R}^N:\mathcal{M}f(\mathbf{x})>2^j\}.$$

 Then $\boldsymbol{\Omega}_j$ is open in $X$,
$\Omega_j=\bigcup_{\mathcal{O}(\mathbf{x})\in \boldsymbol{\Omega_j}}{\mathcal{O}}(\mathbf{x})$, $\boldsymbol m(\boldsymbol {\Omega_j})={w}(\Omega_j)$,
$$\sum_{j} 2^j{w}(\Omega_j)\sim \|\mathcal{M}f\|_{L^1({dw})} \sim \| f\|_{H^1_{{\rm max},{H}}}.$$
 Clearly, $\hat \Omega_j = \{ (t,\mathbf{x})\in\mathbb{R}^{N+1}_+: (t,\mathcal{O}(\mathbf{x}))\in\hat{\boldsymbol{\Omega}}_j\}$. Set $\mathbf T_j=\hat\Omega_j\setminus \hat\Omega_{j+1}$. Then,
\begin{equation}
\begin{split}
{\rm supp}\, F(t,\mathbf{x}) &\subset \bigcup_{j\in\mathbb Z} \hat \Omega_j
=\bigcup_{j\in\mathbb Z} (\hat \Omega_j\setminus \hat\Omega_{j+1})= \bigcup_{j\in\mathbb Z} \mathbf  T_j\\
\end{split}\end{equation}
Let $B_X(\mathcal{O}(\mathbf{x}_{\{n,\,j\}}), r_{\{n,\,j\}}\slash 2))$, $\mathbf{x}_{\{n,\,j\}}\in\mathbb{R}^N$, $n=1,2,{\dots},$  be a Whitney covering of $\boldsymbol \Omega_j$. Set
$$Q_{\{n,\,j\}}=\{\mathbf{x}\in\mathbb{R}^N:\mathcal{O}(\mathbf{x})\in B_X(\mathcal{O}(\mathbf{x}_{\{n,\,j\}}), r_{\{n,\,j\}}\slash 2))\}=\mathcal{O}(B(\mathbf{x}_{\{n,\,j\}}, r_{\{n,\,j\}}\slash 2)).$$

Obviously, ${w}(B(\mathbf{x}_{\{n,\,j\}}, r_{\{n,\,j\}}\slash 2))\leq {w}(Q_{\{n,\,j\}})\leq |G|{w}(B(\mathbf{x}_{\{n,\,j\}}, r_{\{n,\,j\}}\slash 2))$.   We define a cone over a $G$-invariant set $E$ as
$$\mathcal R(E)=\{(t,\mathbf{y}): d(\mathbf{y}, E)<2t\}.$$
For $n=1,2,{\dots}$, let
$$ \mathbf T_{\{n,\,j\}}=\mathbf T_j\cap \Big(\mathcal R(Q_{\{n,\,j\}})
\setminus \bigcup_{i=0}^{n-1}\mathcal R(Q_{\{i,\,j\}})\Big), \ \ \mathcal R(Q_{\{0,\,j\}})=\emptyset.$$
Clearly, $\hat\Omega_j\subset \bigcup_{n\in\mathbb N}\mathcal R(Q_{\{n,\,j\}})$, $\mathbf T_{\{n,\,j\}}\cap \mathbf T_{\{n',\,j'\}}=\emptyset$ if $(j,n)\ne (j',n')$. Thus we have \begin{equation}\begin{split}
{\rm supp}\, F(t,\mathbf{x}) &\subset \bigcup_{j\in\mathbb Z} \bigcup_{n\in\mathbb N} \mathbf T_{\{n,\,j\}}.
\end{split}\end{equation}

{\bf Step 4. Decomposition of $f$ and $L^2({dw})$-convergence.}\label{step4} Write
\begin{equation}\begin{split}
f&= \sum_{j\in\mathbb Z, \, n\in\mathbb N}
c\int_0^\infty\Psi_t\Big(\chi_{\mathbf T_{\{n,\,j\}}}t^2{\Delta} e^{t^2{\Delta}}f\Big)\frac{dt}{t}=\sum_{j\in\mathbb Z, \, n\in\mathbb N} \lambda_{\{n,\,j\}}a_{\{n,\,j\}},
\end{split}\end{equation}
where $\lambda_{\{n,\,j\}}=2^j{w}(Q_{\{n,\,j\}})$,
 \begin{equation*}\begin{split}
 a_{\{n,\,j\}}
 & =(\lambda_{\{n,\,j\}})^{-1} c\int_0^\infty\Psi_t\Big(\chi_{\mathbf T_{\{n,\,j\}}}t^2(-{\Delta}) e^{t^2{\Delta}}f\Big)\frac{dt}{t}\\
 &=
(\lambda_{\{n,\,j\}})^{-1} c\int_0^\infty t^{2M}(-{\Delta})^M\Phi_t\Phi_t\Big(\chi_{\mathbf T_{\{n,\,j\}}}t^2(-{\Delta})e^{t^2{\Delta}}f\Big)\frac{dt}{t}
\end{split}\end{equation*}
and, thanks to \eqref{L2backL2}, the convergence is in $L^2({dw})$, because $\mathbf T_{\{n,\,j\}}$ are pairwise disjoint.

{\bf Step 5. What remains to prove.}\label{step5} Our task is to prove that the functions $a_{\{n,\,j\}}$ are proportional to $(1,\infty, M)$-atoms. If this is done then
$$ \sum_{j\in\mathbb{Z},\,n\in\mathbb{N}} |\lambda_{\{n,\,j\}}| = \sum_{j\in\mathbb{Z},\,n\in\mathbb{N}} 2^j {w}(Q_{\{n,\,j\}})\lesssim \sum_{j\in\mathbb Z} 2^j{w}(\Omega_j)\sim \| f\|_{H^1_{{\rm max},\, H}},$$
which proves the atomic decomposition.

{\bf Step 6. Functions $b_{\{n,\,j\}}$. Support of ${\Delta}^mb_{\{n,\,j\}}$ for $m=0,1,\ldots,M$.}\label{step6}  Observe that
\begin{equation}\label{t_support}\begin{split} a_{\{n,\,j\}}&=(\lambda_{\{n,\,j\}})^{-1} c\int_0^{r_{\{n,\,j\}}}\Psi_t\Big(\chi_{\mathbf T_{\{n,\,j\}}}t^2(-{\Delta})e^{t^2{\Delta}}f\Big)\frac{dt}{t}\\
&=(\lambda_{\{n,\,j\}})^{-1} c\int_0^{r_{\{n,\,j\}}} t^{2M}(-{\Delta})^M\Phi_t\Phi_t\Big(\chi_{\mathbf T_{\{n,\,j\}}}t^2(-{\Delta})e^{t^2{\Delta}}f\Big)\frac{dt}{t}.
\end{split}\end{equation}
Indeed, if $t>r_{\{n,\,j\}}$ and $(t,\mathbf{y})\in\mathcal R(Q_{\{n,\,j\}})$ then
\begin{equation}\label{distances}
d(\mathbf{y}, (\Omega_j)^c)\leq d(\mathbf{y}, Q_{\{n,\,j\}})+\frac{1}{2} r_{\{n,\,j\}}
+d(\mathbf{x}_{\{n,\,j\}},(\Omega_j)^c)\leq 2t+\frac{1}{2}t+t=\frac{7}{2}t.
\end{equation}
 Hence $(t,\mathbf{y})\notin \mathbf T_{\{n,\,j\}}$.

As a consequence of \eqref{supprt_Psi}, \eqref{t_support}, and \eqref{distances}, we have
 \begin{equation}\label{support_a}
 {\rm supp}\, a_{\{n,\,j\}}\subset \Big\{\mathbf{x}\in\mathbb{R}^N: d(\mathbf{x},\mathbf{x}_{\{n,\,j\}})\leq \frac{7}{2}r_{\{n,\,j\}}\Big\}=\mathcal O\Big(B\Big(\mathbf x_{\{n,\,j\}},\frac{7}{2}r_{\{n,\,j\}}\Big)\Big).
 \end{equation}
Let
$$ b_{\{n,\,j\}}=(\lambda_{\{n,\,j\}})^{-1} c\int_0^{r_{\{n,\,j\}}} t^{2M} \Phi_t\Phi_t\Big(\chi_{\mathbf T_{\{n,\,j\}}}t^2(-{\Delta}) e^{t^2{\Delta}}f\Big)\frac{dt}{t}. $$
Then $b_{\{n,\,j\}}\in\mathcal{D}({\Delta}^M)$,
$$ (-{\Delta})^mb_{\{n,\,j\}}=(\lambda_{\{n,\,j\}} )^{-1} c\int_0^{r_{\{n,\,j\}}} t^{2M}(-{\Delta})^m\Phi_t\Phi_t\Big(\chi_{\mathbf T_{\{n,\,j\}}}t^2(-{\Delta})e^{t^2{\Delta}}f\Big)\frac{dt}{t}$$
for  $m=1,2,{\dots}M$, and, by the same arguments,
\begin{equation}\label{support_atoms1} \text{\rm supp}\, {\Delta}^mb_{\{n,\,j\}}\subset
\mathcal O\Big(B\Big(\mathbf x_{\{n,\,j\}},\frac{7}{2}r_{\{n,\,j\}}\Big)\Big).
\end{equation}
Note also that ${\Delta}^mb_{\{n,\,j\}}(\mathbf{x})\ne 0$ implies that there is $(t,\mathbf{y})\in \hat {\Omega}_j$ such that $d(\mathbf{x},\mathbf{y})\leq t$.
Then
$\mathcal{O}(\mathbf{x})\in B_X(\mathcal{O}(\mathbf{y}), t)\subset B_X(\mathcal{O}(\mathbf{y}), 4t)\subset {\boldsymbol{\Omega}}_j$. Hence,
\begin{equation}\label{support_atoms2}
\text{\rm supp}\, {\Delta}^mb_{\{n,\,j\}}\subset \Omega_j.
\end{equation}

{\bf Step 7. Size of ${\Delta}^mb_{\{n,\,j\}}$ for $m=0,1,{\dots},M-1$.}\label{step7}
Suppose that $(t,\mathbf{y})$ is such that $\chi_{\mathbf T_{\{n,\,j\}}}(t,\mathbf{y})=1$. Then $(t,\mathbf{y})\in (\hat{\Omega}_{j+1})^c$, so $|t^2{\Delta} e^{t^2{\Delta}}f(\mathbf{y})|\leq 2^{j+1}$. Consequently,
\begin{equation*}
\begin{split}
&|{\Delta}^mb_{\{n,\,j\}}(\mathbf{x})|
=\frac{c}{\lambda_{\{n,\,j\}}}\left| \int_0^{r_{\{n,\,j\}}} t^{2M-2m} (t^2(-{\Delta}))^m \Phi_t\Phi_t(\chi_{\mathbf T_{\{n,\,j\}}}t^2(-{\Delta})e^{t^2{\Delta}}f)(\mathbf{x})\frac{dt}{t}\right|\\
&= (\lambda_{\{n,\,j\}})^{-1} c\left| \int_0^{r_{\{n,\,j\}}} \int t^{2M-2m}  K_t^m(\mathbf{x},\mathbf{y})(\chi_{\mathbf T_{\{n,\,j\}}}(t,\mathbf{y})t^2(-{\Delta})e^{t^2{\Delta}}f(\mathbf{y})){dw}(\mathbf{y})\frac{dt}{t}\right|,\\
\end{split}
\end{equation*}
where $K_t^m(\mathbf{x},\mathbf{y})$ is the integral kernel of the operator $(-t^2{\Delta})^m \Phi_t\Phi_t$. Recall that  $$|K_t^m(\mathbf{x},\mathbf{y})|\leq C{w}(B(\mathbf{x},t))^{-1}$$
and
$$K_t^m(\mathbf{x},\mathbf{y})=0 \ \ \text{\rm for } d(\mathbf{x},\mathbf{y})>t\slash 2$$
 (see \eqref{supprt_Psi} and Corollary \ref{translation2}).
Thus,
\begin{equation}\label{atombnj}\begin{split}
|{\Delta}^mb_{\{n,\,j\}}(\mathbf{x})|
&\leq C(\lambda_{\{n,\,j\}})^{-1} 2^{j+1}   \int_0^{r_{\{n,\,j\}}} \int t^{2M-2m}  |K_t^m(\mathbf{x},\mathbf{y})|{dw}(\mathbf{y})\frac{dt}{t}\\
&\leq C(\lambda_{\{n,\,j\}})^{-1} 2^{j+1} \int_0^{r_{\{n,\,j\}}} t^{2M-2m}\frac{dt}{t}\\
&= C(\lambda_{\{n,\,j\}})^{-1} 2^{j} (r_{\{n,\,j\}})^{2M-2m}\\
&=C {w}(Q_{\{n,\,j\}})^{-1} (r_{\{n,\,j\}})^{2M-2m}.
\end{split}\end{equation}

{\bf Step 8.  Key lemma.}\label{step8} It remains to estimate
$$ a_{\{n,\,j\}}(\mathbf{x})=(\lambda_{\{n,\,j\}})^{-1} c\int_0^{\infty} \int \Psi_t(\mathbf{x},\mathbf{y}) \chi_{\mathbf T_{\{n,\,j\}}}(t,\mathbf{y})(t^2(-{\Delta})e^{t^2{\Delta}}f)(\mathbf{y})\, {dw}(\mathbf{y})\frac{dt}{t}.$$
Let $E_{\{n,\,j\}}=\bigcup_{i=1}^n Q_{\{i,\,j\}}$. Then
 \begin{equation}\begin{split}\chi_{\mathbf T_{\{n,\,j\}}}(t,\mathbf{y})&=\chi_{\hat{\Omega}_j}(t,\mathbf{y})\chi_{(\hat{\Omega}_{j+1})^c}(t,\mathbf{y})\chi_{\mathcal R(E_{\{n,\,j\}})}(t,\mathbf{y})\chi_{(\mathcal R(E_{\{n-1,\,j\}}))^c}(t,\mathbf{y})\\
&=\chi_1(t,\mathbf{y})\chi_2(t,\mathbf{y})\chi_3(t,\mathbf{y})\chi_4(t,\mathbf{y}).
\end{split}\end{equation}
 The following lemma (see \cite[Lemma 4.2]{SY}) plays a crucial role in the remaining part of the proof of Theorem \ref{AtomicL2}.

\begin{lemma}\label{key_lemma}
For every $\mathbf{x}\in \Omega_j$  and every function $\chi_s$, $s=1,2,3,4$, there are numbers $0<\delta_s\leq \omega_s$ such that $\omega_s\leq 3\delta_s$ and

  \noindent
  either $\Psi_{t}(\mathbf{x},\mathbf{y})\chi_s(t,\mathbf{y})=0$ for every $0<t<\delta_s$ or
  $\Psi_{t}(\mathbf{x},\mathbf{y})\chi_s(t,\mathbf{y})=\Psi_t(\mathbf{x},\mathbf{y})$ for every $0<t<\delta_s$

  and

  \noindent
  either $\Psi_{t}(\mathbf{x},\mathbf{y})\chi_s(t,\mathbf{y})=0$ for every $t>\omega_s$ or
  $\Psi_{t}(\mathbf{x},\mathbf{y})\chi_s(t,\mathbf{y})=\Psi_t(\mathbf{x},\mathbf{y})$ for every $t>\omega_s$.
 \end{lemma}

\begin{proof}
For the reader's convenience, we include a short proof along the lines of \cite{SY}.
Fix $t>0$ and define $\chi_{1}'(\mathbf{y})=\chi_{[4t,\infty)}(d(\mathbf{y},\Omega_j^{c}))$, $\chi_{2}'(\mathbf{y})=\chi_{(-\infty,4t)}(d(\mathbf{y},\Omega_{j+1}^{c}))$, $\chi_{3}'(\mathbf{y})=\chi_{(-\infty,2t)}(d(\mathbf{y},E_{\{n,\,j\}}))$, $\chi_{4}'(\mathbf{y})=\chi_{[2t,\infty)}(d(\mathbf{y},E_{\{n-1,\,j\}}))$. Clearly,  $\chi_s'(\mathbf{y})=\chi_s(t,\mathbf{y})$ for $s=1,2,3,4$. If $d(\mathbf{x},\mathbf{y})\geq t$, then $\Psi_t(\mathbf{x},\mathbf{y})=\Psi_t(\mathbf{x},\mathbf{y})\chi_s (t,\mathbf{y})=0$. Therefore, to finish the proof, we  assume that  $d(\mathbf{x},\mathbf{y})<t$.  Then
$$   -t+d(A,\mathbf{x}) <d(A,\mathbf{y})< t+d(A,\mathbf{x}) \ \  \text{for }  A=\Omega_j^{c},\Omega_{j+1}^{c},E_{\{n,\,j\}},E_{\{n-1,\,j\}}.$$  We are in a position to define  consecutively   $\delta_s$ and $\omega_s$.
\begin{enumerate}[(1)]
\item{If $d(\mathbf{x},\Omega_{j}^c) < 3t$ or $d(\mathbf{x},\Omega_{j}^c) > 5t$, then $\chi'_1(\mathbf{y}) = 0$ and $\chi'_1(\mathbf{y}) = 1$ respectively, so we put $\delta_1=\frac{1}{5}d(\mathbf{x},\Omega_{j}^c)$ and $\omega_1=\frac{1}{3}d(\mathbf{x},\Omega_{j}^c)$.}

\item{If $d(\mathbf{x},\Omega_{j+1}^c) < 3t$ or $d(\mathbf{x},\Omega_{j+1}^c) > 5t$, then $\chi'_2(\mathbf{y}) = 1$ and $\chi'_2(\mathbf{y}) = 0$ respectively. Hence we set $\delta_2=\frac{1}{5}d(\mathbf{x},\Omega_{j+1}^c)$ and $\omega_2=\frac{1}{3}d(\mathbf{x},\Omega_{j+1}^c)$ if $d(\mathbf{x},\Omega_{j+1}^c) \neq 0$, $\delta_2=\omega_2=\delta_1$ otherwise.}

\item{If $d(\mathbf{x},E_{\{n,\,j\}}) < t$ or $d(\mathbf{x},E_{\{n,\,j\}}) > 3t$, then $\chi'_3(\mathbf{y}) = 1$ and $\chi'_3(\mathbf{y}) = 0$ respectively. Thus we put $\delta_3=\frac{1}{3}d(\mathbf{x},E_{\{n,\,j\}})$ and $\omega_3=d(\mathbf{x},E_{\{n,\,j\}})$ if $d(\mathbf{x},E_{\{n,\,j\}}) \neq 0$, $\delta_3=\omega_3=\delta_1$ otherwise.}

\item{If $d(\mathbf{x},E_{\{n-1,\,j\}}) < t$ or $d(\mathbf{x},E_{\{n-1,\,j\}}) > 3t$, then $\chi'_4(\mathbf{y}) = 0$ and $\chi'_4(\mathbf{y}) = 1$ respectively, so we put $\delta_4=\frac{1}{3}d(\mathbf{x},E_{\{n-1,\,j\}})$ and $\omega_4=d(\mathbf{x},E_{\{n-1,\,j\}})$ if $d(\mathbf{x},E_{\{n-1,\,j\}}) \neq 0$, $\delta_4=\omega_4=\delta_1$ otherwise.}
\end{enumerate}\end{proof}

We finish Step 8 by the remark (see Case 1 of the proof of the lemma) that if $t>\omega_1>0$ then
 $$\Psi_t(\mathbf{x},\mathbf{y}) \chi_{\mathbf T_{\{n,\,j\}}}(t,\mathbf{y})=0.$$

 {\bf Step 9. Estimates for $a_{\{n,\,j\}}$.}\label{step9}
 We shall prove that
 \begin{equation}\label{atomanj}
  |a_{\{n,\,j\}}(\mathbf{x})|\leq C{w}(Q_{\{n,\,j\}})^{-1}.
  \end{equation}
 Fix $\mathbf{x}\in\Omega_j$. Recall that $\text{\rm supp}\, a_{\{n,\,j\}}\subset \Omega_j$.  Let $J=\bigcup_{s=1}^4 [\delta_s,\omega_s]$,    $I=(0,\infty)\setminus J$, where $\delta_s,\ \omega_s$ are from Lemma~\ref{key_lemma}. Obviously, $I=(a_1,b_1)\cup{\dots}\cup(a_m,b_m)$, where $m\leq 5$,  $a_1=0$, $b_m=\infty$, and  $(a_l,b_l)$ are connected disjoint  components of $I$.
Clearly,

\begin{equation*}\begin{split}
\Big|a_{\{n,\,j\}}(\mathbf{x})\Big|
&\leq \sum_{s=1}^4(\lambda_{\{n,\,j\}})^{-1} c\int_{\delta_s}^{\omega_s} \int \Big|\Psi_t(\mathbf{x},\mathbf{y}) \chi_{\mathbf T_{\{n,\,j\}}}(t,\mathbf{y})(t^2(-{\Delta})e^{t^2{\Delta}}f)(\mathbf{y})\Big|\, {dw}(\mathbf{y})\frac{dt}{t}\\
&+\sum_{s=1}^m (\lambda_{\{n,\,j\}})^{-1} c\left|\int_{a_s}^{b_s} \int \Psi_t(\mathbf{x},\mathbf{y}) \chi_{\mathbf T_{\{n,\,j\}}}(t,\mathbf{y})(t^2(-{\Delta})e^{t^2\Delta}f)(\mathbf{y})\, {dw}(\mathbf{y})\frac{dt}{t}\right|.\\
\end{split}
\end{equation*}

Consider the integral over $[\delta_s,\omega_s]$. Take
$t\in [\delta_s,\omega_s]$ and $\mathbf{y}$ such that the integrant
$\Big|\Psi_t(\mathbf{x},\mathbf{y}) \chi_{\mathbf T_{\{n,\,j\}}}(t,\mathbf{y})(t^2(-{\Delta})e^{t^2{\Delta}}f)(\mathbf{y})\Big|\ne 0$. Then $(t,\mathbf{y})\notin \hat{\Omega}_{j+1}$.
Thus, there is $\mathbf{x}'$ such that $d(\mathbf{y},\mathbf{x}')<4t$ and $\mathbf{x}'\notin \Omega_{j+1}$, which means that $\mathcal{M}f(\mathbf{x}')\leq 2^{j+1}$. Consequently, $|t^2(-{\Delta})e^{t^2{\Delta}}f(\mathbf{y})|\leq 2^{j+1}$. Hence,
\begin{equation}\label{atom1}\begin{split}
(\lambda_{\{n,\,j\}})^{-1}
& c\int_{\delta_s}^{\omega_s} \int \Big|\Psi_t(\mathbf{x},\mathbf{y}) \chi_{\mathbf T_{\{n,\,j\}}}(t,\mathbf{y})(t^2(-{\Delta})e^{t^2{\Delta}}f)(\mathbf{y})\Big|\, {dw}(\mathbf{y})\frac{dt}{t}\\
&\leq (\lambda_{\{n,\,j\}})^{-1} 2^{j+1} c\int_{\delta_s}^{\omega_s} \int \Big| \Psi_t(\mathbf{x},\mathbf{y})\Big| \, {dw}(\mathbf{y}) \frac{dt}{t}\\
&\leq C' (\lambda_{\{n,\,j\}})^{-1} 2^{j+1} c\int_{\delta_s}^{\omega_s}  \frac{dt}{t}\\
&\leq C{w}(Q_{\{n,\,j\}})^{-1},
\end{split}
\end{equation}
because $0<\omega_s\leq 3\delta_s$.

We turn to estimate the integrals over $[a_s,b_s]$.
Assume that
$$(\lambda_{\{n,\,j\}})^{-1} c\left|\int_{a_s}^{b_s} \int \Psi_t(\mathbf{x},\mathbf{y}) \chi_{\mathbf T_{\{n,\,j\}}}(t,\mathbf{y})(t^2(-{\Delta})e^{t^2{\Delta}}f)(\mathbf{y})\, {dw}(\mathbf{y})\frac{dt}{t}\right|>0$$
By Lemma \ref{key_lemma} for fixed $\mathbf{x}\in\Omega_j$ and $s\in\{1,2,{\dots},m\}$
either $\chi_{\mathbf T_{\{n,\,j\}}}(t,\mathbf{y})\equiv 0$ for all $t\in [a_s,b_s]$ and
$d(\mathbf{x},\mathbf{y})<t$ or $\chi_{\mathbf T_{\{n,\,j\}}}(t,\mathbf{y})\equiv 1$ for all  $t\in [a_s,b_s]$ and $d(\mathbf{x},\mathbf{y})<t$. So the  letter holds. This gives that for every $t\in [a_s,b_s]$ and $\mathbf{y}$ such that $d(\mathbf{x},\mathbf{y})<t$ we have that $(t,\mathbf{y})\notin \hat{\Omega}_{j+1}$. So there is $\mathbf{x}'$ (which depends on $(t,\mathbf{y})$) such that $d(\mathbf{y},\mathbf{x}')<4t$ and $\mathcal{M} f(\mathbf{x}')<2^{j+1}$. Note that $d(\mathbf{x},\mathbf{x}')<d(\mathbf{x},\mathbf{y})+d(\mathbf{y},\mathbf{x}')<5t$. Consequently, for every $t\in [a_s,b_s]$ we have
$$ 2^{j+1}\geq\mathcal{M}f(\mathbf{x}')\geq \sup_{d(\mathbf{x}',\mathbf z)<5t}|\Xi_tf(\mathbf z)|\geq |\Xi_tf(\mathbf{x})|.$$
Finally, in our case
\begin{equation}\label{atom2}\begin{split}
 (\lambda_{\{n,\,j\}})^{-1} c & \left|\int_{a_s}^{b_s} \int \Psi_t(\mathbf{x},\mathbf{y}) \chi_{\mathbf T_{\{n,\,j\}}}(t,\mathbf{y})(t^2(-{\Delta})e^{t^2{\Delta}}f)(\mathbf{y})\, {dw}(\mathbf{y})\frac{dt}{t}\right|\\
&=(\lambda_{\{n,\,j\}})^{-1} c\left|\int_{a_s}^{b_s} \int \Psi_t(\mathbf{x},\mathbf{y}) (t^2(-{\Delta})e^{t^2{\Delta}}f)(\mathbf{y})\, {dw}(\mathbf{y})\frac{dt}{t}\right|\\
&=(\lambda_{\{n,\,j\}})^{-1} c \left| \Xi_{a_s}f(\mathbf{x})-\Xi_{b_s}f(\mathbf{x})\right|\\
&\leq C{w}(Q_{\{n,\,j\}})^{-1},
\end{split}\end{equation}
where in the last equality we have used \eqref{differ}. The estimates  \eqref{atom1} and \eqref{atom2} give \eqref{atomanj}. Recall that $w(Q_{\{n,j\}})\sim w(B(\mathbf x_{\{n,j\}},7r_{\{n,j\}}\slash 2))$. Hence, from \eqref{atomanj}, \eqref{atombnj}, \eqref{support_a}, and \eqref{support_atoms1} we deduce Step 5. The proof of Theorem \ref{AtomicL2} is complete.
\end{proof}

{The next lemma will help us to remove the extra assumption that $f\in L^2({dw})$.

\begin{lemma}\label{L2Lemma} Assume that $f\in H^1_{{\rm max},P}$. Then  $P_tf\in L^2({dw})$ for every $t>0$ and
\begin{equation}\label{converge}\lim_{t\to 0} \|P_tf-f\|_{H^1_{{\rm max},P}}=0.
\end{equation}
\end{lemma}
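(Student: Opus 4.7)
The plan splits into two separate assertions.

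\textbf{$L^2$ membership.} I would combine the symmetry and semigroup properties with the upper bound in \eqref{Poisson_low_up}, evaluated at $\mathbf{x}=\mathbf{y}$, to get
\begin{equation*}
\|p_t(\cdot,\mathbf{y})\|_{L^2(dw)}^2=\int_{\mathbb{R}^N}p_t(\mathbf{y},\mathbf{x})\,p_t(\mathbf{x},\mathbf{y})\,dw(\mathbf{x})=p_{2t}(\mathbf{y},\mathbf{y})\le\frac{C}{w(B(\mathbf{y},2t))}\,.
\end{equation*}
The explicit volume formula recorded in Section \ref{SectionDunkl} gives the uniform lower bound $w(B(\mathbf{y},2t))\gtrsim t^{\mathbf{N}}$, so Minkowski's integral inequality yields
\begin{equation*}
\|P_tf\|_{L^2(dw)}\le\int_{\mathbb{R}^N}\|p_t(\cdot,\mathbf{y})\|_{L^2(dw)}\,|f(\mathbf{y})|\,dw(\mathbf{y})\lesssim t^{-\mathbf{N}/2}\,\|f\|_{L^1(dw)},
\end{equation*}
which is finite for every $t>0$ since $H^1_{{\rm max},P}\subset L^1(dw)$.

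\textbf{Convergence.} For the second assertion, I would reduce $\|\mathcal{M}_P(P_tf-f)\|_{L^1(dw)}\to 0$ to pointwise convergence a.e.\ via dominated convergence. The dominating function is at hand: since $\|\mathbf{x}-\mathbf{y}\|<s<s+t$ in the supremum defining $\mathcal{M}_P(P_tf-f)(\mathbf{x})$, both $P_{s+t}f(\mathbf{y})$ and $P_sf(\mathbf{y})$ are individually dominated by $\mathcal{M}_Pf(\mathbf{x})$, so
\begin{equation*}
\mathcal{M}_P(P_tf-f)(\mathbf{x})=\sup_{\|\mathbf{x}-\mathbf{y}\|<s}|P_{s+t}f(\mathbf{y})-P_sf(\mathbf{y})|\le 2\,\mathcal{M}_Pf(\mathbf{x})\in L^1(dw).
\end{equation*}

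For the pointwise limit I would split the supremum at a threshold $\delta>0$. Fix $\mathbf{x}$ at which both $\mathcal{M}_Pf(\mathbf{x})<\infty$ and Corollary \ref{PoissonConv} hold. Given $\varepsilon>0$, pick $\delta$ so small that $\sup_{\|\mathbf{y}-\mathbf{x}\|<r}|P_rf(\mathbf{y})-f(\mathbf{x})|<\varepsilon$ for every $0<r<2\delta$. On small scales ($0<s<\delta$, $0<t<\delta$), inserting $f(\mathbf{x})$ via the triangle inequality bounds $|P_{s+t}f(\mathbf{y})-P_sf(\mathbf{y})|$ by $2\varepsilon$. On large scales $s\ge\delta$, the pointwise bound $p_s(\mathbf{y},\mathbf{z})\le C/w(B(\mathbf{y},s))$ from \eqref{Poisson_low_up}, combined with the doubling consequence $w(B(\mathbf{y},s))\gtrsim w(B(\mathbf{x},s))\ge w(B(\mathbf{x},\delta))$ valid for $\|\mathbf{x}-\mathbf{y}\|<s$, yields the uniform bound
\begin{equation*}
|P_s(P_tf-f)(\mathbf{y})|\le\frac{C\,\|P_tf-f\|_{L^1(dw)}}{w(B(\mathbf{x},\delta))},
\end{equation*}
which tends to zero as $t\to 0$ by strong continuity of the Poisson semigroup on $L^1(dw)$ (transferred from the heat semigroup via \eqref{subordination}). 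Hence $\limsup_{t\to 0}\mathcal{M}_P(P_tf-f)(\mathbf{x})\le 2\varepsilon$, and letting $\varepsilon\to 0$ gives the pointwise limit.

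The main obstacle is merely coordinating the small- and large-scale contributions with a single threshold $\delta$, which is mild because the large-scale estimate is uniform in $s\ge\delta$ by monotonicity of $s\mapsto w(B(\mathbf{x},s))$. Dominated convergence then upgrades the pointwise decay to $L^1(dw)$ convergence, completing the proof.
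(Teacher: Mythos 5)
Your proposal is correct, and on the main point \eqref{converge} it takes a genuinely different route from the paper. The paper splits the supremum defining $\mathcal{M}_P(P_tf-f)(\mathbf{x})$ at the \emph{$t$-dependent} threshold $s=At$: for $s>At$ it proves the quantitative, uniform-in-$t$ tail estimate \eqref{Poisson-diff} by integrating the time derivative $\partial_u p_{s+u}(\mathbf{y},\mathbf{z})$ (so it needs the derivative bounds of Proposition \ref{Poiss_new}), obtaining an $L^1$ bound $CA^{-1}\|f\|_{L^1(dw)}$; for $s\le At$ it reduces to $2\sup_{\|\mathbf{x}-\mathbf{y}\|<s,\,s\le(A+1)t}|P_sf(\mathbf{y})-f(\mathbf{x})|$ and applies Corollary \ref{PoissonConv} plus dominated convergence, finally sending $A\to\infty$. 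You instead split at a \emph{fixed} threshold $\delta$ (depending on $\mathbf{x}$ and $\varepsilon$), argue pointwise a.e., handle $s\ge\delta$ with only the crude size bound $p_s(\mathbf{y},\mathbf{z})\le C\,w(B(\mathbf{y},s))^{-1}$ together with $\|P_tf-f\|_{L^1(dw)}\to0$, handle $s<\delta$ by two applications of Corollary \ref{PoissonConv}, and then make a single global application of dominated convergence with the same dominant $2\mathcal{M}_Pf$. Your version is more elementary (no derivative estimates on the Poisson kernel are needed) but purely qualitative; the paper's version produces the reusable quantitative inequality \eqref{Poisson-diff}. Two small remarks: the $L^1$-strong continuity of $P_t$ that you invoke does follow from \eqref{subordination} and the strong continuity of $H_t$ stated after \eqref{heat_semigroup} (or, even more cheaply, from Corollary \ref{PoissonConv} and the bound $|P_tf-f|\le 2\mathcal{M}_Pf$ a.e.); and your $L^2$ argument via $\|p_t(\cdot,\mathbf{y})\|_{L^2(dw)}^2=p_{2t}(\mathbf{y},\mathbf{y})\lesssim t^{-\mathbf{N}}$ is a clean way of making precise what the paper dispatches with the single phrase ``Proposition \ref{Poiss_new} implies that $P_tf\in L^2(dw)$.''
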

\begin{proof}
 Proposition \ref{Poiss_new} implies that $P_tf\in L^2({dw})$. To prove \eqref{converge} we follow{,} e.g.{,} \cite[proof of (6.5)]{DZ2016}.

 First observe that there is a constant $C>0$ such that for every $A>0$ and $t>0$ we have
 \begin{align}\label{Poisson-diff}
\Big\|\sup_{\|\mathbf{x}-\mathbf{y}\|<s,s>At}|P_{t+s}f(\mathbf{y})-P_{s}f(\mathbf{y})|\Big\|_{L^1({dw}(\mathbf{x}))} \leq C A^{-1}\|f\|_{L^1({dw})}.
\end{align}
To see \eqref{Poisson-diff} fix $\mathbf z \in\mathbb{R}^N$. For $s>At$, thanks to \eqref{Poisson_low_up}, we have
\begin{align*}
&|p_{s+t}(\mathbf{y},\mathbf z)-p_{s}(\mathbf{y},\mathbf z)|=\left|\int_{0}^{t}\partial_{u}p_{s+u}(\mathbf{y},\mathbf z)\,du\right|\\
& \leq C \int_{0}^{t}\frac{1}{u+s+d(\mathbf{y},\mathbf z)}{w}(B(\mathbf z,s+u+d(\mathbf{y},\mathbf z)))^{-1}\,du \\
&\leq  C\int_{0}^{t}\frac{1}{s+d(\mathbf{y},\mathbf z)}{w}(B(\mathbf z,s+d(\mathbf{y},\mathbf z)))^{-1}\,du \\
&\leq  \frac{C}{A}\frac{s}{s+d(\mathbf{y},\mathbf z)}{w}(B(\mathbf z,s+d(\mathbf{y},\mathbf z)))^{-1}.
\end{align*}
Since $s+d(\mathbf{x},\mathbf z)\leq s+d(\mathbf{x},\mathbf{y})+d(\mathbf{y},\mathbf z)\leq s+\|\mathbf{x}-\mathbf{y}\|+d(\mathbf{y},\mathbf z)\leq 2(s+d(\mathbf{y},\mathbf z))$, we obtain
\begin{equation}\begin{split}\sup_{\|\mathbf{x}-\mathbf{y}\|<s}|p_{s+t}(\mathbf{y},\mathbf z)-p_{s}(\mathbf{y},\mathbf z)|
&\leq  \frac{C}{A}\frac{s}{s+d(\mathbf{x},\mathbf z)}{w}(B(\mathbf z,s+d(\mathbf{x},\mathbf z)))^{-1},\end{split}\end{equation}
which implies \eqref{Poisson-diff}.

In order to finish the proof of  \eqref{converge} assume that $f\in H^1_{{\rm max},P}$. Then

\begin{align*}
\left\|P_{t}f-f\right\|_{H^1_{{\rm max},P}}
&\leq \Big\|\sup_{\|\mathbf{x}-\mathbf{y}\|<s,s>At}|P_{t+s}f(\mathbf{y})-P_{s}f(\mathbf{y})|\Big\|_{L^1({dw}(\mathbf{x}))}\\
&\ +\Big\|\sup_{\|\mathbf{x}-\mathbf{y}\|<s,s \leq At}|P_{t+s}f(\mathbf{y})-P_{s}f(\mathbf{y})|\Big\|_{L^1({dw}(\mathbf{x}))} \\
&\leq C A^{-1}\|f\|_{L^1({dw})}
+\Big\|\sup_{\|\mathbf{x}-\mathbf{y}\|<s,s \leq At}|P_{s+t}f(\mathbf{y})-f(\mathbf{x})|\Big\|_{L^1({dw}(\mathbf{x}))}\\
&+\Big\|\sup_{\|\mathbf{x}-\mathbf{y}\|<s,s \leq At}|
P_{s}f(\mathbf{y})-f(\mathbf{x})|\Big\|_{L^1({dw}(\mathbf{x}))}\\
&\leq CA^{-1}\|f\|_{L^1({dw})}+2\Big\|\sup_{\|\mathbf{x}-\mathbf{y}\|<s,s \leq (A+1)t}|P_{s}f(\mathbf{y})-f(\mathbf{x})|\Big\|_{L^1({dw}(\mathbf{x}))}.
\end{align*}
Fix $\varepsilon>0$ and take $A=C\varepsilon^{-1}$. Corollary \ref{PoissonConv}  implies
  $$\lim_{t \to 0}\sup_{\|\mathbf{x}-\mathbf{y}\|<s,\, s \leq (A+1)t}|P_{s}f(\mathbf{y})-f(\mathbf{x})|=0 \ \
 \text{ \rm for almost every} \ \mathbf x\in\mathbb R^N.$$
 Since
$
\sup_{\|\mathbf{x}-\mathbf{y}\|<s,s \leq (A+1)t}|P_{s}f(\mathbf{y})-f(\mathbf{y})| \leq 2\mathcal{M}_Pf(\mathbf{x})\in L^1({dw}(\mathbf{x})),
$
the proof is complete by applying the Lebesgue dominated   convergence  theorem.
\end{proof}

Having Lemma \ref{L2Lemma} we are in a position to complete the proof of the atomic decomposition of $H^1_{{\rm max},P}$ functions. This is stated in the theorem below.

\begin{theorem}\label{AtomPoisson}
  There is a constant $C>0$ such that every function  $f\in H^1_{{\rm max},P}$ can be written as
  $$ f=\sum \lambda_j a_j,$$
where  $a_j$ are $(1,\infty, M)$-atoms, $\sum |\lambda_j|\leq C \|\mathcal{M}_P f\|_{L^1({dw})}$.
\end{theorem}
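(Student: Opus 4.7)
The strategy is a classical density/telescoping argument that reduces the general case to the $L^2$-case already handled by Theorem \ref{AtomicL2}, using Lemma \ref{L2Lemma} as the bridge.

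First, I would exploit Lemma \ref{L2Lemma} to choose a decreasing sequence $t_0>t_1>t_2>\ldots\searrow 0$ such that
\begin{equation*}
\|P_{t_k}f-f\|_{H^1_{\mathrm{max},P}}\le 2^{-k-2}\,\|\mathcal{M}_Pf\|_{L^1(dw)}\qquad\forall\;k\ge 0.
\end{equation*}
Setting $g_k=P_{t_k}f$, each $g_k$ lies in $L^2(dw)$ (again by Lemma \ref{L2Lemma}), and the telescoping identity
\begin{equation*}
f\,=\,g_0\,+\,\sum_{k=0}^{\infty}(g_{k+1}-g_k)
\end{equation*}
holds with convergence in $H^1_{\mathrm{max},P}$, hence in $L^1(dw)$. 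The triangle inequality gives $\|g_{k+1}-g_k\|_{H^1_{\mathrm{max},P}}\le 2^{-k-1}\|\mathcal{M}_Pf\|_{L^1(dw)}$, while a straightforward application of $\mathcal{M}_P(P_{t_0}f)(\mathbf{x})\le\mathcal{M}_Pf(\mathbf{x})$ yields $\|g_0\|_{H^1_{\mathrm{max},P}}\le\|\mathcal{M}_Pf\|_{L^1(dw)}$.

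Next, because $g_0$ and each difference $g_{k+1}-g_k$ belong to $L^2(dw)\cap H^1_{\mathrm{max},P}$, Theorem \ref{AtomicL2} supplies atomic decompositions
\begin{equation*}
g_0=\sum_{i}\lambda_{0,i}a_{0,i},\qquad g_{k+1}-g_k=\sum_{i}\lambda_{k+1,i}a_{k+1,i}\quad(k\ge 0),
\end{equation*}
into $(1,\infty,M)$-atoms, with $L^2$-convergence and
\begin{equation*}
\sum_i|\lambda_{k,i}|\,\le\,C_M\,\|g_k-g_{k-1}\|_{H^1_{\mathrm{max},P}}\quad(k\ge1),\qquad\sum_i|\lambda_{0,i}|\,\le\,C_M\,\|g_0\|_{H^1_{\mathrm{max},P}}.
\end{equation*}
Concatenating these decompositions yields a candidate representation $f=\sum_{k,i}\lambda_{k,i}a_{k,i}$, whose coefficient sum is controlled by
\begin{equation*}
\sum_{k,i}|\lambda_{k,i}|\,\le\,C_M\Bigl(\|\mathcal{M}_Pf\|_{L^1(dw)}+\sum_{k\ge0}2^{-k-1}\|\mathcal{M}_Pf\|_{L^1(dw)}\Bigr)\,\le\,C\,\|\mathcal{M}_Pf\|_{L^1(dw)}.
\end{equation*}

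It remains to justify that this doubly-indexed series in fact represents $f$. Since every $(1,\infty,M)$-atom $a$ satisfies $\|a\|_{L^1(dw)}\le|G|$ by the support and $L^\infty$ conditions in Definition \ref{def-atom}, the absolute summability of the coefficients forces the series $\sum_{k,i}\lambda_{k,i}a_{k,i}$ to converge in $L^1(dw)$. Combined with the $L^2$-convergence of the partial decompositions to $g_0$ and $g_{k+1}-g_k$, together with the $L^1$-convergence $\sum_k(g_{k+1}-g_k)+g_0=f$, this identifies the sum with $f$ in $L^1(dw)$, completing the proof. The only subtle point is the coefficient bookkeeping: one must be careful that rearranging the double sum into a single indexed family does not affect the atomic property (it does not, as the atoms are unchanged) nor the coefficient estimate (which follows from Fubini on counting measure).
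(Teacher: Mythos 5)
Your proposal is correct and follows essentially the same route as the paper: pick $t_k\searrow 0$ via Lemma \ref{L2Lemma} so that the telescoping pieces $P_{t_0}f$ and $P_{t_{k+1}}f-P_{t_k}f$ have geometrically decaying $H^1_{\mathrm{max},P}$-norms, apply Theorem \ref{AtomicL2} to each piece (which lies in $L^2(dw)\cap H^1_{\mathrm{max},P}$), and sum the coefficients. Your extra care about the uniform $L^1$-bound $\|a\|_{L^1(dw)}\le|G|$ for atoms and the identification of the limit is a welcome elaboration of what the paper leaves implicit.
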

\begin{proof}
Take a sequence $t_n\to 0$, $n=0,1,{\dots},$  such that $\|P_{t_0}f\|_{H^1_{{\rm max},P}}\leq 2\| f\|_{H^1_{{\rm max},P}}$,
$\|P_{t_{n+1}}f-P_{t_n}f\|_{H^1_{{\rm max},P}}\leq 2^{-n}\| f\|_{H^1{{\rm max}, P}}$. Then $f=P_{t_0}f+\sum_{n=1}^\infty (P_{t_{n}}f-P_{t_{n-1}}f)=g_0+\sum_{n=1}^\infty g_n$, with convergence in $L^1({dw})$. The functions $g_n\in L^2({dw})\cap H^1_{{\rm max},P}$, so, by Theorem \ref{AtomicL2}
 they admit atomic decompositions into $(1,\infty,M)$-atoms with the required control of their atomic norms.
\end{proof}
The following theorem is a direct consequence of Lemma \ref{PoissonHeat} and Theorem \ref{AtomPoisson}.
\begin{theorem}
  There is a constant $C>0$ such that every element $f\in H^1_{{\rm max},{H}}$ can be written as
  $$ f=\sum \lambda_j a_j$$
where  $a_j$ are $(1,\infty, M)$-atoms, $\sum |\lambda_j|\leq C \|\mathcal{M}_{H} f\|_{L^1({dw})}$.
\end{theorem}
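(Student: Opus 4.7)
The plan is to chain together the two results the authors explicitly cite, with no additional work needed. Given $f\in H^1_{{\rm max},H}$, the strategy is:

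First, invoke Lemma \ref{PoissonHeat} to transfer membership from the heat maximal space to the Poisson maximal space. That lemma provides the inclusion $H^1_{{\rm max},H}\subset H^1_{{\rm max},P}$ together with the quantitative bound
$$
\|\mathcal{M}_Pf\|_{L^1(dw)}\leq C_1\,\|\mathcal{M}_Hf\|_{L^1(dw)}\,.
$$
Therefore $f$ is automatically an element of $H^1_{{\rm max},P}$, with its Poisson-maximal norm controlled by the heat-maximal norm.

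Next, apply Theorem \ref{AtomPoisson} to this $f$\hspace{.25mm}, regarded now as a member of $H^1_{{\rm max},P}$. That theorem yields an atomic decomposition
$$
f=\sum\nolimits_j\lambda_j\,a_j
$$
into $(1,\infty,M)$-atoms, together with the quantitative control $\sum_j|\lambda_j|\leq C_2\,\|\mathcal{M}_Pf\|_{L^1(dw)}$. Combining this with the previous inequality,
$$
\sum\nolimits_j|\lambda_j|\leq C_2\,\|\mathcal{M}_Pf\|_{L^1(dw)}\leq C_1C_2\,\|\mathcal{M}_Hf\|_{L^1(dw)}\,,
$$
which is the required estimate with $C=C_1C_2$.

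There is no genuine obstacle here since all the analytic work has been carried out in the preceding sections: the pointwise domination of the Poisson maximal function by the Littlewood--Paley tent maximal function (used in Lemma \ref{PoissonHeat}), and the Calder\'on reproducing formula together with the Whitney-type decomposition in the orbit space $X=\mathbb{R}^N/G$ (used in Theorem \ref{AtomPoisson} via Theorem \ref{AtomicL2} and Lemma \ref{L2Lemma}). The present statement is merely the composition of these two implications, so the proof reduces to writing out the two inequalities and multiplying the constants.
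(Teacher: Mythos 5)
Your proposal is correct and is exactly the paper's argument: the authors state that this theorem is a direct consequence of Lemma \ref{PoissonHeat} and Theorem \ref{AtomPoisson}, which is precisely the two-step chain you wrote out.
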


We are in a position to complete the proof of Theorem \ref{main1}, by proving the following proposition, which is the converse to Proposition \ref{PropMain1Part1}.
\begin{proposition}\label{converse}
  Assume that $u_0$ is $\mathcal L$-harmonic and satisfies $u_0^*\in L^1({dw})$. Then there is a system $\mathbf{u}=(u_0,u_1,{\dots},u_N)\in\mathcal{H}^1_{k}$ such that
  $ \| \mathbf{u}\|_{\mathcal{H}^1_{k}}\leq C \| u^*_0\|_{L^1({dw})}.$
\end{proposition}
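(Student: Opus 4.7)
The plan is to piece together the chain of inclusions already established in the earlier sections and rely on the theorems that have been proved. Assume $u_0$ is $\mathcal{L}$-harmonic in $\mathbb{R}_+^{1+N}$ with $u_0^*\in L^1(dw)$. First, since $\sup_{x_0>0}\|u_0(x_0,\cdot)\|_{L^1(dw)}\le\|u_0^*\|_{L^1(dw)}<\infty$, the case $p=1$ of Theorem~\ref{LpBounds}, combined with the extra hypothesis $u_0^*\in L^1(dw)$, gives a function $f\in L^1(dw)$ such that $u_0(x_0,\mathbf{x})=P_{x_0}f(\mathbf{x})$. Moreover the nontangential maximal function $\mathcal{M}_Pf$ coincides with $u_0^*$, so that $f\in H^1_{\mathrm{max},P}$ with $\|\mathcal{M}_Pf\|_{L^1(dw)}=\|u_0^*\|_{L^1(dw)}$.

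Next, I would invoke Theorem~\ref{AtomPoisson} to decompose $f=\sum_j\lambda_ja_j$ into $(1,\infty,M)$-atoms with $M>\mathbf{N}$ fixed and $\sum_j|\lambda_j|\le C\,\|u_0^*\|_{L^1(dw)}$. The inclusion $H^1_{(1,q,M)}\subset H^1_{\mathrm{Riesz}}$ proved in Section~\ref{Sect9} shows, atom by atom, that $R_ja_j\in L^1(dw)$ with $\|R_ja_j\|_{L^1(dw)}\le C$ uniformly in $j$ and $1\le j\le N$. Summing over $j$ gives $R_jf\in L^1(dw)$ together with the norm bound
\begin{equation*}
\|f\|_{L^1(dw)}+\sum_{j=1}^{N}\|R_jf\|_{L^1(dw)}\le C\,\|u_0^*\|_{L^1(dw)}.
\end{equation*}

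Now set $f_0=f$, $f_j=R_jf$ for $j=1,\dots,N$, and $u_j(x_0,\mathbf{x})=P_{x_0}f_j(\mathbf{x})$. The argument already used in the proof of Theorem~\ref{main4} (the first implication $H^1_{\mathrm{Riesz}}\subset H^1_{\Delta}$) shows that the system $\mathbf{u}=(u_0,u_1,\dots,u_N)$ of Poisson integrals satisfies the Cauchy--Riemann equations \eqref{C-R}: the identity $T_\xi(-\Delta)^{-1/2}=R_\xi$ on the symbol side gives $T_iu_j=T_ju_i$ for $i,j\ge1$, while the subordination formula and the definition of $u_j$ via $R_j$ yield $T_0u_j=T_ju_0$ and $\sum_{j=0}^NT_ju_j=0$. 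Hence each $u_j$ is $\mathcal{L}$-harmonic and
\begin{equation*}
\|\mathbf{u}\|_{\mathcal{H}^1_k}=\sup_{x_0>0}\bigl\||\mathbf{u}(x_0,\cdot)|\bigr\|_{L^1(dw)}\le\sum_{j=0}^N\|f_j\|_{L^1(dw)}\le C\,\|u_0^*\|_{L^1(dw)},
\end{equation*}
which is exactly the conclusion.

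The proof is essentially a bookkeeping exercise; the only nontrivial points are already hidden in the cited results. The most delicate among them — and the step that would be the main obstacle if it had not been prepared earlier — is the uniform $L^1$-boundedness of $R_ja$ on $(1,\infty,M)$-atoms established in Section~\ref{Sect9}, which requires the heat kernel derivative estimates~\eqref{TxiDtHeat} to handle both the local piece $R_{j,0}a$ and the tail $R_{j,\infty}a$ off the enlarged orbit $\mathcal{O}(B(\mathbf{y}_0,4r))$.
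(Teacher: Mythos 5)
Your proof is correct and follows essentially the same route as the paper: Theorem~\ref{LpBounds} to write $u_0=P_{x_0}f$ with $f\in H^1_{{\rm max},P}$, Theorem~\ref{AtomPoisson} for the atomic decomposition, and then the inclusion $H^1_{(1,q,M)}\subset H^1_{\Delta}$ of Section~\ref{Sect9} (i.e.\ \eqref{inAtom}, which itself goes through the uniform bound $\|R_ja\|_{L^1(dw)}\le C$ and the construction of the conjugate system in Theorem~\ref{main4}). You have merely unpacked the references that the paper's two-line proof cites.
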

\begin{proof}
By Theorem \ref{LpBounds} we have $u_0(t,\mathbf{x})=P_tf_0(\mathbf{x})$, where $f_0\in L^1({dw})$. So $f_0\in H^1_{\max,\, P}$ and $\| f_0\|_{H^1_{\max, \, P}}=\| u^*_0\|_{L^1({dw})}$. Using  Theorem \ref{AtomPoisson} and then \eqref{inAtom} we obtain that $f_0\in H^1_{{\Delta}}$ and $\| f_0\|_{H^1_{{\Delta}}}\leq C \| u^*_0\|_{L^1({dw})}$.
\end{proof}

\section{Inclusion $H^1_{(1,q,M)}\subset H^1_{{\rm max},H}$}\label{SectionMax}

In this section we shall prove that for every integer $M\geq 1$ and every $1<q
\leq \infty$, we have $H^1_{(1,q,M)}\subset H^1_{{\rm max},H}$ and
$$ \| f\|_{H^1_{{\rm max},H}}\leq C_{M,q}\| f\|_{H^1_{(1,q,M)}}.$$
It suffices to establish that  there is a constant $C_{M,q}>0$ such that
$$ \| a\|_{H^1_{{\rm max},{H}}}\leq C_{M,q}$$
for every $a$ being $(1,q,M)$-atom.
Since every $(1,q,M)$-atom is automatically  $(1,q,1)$-atom, it is enough to consider $M=1$ only.

Assume that $a$ is a $(1,q,1)$-atom associated with a set $\mathcal B=\bigcup_{\sigma\in G} B(\sigma (\mathbf{y}_0), r)$. Then there is a function $b\in\mathcal{D}({\Delta})$ such that $a={\Delta}b$, $\text{\rm supp}\, {\Delta}^jb\subset\mathcal B$, $\| {\Delta}^jb\|_{L^q({dw})} \leq r^{2-2j} {w}(\mathcal B)^{\frac{1}{q}-1}$, $j=0,1$.
Set $u(t,\mathbf{x})=e^{t^2{\Delta}}a(\mathbf{x})$. Observe that
$$\|u^*\|_{L^q({dw})}\leq C_q\| a\|_{L^q({dw})}\leq {w}(\mathcal B)^{\frac{1}{q}-1}$$
 (see \eqref{star} for the definition of $u^*$).  Thus, by the doubling property of the measure ${dw}(\mathbf{x})\, d\mathbf{x}$ and the H\"older inequality,
$$ \int_{d(\mathbf{x},\mathbf{y}_0)\leq 8r} u^*(\mathbf{x})\, {dw}(\mathbf{x})\leq C'_q.$$

We turn to  estimate $u^*(\mathbf{x})$ on $d(\mathbf{x},\mathbf{y}_0)>8r$. Clearly,
\begin{equation}\begin{split}
u^*(\mathbf{x})
&\leq   \sup_{0<t<d(\mathbf{x},\mathbf{y}_0)\slash 4, \, d(\mathbf{x}',\mathbf{x})<t} |e^{t^2{\Delta}}{\Delta} b(\mathbf{x}')|+
 \sup_{t>d(\mathbf{x},\mathbf{y}_0)\slash 4,\,  d(\mathbf{x}',\mathbf{x})<t} |e^{t^2{\Delta}} {\Delta} b(\mathbf{x}')|\\
&= J_1(\mathbf{x})+J_2(\mathbf{x}).\\
\end{split}\end{equation}
Recall that $\| b\|_{L^1({dw})}\leq r^2$ and note that
$$e^{t^2{\Delta}}{\Delta} ={\Delta} e^{t^2{\Delta}}=\frac{d}{ds}e^{s{\Delta}}{\big|_{s=t^2}}.$$
To deal with $J_1$ we note that if $d(\mathbf{x}',\mathbf{x})<t\leq d(\mathbf{x},\mathbf{x}_0)\slash 4$, $d(\mathbf{x},\mathbf{y}_0)>4r$, and $d(\mathbf{y},\mathbf{y}_0)<r$, then $d(\mathbf{x}',\mathbf{y})\sim d(\mathbf{x},\mathbf{y}_0)$. So, using  \eqref{Gauss}, we have
$$\Big|\frac{d}{ds}h_s(\mathbf{x}',\mathbf{y})\Big|_{\big|{s=t^2}}\leq \frac{C}{t^2{w}(B(\mathbf{y}_0,d(\mathbf{y}_0,\mathbf{x})))}e^{-c'd(\mathbf{y}_0,\mathbf{x})^2\slash t^2}. $$
Hence,
$$ J_1(\mathbf{x})\lesssim {w}(B(\mathbf{y}_0,d(\mathbf{x},\mathbf{y}_0)))^{-1} \frac{r^2}{d(\mathbf{x},\mathbf{y}_0)^2}.$$
 In order to estimate $J_2$, we observe from \eqref{Gauss} that for $t>d(\mathbf{x},\mathbf{y})$ and $d(\mathbf{y},\mathbf{y}_0)<r<t$ we have
  $$\Big|\frac{d}{ds}h_s(\mathbf{x}',\mathbf{y})\Big|_{\big|{s=t^2}}\leq \frac{C}{t^2{w}(B(\mathbf{y}_0,d(\mathbf{y}_0,\mathbf{x})))} $$
Consequently,
$$ J_2(\mathbf{x})\lesssim {w}(B(\mathbf{y}_0, d(\mathbf{x},\mathbf{y}_0)))^{-1} \frac{r^2}{d(\mathbf{x},\mathbf{y}_0)^2}.$$
Now
\begin{equation*}
\begin{split}
 \int_{d(\mathbf{x},\mathbf{y}_0)>8r}u^*(\mathbf{x})\, {dw}(\mathbf{x})
&\lesssim \sum_{j=3}^\infty \int_{2^jr<d(\mathbf{x},\mathbf{y}_0)\leq 2^{j+1}r} \frac{r^2}{ {w}(B(\mathbf{y}_0, d(\mathbf{x},\mathbf{y}_0)))d(\mathbf{x},\mathbf{y}_0)^2}\, {dw}(\mathbf{x})\\
&\lesssim \sum_{j=3}^\infty 2^{-2j}=C.
\end{split}
\end{equation*}

\section{Square function characterization}\label{SectionSquare}

In this section we  prove Theorem \ref{main5}. More precisely  we show that the atomic Hardy space $H^1_{(1,2,M)}$ coincides with the Hardy space defined by the square function \eqref{square} with $Q_t=t\hspace{.25mm}\sqrt{-\Delta\,}\smash{e^{-\hspace{.25mm}t\hspace{.25mm}\sqrt{-\Delta}}}$. This is achieved by mimicking arguments in \cite{HMMLY}. The proof for $Q_t\hspace{-.25mm}=t^2\hspace{.25mm}(-\Delta)\hspace{.5mm}e^{\hspace{.5mm}t^2\Delta}$ is similar.
\medskip

{\bf Tent spaces $T_2^p$ on spaces of homogeneous type.}
The square function characterization of the Hardy space $H^1_{(1,2,M)}$ can be related with the so called tent space $T_2^1$. The tent spaces on Euclidean spaces were introduced  in \cite{CMS} and then extended on spaces of homogeneous type (see, e.g. \cite{Rus}). For more details we refer the reader to \cite{St2}.

For a measurable function $F(t, \mathbf{x})$ on $(0,\infty)\times\mathbb{R}^N$ let
$$\mathcal AF(\mathbf{x}) :=\Big( \int_0^\infty\int_{\|\mathbf{y}-\mathbf{x}\|<t} |F(t,\mathbf{y})|^2\frac{{dw}(\mathbf{y})}{{w}(B(\mathbf{x},t))}\frac{dt}{t}\Big)^{1\slash 2}.$$

\begin{definition}\normalfont
For $1\leq p<\infty$ the tent space $T_2^p$ is defined to be
$$T_2^p=\{F: \| F\|_{T_2^p}:=\|\mathcal AF\|_{L^p({dw})}<\infty\}. $$
\end{definition}
Clearly, by the doubling property,
\begin{equation}\label{T22}
\| F\|_{T_2^2}^2=\|\mathcal AF\|_{L^2({dw})}^2\sim \int_0^\infty \int_{\mathbb{R}^N} |F(t,\mathbf{y})|^2\frac{{dw}(\mathbf{y})dt}{t}.
\end{equation}

\begin{remark}\label{RemarkT22}
\normalfont
By~\eqref{L2backL2} and~\eqref{T22} the operator $\pi_{\Psi}$ maps continuously the space $T_2^2$ into $L^2({dw})$.

Furthermore, by \eqref{L2toL2},  if $F(t, \mathbf{x})=Q_tf(\mathbf{x})$ for  $f\in L^2({dw})$, then
$$ \| F\|_{T_2^2}=\| Sf\|_{L^2({dw})}\sim \| f\|_{L^2({dw})}.$$
and
 $f=c \pi_{\Psi}( F)$.
\end{remark}

The tent space $T_2^1$ on the space of homogenous type   admits the following  atomic decomposition (see{,} e.g.{,}~\cite{Rus}).

\begin{definition}\normalfont
A measurable function $A(t,\mathbf{x})$ is a {\it $T^1_2$-atom} if there is a ball $B\subset\mathbb{R}^N$ such that

$\bullet$  $ \text{supp}\, A\subset \widehat B$

$\bullet$ $\iint_{(0,\infty) \times \mathbb{R}^N} |A(t,\mathbf{x})|^2\, {dw}(\mathbf{x})\, \frac{dt}{t}\leq {w}(B)^{-1}.$
\end{definition}
A function $F$ belongs to $T_2^1$ if and only if there are   sequences $A_j$ of $T_2^1$-atoms and $\lambda_j\in\mathbb C$ such that
$$  \sum_j \lambda_j A_j=F,\ \ \ \sum_j |\lambda_j|\sim \| F\|_{T_2^1},$$
where the convergence is in $T_2^1$ norm and a.e.

The H\"older inequality immediately gives that there is a  constant $C>0$ such that for every function  $A(t,\mathbf{x})$ being a $T_2^1$-atom one has
$$ \| A\|_{T_2^1}\leq C.$$

Observe that for $f\in L^1({dw})$ the function $F(t, \mathbf{x})=Q_tf(\mathbf{x})$ is well defined. Moreover,
$\mathcal AF(\mathbf{x})=Sf(\mathbf{x})$ and $\| Sf\|_{L^1({dw})}=\| F\|_{T^1_2}$.

\

\begin{remark}\label{remarkAtom}
\normalfont
According to the proof of atomic decomposition of $T_2^1$ presented in \cite{Rus}, the function $\lambda_jA_j$ can be taken of the form $\lambda_j A_j(\mathbf{x},t) =\chi_{S_j}(\mathbf{x}, t)$, where $S_j$ are disjoint, $\mathbb{R}_+^{N+1}=\bigcup S_j$, and $S_j$ is contained in a tent $\widehat B_j$.

So, if $F\in T^1_2\cap T^2_2$, then $F$ can be decomposed into atoms such that
$ F(t,\mathbf{x})=\sum_{j} \lambda_j A_j(\mathbf{x},t)$ and the convergence is in $T_2^1$, $T_2^2$, and pointwise.
\end{remark}

\begin{lemma}\label{Ps}
The map $(P_s F)(t,\mathbf{x})=\int p_s(\mathbf{x},\mathbf{y}) F(t, \mathbf{y})\, {dw}(\mathbf{y})$ is bounded on $T_2^1$.
\\
Moreover, there is a constant $C>0$ independent of $s>0$ such that $\| P_sF\|_{T_2^1}\leq C\| F\|_{T_2^1}$.
\end{lemma}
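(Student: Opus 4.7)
By the atomic decomposition of $T_2^1$, it suffices to show $\|P_sA\|_{T_2^1}\le C$ uniformly in $s>0$ for every $T_2^1$-atom $A$ with $\mathrm{supp}\,A\subset\widehat B$, $B=B(\mathbf{x}_0,r)$; in particular $A(t,\cdot)\equiv 0$ for $t>r/4$ and $\iint|A|^2\,dw\,dt/t\le w(B)^{-1}$. Set $R=r+s$. The plan is to decompose $\mathbb R^N$ into the orbit neighbourhood $U_0=\mathcal{O}(B(\mathbf{x}_0,4R))$ together with the dyadic shells $U_k=\mathcal{O}(B(\mathbf{x}_0,2^{k+2}R))\setminus\mathcal{O}(B(\mathbf{x}_0,2^{k+1}R))$ for $k\ge 1$, and to estimate $\int_{U_k}\mathcal A(P_sA)\,dw$ for each $k$ separately.

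For each far shell $U_k$ with $k\ge 1$: every $\mathbf{y}$ with $\|\mathbf{y}-\mathbf{x}\|<t\le r/4$ for some $\mathbf{x}\in U_k$ and every $\mathbf z\in B$ satisfy $d(\mathbf{y},\mathbf z)\sim 2^kR$. The Poisson upper bound \eqref{Poisson_low_up} then yields $p_s(\mathbf{y},\mathbf z)\lesssim s\,[2^kR\,w(B(\mathbf{x}_0,2^kR))]^{-1}$, and combining this with Cauchy--Schwarz on the $\mathbf z$-integral and the $T_2^2$-normalization of $A$ produces the pointwise bound $\mathcal A(P_sA)(\mathbf{x})\lesssim s\,[2^kR\,w(B(\mathbf{x}_0,2^kR))]^{-1}$. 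Integrating over $U_k$ (whose $w$-measure is $\lesssim w(B(\mathbf{x}_0,2^kR))$) gives a contribution $\lesssim 2^{-k}s/R\le 2^{-k}$; these sum over $k\ge 1$ to $O(1)$.

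For the local region $U_0$ in the regime $s\lesssim r$, Cauchy--Schwarz combined with the $T_2^2$-contractivity of $P_s$ (inherited from the $L^2$-contractivity of the Poisson semigroup via \eqref{T22}) gives $\int_{U_0}\mathcal A(P_sA)\,dw\le w(U_0)^{1/2}\|P_sA\|_{T_2^2}\lesssim w(B)^{1/2}w(B)^{-1/2}=1$. The \textbf{main obstacle} is the complementary case $s\gg r$, in which the same naive estimate blows up by a factor $(s/r)^{\mathbf{N}/2}$. To bypass it, I propose to replace $p_s(\mathbf{y},\mathbf z)$ by a $\mathbf z$-independent Poisson-type majorant. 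Indeed, for $\mathbf z\in B$ one has $|d(\mathbf{y},\mathbf z)-d(\mathbf{y},\mathbf{x}_0)|\le r\ll s$, so $s+d(\mathbf{y},\mathbf z)\sim s+d(\mathbf{y},\mathbf{x}_0)$, and doubling then ensures that the quantity $V$ in \eqref{Poisson_low_up} at $\mathbf z$ is comparable to its value at $\mathbf{x}_0$; consequently $p_s(\mathbf{y},\mathbf z)\lesssim\tilde p_s(\mathbf{y}):=s\,[V(\mathbf{y},\mathbf{x}_0,s+d(\mathbf{y},\mathbf{x}_0))(s+d(\mathbf{y},\mathbf{x}_0))]^{-1}$.

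Plugging this majorant into $|P_sA(t,\mathbf{y})|\le\int_B p_s(\mathbf{y},\mathbf z)|A(t,\mathbf z)|\,dw(\mathbf z)$ and applying Cauchy--Schwarz in $\mathbf z$ yields $|P_sA(t,\mathbf{y})|\lesssim\tilde p_s(\mathbf{y})\phi(t)$ with $\phi(t)=\int_B|A(t,\mathbf z)|\,dw(\mathbf z)$ satisfying $\int_0^{r/4}\phi(t)^2\,dt/t\le w(B)\iint|A|^2\,dw\,dt/t\le 1$. The local comparability $\tilde p_s(\mathbf{y})\sim\tilde p_s(\mathbf{x})$ for $\|\mathbf{y}-\mathbf{x}\|<t\le r/4\ll s$ (again from \eqref{Poisson_low_up} and doubling) then gives $\mathcal A(P_sA)(\mathbf{x})\lesssim\tilde p_s(\mathbf{x})$; a routine annular computation (splitting the integration by $d(\mathbf{x},\mathbf{x}_0)\in[2^ks,2^{k+1}s)$ and summing a geometric series $\sum_k 2^{-k}$) shows $\int\tilde p_s(\mathbf{x})\,dw(\mathbf{x})\lesssim 1$, which disposes of the obstacle. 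Combining the contributions from $U_0$ in both regimes with the shell estimates establishes $\|P_sA\|_{T_2^1}\le C$ uniformly in $s>0$.
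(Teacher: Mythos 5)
Your argument is correct and follows essentially the same route as the paper's proof: reduce to a single $T_2^1$-atom, control the far field via the Poisson upper bound \eqref{Poisson_low_up} combined with Cauchy--Schwarz, and handle the near field by the $L^2$-contractivity of $P_s$ when $s\lesssim r$ and by the $\mathbf{z}$-independent majorant $s\,[(s+d(\mathbf{x},\mathbf{x}_0))\,w(B(\mathbf{x}_0,s+d(\mathbf{x},\mathbf{x}_0)))]^{-1}$ when $s\gg r$, which is exactly the paper's Case~1 bound. The dyadic-shell packaging with $R=r+s$ is only a cosmetic reorganization of the paper's case split.
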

\begin{proof}
Let $F(t,\mathbf{x})=\sum_j \lambda_j A_j(t,\mathbf{x})$ be an atomic decomposition of $F\in T_2^1$ as described above. Since $p_s(\mathbf{x},\mathbf{y})\geq 0$, it suffices to prove that there is a constant $C>0$ such that
$$ \Big\|P_s|A|\Big\|_{T_2^1}\leq C$$
for every atom $A$ of $T_2^1$. To this end let $B=B(\mathbf{x}_0, r)$ be a ball associated with $A$.
Obviously, $P_s|A|(t,\mathbf{x}')=0$ for $t>r$.

{\bf Case 1:} $s>r$. Then, by \eqref{Poisson_low_up} and the H\"older inequality,
\begin{equation*}\begin{split}
P_s|A|(t,\mathbf{x}')\leq \frac{Cs}{s+d(\mathbf{x}_0,\mathbf{x}')}\frac{{w}(B(\mathbf{x}_0, r))^{1\slash 2}}{ {w}(B(\mathbf{x}_0, s+d(\mathbf{x}_0,\mathbf{x}')))}
\Big(\int |A(t,\mathbf{y})|^2{dw}(\mathbf{y})\Big)^{1\slash 2}.
\end{split}
\end{equation*}
If $\|\mathbf{x}-\mathbf{x}'\|< t\leq r$, then $s+d(\mathbf{x}_0,\mathbf{x}')\sim s+d(\mathbf{x}_0,\mathbf{x})$, because, by our assumption,  $s>r$.
Hence,
\begin{equation*}
\begin{split}
\Big\| P_s|A|\Big\|_{T_2^1}
&\leq C\int \frac{s}{s+d(\mathbf{x}_0,\mathbf{x})} \frac{{w}(B(\mathbf{x}_0,r))^{1\slash 2}}{{w}(B(\mathbf{x}_0, s+d(\mathbf{x}_0,\mathbf{x})))} \\
&\ \ \ \ \  \times
\Big(\int_0^r\int_{\|\mathbf{x}-\mathbf{x}'\|<t} \int |A(t,\mathbf{y})|^2{dw}(\mathbf{y})\frac{{dw}(\mathbf{x}')dt}{{w}(B(\mathbf{x}, t))t}\Big)^{1\slash 2}{dw}(\mathbf{x})\\
&\leq  C\int \frac{s}{s+d(\mathbf{x}_0,\mathbf{x})} \frac{{dw}(\mathbf{x})}{{w}(B(\mathbf{x}_0, s+d(\mathbf{x}_0,\mathbf{x})))}\leq C,
\end{split}
\end{equation*}
where to get the second to last inequality we first integrated with respect to ${dw}(\mathbf{x}')$ and then used the definition of $T_2^1$-atom.

{\bf Case 2:} $s\leq r$. Recall that $P_s$ is a contraction on $L^2({dw})$.  Hence,
\begin{equation}\begin{split}
\|\mathcal AP_s|A|\|_{L^1(\mathcal{O}(B(\mathbf{x}_0, 4r)),\,{dw})}
&\leq C{w}(B(\mathbf{x}_0,r))^{1\slash 2} \|\mathcal AP_s|A|\|_{L^2({dw})}\\
&\leq C{w}(B(\mathbf{x}_0,r))^{1\slash 2} \|P_s|A|\|_{T_2^2}\\
&\leq C{w}(B(\mathbf{x}_0,r))^{1\slash 2} \||A|\|_{T_2^2}\leq C.\\
\end{split}
\end{equation}
If $d(\mathbf{x},\mathbf{x}_0)>4r$, $\|\mathbf{x}'-\mathbf{x}\|<t<r$, and $\|\mathbf{x}_0-\mathbf{y}\|<r$,
then $s+d(\mathbf{x}',\mathbf{y})\sim s+d(\mathbf{x},\mathbf{x}_0)$. Now we proceed as in Case 1 to get the required bound on $\mathcal{O}(B(\mathbf{x}_0, 4r))^c$.
\end{proof}

\begin{lemma}\label{Approx}
The family $P_s$ forms approximate of identity in $T_2^1$, that is,
$$ \lim_{s\to 0} \| P_sF-F\|_{T_2^1}=0.$$
\end{lemma}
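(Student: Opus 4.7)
The plan is to combine the uniform boundedness of $P_s$ on $T_2^1$ established in Lemma \ref{Ps} with a density argument. Since the atomic decomposition of $T_2^1$ converges in the $T_2^1$ norm, finite sums of $T_2^1$-atoms are dense, and for any truncation $F_N=\sum_{j\le N}\lambda_jA_j$ of such a decomposition one has
\[
\|P_sF-F\|_{T_2^1}\le(C+1)\|F-F_N\|_{T_2^1}+\|P_sF_N-F_N\|_{T_2^1}.
\]
Taking first $N$ large and then $s$ small, the task reduces to proving $\|P_sA-A\|_{T_2^1}\to0$ for a single $T_2^1$-atom $A$, with associated ball $B=B(\mathbf{x}_0,r)$.

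For such an atom I split $\|P_sA-A\|_{T_2^1}=I_{\mathrm{near}}(s)+I_{\mathrm{far}}(s)$ according to whether $\mathbf{x}\in\mathcal{O}(B(\mathbf{x}_0,4r))$ or not. On the near region, Cauchy--Schwarz together with the doubling property of $dw$ yields $I_{\mathrm{near}}(s)\le C\,w(B)^{1/2}\|P_sA-A\|_{T_2^2}$. By \eqref{T22} the $T_2^2$ norm is equivalent to $\big(\int_0^\infty\|P_sA(t,\cdot)-A(t,\cdot)\|_{L^2(dw)}^2\,\tfrac{dt}{t}\big)^{1/2}$, so the strong continuity of $\{P_s\}$ on $L^2(dw)$ (inherited from \eqref{subordination} and the analogous property of the heat semigroup) gives pointwise-in-$t$ convergence, dominated by $4\|A(t,\cdot)\|_{L^2}^2$, which is $dt/t$-integrable by the atom normalization; dominated convergence then forces $I_{\mathrm{near}}(s)\to0$. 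For the far region, the support condition $\operatorname{supp}A\subset\widehat B$ (equivalently $\|\mathbf{y}-\mathbf{x}_0\|+4t\le r$) forces $\mathcal{A}A(\mathbf{x})=0$ whenever $d(\mathbf{x},\mathbf{x}_0)\ge4r$, so only the $\mathcal{A}(P_sA)$ term survives. Replaying Case 2 in the proof of Lemma \ref{Ps} (Cauchy--Schwarz in $\mathbf{y}$, the Poisson upper bound \eqref{Poisson_low_up}, and the atom normalization), I obtain, for $s\le r$ and $d(\mathbf{x},\mathbf{x}_0)>4r$, the pointwise estimate
\[
\mathcal{A}(P_sA)(\mathbf{x})\le C\,\frac{s}{s+d(\mathbf{x}_0,\mathbf{x})}\,w(B(\mathbf{x}_0,s+d(\mathbf{x}_0,\mathbf{x})))^{-1}.
\]
A dyadic decomposition $2^jr<d(\mathbf{x},\mathbf{x}_0)<2^{j+1}r$ together with \eqref{growth} then yields $I_{\mathrm{far}}(s)\le C\sum_{j\ge2}s/(2^jr)\lesssim s/r\to0$ as $s\to0$.

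The main technical subtlety I anticipate is the far-region tail: Lemma \ref{Ps} alone yields only a uniform bound on the contribution of $P_sA$ away from $B$, and one must extract an additional small factor that vanishes as $s\to0$. This is taken care of by the extra multiplicative factor $s/(s+d(\cdot,\cdot))$ in \eqref{Poisson_low_up}, which, restricted to $s\le r$ and dyadic summation, produces exactly the required decay $s/r$. The near-region analysis is then a standard pairing of the $L^2$-strong continuity of $\{P_s\}$ with the atom normalization.
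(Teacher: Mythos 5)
Your proposal is correct and follows essentially the same route as the paper: reduction to a single $T_2^1$-atom via Lemma \ref{Ps} and the $T_2^1$-convergent atomic decomposition, a near-region estimate by Cauchy--Schwarz, the $T_2^2$ identification \eqref{T22} and dominated convergence using the $L^2$-continuity of $P_s$, and a far-region estimate extracting the decaying factor $s/(s+d(\mathbf{x}_0,\mathbf{x}))$ from \eqref{Poisson_low_up} exactly as in Case 2 of Lemma \ref{Ps}. The only cosmetic differences are that you spell out the density/truncation step and the dyadic summation giving $I_{\mathrm{far}}(s)\lesssim s/r$, which the paper leaves implicit.
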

\begin{proof} According to Lemma \ref{Ps}, it suffices to establish that for every $A$ being a $T_2^1$-atom we have
\begin{equation}\label{limPs}
\lim_{s\to 0}\| P_sA-A\|_{T_2^1}=\lim_{s\to 0}\|\mathcal A(P_sA-A)\|_{L^1({dw})}=0.
\end{equation}
Let $A$ be such an atom and let $B=B(\mathbf{x}_0,r)$ be its associated ball. To prove \eqref{limPs} it suffices to consider $0<s<r$.

If $d(\mathbf{x},\mathbf{x}_0)>4r$, $\|\mathbf{y}-\mathbf{x}_0\|<r$, and $\|\mathbf{x}-\mathbf{x}'\|<t<r$, then $s+d(\mathbf{x}',\mathbf{y})\sim d(\mathbf{x},\mathbf{x}_0)$, so
\begin{equation*}\begin{split}
|P_s A(t,\mathbf{x}')|\leq \frac{Cs}{s+d(\mathbf{x}_0,\mathbf{x})}\frac{{w}(B(\mathbf{x}_0, r))^{1\slash 2}}{ {w}(B(\mathbf{x}_0, s+d(\mathbf{x}_0,\mathbf{x})))}
\Big(\int |A(t,\mathbf{y})|^2{dw}(\mathbf{y})\Big)^{1\slash 2}.
\end{split}
\end{equation*}
Since $\text{supp}\, A\cap \{(t,\mathbf{x}'): \|\mathbf{x}'-\mathbf{x}\|<t<r\}=\emptyset$, we have
$$ |\mathcal A(P_s A-A)(\mathbf{x})|=|\mathcal A(P_s A)(\mathbf{x})|\leq  \frac{Cs}{s+d(\mathbf{x}_0,\mathbf{x})}\frac{1}{ {w}(B(\mathbf{x}_0, s+d(\mathbf{x}_0,\mathbf{x})))}.$$
Hence,
$$ \lim_{s\to 0} \int_{d(\mathbf{x},\mathbf{x}_0)>4r} |\mathcal A(P_sA-A)(\mathbf{x})|\, {dw}(\mathbf{x})=0.$$
We now turn to estimate $\|\mathcal A(P_sA-A)\|_{L^1(\mathcal{O}(B(\mathbf{x}_0, 4r)),{dw})}$. Observe that
$$ |(P_sA-A)(t,\mathbf{x}')|\leq 2\mathcal{M}_PA(t,\mathbf{x}') \ \ \text{\rm and} \ \|\mathcal{M}_PA(t,\mathbf{x}')\|_{L^2({dw}(\mathbf{x}'))}\leq C\| A(t,\mathbf{x}')\|_{L^2({dw}(\mathbf{x}'))}.$$
Moreover, $\lim_{s\to 0} \| P_s A(t,\mathbf{x}')-A(t,\mathbf{x}')\|_{L^2({dw}(\mathbf{x}'))}=0$ for almost every $t>0$.
Therefore, applying the H\"older inequality and \eqref{T22}, we have
\begin{equation*}\begin{split}
\limsup_{s\to 0} &\|\mathcal A(P_sA-A)\|_{L^1(\mathcal{O}(B(\mathbf{x}_0,4r)))}\\
&\leq \limsup_{s\to 0} C{w}(B)^{1\slash 2}  \|\mathcal A(P_sA-A)\|_{L^2(\mathcal{O}(B(\mathbf{x}_0,4r)))}\\
&\leq \limsup_{s\to 0} C{w}(B)^{1\slash 2}\Big(\int_0^r\int |P_sA(t,\mathbf{x})-A(t,\mathbf{x})|^2\frac{{dw}(\mathbf{x})\, dt}{t}\Big)^{1\slash 2}=0,
\end{split}\end{equation*}
where in the last equality we have used the Lebesgue dominated convergence theorem.\end{proof}

\begin{lemma}
For every positive integer $M$ there is a constant $C_M>0$ such that for every $a(\mathbf{x})$ being a $(1,2,M)$-atom if $F(t,\mathbf{x})= Q_ta(\mathbf{x})$,  then
$$\| F(t,\mathbf{x})\|_{T_2^1} \leq C_M.$$
\end{lemma}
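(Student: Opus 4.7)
The quantity to estimate is
\[
\|F\|_{T_2^1}=\|\mathcal{A}F\|_{L^1(dw)}=\|Sa\|_{L^1(dw)},
\]
where $a=\Delta^M b$ is a $(1,2,M)$-atom attached to $b\in\mathcal{D}(\Delta^M)$ and to a ball $B=B(\mathbf{y}_0,r_B)$. Following the strategy of \cite{HMMLY}, I would split $\mathbb{R}^N=\mathcal{O}(B^{*})\cup\mathcal{O}(B^{*})^c$ with $B^{*}=B(\mathbf{y}_0,4r_B)$ and control the local and far-field contributions separately.

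\textbf{Local part.} First I would record the $L^2$-boundedness of $S$ via Plancherel for the Dunkl transform: since $\mathcal{F}(Q_tf)(\xi)=t\|\xi\|e^{-t\|\xi\|}\mathcal{F}f(\xi)$ and, by doubling, $\int_{\|\mathbf{x}-\mathbf{y}\|<t}w(B(\mathbf{x},t))^{-1}\,dw(\mathbf{x})\sim 1$, one has
\[
\|Sf\|_{L^2(dw)}^2\sim\iint|Q_tf(\mathbf{y})|^2\,\frac{dw(\mathbf{y})\,dt}t=\|f\|_{L^2(dw)}^2\cdot\int_0^\infty s^2e^{-2s}\frac{ds}s.
\]
Combining $\|Sa\|_{L^2(dw)}\lesssim\|a\|_{L^2(dw)}\le w(B)^{-1/2}$ (the atom bound at level $\ell=M$) with Cauchy--Schwarz and $w(\mathcal{O}(B^{*}))\lesssim w(B)$ gives $\int_{\mathcal{O}(B^{*})}Sa\,dw\lesssim 1$.

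\textbf{Far-field part.} Fix $\mathbf{x}$ with $d:=d(\mathbf{x},\mathbf{y}_0)>4r_B$ and split the $t$-integral in $Sa(\mathbf{x})^2$ at $r_B$. For $0<t<r_B$ I would use $Q_t=-t\partial_tP_t$ directly on $a$: for $\|\mathbf{y}-\mathbf{x}\|<t$ and $\mathbf{z}\in\mathcal{O}(B)$ one has $d(\mathbf{y},\mathbf{z})\sim d$, so \eqref{DtDyPoisson} with $m=1$ together with $\|a\|_{L^1(dw)}\lesssim 1$ yields $|Q_ta(\mathbf{y})|\lesssim t/(d\,V(\mathbf{x},\mathbf{y}_0,d))$, and the resulting $t$-integral is $\lesssim r_B^2/(d^2V(\mathbf{x},\mathbf{y}_0,d)^2)$. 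For $t\ge r_B$ I would use the identity $\Delta P_t=-\partial_t^2P_t$ (which follows from $\partial_tP_t=-\sqrt{-\Delta}\,P_t$) to rewrite $Q_ta=Q_t\Delta^Mb=(-1)^{M+1}t\,\partial_t^{2M+1}P_tb$; then \eqref{DtDyPoisson} with $m=2M+1$ and $\|b\|_{L^1(dw)}\lesssim r_B^{2M}$ gives $|Q_ta(\mathbf{y})|\lesssim tr_B^{2M}/\bigl(V(\mathbf{y},\mathbf{y}_0,t+d)(t+d)^{2M+1}\bigr)$. Splitting $\int_{r_B}^\infty=\int_{r_B}^d+\int_d^\infty$ and invoking doubling produces $\lesssim r_B^{4M}/(d^{4M}V(\mathbf{x},\mathbf{y}_0,d)^2)$. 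Adding these two contributions,
\[
Sa(\mathbf{x})\lesssim\bigl(r_B/d+(r_B/d)^{2M}\bigr)\,V(\mathbf{x},\mathbf{y}_0,d)^{-1},
\]
which integrates over the dyadic annuli $\{2^jr_B<d\le 2^{j+1}r_B\}$ ($j\ge 2$) to $\sum_{j\ge 2}(2^{-j}+2^{-2jM})\lesssim 1$.

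\textbf{Expected difficulty.} The delicate step is the large-$t$ analysis: trading $\Delta^M$ for $\partial_t^{2M}$ via $\Delta P_t=-\partial_t^2P_t$, and then carefully tracking the volume factors $V(\cdot,\cdot,t+d)$ through the two sub-regimes $r_B\le t\le d$ and $t\ge d$ so that the cancellation gain $(r_B/d)^{2M}$ survives the $L^2(dt/t)$ integration. Everything else is a standard Coifman--Weiss-type computation on the space of homogeneous type equipped with the orbit distance $d(\cdot,\cdot)$.
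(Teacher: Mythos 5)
Your argument is correct and follows the paper's proof in all essentials: the same local/far-field splitting, the local part handled by the $L^2$-boundedness of $S$ (Plancherel) plus Cauchy--Schwarz, and the far field by the Poisson kernel derivative bounds \eqref{Poisson_low_up}, \eqref{DtDyPoisson} combined with the $L^1$ bounds coming from the atom structure, integrated over dyadic annuli in the orbit distance. The only (harmless) difference is bookkeeping: the paper puts the cancellation on $a_1=\Delta^{M-1}b$ (so $\|a_1\|_{L^1}\lesssim r^2$ and only $\partial_t^3 p_t$ is needed, with the $t$-integral split at $t\sim d(\mathbf{x},\mathbf{y}_0)$, yielding a uniform gain $2^{-2n}$ per annulus), whereas you split at $t=r_B$, use $a$ directly for small $t$ and trade all $M$ powers of $\Delta$ for $\partial_t^{2M+1}p_t$ acting on $b$ for large $t$ — both are valid, yours merely extracts more far-field decay than is needed.
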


\begin{proof}
Let $a$ be a $(1,2,M)$--atom, $M\geq 1$,  associated with a ball $B=B(\mathbf{x}_0,r)$. By definition  $a=\Delta_{k}^{M}b$ with ${\Delta}^\ell b$ (for $\ell =0,1,{\dots}, M$) satisfying relevant support and size conditions (see Definition \ref{def-atom}). By the H\"older inequality,
\begin{align*}
\|Sa\|_{L^1(\mathcal{O}(8B))}&  \lesssim \|Sa\|_{L^2(\mathcal{O}(8B))}{w}(\mathcal{O}(8B))^{1/2}  \lesssim 1.
\end{align*}
If $d(\mathbf{x},\mathbf{x}_0)>8r$ then choose $n \geq 3$ such that $2^{n}r \leq d(\mathbf{x},\mathbf{x}_0) < 2^{n+1}r$ and split the integral as below
\begin{align*}
Sa(\mathbf{x})^2&= \int \int_{t>\|\mathbf{x}-\mathbf{y}\|}|Q_ta(\mathbf{y})|^{2}w(B(\mathbf{y},t))^{-1}\,{dw}(\mathbf{y})\, \frac{dt}{t}\\&=\int_{0}^{2^nr/4} \int_{t>\|\mathbf{x}-\mathbf{y}\|}+\int_{2^nr/4}^{\infty} \int_{t>\|\mathbf{x}-\mathbf{y}\|}=I_1+I_2.
\end{align*}
 Define $a_1=\Delta_{k}^{M-1}b$. Then by the definition of the atom $\|a_1\|_{L^1({w})} \leq r^{2}$. Note that
\begin{align*}
Q_t(a)=Q_t({\Delta} a_1)=({\Delta} Q_t)(a_1)=t(\partial_tQ_t)^3(a_1).
\end{align*}
\textbf{Estimation for $I_1$.}
If $z \in\mathcal{O}(B)$ and $\|\mathbf{x}-\mathbf{y}\|<t\leq 2^nr/4$, then $2^n r \lesssim d(\mathbf z,\mathbf{y})$. Therefore, thanks to \eqref{Poisson_low_up} and \eqref{DtDxDyPoisson} with $m=3$, we have
\begin{align*}
|Q_ta(\mathbf{y})|^{2} &= \left|\int t(\partial^3_t)(p_t(\mathbf{y},\mathbf z))a_1(\mathbf z)\,{dw}(\mathbf z)\right|^{2} \\&\lesssim \left(\int  {d(\mathbf z,\mathbf{y})}^{-2}\frac{t}{t+d(\mathbf z,\mathbf{y})}V(\mathbf z,\mathbf{y},t+d(\mathbf z,\mathbf{y}))^{-1}|a_1(\mathbf z)|\,{dw}(\mathbf z)\right)^{2} \\&\lesssim
(2^nr)^{-4}\frac{t^2}{(2^nr)^2}{w}(B(\mathbf{x}_0,2^nr))^{-2}\|a_1\|_{L^1({dw})}^2.
\end{align*}
Consequently,
\begin{align*}
I_1 &\lesssim \left(\int_{0}^{2^nr}t\,dt\right) {w}(B(\mathbf{x}_0,2^nr))^{-2}\|a_1\|_{L^1({dw})}^2 (2^nr)^{-4} (2^nr)^{-2}  \lesssim 2^{-4n} {w}(B(\mathbf{x}_0,2^nr))^{-2}.
\end{align*}
\\
\textbf{Estimation for $I_2$.} In this case $t \geq 2^nr/4$, so thanks to~\eqref{DtDxDyPoisson} with $m=3$ we have
\begin{align*}
|Q_ta(\mathbf{y})|^{2} &= \left(\int t(\partial^3_t)(p_t(\mathbf{y},\mathbf z))a_1(\mathbf z)\,{dw}(\mathbf z)\right)^{2} \\&\lesssim \left(\int  {t}^{-2}\frac{t}{t+d(\mathbf z,\mathbf{y})}V(\mathbf z,\mathbf{y},t+d(\mathbf z,\mathbf{y}))^{-1}|a_1(\mathbf z)|\,{dw}(\mathbf z)\right)^{2} \\&\lesssim
t^{-4}{w}(B(\mathbf{x}_0,2^nr))^{-2}\|a_1\|_{L^1({dw})}^2.
\end{align*}
Consequently,
\begin{align*}
I_2 &\lesssim \left(\int_{2^nr/4}^{\infty}t^{-5}\,dt\right) {w}(B(\mathbf{x}_0,2^nr))^{-2}\|a_1\|_{L^1({dw})}^2  \lesssim  2^{-4n} {w}(B(\mathbf{x}_0,2^nr))^{-2}.
\end{align*}
Finally,
\begin{align*}
 \| Sa\|_{L^1(\mathcal{O}(8B)^c)}&\lesssim \sum_{n\geq 3} \int_{2^nr<d(\mathbf{x},\mathbf{x}_0)\leq 2^{n+1}r} 2^{-2n}{w}(B(\mathbf{x}_0, 2^nr))^{-1}{dw}(\mathbf{x})\lesssim 1.
\end{align*}
\end{proof}

\begin{proposition}
Let $M$ be a positive integer.
Assume that for $f\in L^1({dw})$ the function $F(t,\mathbf{x})=Q_tf(\mathbf{x})$ belongs to $T_2^1$. Then
there are $\lambda_j\in\mathbb C$ and   $a_j$ being $(1,2,M)$-atoms such that
$$ f=\sum_{j}\lambda_j a_j,$$
and
$$ \sum_{j}|\lambda_j|\leq C\|F\|_{T_2^1}.$$
The constant $C$ depends on $M$ but it is independent of $f$.
\end{proposition}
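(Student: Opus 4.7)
The strategy is the classical one adapted to the Dunkl setting following~\cite{HMMLY}: run a Calder\'on reproducing formula for $f$, decompose the ``$t$-variable'' of $F=Q_tf$ using the $T_2^1$ atomic decomposition, and check that each summand produces a $(1,2,M)$-atom.

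\emph{Step 1 (Choice of $\Psi$ and reproducing formula).} I would pick a radial Schwartz function $\tilde\Psi$ on $\mathbb{R}^N$, compactly supported (say in $B(0,1/2)$), and set $\Psi=(-\Delta)^M\tilde\Psi$; then $\Psi$ is radial, compactly supported, mean zero, and $\Psi_t(\mathbf{x},\mathbf{y})=t^{2M}(-\Delta_{\mathbf{x}})^M\tilde\Psi_t(\mathbf{x},\mathbf{y})$. A computation on the Dunkl transform side (reducing to the one-dimensional identity $\int_0^\infty \tilde\psi(u)\,e^{-u}\,du\neq 0$ with $\tilde\psi$ the profile of $\Psi$) shows that, for a suitable normalizing constant $c$, the Calder\'on reproducing formula
\begin{equation*}
g=c\int_0^\infty\Psi_t\,Q_tg\,\frac{dt}{t}=c\,\pi_\Psi(Q_tg)
\end{equation*}
holds in $L^2(dw)$ for every $g\in L^2(dw)$ (cf.\ the analogous formula~\eqref{Calderon} in Section~\ref{Max=Max}).

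\emph{Step 2 (Atomic decomposition of $F$).} Since $F\in T_2^1$, invoke the $T_2^1$ atomic decomposition to write $F=\sum_j\lambda_j A_j$ with $\sum_j|\lambda_j|\lesssim\|F\|_{T_2^1}$, where each $A_j$ is supported in a tent $\widehat{B_j}$ over a ball $B_j=B(\mathbf{x}_{0,j},r_j)$ and $\iint|A_j(t,\mathbf{x})|^2\,dw(\mathbf{x})\frac{dt}{t}\le w(B_j)^{-1}$.

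\emph{Step 3 ($(1,2,M)$-atom structure).} For each $j$, set
\begin{equation*}
b_j(\mathbf{x}):=\int_0^{r_j}\!\int_{\mathbb{R}^N}t^{2M}\,\tilde\Psi_t(\mathbf{x},\mathbf{y})\,A_j(t,\mathbf{y})\,dw(\mathbf{y})\,\frac{dt}{t},
\end{equation*}
so that by Step 1 the function $\pi_\Psi A_j=(-\Delta)^M b_j$, and more generally $(-\Delta)^\ell b_j$ has the analogous expression with $t^{2M}$ replaced by $t^{2M-2\ell}$ and $\tilde\Psi$ by $(t^2\Delta)^\ell\tilde\Psi$. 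Compact support of $\tilde\Psi$ and \eqref{supprt_Psi} yield $\tilde\Psi_t(\mathbf{x},\mathbf{y})=0$ whenever $d(\mathbf{x},\mathbf{y})>t/2$, so each $(-\Delta)^\ell b_j$ is supported in $\mathcal{O}(B(\mathbf{x}_{0,j},\tfrac32 r_j))$. For the size bound, apply Cauchy--Schwarz in $(t,\mathbf{y})$ and use the $T_2^2\to L^2(dw)$ continuity of $\pi_{(t^2\Delta)^\ell\tilde\Psi}$ (which is a consequence of \eqref{L2backL2} applied to the mean-zero radial Schwartz function $(t^2\Delta)^\ell\tilde\Psi$) together with the $T_2^2$-size of atoms to get
\begin{equation*}
\|(r_j^2\Delta)^\ell b_j\|_{L^2(dw)}\lesssim r_j^{2M}\,w(B_j)^{-1/2},\qquad 0\le\ell\le M.
\end{equation*}
Therefore $c\lambda_j\,\pi_\Psi A_j=\tilde\lambda_j\,a_j$ with $a_j$ a $(1,2,M)$-atom and $\sum_j|\tilde\lambda_j|\lesssim\|F\|_{T_2^1}$.

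\emph{Step 4 (Passing from $L^2$ to $L^1$; convergence).} The delicate point is that the reproducing identity of Step 1 is only stated in $L^2$ whereas here $f$ belongs only to $L^1(dw)$. I would approximate by $f_\varepsilon:=P_\varepsilon f$, which lies in $L^2(dw)\cap L^1(dw)$ by Proposition~\ref{Poiss_new}, set $F_\varepsilon:=Q_tf_\varepsilon=P_\varepsilon F$, and invoke Lemmas~\ref{Ps}--\ref{Approx} to obtain $F_\varepsilon\to F$ in $T_2^1$ and $f_\varepsilon\to f$ in $L^1(dw)$. Applying Steps 2--3 to each $F_\varepsilon$ produces atomic decompositions $f_\varepsilon=\sum_j\tilde\lambda_j^{(\varepsilon)}a_j^{(\varepsilon)}$ with total atomic mass controlled by $\|F_\varepsilon\|_{T_2^1}\to\|F\|_{T_2^1}$; a standard diagonal/compactness argument (using that the unit ball in the atomic space is closed under $L^1$-convergent sums with summable coefficients) produces the desired decomposition of~$f$. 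The main obstacle is precisely this step: one must simultaneously reconcile the $L^2$-reproducing formula with the $L^1$ target and avoid any reshuffling that destroys the atomic bookkeeping, which is handled by showing that the partial sums $\sum_{j\le N}\tilde\lambda_j\,a_j$ converge in $L^1(dw)$ thanks to the continuity $\pi_\Psi:T_2^1\to L^1(dw)$ that Step~3 supplies uniformly on atoms.
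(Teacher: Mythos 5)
Your proposal follows essentially the same route as the paper's proof: a Calder\'on reproducing formula with a compactly supported $\Psi$ equal to a power of $\Delta$ applied to a bump, the $T_2^1$ atomic decomposition of $F$, verification that each $\pi_\Psi A_j$ is a multiple of a $(1,2,M)$-atom, and removal of the auxiliary $L^2(dw)$ hypothesis via Poisson approximation (the paper carries out your Step 4 by the telescoping series of Theorem \ref{AtomPoisson} combined with Lemma \ref{Approx}, which is exactly the ``$L^1$-convergent sums with summable coefficients'' device you allude to, so that part is fine). One point in Step 3 needs correction: for $\ell=0$ the function $(t^2\Delta)^0\tilde\Psi=\tilde\Psi$ is \emph{not} mean zero, so \eqref{L2backL2} does not apply to it and your stated justification of $\|b_j\|_{L^2(dw)}\lesssim r_j^{2M}\,w(B_j)^{-1/2}$ breaks down as written. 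The bound is nevertheless true for a different reason: since $A_j$ lives in the tent, one has $t\le r_j$ there, and Minkowski's inequality followed by Cauchy--Schwarz in $t$ against the weight $t^{2(2M-2\ell)}\frac{dt}{t}$ (integrable on $(0,r_j]$ precisely because $2M-2\ell>0$ for $\ell<M$) yields the estimate with no cancellation of the kernel at all; genuine cancellation is only needed at $\ell=M$, where $(t^2\Delta)^M\tilde\Psi$ is indeed mean zero and \eqref{L2backL2} applies. The paper sidesteps this entirely by taking $\Psi=\Delta^{M+1}\Psi^{\{1\}}$, i.e.\ one extra power of $\Delta$, so that every kernel $(t^2\Delta)^{\ell+1}\Psi^{\{1\}}$ occurring in $\Delta^\ell b_j$ has mean zero and the $T_2^2\to L^2(dw)$ bound applies uniformly in $\ell$; either fix is one line, but as stated your justification for $\ell=0$ is false.
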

\begin{proof}

We start our proof under the  additional assumption $f\in L^2({dw})$.  Then $F(t, \mathbf{x})=Q_tf(\mathbf{x})\in T_2^1\cap T_2^2$.
The proof in this case  is the same as that of \cite[Theorem 4.1]{HMMLY}. The only difference is to control support of functions ${\Delta}^sb_j$. For the convenience of the reader we provide its sketch.

Let $F=\sum_{j} \lambda_j A_j$ be a $T_2^1$ atomic decomposition of the function $Q_tf(\mathbf{x})$
 as it is described in Remark \ref{remarkAtom}.
In particular, $\sum_j |\lambda_j|\leq C \| Sf\|_{L^1({dw})}$.
 Let $\Psi$ be chosen such that
$ \int_0^\infty \Psi_tQ_t\frac{dt}{t}$ forms a Calder\'on reproducing formula, with $\Psi={\Delta}^{M+1}\Psi^{\{1\}}$, where $\Psi^{\{1\}}$ is a radial $C^\infty$ function supported by $B(0,1\slash 4)$.
By Remark \ref{RemarkT22} we have
\begin{equation}\label{sumA}
f=\pi_\Psi F=\sum_j \lambda_j\pi_\Psi A_j
\end{equation}
and the series converges in $L^2({dw})$.
Let $B_j=B(\mathbf{y}_j,r_j)$ be a ball associated with $A_j$. Then $\text{supp}\, A_j\subset \widehat B_j$.

Set $a_j=\pi_{\Psi} (A_j)={\Delta}^M b_j$, where
$$ b_j=\int_0^\infty t^{2M} (t^2{\Delta}\Psi^{\{1\}}_tA)\frac{dt}{t}.$$
Clearly, $\text{supp}\, b_j\subset\mathcal{O}(B(\mathbf{y}_j, 2r_j))$. The same argument as in the proof of Lemma 4.11. in  \cite{HMMLY}  shows that for every $s=0,1,2,{\dots},M$, the function
$$ b_{j,s}={\Delta}^s b_j=\int_0^\infty t^{2M} ({\Delta}^s t^2L \Psi^{\{1\}}_tA)\frac{dt}{t}$$
is supported by  $\mathcal{O}(B(\mathbf{y}_j, 2r_j))$ and its $L^2(w)$-norm is bounded by $r^{2M-2s}{w}(B_j)^{-1\slash 2}$. Thus $a_j$ are proportional to $(1,2,M)$-atoms. In particular
$\| a_j\|_{L^1({dw})}\leq C$ and, consequently, the series  \eqref{sumA} converges in $L^1({dw})$.

To remove the additional assumption $f\in L^2({dw})$ we    use Lemma \ref{Approx}
together with the fact that $P_sf\in L^2({dw})$ for $f\in L^1({dw})$,  and apply the same arguments as those in the proof of Theorem \ref{AtomPoisson}.
\end{proof}


\begin{thebibliography}{99}

\bibitem{AH}
B. Amri, A. Hammi,
\textit{Dunkl-Schr\"odinger operators\/},
preprint [arXiv:1802.01547].

\bibitem{ABDH}
J.-Ph. Anker, N. Ben Salem, J. Dziuba\'nski, N. Hamda,
\textit{The Hardy space $H^1$ in the rational Dunkl setting\/},
Constr. Approx. 42, (2015){, no. 1, 93--128}.

\bibitem{AS}
B. Amri, M. Sifi,
\textit{Riesz transforms for Dunkl transform\/},
Ann. Math. Blaise Pascal 19 (2012), {no. 1,} 247--262.

\bibitem{BGS}
D.L. Burkholder, R.F. Gundy, M.L. Silverstein,
\textit{A maximal function characterisation of the class $H^p$},
Trans. Amer. Math. Soc. 157 (1971), 137--153.

\bibitem{Coifman}
R.R. Coifman,
\textit{A real variable characterization of $H^p$},
Stud{ia} Math. 51 (1974), 269--274.

\bibitem{CMS}
R.R. Coifman, Y. Meyer{,} E.M. Stein,
\textit{Some new functions and their applications to harmonic analysis\/},
J. Funct. Anal{.} 62 (1985), {no. 2,} 304--315.

\bibitem{CW}
R.R. Coifman, G. Weiss,
\textit{Extensions of Hardy spaces and their use in analysis\/},
Bull. {Amer}. Math. Soc. 83 (1977), {no. 4,} 569--615.

\bibitem{Dunkl}
C.F. Dunkl,
\textit{Differential-difference operators associated to reflection groups\/},
Trans. {Amer}. Math. 311 {(1989), no. 1,} 167--183{.}

\bibitem{Dz}
J. Dziuba\'nski,
\textit{Riesz transforms characterizations of Hardy spaces $H^1$
for the rational Dunkl setting and multidimensional Bessel operators\/},
J. {Geom. Anal.} 26 (2016), {no. 4,} 2639--2663.

\bibitem{DZ2016}
J. Dziuba\'nski, J. Zienkiewicz,
\textit{A characterization of Hardy spaces associated
with certain Schr{\"o}dinger operators\/},
Potential Anal. 41 (2014), no. 3, 917--930.

\bibitem{DzPr}
J. Dziuba\'nski, M. Preisner,
\textit{Hardy spaces for semigroups with Gaussian bounds},
to appear in Ann. Mat. Pura Appl. DOI 10.1007/s10231-017-0711-y.

\bibitem{FS}
C. Fefferman{,} E.M. Stein,
\textit{$H^p$ spaces of several variables\/},
Acta Math., 129 (1972), {no. 3-4,} 137--195.

\bibitem{FollanStein}
G.B. Folland, E.M. Stein,
\textit{Hardy spaces on homogeneous groups\/},
{Princeton Math. Notes 28,}
Pri{n}ceton {Univ.} Press, 1982.

\bibitem{GR}
L. Gallardo, {C.} Rejeb,
\textit{A new mean value property for harmonic functions
relative to the Dunkl-Laplacian operator and applications\/},
Trans. Amer. Math. Soc. 368 (2015), {no. 5,} 3727--3753.

\bibitem{HMMLY}
S. Hofmann, G. Lu, D. Mitrea, M. Mitrea,  L. Yan,
\textit{Hardy spaces associated to non-negative self-adjoint operators
satisfying Davies-Gaffney estimates},
Mem. Amer. Math. Soc. 214 (2011), no. 1007, vi+78 {pages}.

\bibitem{dJ}
M.F.E. de Jeu,
\textit{The Dunkl transform\/},
Invent. Math. 113 (1993), 147--162.

\bibitem{MS}
B. Muckenhoupt{,} E.{M.} Stein,
\textit{Classical expansions and their relation to conjugate harmonic functions\/},
Trans. Amer. Math. Soc. 118 (1965), 17--92.

\bibitem{Roesler2}
M. R\"osler,
\textit{Generalized Hermite polynomials and the heat equation for Dunkl operators\/},
Comm. Math. Phys. {192} (1998), 519--542.

\bibitem{Roesle99}
M. R\"osler,
\textit{Positivity of Dunkl's intertwining operator\/},
Duke Math. J. 98 (1999), no. 3, 445--463.

\bibitem{Roesler2003}
{M.} R\"osler,
\textit{A positive radial product formula for the Dunkl kernel\/},
Trans. Amer.Math. Soc. 355 (2003), no. 6, 2413--2438{.}

\bibitem{Roesler3}
{M.} R\"osler,
{\textit{Dunkl operators\/} (\textit{theory and applications\/}),
in \textit{Orthogonal polynomials and special functions\/} (\textit{Leuven, 2002\/}),}
{E.} Koelink, {W.} Van Assche (eds.),
93--135{,} Lect. Notes Math. {1817}, Springer-Verlag, {2003.}

\bibitem{Roesler-Voit}
M.~R\"osler, M. Voit,
\textit{Dunkl theory, convolution algebras, and related Markov processes\/},
in \textit{Harmonic and stochastic analysis of Dunkl processes\/},
{P. Graczyk, M. R\"osler, M. Yor (eds.), 1--112, Travaux en cours 71,}
Hermann, Paris, 2008.

\bibitem{Rus}
E. Russ,
\textit{The atomic decomposition for tent spaces on spaces of homogeneous type\/},
in {\textit{Asymptotic geometric analysis, harmonic analysis,
and related topics\/}, A. McIntosh, P. Portal (eds.),} 125--135,
{Proc. Centre Math. Anal. 42, Australian National Univ.},
Canberra, 2007.

\bibitem{St1}
E.M. Stein,
\textit{Singular integral and differentiability properties of functions\/},
Princeton {Math.} Series 30, Princeton Univ. Press, 1970.

\bibitem{St2}
E.M. Stein,
\textit{Harmonic analysis\/}
{(\textit{real variable methods, orthogonality and oscillatory integrals\/})}, {Princeton Math. Series 43}, Princeton Univ. Press, 1993.

\bibitem{SW}
E.M. Stein, G. Weiss,
\textit{On the theory of harmonic functions of several variables {I}\/}
{(the theory of $H^p$-spaces\/)},
Acta Math. 103 (1960), 25--62.

\bibitem{SY}
L. Song, L. Yan,
\textit{Maximal function characterizations for Hardy spaces associated to
nonnegative self-adjoint operators on space of homogeneous type\/},
to appear in {J. Evol. Equ., 23 pages.}
\end{thebibliography}
\end{document}